\pgfplotsset{compat=1.17} 
\theoremstyle{plain}
\newtheorem{theorem}{Theorem}[section]
\newtheorem{lemma}[theorem]{Lemma} 
\newtheorem{definition}[theorem]{Definition}
\newtheorem{proposition}[theorem]{Proposition}
\newtheorem{corollary}[theorem]{Corollary}
\newtheorem{notation}[theorem]{Notation}
\newtheorem{remark}[theorem]{Remark}
\newcommand{\figurezero}{
\begin{tikzpicture}[samples=100]
\tikzstyle{ann} = [fill=white,font=\footnotesize,inner sep=1pt]
\draw [dashed,black] (-2,0) -- (-2,-4.5) ;
\draw [black] (-2,0) -- (2,3.5) ;
\draw [black] (2,3.5) -- (2,-1) ;
\draw [color=black!70!white,line width=0.5pt,->] (2,3.5) -- (2,4.3) ;
\node at (2.2,4.2) [right] {$x_3$} ;
\draw [black] (2,3.5) -- (8,3.5) ;
\draw [dashed,black] (8,3.5)  -- (8,-0.5) ;
\draw [color=black!70!white,line width=0.5pt,->] (8,-0.5)  -- (8.7,-0.5) node [below] {$x_2$} ;
\draw [black] (-2,-4.5) -- (4,-4.5) ;
 \shadedraw [top color=blue!55!white, bottom color=blue!55!white] 
 (-2,-2) -- (-2,-4.5) -- (4,-4.5) -- (4,-2) ;
  \shadedraw [right color=blue!55!white, left color=blue!55!white, color=black] 
 (-2,-2) -- (-2,-4.5) -- (4,-4.5) -- (4,-2) ;
\filldraw  [color=blue!55!white] 
(4,-4.5) -- (4,-3) -- 
 plot [domain=-2:2,shift={(6,-1)},rotate=0.3] ({\x},{\x+exp(-\x*\x)}) -- (8,-0.5) ;
\shadedraw [top color=red!35!white, bottom color=blue!45!white, color=black] plot [domain=-2:2,rotate=0.3] ({\x},{\x+exp(-\x*\x)}) 
-- plot [domain=-2:4,shift={(4,1.55)}] ({\x},{(0.5*cos((0.524*(\x+2) r)))}) -- 
 plot [domain=2:-2,shift={(6,-1)},rotate=0.3] ({\x},{\x+exp(-\x*\x)}) -- plot [domain=4:-2,shift={(0,-2.5)}] ({\x},{(0.5*cos((0.524*(\x+2) r)))}) ;
\draw[color=black!20!white,domain=2:-2,shift={(1,-0.06)},rotate=0.3] plot  ({\x},{\x+exp(-\x*\x)}) ;
\draw[color=black!16!white,domain=2:-2,shift={(2,-0.25)},rotate=0.3] plot  ({\x},{\x+exp(-\x*\x)}) ;
\draw[color=black!14!white,domain=2:-2,shift={(3,-0.5)},rotate=0.3] plot  ({\x},{\x+exp(-\x*\x)}) ;
\draw[color=black!12!white,domain=2:-2,shift={(4,-0.75)},rotate=0.3] plot  ({\x},{\x+exp(-\x*\x)}) ;
\draw[color=black!10!white,domain=2:-2,shift={(5,-0.93)},rotate=0.3] plot  ({\x},{\x+exp(-\x*\x)});
\draw[color=black!20!white] (-1,-2.06) -- (-1,-4.5);
\draw[color=black!20!white] (0,-2.25) -- (0,-4.5);
\draw[color=black!20!white] (1,-2.5) -- (1,-4.5);
\draw[color=black!20!white] (2,-2.75) -- (2,-4.5);
\draw[color=black!20!white] (3,-2.93) -- (3,-4.5);
\draw [color=blue!60!white] (-2,-4.5) -- (4,-4.5);
\node at (0,-4) {$~$};
\draw [dashed,black] (4,0) -- (4,-4.5) ;
\draw [black] (4,0) -- (8,3.5) ;
\draw [black] (4,0) -- (-2,0) ;
\draw [black] (4,-4.5) -- (8,-0.5) ;
\draw [color=black!70!white,line width=0.5pt,->] (-2,-4.5) -- (-2.5,-5) ;
\node at (-2.5,-5.2) [color=black!70!white,right] {$x_1$};
\draw[style=thick,color=black,domain=4:-2,shift={(0,-2-0.5)}] plot  ({\x},{(0.5*cos((0.524*(\x+2) r)))}) ;
\node at (0.5,-2.8) [below] {$\mathcal{O}(t)$};
\draw[style=thick,color=black,domain=2:-2.018,shift={(6,-1)},rotate=0.3] plot  ({\x},{\x+exp(-\x*\x)}) ;
\node at (4.5,2.5) [below] {$\Sigma(t)=\partial\mathcal{O}(t)$};
\draw [->,color=black!60!white,line width=1.5pt] (1.5,1) -- (1.6,2.3) node [above] {$n$};
\end{tikzpicture}}
\newcounter{randymark}
\DeclareMathOperator{\cn}{div}
\DeclareMathOperator{\supp}{supp}
\DeclareMathOperator{\diff}{d}
\DeclareMathOperator*{\osc}{osc} 
\def\loc{\textrm{loc}}
\def\eps{\varepsilon}
\def\sep{:}
\def\dx{\, dx}
\def\la{\left\lvert}
\def\lA{\left\lVert}
\def\ra{\right\rvert}
\def\rA{\right\rVert}
\def\ba{\begin{align}}
\def\be{\begin{equation}}
\def\ea{\end{align}}
\def\ee{\end{equation}}
\def\e{\eqref}
\def\dt{\diff \! t}
\def\dtau{\diff \! \tau}
\def\ds{\diff \! s}
\def\dx{\diff \! x}
\def\dy{\diff \! y}
\def\fract{\frac{\diff}{\dt}}
\def\dmH{\diff \! \mathcal{H}^{N-1}}
\def\defn{\mathrel{:=}}
\def\eps{\varepsilon}
\def\la{\left\vert}
\def\lA{\left\Vert}
\def\le{\leq}
\def\les{\lesssim}
\def\mez{\frac{1}{2}}
\def\ra{\right\vert}
\def\rA{\right\Vert}
\def\xN{\mathbb{N}}
\def\xR{\mathbb{R}}
\def\xS{\mathbb{S}}
\begin{document}

\title{The Hele-Shaw semi-flow}
\author{Thomas Alazard 
   and Herbert Koch 
}

\thanks{
Part of the research was performed during the program ``Mathematical Problems in Fluid Dynamics, Part~2,'' held during the summer of 2023 by the Simons Laufer Mathematical Sciences Institute (SLMath), which is supported by the National Science Foundation (Grant No.~DMS-1928930).
H.K. was supported by the DFG through the Hausdorff Center for Mathematics under Germany's Excellence Strategy - GZ 2047/1, Projekt-ID 390685813 Hausdorff Center for Mathematics and  Project ID 211504053 - SFB 1060. 
The authors acknowledge Hongjie Dong, Francisco Gancedo and Huy Q. Nguyen for their comments which helped improve the presentation of Section 6.
}

\begin{abstract}
In this article, we return to one of the simplest free boundary evolution problems and provide a new study of the case of an unbounded free surface with current insights leading to new results. 
Employing an elliptic formulation of the Hele-Shaw equation, 
we establish the well-posedness of the Cauchy problem in a comprehensive context. Notably, we prove that 
the Cauchy problem is well-posed in a strong sense and in a general setting.

Our main result is the construction of an abstract 
semi-flow for the Hele-Shaw problem within general fluid domains (enabling, for instance, changes 
in the topology of the fluid domain) and which satisfies several properties:  
We provide simple comparison arguments, establish a new stability estimate and derive several consequences, including monotonicity and continuity results for the 
solutions, along with many Lyapunov functionals. Applying Caffarelli's regularity results, we establish an eventual analytic regularity result for  arbitrary initial data. We also study numerous qualitative properties, including global regularity for initial data in sub-critical Sobolev spaces, well-posedness in a strong sense for initial data 
with barely a modulus of continuity, as well as waiting-time phenomena for Lipschitz solutions, in any dimension.

Our approach is grounded in insightful observations by Baiocchi and Duvaut, which consists in framing the Hele-Shaw equation as an obstacle problem. We extend this analysis to a noncompact free boundary, 
introducing intriguing new problems. Examining both solutions and subsolutions, we leverage elliptic regularity results such as the co-area formula, Caccioppoli and Moser estimates, along with the boundary Harnack inequality. We also prove a new stability estimate for solutions to the obstacle problem.

As side results, we establish a uniqueness result for positive harmonic functions defined on the subgraph of Lipschitz functions that vanish on the boundary. Additionally, we present a general result concerning the Rayleigh-Taylor sign condition in $C^{1,\alpha}$ domains.
\end{abstract}

\maketitle
\section{Introduction}

The objective of this article is to investigate the Cauchy problem concerning the evolution of a free surface, 
referred to as $\Sigma(t)$, which serves as the boundary of a fluid-occupied domain $\mathcal{O}(t)$ in $\mathbb{R}^N$, $N\ge2$. 
This free surface $\Sigma(t)$ is subject to movement driven by the fluid's velocity. Specifically, 
we assume that the normal velocity of $\Sigma$, denoted as $V_n$, is determined as follows:
$$
V_n=v\cdot n,
$$
where $v\colon \mathcal{O}\to \xR^N$ represents 
the fluid velocity, and $n$ is the  outer unit normal vector to $\Sigma$.

This formulation is quite general and encompasses many problems in fluid dynamics. In this article, 
our focus is on the Hele-Shaw equation, a fundamental model for analyzing incompressible fluids. 
This equation governs the behavior of the free surface $\Sigma$ when the fluid velocity $v$ satisfies the 
conditions of being divergence-free and obeying to Darcy's law:
\[
v=-\nabla (P+gx_N)\quad\text{and}\quad\nabla\cdot v =0\quad\text{in }\mathcal{O}, \quad P\arrowvert_{\Sigma}=0.
\]
In these equations, $P$ represents pressure, $g$ stands for the acceleration due to gravity, and $x_N$ denotes the vertical coordinate aligned with the direction of gravity.

Our results are of different nature, we will : $i)$ establish a connection to an obstacle problem, $ii)$ 
examine the regularity of solutions to this elliptic problem and $iii)$ draw various implications for the Cauchy problem associated with the Hele-Shaw equation. 
Notably, we will define a semi-flow for the Hele-Shaw equation that applies to very general fluid domains.

In this introduction, for the reader's convenience, we will:
clarify the structure of the paper, summarize these results, and
enhance this summary with illustrations to explain 
the range of generality permitted by our approach. 
To maintain the flow of the argument, we will 
postpone several discussions of related works 
to Section~\ref{I:background} as well as to subsequent chapters.

\subsection{An obstacle problem formulation}\label{S:I1.1}
We begin by assuming that the free surface is represented as the graph of a time-dependent function 
$h=h(t)$. Then $\Sigma(t)$ and the fluid domain $\mathcal{O}(t)$ are of the form:
\begin{align*}
\Sigma(t)&= \{(x',x_N) \in \mathbb{R}^{N-1}\times \mathbb{R}\,;\, x_N = h(t,x')\},\\
\mathcal{O}(t)&=\{(x',x_N) \in \mathbb{R}^{N-1}\times \mathbb{R}\,;\,x_N<h(t,x')\},
\end{align*}
and the normal $n$ and normal velocity $V_n$ are given by
$$
n=\frac{1}{\sqrt{1+|\nabla_{x'} h|^2}}\begin{pmatrix} -\nabla_{x'} h,\\ 1\end{pmatrix},\quad
V_n=n\cdot \fract \begin{pmatrix} x' \\ h(t,x')\end{pmatrix}=\frac{\partial_th}{\sqrt{1+|\nabla_{x'} h|^2}}.
$$

\begin{figure}[ht]
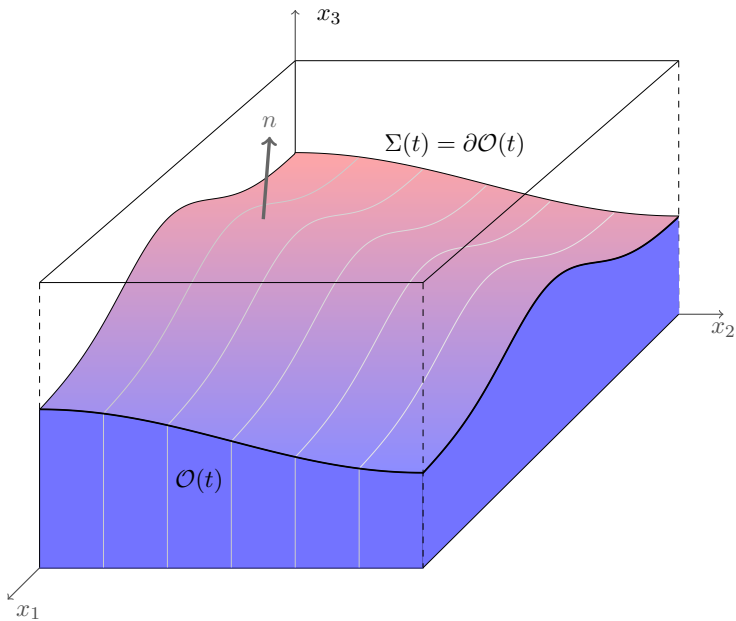

\centering
\resizebox{0.7\textwidth}{!}{\figurezero}
\caption{The fluid domain}\label{Fig:figure1}
\end{figure}

It has 
long\footnote{See~\cite{CaOrSi-SIAM90,Escher-Simonett-ADE-1997,Pruss-Simonett-book,SCH2004} 
as well as the discussions in \S\ref{S:1.1} and in Remark~\ref{R:3.4}.} 
been observed that the Hele-Shaw 
problem can be expressed as a parabolic evolution 
equation involving only the function $h$ (we derive such an equation in $\S\ref{S:2.1}$). 
This equation possesses two 
key characteristics: it is both a fractional parabolic equation and highly nonlinear. 
These attributes 
are shared by several equations that have attracted a lot of attention in recent 
years\footnote{We will discuss this in detail in section~\ref{S:comparison}.}. 
Among these equations, a notable feature of 
the Hele-Shaw equation is its ability to be reformulated as an obstacle 
problem. {Problems 
with obstacles give rise to free boundary problems (namely, the contact set 
between the solutions 
and the obstacle). 

However, the possible connections between obstacle problems 
and problems with free boundaries in fluid dynamics are not obvious. 
The first pioneering works in this direction are those of Baiochi \cite{Baiocchi-1971,Baiocchi-1972,Baiocchi-1974} 
for a particular stationary problem in porous medium, and Duvaut~\cite{Duvaut-1973} for the 
Stefan problem. Many other fluid dynamics questions on free boundary 
problems can be reformulated as a variational elliptic problem. This is indeed a vast subject and we refer the reader to Br\'ezis 
and Stampacchia \cite{Brezis-Stampacchia-1973}, and the books by 
Baiocchi and Capelo \cite{MR0745619} or 
Kinderlehrer and Stampacchia 
\cite{MR1786735}.}

What makes the obstacle problem formulation particularly appealing is 
that the time variable only appears as a parameter, enabling us to study 
this {\em evolution equation} with techniques developed for 
the study of {\em elliptic equations}. In some respects, this represents the most 
elementary formulation of the Hele-Shaw equation\footnote{This approach 
was once widely employed for related problems: it was discussed 
or evoked by Baiocchi, Br\'ezis, 
and Kinderlehrer and in their lectures at the ICM in 1974 (see~\cite{ICM1974}). 
However, it is safe to say that it gradually faded from the spotlight. Its 
resurgence came about with the research of Figalli, 
Ros-Oton and Serra~\cite{MR4179834,figalli2021singular} (for more details, refer to Figalli's plenary 
lecture at the ICM in 2018~\cite{Figalli-ICM2018}).}. For the Hele-Shaw problem, this insight was originally observed by 
Elliott and Janovsk\'y (\cite{MR611303}) for bounded fluid domains. The first 
contribution of this paper is the extension of this obstacle problem 
formulation to an unbounded domain. 

To get this obstacle problem formulation, the first step consists in 
introducing an elementary change of variables, which we call {\em free falling coordinates} 
since they eliminate the gravity in the equation: we will work with the unknowns 
\be\label{i1}
f(t,x')= h(t,x')+gt, \quad  p(t,x)=
\left\{
\begin{aligned}
&P(t,x',x_N-gt) \quad&&\text{for }x_N<f(t,x'),\\
&0 \quad&&\text{for }x_N\ge f(t,x').
\end{aligned}
\right.
\ee
Then, we use the so-called Baiocchi-Duvaut transformation, 
which involves the temporal integration of the pressure function. We set
$$
u(t,x)=\int_0^t p(s,x)\ds.
$$
Since the pressure within the fluid domain is positive, as can 
be deduced from the maximum principle, it follows 
that $\partial_t f\ge 0$ which means that the sets 
\be\label{def:Omega}
\Omega(t)\defn\{p(t,x)>0\}=\{P(t,x)>0\}+te_N=\mathcal{O}(t)+te_N
\ee
are increasing in time 
(see Figures~\ref{Fig:shift} and~\ref{Fig:equations}).  
This has the following key consequence: 
$\Omega(t)=\{ u(t,x)>0\}$. 
Furthermore, we will see in Section~\ref{S:2} that $u(t)$ 
satisfies the following obstacle problem:
\begin{equation*}
\left\{
\begin{aligned}
&\Delta u(t)=0 \quad &&\text{in}\quad \Omega(0)\cup \big(\xR^N\setminus \Omega(t)\big),\\
&\Delta u(t)=1 \quad &&\text{in}\quad \Omega(t)\setminus \Omega(0),
\end{aligned}
\right.
\end{equation*}
supplemented with the boundary conditions
\begin{equation*}
\left\{
\begin{aligned}
&\nabla u(t)=0 \quad \text{on}\quad\partial\Omega(t),\\
&\lim_{s\to -\infty}\nabla u(t,x+se_N) = -t e_N.
\end{aligned}
\right.
\end{equation*}
Once the elementary change of unknowns \e{i1} is taken into account, 
this reduction to an obstacle problem can be derived through classical 
computations (see Section~\ref{S:2}). 
Furthermore, we will present a broader approach for deriving the obstacle 
problem formulation, based on the co-area 
formula (cf.\ section~\ref{S:3.3}). This approach is interesting in 
that it also applies for sub- and supersolutions. 

\subsection{Study of the obstacle problem}
After rigorously establishing the previous 
calculations, the core of 
our analysis will be dedicated 
to the obstacle problem. The analysis of this problem, set in an {\em unbounded} domain, 
introduces intriguing challenges. 
More precisely, our goal is to establish the existence and uniqueness of solutions, with 
the most significant outcome being the derivation of a stability estimate under quite weak conditions.

Solutions will be constructed using an approximation approach involving obstacle 
problems in bounded domains, using comparison estimates. The analysis will 
draw upon various tools and questions, including classical variational methods, a Moser's iteration argument and Caccioppoli's inequality 
in the uniqueness argument, a Boundary Harnack inequality applicable in Lipschitz domains 
and a classification of nonnegative harmonic functions in 
unbounded 
domains located below the graph of a Lipschitz function. 

\subsection{The Cauchy problem for the graph case} 
The Cauchy problem for the Hele-Shaw equation has 
been investigated using various techniques (we will review many 
previous works in the next chapters). In this paper, 
we will examine both the graph and non-graph cases. 
We begin by examining the first case. We will 
establish the existence and uniqueness of a solution globally in time 
for smooth initial data $h_0$ (in subcritical Sobolev spaces). We will 
demonstrate that it is possible to extend the solution 
mapping $h_0\mapsto h(t)$ in a unique manner, thereby defining 
a semi-flow on the space of lower semicontinuous functions. This semi-flow 
will satisfy various properties of continuous dependence. 

Our main results 
about the Cauchy problem are presented in Section~\ref{S:mainR}. We can 
summarize them as follows: in any dimension $N\ge 2$, the following three properties are observed:

\begin{enumerate}
\item \textbf{Smoothness and Global Existence for $H^s$ Initial Data}: 
 When the initial data $h_0$ belongs to $H^s(\mathbb{R}^{N-1})$ for some $s>(N-1)/2+1$, 
 the solution exists globally in time and exhibits $C^\infty$ smoothness. This extends 
 previous results (see \cite{Cheng-Belinchon-Shkoller-AdvMath,AMS,MR4090462}), 
 which had established the local-in-time well-posedness of smooth solutions.
  
\item \textbf{Global Solvability and uniqueness for Rough Initial Data}: 
The Cauchy problem can be solved globally in time even when the initial data are very 
rough (see Figure~\ref{Fig:Holder}). For instance, if the initial data $h_0$ is uniformly continuous, 
a unique weak solution exists that is global in time and remains 
uniformly continuous as a function of space and time. This complements prior 
results in~\cite{Kim-ARMA2003,ChangLaraGuillenSchwab,AMS,MR4655356} (see Section~\ref{I:background} and subsequent chapters for further discussions of prior results and approaches, as well as for more references). 

Furthermore, we prove $L^p$-contraction estimates ($1\le p \le \infty$)
for the dependence with respect to initial data and that any modulus 
of continuity of the initial data is preserved. Moreover, we prove that the solution becomes analytic after a finite time without any regularity assumption on the initial data. See  \cite{MR2218549} for similar results under stronger assumptions on the initial data. 

\item\textbf{Waiting time and 
of smoothness for Lipschitz Initial Data}: when the initial data 
$h_0$ belongs to $W^{1,\infty}(\mathbb{R}^{N-1})$, 
we provide a comprehensive understanding of the interplay between 
two scenarios: a waiting time property (where the initial Lipschitz 
singularity persists for a finite duration) and a smoothing effect 
(where solutions instantaneously become smooth, see Figure~\ref{Fig:nonacute}). 
As discussed in Remark~\ref{R:2.6}, 
this extends previous results (\cite{MR1363758,MR2306045,zbMATH05034334,MR4520423}) to the case of an unbounded fluid domain. We give a simple proof that moreover applies for general domains (not only subgraphs). 
\end{enumerate}

\begin{figure}[htb]
\centering
\resizebox{0.8\textwidth}{!}{%
\begin{tikzpicture}[samples=100]
\draw [gray!40!white] (-5,-1) -- (13,-1) ;
\filldraw [color=blue!55!white] 
(-5,-1) -- (-5,-5) -- (13,-5) -- (13,-1)  -- cycle;
\filldraw[fill=blue!10!white, draw=blue!10!white] (-5,-1) -- (-5,-2) arc (180:270:1) -- (-4,-3) -- (-3,-3) arc (-90:0:1) -- (-2,-1)  -- cycle;
\draw [blue!10!white] (-2,-1) -- (-2,-2) ;
\filldraw[fill=blue!10!white, draw=blue!10!white] (-2,-1) -- (-2,-2) arc (180:270:1) -- (-1,-3) -- (0,-3) arc (90:0:1) -- (1,-1)  -- cycle;
\draw [blue!10!white] (-2,-1) -- (-2,-2) ;
\filldraw[fill=blue!10!white, draw=blue!10!white] (1,-1) -- (1,-4) arc (180:90:1) -- (2,-3) -- (3,-3) arc (-90:0:0.5) -- (3.5,-1)  -- cycle;
\draw [blue!10!white] (1,-1) -- (1,-2) ;
\filldraw[fill=blue!10!white, draw=blue!10!white] (3.5,-1) -- (3.5,-2.5) arc (180:270:0.5) -- (5,-3) -- (6,-3) arc (-90:0:2) -- (8,-1)  -- cycle;
\draw [blue!10!white] (4,-1) -- (4,-2) ;
\filldraw[fill=blue!10!white, draw=blue!10!white] (8,-1) arc (180:270:2) -- (10,-3) -- (11,-3) arc (-90:0:1) -- (12,-1)  -- cycle;
\draw [blue!10!white] (7,-1) -- (7,-2) ;
\filldraw[fill=blue!10!white, draw=blue!10!white] 
(10,-1) -- (10,-2) arc (180:270:1) -- (11,-3) -- (12,-3) arc (-90:0:1) -- (13,-1)  -- cycle;
\draw [blue!10!white] (10,-1) -- (10,-2) ;
\end{tikzpicture}
}\caption{A H\"older graph}\label{Fig:Holder}
\end{figure}
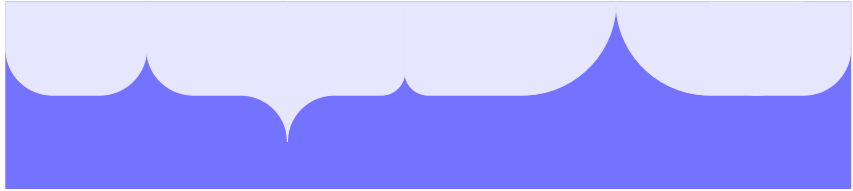
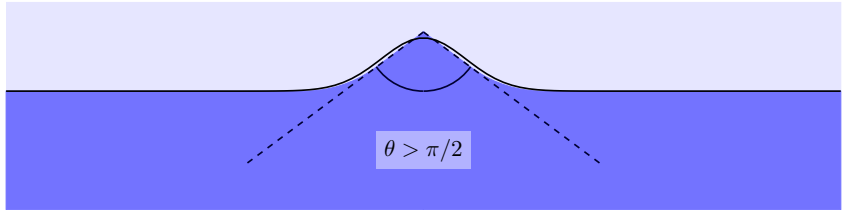
\begin{figure}[htb]
\centering
\resizebox{0.8\textwidth}{!}{
\begin{tikzpicture}[samples=100]
\filldraw [color=blue!10!white] (-7,5.5) -- (7,5.5) -- (7,2) -- 
(-7,2)  ;
\filldraw [color=blue!55!white]   
(-7,4) -- (-2,4) arc (-90:-55:2) -- (0,5) -- (0,2) -- (-7,2) -- cycle ; 
\draw[color=black!80!blue,dashed,thick] (0,5) -- (-3,2.75) ;
\draw[color=black!80!blue,thick]
  (0,4)
  arc[start angle=-90,end angle=-145,radius=1cm-1pt];
\begin{scope}[xscale=-1]
\filldraw [color=blue!55!white]   
(-7,4) -- (-2,4) arc (-90:-55:2) -- (0,5) -- (0,2) -- (-7,2) -- cycle ; 
\draw[color=black!80!blue,dashed,thick] (0,5) -- (-3,2.75) ;
\draw[color=black!80!blue,thick]
  (0,4)
  arc[start angle=-90,end angle=-145,radius=1cm-1pt];
\end{scope}
\node at (0,3) [fill=blue!30!white] {{$\theta>\pi/2$}};
\draw [white!80!black,draw=black,thick]
plot [domain=-7:7] ({\x},{4+0.9*exp(-\x*\x)})  ;
\end{tikzpicture}
}\caption{A non acute angle is regularized instantaneously}
\label{Fig:nonacute}
\end{figure}

\subsection{A generalized semi-flow}
Our main result is that one can expand the semi-flow defined on graph domains 
to encompass highly diverse initial domains. The only condition 
is that the initial fluid domain is connected and 
its boundary is located between two horizontal 
hyperplanes (without any assumption about regularity). 
Furthermore, we will establish that this semi-flow exhibits numerous continuous dependency properties. 

One application is to consider domains that are not situated beneath the graph of a function.

\begin{figure}[htb]
    \centering
    \resizebox{0.8\textwidth}{!}{%
\begin{tikzpicture}[x=5mm,y=5mm,decoration={mark random y steps,segment length=3mm,amplitude=1mm}]
\filldraw[fill=blue!10!white, draw=blue!10!white] (-4,1) -- (14.5,1) 
-- (14.5,-5) -- (-4,-5) -- cycle;
  \path[decorate] (-4, -0.5) -- (2,0) -- (2,0.5) -- (0.5,-4) -- (2,-3.5) -- (3.5,-2) -- (14.5, 0);
\filldraw [color=blue!55!white]  plot[color=blue!70!white,variable=\x,samples at={1,...,\arabic{randymark}},smooth] 
   (randymark\x) -- (14.5,-5) -- (-4,-5) -- cycle;
\end{tikzpicture}
}\caption{A smooth interface which is not a graph}
\end{figure}
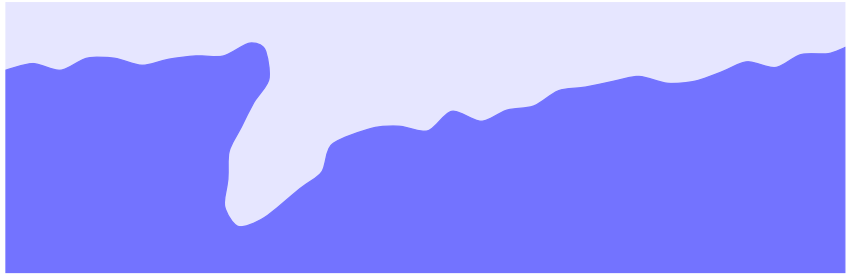

 \begin{figure}[htb]
    \centering
    \resizebox{0.8\textwidth}{!}{%
  \begin{tikzpicture}[samples=100,scale=0.7]

\shadedraw [top color=blue!70!white, bottom color=blue!30!white, color=blue!70!white]   
 plot [domain=-10.3:10.3] ({\x},{4+exp(-\x*\x)}) -- (10.3,-6) -- (-10.3,-6) -- (-10.3,4) ;

    \foreach \x in {-10,-9,...,9}{                           
    \foreach \y in {-1,0,...,3}{                        
    \node[blue!10!white,draw,circle,inner sep=1pt,fill] at (\x + 0.7*rnd,\y +0.9*rnd-0.3) {};
    }} 
    
    \foreach \x in {-10,-9,...,9}{                           
    \foreach \y in {-2,-1,...,3}{                      
    \node[blue!10!white,draw,circle,inner sep=0.5pt,fill] at (\x + 0.7*rnd,\y +0.9*rnd) {};
    }}

       \foreach \x in {-10,-9,...,9}{                          
    \foreach \y in {-3,-2,...,3}{                       
    \node[blue!10!white,draw,circle,inner sep=0.1pt,fill] at (\x + 0.7*rnd,\y +0.9*rnd) {};
    }}  

\draw [thick,draw=gray,dashed] (-10.3,-4) -- (10.3,-4) ;
     \end{tikzpicture}
}\caption{A fluid domain with bubbles of air inside}\label{Fig:bubbles}
\end{figure}
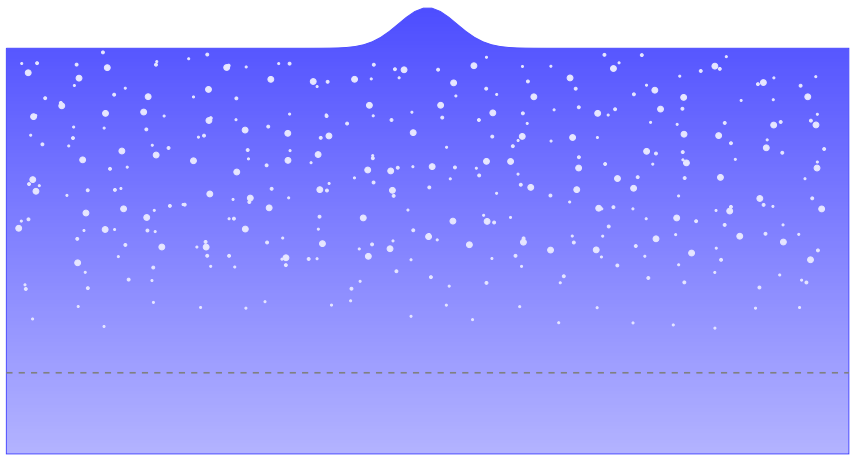

The main new aspect in this paper is the 
construction of a general {\em global in time} semi-flow, and the proof that  
this semi-flow will lead to the evolution of the fluid domain into a smooth subgraph after 
a certain time. Specifically, 
we will prove that the domain is a subgraph after a period 
of time $T\ge 2C$ where $2C$ 
is the width of the strip in which the initial free surface lies (see~\e{C}) and analytic after a period of time 
$\kappa C$ where $\kappa$ is a geometric constant depending on a regularity result of Caffarelli 
for the obstacle problem. We conjecture 
that smoothness holds  with $\kappa=2$.

Given the generality of our framework, numerous scenarios are conceivable. 
For instance, one can consider scenarios allowing changes in topology. 
Specifically, we can begin with an initial domain containing multiple bubbles (see Figure~\ref{Fig:bubbles}). 
Over time, we will prove that this domain will evolve to the point where the fluid domain becomes a smooth 
subgraph (and thus simply connected).

\subsection{Background}\label{I:background}

The Hele-Shaw problem has been a prominent problem over the last four decades, with many important contributions from different people, groups and perspectives. 
\begin{enumerate} 
\item  The obstacle formulation of Baiocchi \cite{Baiocchi-1971,Baiocchi-1972,Baiocchi-1974}, Duvaut
\cite{Duvaut-1973}  and 
Elliott and Janovsk\'{y} \cite{MR611303} translates the Hele-Shaw problem into a family of obstacle problems parametrized by time. The obstacle problem is well studied, which allows to use strong comparison argument, with  seminal contributions by Caffarelli and coworkers (see \cite{CS-book}) , and, more recently by Figalli and coworkers \cite{MR4179834,Figalli-ICM2018}. See also Section~\ref{S:2}.
\item Various type of weak solutions have been studied. 
\begin{itemize} 
\item Global weak solutions to the Hele-Shaw equations have been 
studied by Antontsev, Meirmanov, and Yurinsky \cite{MR2072944}, Perthame, Quir\'os, and V\'azquez \cite{MR3162474} and David and Perthame~\cite{MR4324293}. They regarded the 
Hele-Shaw equation as an evolution equation for $\chi_{\Omega(t)}$, as described 
in~\e{n42}, and established the existence of solutions by taking limits in a 
sequence of regularized equations of porous medium type. 
\item Viscosity solutions, 
on the other hand, have been the focus of studies conducted by Kim~\cite{Kim-ARMA2003}, Chang-Lara, Guillen, and Schwab \cite{ChangLaraGuillenSchwab}, as well as Dong, Gancedo and Nguyen~\cite{MR4655356}. 
They are close to our use of sub- and supersolutions for a study of the waiting time (see Section~\ref{sec:super} and Section~\ref{S:8}). 
Related problems have been studied by 
Maury, Roudneff-Chupin and Santambrogio~\cite{MR2735914}. Connections between an 
obstacle problem for the pressure 
and these solutions have already been studied by 
Guillen, Kim and Mellet~\cite{MR4367913}, and Kim, Mellet and Wu \cite{kim2023density}. 
\item One could probably build a theory at least for the periodic case around the approach by  Crandall  and  Liggett  \cite{MR0287357}. 
It is not clear whether it could be adapted to the  unbounded setting in this paper. This would be of independent interest. 
\end{itemize} 
\item  There is the perspective from elliptic problem in Lipschitz domains, by Jerison, Kim and coworkers \cite{MR2306045,MR2203166,MR2210142}, with tools like the boundary Harnack inequality and other techniques from harmonic analysis. The boundary Harnack inequality and its consequences for the Hele-Shaw problems 
are studied in Section~\ref{sec:bh}. 

\item  The Hele-Shaw problem can be considered as a pseudo-differential parabolic problem, with a focus on local existence for initial data given as the subgraph of function is Sobolev of Hölder spaces. This approach is discussed in more detail in Section~\ref{S:mainR}. 
Eventual regularity has been proven by Chang-Lara and Guillen \cite{changlara2016free} in this context. We provide a more classical argument: It is an immediate  consequence of Caffarelli's results that singularities in the obstacle problem only occur if the contact set is thin. This seminal result is applied in Section~\ref{sec:eventualreg} to prove the eventual regularity for the Hele-Shaw problem. 
\end{enumerate} 

This brief survey reflects the limited knowledge and the taste of the authors. We apologize for any omissions. 
Further references will be discussed in subsequent chapters.

\subsection{Notations}
We gather here some classical notations which will be used continually in the rest of the paper.

\begin{enumerate}
\item Given $0\le t\le T$  
and a function $\varphi\colon [0,T]\times D \to X$, we denote by $\varphi(t)$ 
the function $D\ni x \mapsto \varphi(t,x)\in X$.

\item The indicator function of a subset $A\subset \mathbb{R}^N$ is denoted by $\chi_A$.

\item Given $s\ge 0$, we denote by $H^s(\xR^d)$ the Sobolev space 
of functions $u\in L^2(\xR^d)$ 
such that $(I-\Delta)^{s/2}u$ belongs to 
$L^2(\xR^d)$, where $(I-\Delta)^{s/2}$ is the Fourier multiplier with symbol $(1+\la\xi\ra^2)^{s/2}$. 
\item We denote by $C(U)$ the space of functions which are continuous on $U$, without assuming that these functions are bounded. 
The notation $C^0(U)$ refers instead to the space 
of continuous and bounded functions.  
\end{enumerate}



 
\section{Main results}\label{S:mainR}

In this chapter, we state our main results about the Cauchy problem for the Hele-Shaw 
problem. Our approach to introduce
a generalized semi-flow for the Hele-Shaw equation for general fluid domains unfolds 
in three sequential steps. 
After recalling the derivation of the Hele-Shaw equation in section~$\S\ref{S:2.1}$, 
we will consider, in $\S\ref{S:1.1}$, the case where the free surface can be 
represented as the graph of a regular function (we start with this case since defining a meaningful concept of a solution in this context is straightforward).
Subsequently, we will systematically expand these 
solutions in $\S\ref{S:1.3}$  
to encompass cases where the free surface corresponds 
to a graph of a rough function, specifically, any bounded 
lower semicontinuous function. Finally\footnote{In fact, 
the proof of our construction of a semi-flow for the Hele-Shaw 
equation goes in the opposite direction: it will commence 
with general domains and progressively refine it to less 
general ones. However, for the sake of clarity in explanation, 
it is more intuitive to reverse this process, beginning with 
smooth solutions where the meaning of solving the 
equation is unambiguous.}, in $\S\ref{S:2.4}$, we further 
extend our analysis to encompass general domains, permitting 
the free surface to serve as the boundary for any connected 
open subset enclosed between two parallel horizontal 
hyperplanes. The last section is about Lipschitz 
singularities.

\subsection{The equations}\label{S:2.1}
In this section, we provide a detailed derivation of the 
Hele-Shaw equation as an evolution equation involving 
the Dirichlet-to-Neumann operator. 

For the sake 
of simplicity in notation, we assume throughout 
the following that the acceleration due to gravity $g$, 
is set to a value of $1$.

Consider a time-dependent fluid domain which is at 
time $t\ge 0$ of the form
$$
\mathcal{O}(t)=\{x= (x',x_N) \in \xR^{N-1}\times \xR\,;\, x_N < h(t,x')\}.
$$
Assume that the velocity 
$v\colon \mathcal{O}\rightarrow \xR^{N}$ 
and the pressure $P\colon\mathcal{O}\rightarrow \xR$ 
solve the 
Darcy's equations, so that
\be\label{Darcy}
\nabla\cdot v=0, \quad\text{ and }\quad  v=-\nabla (P+x_N) \quad \text{in }\mathcal{O}.
\ee
One also imposes that 
$\lim_{x_N\to-\infty}v=0$. 
On the free surface $\Sigma=\partial\mathcal{O}$, 
it is assumed that 
the normal component of $v$ 
is equal to the normal component of the velocity 
of the free surface. This implies that
\be\label{HS3}
\partial_t h=\sqrt{1+|\nabla_{x'} h|^2} \, v\cdot n\quad \text{on}\quad x_N=h,
\ee
where $\nabla_{x'}=(\partial_{x_1},\ldots,\partial_{x_{N-1}})$ 
and $n$ denotes the outward unit normal to $\Sigma$, 
given by
$$
n=\frac{1}{\sqrt{1+|\nabla_{x'}h|^2}} \begin{pmatrix} -\nabla_{x'}h\\ 1 \end{pmatrix}.
$$
The final equation states that the pressure vanishes on the free surface:
$$
P=0\quad \text{on}\quad\Sigma.
$$

Notice that $v$ is a gradient, that is $v=-\nabla\phi$ 
where $\phi=P+gx_N$. In addition, 
since $\nabla\cdot v=0$, the function $\phi$ is harmonic, and hence 
is fully determined by its trace on the free surface, 
which is $h$ since $P=0$ on the free surface. We thus have
\be\label{HS4}
\Delta\phi=0\quad \text{in }\mathcal{O},\qquad \phi\arrowvert_{x_N=h}=h.
\ee
To summarize, the Hele-Shaw problem can be reduced to the equation 
\e{HS3} which is an evolution equation for $h$ only, since $\phi$ and 
therefore $v$ is entirely determined by $h$. 
Once $h$ is determined, one defines
$\phi$ by solving~\e{HS4} and then recovers the velocity and the pressure 
by setting $v=-\nabla\phi$ and $P=\phi-x_N$.

In order to explicitly write the problem as 
an evolution equation for the unknown $h$, we introduce the Dirichlet-to-Neumann operator $G(h)$. 
For a given time~$t$, that is omitted here, 
and a function $\psi=\psi(x')$, 
$G(h)\psi$ is defined by (see Appendix~\S\ref{A:DN} for details)
\begin{align*}
(G(h)\psi) (x')&=\sqrt{1+|\nabla_{x'} h|^2}\partial_n\varphi\arrowvert_{x_N=h(x')}\\
&=\partial_{x_N}\varphi(x',h(x'))-\nabla_{x'}h(x')\cdot\nabla\varphi(x',h(x'))
\end{align*}
where $\varphi$ is 
the bounded harmonic extension\footnote{Let us comment that part of the analysis 
will be devoted to study  such harmonic functions.} of $\psi$, given by
\be\label{defi:varphi}
\Delta \varphi=0\quad \text{in }\Omega,\qquad \varphi\arrowvert_{x_N=h}=\psi.
\ee
Then, with this notation, it follows that 
the equation~\e{HS3} can be written as follows
\begin{equation}\label{n7}
\partial_{t}h+G(h)h=0.
\end{equation}

The study of the Hele-Shaw problem with this formulation has received a lot of attention recently (see~\cite{ChangLaraGuillenSchwab,MR4090462,AMS}). 
It is remarkable that this formulation allows also to study viscosity solutions as well (see~\cite{ChangLaraGuillenSchwab} and \cite{MR4655356} where the authors prove the existence of such solutions using a new method which makes the solutions to satisfy the contour-dynamics formulation).

\subsection{Global existence of smooth solutions}\label{S:1.1}
To study the Cauchy problem, 
we begin by considering regular solutions. Many 
results have been obtained in the past decades about this problem. 
The study began with analysis of local in time well-posedness (as well as global regularity for small data), in works of Escher  and Simonett~\cite{Escher-Simonett-ADE-1997}, Antontsev, Gon\c{c}alves and Meirmanov~\cite{MR1942849}, 
Cheng, Granero-Belinch{\'o}n and Shkoller~\cite{Cheng-Belinchon-Shkoller-AdvMath}, Pr\"uss and Simonett~\cite{Pruss-Simonett-book}. 
In particular, there are now many possible ways to study this problem 
as well as various proofs that the Cauchy problem is well-posed {\em locally in time} 
for initial data in the Sobolev space $H^s(\xR^d)$ 
with $s>1+d/2$ (\cite{AMS,MR4090462}). 
We refer to the statement of Theorem~\ref{T:Cauchy} which recalls several known results. 

Our first main result asserts that the Cauchy problem is in fact well posed globally in time and moreover any Sobolev norm $\lA \cdot\rA_{W^{1,p}}$ is a Lyapunov functional.

\begin{theorem}\label{IT1}
Consider an integer 
$d=N-1\ge 1$ and a real number $s>d/2+1$. 
For any initial data $h_0$ in $H^s(\xR^d)$, the Cauchy problem
\begin{equation*}
\partial_{t}h+G(h)h=0,
\quad h\arrowvert_{t=0}=h_0,
\end{equation*}
has a unique global solution 
\begin{equation*}
h\in C^0([0,+\infty);H^s(\xR^d))\cap C^\infty((0,+\infty)\times \xR^d).
\end{equation*}
Moreover, $t\mapsto \lA h(t)\rA_{L^{p}}$ and $t\mapsto\lA \nabla h(t)\rA_{L^{p}}$ are decaying for any $p\in [1,+\infty]$. 
\end{theorem}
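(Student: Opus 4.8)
The plan is to combine three ingredients: (1) local-in-time well-posedness in $H^s$ for $s>d/2+1$ with a blow-up criterion, which is already available in the literature (Theorem~\ref{T:Cauchy}); (2) the parabolic smoothing that upgrades an $H^s$ solution to a $C^\infty$ solution on $(0,T)\times\xR^d$; and (3) a priori control of $\lA h(t)\rA_{W^{1,\infty}}$ (and more generally $\lA h(t)\rA_{W^{1,p}}$) that is strong enough to rule out blow-up, thereby extending the solution to $[0,+\infty)$. The decay of the $L^p$ and $\dot W^{1,p}$ norms will come from the structure of $G(h)$ together with maximum-principle / comparison arguments, and it is exactly these monotonicity estimates that both give the last assertion of the theorem and close the continuation argument.

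First I would record the local theory: for $h_0\in H^s$ there is $T_*>0$ and a unique maximal solution $h\in C^0([0,T_*);H^s)$, with the standard continuation criterion that if $T_*<\infty$ then $\int_0^{T_*}\lA h(t)\rA_{?}\,\dt=\infty$ for the appropriate lower-order norm (in practice a Lipschitz-type norm, since $G(h)$ is a first-order operator and the nonlinearity is quasilinear of order one in the fractional-parabolic scaling). Then I would establish instantaneous smoothing: on any interval $[t_0,T_*)$ with $t_0>0$, a bootstrap/parabolic-regularity argument (e.g. via the elliptic estimates for the harmonic extension defining $G(h)$, or by differentiating the equation and using the parabolic gain of the Dirichlet-to-Neumann flow) shows $h\in C^\infty((0,T_*)\times\xR^d)$. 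This part is technical but routine given the modern machinery for~\e{n7}.

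The heart of the matter is the global a priori bound. Here I would use the maximum principle for the pressure: writing $f=h+gt$ as in~\e{i1}, one has $p\ge 0$ in the fluid, hence $\partial_t f\ge0$, i.e. $\partial_t h\ge -g$; combined with a symmetric argument (or with the comparison principle for the semi-flow established later in the paper, which I may invoke since it is stated earlier) one controls $\lA h(t)\rA_{L^\infty}$ by $\lA h_0\rA_{L^\infty}$ plus a linear-in-$t$ term, and in fact the comparison/monotonicity structure gives $t\mapsto\lA h(t)\rA_{L^p}$ nonincreasing for every $p\in[1,\infty]$. For the gradient: differentiating~\e{n7} in $x'$, the quantity $\nabla h$ solves a (non-autonomous, nonlocal) parabolic equation whose zeroth-order part has the right sign because $G(h)$ is a nonnegative self-adjoint operator; testing against $|\nabla h|^{p-2}\nabla h$ and using the positivity $\langle G(h)w,|w|^{p-2}w\rangle\ge0$ (a Stroock–Varopoulos / Córdoba–Córdoba type inequality for $G(h)$) yields $\fract \lA\nabla h(t)\rA_{L^p}^p\le 0$, hence the claimed decay, and in particular an a priori Lipschitz bound $\lA\nabla h(t)\rA_{L^\infty}\le\lA\nabla h_0\rA_{L^\infty}$. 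This Lipschitz bound is precisely what the continuation criterion needs, so $T_*=+\infty$.

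The main obstacle I expect is making the $\dot W^{1,p}$ monotonicity rigorous: one must justify differentiating the equation and testing against the singular weight $|\nabla h|^{p-2}\nabla h$ (delicate at $p=\infty$, where one argues by a viscosity/barrier method or passes to the limit $p\to\infty$ from finite $p$), and one must prove the pointwise commutator-type positivity inequality for the $h$-dependent operator $G(h)$, which is not a convolution operator. A clean way around part of this is to defer to the obstacle-problem reformulation and the comparison principle proved later in the paper: $L^p$-contraction and preservation of moduli of continuity there give the $L^p$ decay for free, and a spatial-translation comparison argument gives the $\dot W^{1,\infty}$ bound directly. I would then only need the $\dot W^{1,p}$ statement for $p<\infty$ by the differentiated-equation computation, with the endpoint $p=\infty$ obtained by interpolation or by the translation argument. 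Uniqueness in $C^0([0,\infty);H^s)$ follows from uniqueness in the local theory together with the fact that the constructed global solution is, on each $[0,T]$, the unique maximal one.
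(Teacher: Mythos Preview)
There is a genuine gap in your continuation argument. You assert that the blow-up criterion for the local $H^s$ theory is governed by a ``Lipschitz-type norm'' and then close the loop with the a priori bound $\lA\nabla h(t)\rA_{L^\infty}\le\lA\nabla h_0\rA_{L^\infty}$. But Lipschitz is exactly the scale-invariant regularity for~\e{n7} (if $h$ solves the equation then so does $\lambda^{-1}h(\lambda t,\lambda x')$), so it cannot serve as a subcritical continuation norm. The paper's own waiting-time result (Theorem~\ref{T:di}) makes this concrete: a Lipschitz corner with sufficiently acute angle \emph{persists} for positive time, so there is no smoothing mechanism available from a Lipschitz bound alone. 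The actual blow-up criterion (Lemma~\ref{P:Cauchy}) is stated at the level of $C^{1,\alpha}$ for some $\alpha>0$, and the whole difficulty is to upgrade the preserved Lipschitz bound to a $C^{1,\alpha}$ bound.

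The paper's bridge, which your proposal misses entirely, is the boundary Harnack inequality applied through the obstacle-problem reformulation (Section~\ref{sec:bh}, specifically the proof of Theorem~\ref{T:Cauchyglobal}). One shows via the Taylor-sign estimate (Proposition~\ref{prop:taylor}) that the free boundary moves strictly forward, so that in free-falling coordinates the Baiocchi--Duvaut function $u(t)$ solves $\Delta u=\chi_{\Omega(t)\setminus\Omega(0)}$ with $\Omega(t)$ strictly containing $\Omega(0)$. Near $\partial\Omega(t)$ the functions $-\partial_{x_N}u$ and $-\nu\cdot\nabla u$ (for $\nu$ close to $e_N$) are positive harmonic functions vanishing on the Lipschitz boundary; the boundary Harnack inequality then gives H\"older regularity of their ratio, which translates into $\nabla_{x'}f\in C^{0,\alpha}$. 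This is the missing idea, and without it the continuation argument does not close.

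A secondary point: your proposed route to the $\dot W^{1,p}$ decay via a Stroock--Varopoulos/C\'ordoba--C\'ordoba inequality for the variable-domain operator $G(h)$ is not how the paper proceeds and would require substantial new work (the operator is not a Fourier multiplier). The paper instead derives all the $L^p$ and $\dot W^{1,p}$ Lyapunov properties from the one-sided $L^1$ stability estimate for the obstacle problem (Theorem~\ref{thm:mainobstacle}\,\eqref{item:stability}) combined with translation invariance; see Corollary~\ref{cor:graph}. You do mention this as a fallback, so this part is recoverable, but note that these monotonicity statements are consequences of the obstacle-problem analysis and are not themselves used in the global-existence proof --- the latter rests on the boundary Harnack step above.
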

\begin{remark}
$(i)$ The fact that $\lA \nabla h(t)\rA_{L^{p}}$ are Lyapunov functionals was known for $p=+\infty$ (as a consequence of the maximum principle~\cite{Kim-ARMA2003,ChangLaraGuillenSchwab,AMS}). 
We extend this to $1\le p<\infty$ and in fact 
give a general principle to get Lyapunov functionals (see Remark~\ref{R:5.12}).

$(ii)$ All these Lyupunov functionals are a consequence of a 'mother' $L^1$ stability type estimate presented later in Theorem \ref{thm:mainobstacle}, part \ref{item:stability}.
\end{remark}

\subsection{General initial data}\label{S:1.3}
Our second result extends the notion of a solution in a unique 
manner to a very general setting. 
This extension applies to initial data that are barely bounded and lower semicontinuous. 
In particular, notice that 
we might consider initial fluid domain which do not have locally a 
finite perimeter for $N=2$ (and, more broadly, a finite area functional in dimensions $N\ge 2$). 


\begin{theorem}\label{T:B}
Let $d=N-1\ge 1$ and denote by $\mathcal{B}$ the space of 
bounded lower semicontinuous 
functions $f\colon \xR^d\to\xR$. 
There exists a semi-flow
\[
\Psi\colon  [0,\infty) \times  \mathcal{B} \to \mathcal{B}
\]
with the following properties: 
\begin{enumerate} 
\item It generalizes the notion of smooth solutions: for any 
$h_0\in H^s(\xR^d)$ with $s>1+d/2$, there holds 
$\Psi(t,h_0)=h(t)$ 
where $h$ is given by Theorem~\ref{IT1}.

\item It is a semi-flow, i.e. 
\[  \Psi(t,\Psi(s,h_0)) = \Psi(t+s,h_0) ,\qquad \Psi(0,h_0) = h_0.\]

\item It is a monotone contraction in the sense 
\begin{equation}
\left\{
\begin{aligned}
&h_0 \le g_0 \quad \Longrightarrow \quad\Psi(t,h_0) \le \Psi(t,g_0), \\[1ex]
&\Vert \Psi(t,h_0) - \Psi(t,g_0) \Vert_{L^p} \le \Vert h_0-g_0 \Vert_{L^p} \quad\text{for all}\quad p \in [1,+\infty].
\end{aligned}
\right.
\end{equation}
\item 
It is monotone in time in the following sense: 
for all $h_0\in\mathcal{B}$, the function $f$ defined by
$$
f(t,x')=h(t,x')+t \quad\text{with}\quad 
h\colon (t,x') \mapsto \Psi(t,h_0)(x')
$$
satisfies $f(s,x') \le f(t,x') $ if $s \le t $. Moreover, it is continuous in time in the sense that, for all $x'\in \xR^d$ and all $0\le s< t$,
\[ \Big| \int_{B^{\xR^{d}}_R} f(t,x') - f(s,x') \dx' - (t-s)\big\vert B^{\xR^d}_R\big\vert\Big| \lesssim (t-s)\lA h_0\rA_{L^\infty}^{\frac25} R^{d-\frac25}, \] 
for any ball $B^{\xR^{d}}_R\subset \xR^d$ of radius $R>\Vert h_0\Vert_{L^\infty}$.

\item It preserves regularity: If $ h_0$ has a modulus of continuity 
$\omega$ then $\omega $ is a modulus of continuity for $\Psi(t,h_0)$. 
In particular $\Psi(t,h_0) $ is Lipschitz continuous with the same Lipschitz 
constant as $h_0$ if $h_0$ is Lipschitz continuous. If $ h_0$ is uniformly 
continuous then the function 
$h\colon (t,x') \mapsto \Psi(t,h_0)(x')$ 
is continuous on $[0,\infty) \times \xR^d$.
\end{enumerate}
\end{theorem}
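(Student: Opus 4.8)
The plan is to construct $\Psi$ via the Baiocchi--Duvaut obstacle problem described in Section~\ref{S:I1.1}, using the stability estimate (Theorem~\ref{thm:mainobstacle}, part~\ref{item:stability}) as the single engine that drives all five properties. First I would fix $h_0\in\mathcal{B}$, write $f_0=h_0$, and translate the fluid domain into the strip picture: since $h_0$ is bounded and lower semicontinuous, the subgraph $\Omega(0)=\{x_N<h_0(x')\}$ is an open connected set trapped between two horizontal hyperplanes, so the general-domain construction of \S\ref{S:2.4} applies. For each $t\ge0$ one solves the obstacle problem for $u(t)$ on $\xR^N$ with $\Delta u(t)=\chi_{\Omega(t)\setminus\Omega(0)}$, $\nabla u(t)=0$ on $\partial\Omega(t)$, and $\nabla u(t,x+se_N)\to -te_N$ as $s\to-\infty$, where the coincidence set $\{u(t)=0\}$ is declared to be the complement of $\Omega(t)$. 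Existence and uniqueness of $u(t)$ come from the study of the obstacle problem announced in \S1.2 (approximation by bounded domains, comparison, variational methods), and one reads off $f(t,x')=\sup\{x_N:(x',x_N)\in\Omega(t)\}$, then sets $\Psi(t,h_0)(x')=h(t,x')=f(t,x')-t$. That $h(t)$ is again bounded lower semicontinuous follows because $\Omega(t)$ is open (hence $f(t)$ is l.s.c.) and the $L^\infty$ bound is preserved by the comparison with flat solutions $x_N<\pm\|h_0\|_{L^\infty}$.

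Next I would verify the five properties in order. For (1), when $h_0\in H^s$, $s>1+d/2$, Theorem~\ref{IT1} gives a smooth solution $h$ of \eqref{n7}; one checks that its Baiocchi--Duvaut transform $u(t)=\int_0^t p(s,\cdot)\,ds$ solves exactly the obstacle problem above (this is the content of Section~\ref{S:2}), so by uniqueness of the obstacle problem $\Psi(t,h_0)=h(t)$. For (3), the comparison principle for the obstacle problem gives $h_0\le g_0\Rightarrow\Omega_h(t)\subset\Omega_g(t)\Rightarrow\Psi(t,h_0)\le\Psi(t,g_0)$; the $L^p$ contraction is precisely the stability estimate of Theorem~\ref{thm:mainobstacle} transported from $u$ to $f$ via $\partial_t u = $ pressure and $\{u(t)>0\}=\Omega(t)$, integrating $\chi_{\Omega_h(t)}-\chi_{\Omega_g(t)}$ in $x_N$ to recover $f_h(t)-f_g(t)$ and using that the map $\chi_{\{x_N<a\}}-\chi_{\{x_N<b\}}\mapsto a-b$ is an $L^p_{x'}$ isometry in the relevant sense. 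For (2), the semi-flow identity $\Psi(t,\Psi(s,h_0))=\Psi(t+s,h_0)$ follows from the translation structure \eqref{def:Omega}: $\Omega(t+s)=\mathcal{O}(t+s)+(t+s)e_N$ and restarting the obstacle problem at time $s$ with obstacle $\Omega(s)$ gives the same increasing family, by uniqueness. For (4), monotonicity $f(s)\le f(t)$ for $s\le t$ is the statement that $\Omega(t)$ is increasing, which is $\partial_t p\ge0$ i.e. the maximum principle as noted after \eqref{def:Omega}; the quantitative continuity-in-time bound $|\int_{B_R}(f(t)-f(s))\,dx'-(t-s)|B_R||\lesssim (t-s)\|h_0\|_{L^\infty}^{2/5}R^{d-2/5}$ comes from $\int_{B_R\times\xR}(\chi_{\Omega(t)}-\chi_{\Omega(s)})\,dx = \int_{\xR^{N-1}\times\xR}\Delta(u(t)-u(s))$ over a suitable box, Caffarelli-type nondegeneracy/$C^{1,1}$ estimates controlling the free-boundary layer near $\partial B_R$, and an interpolation that produces the $2/5$ exponent (the same exponent that will reappear in the regularity analysis of the obstacle problem). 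For (5), a modulus of continuity $\omega$ for $h_0$ means $h_0(\cdot+\tau)\le h_0(\cdot)+\omega(|\tau|)$ for every shift $\tau$; applying the monotonicity in (3) to the data $h_0$ and $h_0(\cdot+\tau)$ (whose flow is $\Psi(t,h_0)(\cdot+\tau)$ by translation invariance of the obstacle problem in $x'$) yields $\Psi(t,h_0)(\cdot+\tau)\le\Psi(t,h_0)+\omega(|\tau|)$, and symmetrically, so $\omega$ is preserved; joint continuity in $(t,x')$ for uniformly continuous $h_0$ then follows by combining the spatial modulus with the time estimate from (4) (or, more simply, approximating $h_0$ in $L^\infty$ by smooth data, using the $L^\infty$ contraction in (3) and the modulus preservation to upgrade to uniform convergence of $h(t)$, hence continuity of the limit).

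The main obstacle, I expect, is establishing the $L^1$/$L^p$ stability estimate at the level of generality needed here — i.e. Theorem~\ref{thm:mainobstacle}, part~\ref{item:stability} — for the obstacle problem in the \emph{unbounded} strip, and then transferring it cleanly to a statement about $f$. The difficulty is threefold: (a) the obstacle problem lives on all of $\xR^N$ with only an asymptotic boundary condition at $x_N\to-\infty$, so one cannot directly integrate by parts against test functions without controlling behavior at infinity, which is where the classification of nonnegative harmonic functions below a Lipschitz graph and the boundary Harnack inequality (announced in \S1.2) enter; (b) $u(t)$ is merely $C^{1,1}$ and $\Omega(t)$ may have a wild boundary, so the identity $\{u(t)>0\}=\Omega(t)$ and the co-area computation relating $f_h-f_g$ to $u_h-u_g$ must be justified via the co-area/Caccioppoli/Moser machinery of \S\ref{S:3.3} rather than naive calculus; and (c) the contraction must survive the bounded-domain approximation, so one needs the comparison constants to be uniform as the truncation radius tends to infinity. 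Everything else — the semi-flow identity, monotonicity, modulus preservation — is then essentially formal once the stability estimate and the comparison principle are in hand. The quantitative time-continuity estimate with its curious $2/5$ exponent is the one remaining genuinely computational point, but it is a direct consequence of Caffarelli's regularity theory applied near $\partial B_R$ and I would not expect it to pose conceptual trouble.
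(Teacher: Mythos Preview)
Your overall architecture is correct and matches the paper's: build $\Psi$ via the obstacle problem of Theorem~\ref{thm:mainobstacle}, read off $f(t)$ from the positivity set, and deduce everything from stability and comparison. Two of your attributions, however, are wrong in ways that would leave actual gaps.

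\textbf{The $L^p$ contraction.} The stability estimate in Theorem~\ref{thm:mainobstacle} is a statement about \emph{measures} of symmetric differences of the sets $\Omega^j_t$; integrating $\chi_{\{x_N<a\}}-\chi_{\{x_N<b\}}$ in $x_N$ recovers $a-b$ in $L^1_{x'}$ only. Your claim that this map is ``an $L^p_{x'}$ isometry'' is false for $p>1$, so the $L^p$ contraction does not follow directly. The paper's device (Corollary~\ref{cor:graph}) is to exploit that $f^2+s$ is again a solution for any $s\in\xR$, so $\|(f^1(t)-f^2(t)-s)_+\|_{L^1}\le \|(f^1_0-f^2_0-s)_+\|_{L^1}$ for every $s$; integrating this family of $L^1$ inequalities against $p(p-1)s^{p-2}\,ds$ on $(0,\infty)$ yields $\|(f^1(t)-f^2(t))_+\|_{L^p}^p\le \|(f^1_0-f^2_0)_+\|_{L^p}^p$. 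This layer-cake/shift trick is the missing idea.

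\textbf{The $2/5$ exponent.} You attribute the time-continuity bound to ``Caffarelli-type nondegeneracy/$C^{1,1}$ estimates'' and later say it is ``a direct consequence of Caffarelli's regularity theory applied near $\partial B_R$.'' It is not: the paper's argument (Step~5 in the proof of Theorem~\ref{thm:mainobstacle}) is completely elementary and uses no free-boundary regularity. One tests $\Delta v(t)=\chi_{\Omega_t\setminus\Omega_0}$ against a horizontal cutoff $\eta(x')$ supported in $B_{R+\rho}$, applies the divergence theorem on a box of depth $r$, and controls the two error terms using only the harmonic decay $|\nabla v(x)+te_N|\lesssim Ct/(-x_N-C)$ (which is a Poisson-kernel estimate, \eqref{eq:nablapoisson}) and the crude bound $\|\Delta_{x'}\eta\|_{L^\infty}\lesssim\rho^{-2}$. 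The exponent $2/5$ then drops out of optimizing $r\sim C^{3/5}R^{2/5}$ and $\rho\sim R^{1/3}r^{2/3}$. Your proposed mechanism would not produce this estimate, and invoking Caffarelli here is both unnecessary and circular (his theory requires a setting you have not yet established for general l.s.c.\ data).

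A smaller point: the uniqueness of the obstacle problem in the unbounded strip is not handled via boundary Harnack as you suggest, but via the Moser/Caccioppoli estimates of Lemma~\ref{thm:thm3} applied to the difference of two solutions; the boundary Harnack inequality enters only later, in Section~\ref{sec:bh}, for the smooth-data theory.
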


\subsection{General fluid domains}
\label{S:2.4}

A key aspect of this obstacle problem formulation is that it will allow us to
consider general initial domains.
We will see that it suffices to assume that initially the fluid domain is an open and connected set with its boundary contained
within a strip.

\begin{notation}
We denote by $\mathcal{U}$ the set of all 
open connected 
sets $\mathcal{O}\subset \xR^N$ 
for which there exists a constant $C>0$ such that
\be\label{C}
\{ x_N < -C \} \subset \mathcal{O} \subset \{ x_N < C\}.
\ee
We say that $\mathcal{O}\in\mathcal{U}$ is a subgraph it there exists 
a function $f:\xR^{N-1} \to \xR$ so that 
\[ \mathcal{O} = \{ x=(x',x_N)\in \xR^{N-1}\times \xR : x_N < f(x') \}.
\]
\end{notation}

The following theorem states that there exists a 
semi-flow defined on $\mathcal{U}$ which maps the initial fluid domain to its value at a given time. Part of the statements will be given using the change of variables, called free-falling coordinates, which amounts to work with the sets $\Omega_t$ given by \e{def:Omega}. 

\begin{theorem}\label{thm:mainobstacleintro} 
Let $d=N-1\ge 1$. There exists a mapping
\[ \Phi:  [0,\infty) \times  \mathcal{U} \to \mathcal{U} \]
with the following properties: 
\begin{enumerate} 
\item It is a semi-flow, i.e.\ for all $s,t  \ge 0 $,
\[  \Phi(t, \Phi(s,\mathcal{O})) = \Phi(t+s, \mathcal{O})\]
and 
\[ \Phi(0, \mathcal{O}) =\mathcal{O}. \]
\item It generalizes the notion of smooth solutions: for any 
$\mathcal{O}_0\in \mathcal{U}$ of the form $\mathcal{O}_0=\{x_N<h_0(x')\}$ 
with $h_0\in \mathcal{B}$, there holds 
$\Phi(t,\mathcal{O}_0)=\{x_N<h(t,x')\}$ 
where $h$ is given by Theorem~\ref{T:B}.
\item It is a monotone contraction in the following sense
(here $\Delta $ denotes the symmetric difference and $\la\cdot\ra$ the Lebesgue measure): 
for all $\mathcal{O}_1$ and $\mathcal{O}_2$ in $\mathcal{U}$, we have\begin{equation}
|\Phi(t,\mathcal{O}_2) \Delta \Phi(t,\mathcal{O}_1)| \le |\mathcal{O}_2 \Delta \mathcal{O}_1|
\end{equation}
and
\[
\mathcal{O}_1 \subset \mathcal{O}_2 \Longrightarrow \Phi(t,\mathcal{O}_1)
\subset \Phi(t,\mathcal{O}_2).
\]
(Actually, we prove a stronger estimate, see Theorem~\ref{thm:mainobstacle}.)

\item Let $\Omega_t=\Phi(t,\mathcal{O})+te_N$. The semiflow is monotone in the sense that $\Omega_s \subset \Omega_t$ if $s\le t$ and 
it is continuous with respect to time in the sense that 
\[   \Big|\big| \big(\Omega_t\backslash \Omega_s \big) \cap \big(B_R^{\xR^{d}}(x') \times \xR\big)\big| - (t-s) |B_R^{\xR^d}(x')|    \Big|  \le c(N)|t-s|C^{\frac25}R^{d-\frac25},
\] 
for any ball $B^{\xR^{d}}_R(x')\subset \xR^d$ of radius $R>C$ and center $x'$.

\item (Eventual subgraph) Outside the initial strip $\xR^d\times [-C,C]$, the set $\Omega_t$ is the subgraph of a lower semicontinuous function. More precisely: if $t>0$ and $ (x',C) \in \Omega_t$ then there exist $r>0$ and a lower semicontinuous function
$$
f\colon B_r^{\xR^{d}}(x') \to (C,\infty)
$$
so that 
\[ \Omega_t \cap (B_r(x')\times \xR) = \{  y: y' \in B_r(x'), y_N < f(y')\}.  \]
In particular, for $t\ge 2C$, $\Omega_t$ (and hence $\Phi(t,\mathcal{O})$) is a subgraph.

\item  (Eventual regularity) \label{part:eventual}
There exists $ \kappa>0$ such that if
$\mathcal{O}\in\mathcal{U}$ satisfies \e{C}, then, for all $t\ge C \kappa$, $\Phi(t,\mathcal{O})$ 
is a subgraph of an analytic function. 
\end{enumerate}

\end{theorem}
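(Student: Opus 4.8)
The plan is to construct $\Phi$ by lifting the analysis of the obstacle problem (Theorem~\ref{thm:mainobstacle}) from the graph setting back to the level of fluid domains, and then to read off each of the six properties from the corresponding property of the elliptic problem. I would proceed in the following order. First, reduce everything to the Baiocchi--Duvaut picture: given $\mathcal{O}\in\mathcal{U}$ satisfying \e{C}, set $\Omega_0=\mathcal{O}$ and solve, for each $t>0$, the obstacle problem described in \S\ref{S:I1.1} with the prescribed Laplacian ($0$ in $\Omega_0\cup(\xR^N\setminus\Omega_t)$, $1$ in $\Omega_t\setminus\Omega_0$), vanishing gradient on $\partial\Omega_t$, and the asymptotic condition $\nabla u(t,x+se_N)\to -te_N$ as $s\to-\infty$; here $\Omega_t=\{u(t)>0\}$. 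The existence, uniqueness and regularity of $u(t)$ are exactly the content of the study of the obstacle problem in an unbounded domain, obtained by the approximation-by-bounded-domains scheme already described, using comparison, Moser iteration and Caccioppoli for uniqueness, and the boundary Harnack inequality together with the classification of nonnegative harmonic functions below a Lipschitz graph. One then \emph{defines} $\Phi(t,\mathcal{O})\defn\Omega_t - te_N$, which lands in $\mathcal{U}$ because the stability estimate controls how far $\Omega_t$ can spread in finite time (it remains between two horizontal hyperplanes whose width is controlled by $C+t$), and connectedness of $\Omega_t$ follows from that of $\Omega_0$ since $\Omega_t\supset\Omega_0$ and $\{u(t)>0\}$ cannot develop a separate component without violating the maximum principle / the asymptotic condition.

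Next I would verify the algebraic properties. The semi-flow identity $\Phi(t,\Phi(s,\mathcal{O}))=\Phi(t+s,\mathcal{O})$ and $\Phi(0,\mathcal{O})=\mathcal{O}$ come from uniqueness for the obstacle problem: the function $v(\tau,x)\defn u(s+\tau,x)-u(s,x)$ solves the obstacle problem with initial domain $\Omega_s$ and parameter $\tau$, so by uniqueness it coincides with the solution built from $\Phi(s,\mathcal{O})$, which after undoing the $te_N$ shift is precisely the claimed identity; this is the standard Baiocchi time-additivity and it is routine once the problem is posed correctly. Property (2), compatibility with Theorem~\ref{T:B}, follows because when $\mathcal{O}_0=\{x_N<h_0(x')\}$ the obstacle problem we solve is literally the one used to define $\Psi$, so the two constructions agree by uniqueness. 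Properties (3) and (4) are direct transcriptions: the $L^1$-type stability estimate of Theorem~\ref{thm:mainobstacle}\,\ref{item:stability} applied to $u_1(t)$ and $u_2(t)$ gives $|\Omega_t^{(2)}\Delta\Omega_t^{(1)}|\le|\Omega_0^{(2)}\Delta\Omega_0^{(1)}|$, hence the symmetric-difference contraction after the common shift; comparison for the obstacle problem gives monotonicity $\mathcal{O}_1\subset\mathcal{O}_2\Rightarrow\Phi(t,\mathcal{O}_1)\subset\Phi(t,\mathcal{O}_2)$; monotonicity in time, $\Omega_s\subset\Omega_t$ for $s\le t$, is $\partial_t u\ge 0$, i.e.\ $p\ge 0$, which is the maximum principle for the pressure; and the quantitative time-continuity estimate with the $R^{d-2/5}$ error is obtained by integrating $\Delta u$ over $B_R(x')\times\xR$ (the bulk term gives exactly $(t-s)|B_R^{\xR^d}(x')|$) and estimating the boundary flux through the lateral sides of the cylinder using the $C^{1,\alpha}$/flatness control of $\partial\Omega$ away from the strip — this is where the curious exponent $2/5$ enters, inherited verbatim from the obstacle-problem estimate.

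For property (5), eventual subgraph, the point is that once $(x',C)\in\Omega_t$ one is at a point that has just been ``swept'' by the fluid, where $\Delta u=1$; there Caffarelli's regularity theory for the obstacle problem applies and gives that near $(x',C)$ the free boundary $\partial\Omega_t$ is, say, $C^{1,\alpha}$ and in particular locally a graph over $\xR^d$ in the $x_N$-direction, which is the local lower-semicontinuous-subgraph statement; the global consequence for $t\ge 2C$ is that by monotonicity in time the slab $\{-C<x_N<C\}$ has been entirely swept by time $2C$ (since $f(t)\ge f(0)+t$, crudely, pushes the interface above height $C$), so $\partial\Omega_t\subset\{x_N\ge C\}$ and the local graph patches glue to a global subgraph. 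For property (6), eventual analytic regularity, one combines the previous step with the higher regularity in Caffarelli's theory: once the free boundary is $C^{1,\alpha}$ and we are in the region $\{\Delta u=1\}$ with smooth data, bootstrapping (Schauder, then the standard analytic-regularity argument for the obstacle problem with analytic right-hand side) upgrades $\partial\Omega_t$ to analytic; the geometric constant $\kappa$ is precisely the ratio that measures how long it takes, in units of the strip width $C$, for the full free boundary to enter the good ``just-swept'' region in which Caffarelli's non-degeneracy and thickness hypotheses hold uniformly.

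The main obstacle, I expect, is the global-in-space control needed for property (6): Caffarelli's results are local, so to conclude that \emph{all} of $\partial\Omega_t$ is analytic (not merely that it is analytic near each point) for a single fixed $t\ge\kappa C$, one needs the nondegeneracy and the density/thickness of the contact set near the free boundary to hold with \emph{uniform} constants along the whole interface, uniformly in $x'\in\xR^d$. This uniformity has to be extracted from the asymptotic normalization $\nabla u(t,x+se_N)\to -te_N$ together with the stability estimate, which together pin down the ``slope at infinity'' and prevent the geometry from degenerating as $|x'|\to\infty$; making this quantitative, and identifying the resulting threshold as a clean geometric multiple $\kappa C$ of the strip width (with the conjecture $\kappa=2$), is the delicate part. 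Everything else is a fairly mechanical transfer of the already-established obstacle-problem theorems through the change of variables.
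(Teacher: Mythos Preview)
Your overall architecture is right and matches the paper: define $\Phi$ through Theorem~\ref{thm:mainobstacle}, read off (1)--(4) from the corresponding items there, and deduce (6) from Caffarelli's regularity (Proposition~\ref{thm:6Caffarelli}) plus Kinderlehrer--Nirenberg analyticity, with the uniformity coming from the sandwich $\{x_N<-C+t\}\subset\Omega_t\subset\{x_N<C+t\}$ after rescaling. Your sketch of the time-continuity estimate (divergence theorem against a cutoff, then optimize) is also what the paper does.

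There is, however, a genuine gap in your treatment of property~(5). You propose to get the local subgraph structure near $(x',C)\in\Omega_t$ from Caffarelli's regularity, arguing that the free boundary is $C^{1,\alpha}$ there and hence a graph. This does not work as stated: Caffarelli's result (Proposition~\ref{thm:6Caffarelli}) is \emph{conditional} on the contact set not being contained in a thin strip near the free boundary point, and you have not verified that hypothesis. In fact the logical order is the reverse of what you suggest: the paper first proves (5) by an independent argument, and only afterwards exploits the resulting flatness (via the sandwich bounds) to feed Caffarelli's thickness hypothesis and obtain (6). Moreover, the statement of (5) allows $f$ to be merely lower semicontinuous, so a proof via $C^{1,\alpha}$ regularity would be proving too much at this stage.

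The correct tool for (5) is the \emph{moving plane method}: one shows directly that for every $x'$ the map $(C,\infty)\ni x_N\mapsto v(t,x',x_N)$ is monotonically decreasing. Fix $\kappa>C$, reflect $v$ across $\{x_N=\kappa\}$ to get $u(x',x_N)=v(x',2\kappa-x_N)$, and set $w=u-v$ on the half-space $\Pi_\kappa=\{x_N\le\kappa\}$. Because $A=\xR^N\setminus\Omega_0\supset\{x_N\ge C\}$, one has $\chi_A(x',2\kappa-x_N)=1$ on $\Pi_\kappa$, so $\Delta u=\chi_{\{u>0\}}$ there, and $(\chi_{\{u>0\}}-\chi_{A\cap\{v>0\}})w_+\ge 0$. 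A Caccioppoli inequality on half-balls combined with a Poincar\'e inequality (using that $w_+$ vanishes on $\{x_N=\kappa\}$ and is supported in a strip of width $\le\kappa$) gives $\lVert w_+\rVert_{L^2(B_R^\kappa)}\le (K/R)\lVert w_+\rVert_{L^2(B_{2R}^\kappa)}$, which iterated against the trivial bound $\lVert w_+\rVert_{L^2(B_R^\kappa)}\lesssim R^{N/2}$ forces $w_+\equiv 0$. This monotonicity, together with $\{x_N\le C\}\subset\Omega_t$ for $t\ge 2C$, gives the global subgraph conclusion with no regularity input at all.
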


\subsection{The waiting time}
Let us emphasize the connection between the scaling of the equation 
and Lipschitz regularity. Specifically, if $h=h(t,x')$ solves the Hele-Shaw 
equation, then a rescaled function 
defined as $h_\lambda(t,x')=\lambda^{-1}h(\lambda t,\lambda x')$ 
also satisfies the same equation, and hence the scaling invariance corresponds to Lipschitz regularity. The preceding theorems apply in two 
distinct 
scenarios: firstly, the sub-critical case, where, in terms of Sobolev embedding, 
the initial data exhibits at least $C^{1,\alpha}$ regularity for some $\alpha>0$. In this case, we observed that the solution immediately becomes smooth. In addition, we have presented a pair of results that allow us to consider initial data that are considerably less regular than Lipschitz. 

The final result presented in this section lies at the interface between these various results. It pertains specifically to Lipschitz initial data and is localized in nature. This result postulates that a Lipschitz singularity, reminiscent of the apex of a cone, will either undergo immediate smoothing or persist for a positive duration. This dichotomy depends on the angle of the cone.

Consider an angle
$\alpha\in (0,2\pi)$ and denote by
$C_\alpha$ the cone with aperture $\alpha$ and apex 
at the origin, 
that is 
\[
C_\alpha = \{ x=(x',x_N) \in \xR^{N-1}\times \xR : |x'|<-\tan(\alpha/2) x_N  \}.
\]

\begin{theorem}\label{T:di}
Let $d=N-1$ and $ \mathcal{O}_0 \in \mathcal{U}$ 
as above and introduce the sets
$$
\mathcal{O}_t=\Phi(t,\mathcal{O}_0)
\quad \text{and}\quad\Omega_t=\mathcal{O}(t)+te_N
$$
and consider a point $x_0 \in \partial \mathcal{O}_0=\partial \Omega_0$. The following results hold:
\begin{enumerate}
\item Suppose there exist $r>0$ and a round cone $C_\alpha$ with angle of opening 
$\alpha <2\arctan{\sqrt{d}} $ so that 
\[
B_r(x_0) \cap \mathcal{O}_0 \subset B_r(x_0) \cap (C_\alpha+ x_0).
\]
Then there exist $T>0$ and $\rho >0$ such that 
$ x_0 \in \partial \Omega_t $ for $t\le T$ and
\[ \Omega_t \cap B_\rho( x_0)\subset   B_\rho(x_0)\cap  (C_{\arctan \sqrt{d}}+x_0). 
\]
\item On the other hand, if there exists $r>0$ and a round cone $C_\alpha$ of angle 
$\alpha > 2\arctan \sqrt{d} $ so that 
\[ B_r(x_0) \cap (C_\alpha+x_0) \subset B_r(x_0) \cap \mathcal{O}_0, \]
then 
\[ x_0 \in \Omega_t  \]
for all $t>0$.
\end{enumerate}
Moreover if $\mathcal{O}_0$ is a subgraph of a bounded lower semicontinuous function then $\mathcal{O}_t$ is again a subgraph of a bounded lower semicontinuous functions which is analytic near $x_0'$
in the second case for all $t>0$.
\end{theorem}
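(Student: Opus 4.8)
The plan is to analyze the obstacle problem formulation near the point $x_0$, reducing the cone dichotomy to the behaviour of the Baiocchi--Duvaut function $u(t,\cdot)$ near $x_0$. Recall that $\Omega_t=\{u(t,\cdot)>0\}$, that $u(t)\ge 0$, that $\Delta u(t)=\chi_{\Omega_t\setminus\Omega_0}$, and that $\nabla u(t)=0$ on $\partial\Omega_t$. The two assertions are therefore equivalent to the statements: \emph{(i)} if $\mathcal O_0$ is trapped in a sufficiently narrow cone near $x_0$ then $u(t,x_0)=0$ for $t$ small (a waiting-time statement), together with a quantitative cone-confinement of $\Omega_t$; \emph{(ii)} if $\mathcal O_0$ contains a sufficiently wide cone near $x_0$ then $u(t,x_0)>0$ for every $t>0$ (instantaneous filling). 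For part (ii) I would build an explicit subsolution: work in free-falling coordinates, translate $x_0$ to the origin, and compare $u(t)$ from below with a function of the form $t\,w(x)$ where $w\ge 0$ is a fixed homogeneous degree-$2$ function, harmonic outside the cone and with $\Delta w=1$ inside a slightly narrower cone $C_\beta$, $\arctan\sqrt d<\beta<\alpha$, chosen so that $w>0$ on the axis. Such a $w$ exists precisely because the first Dirichlet eigenvalue of the spherical cap of aperture $\beta$ drops below the critical value $N$ (equivalently the homogeneity exponent of the corresponding cone-harmonic crosses $2$) exactly at the threshold angle $2\arctan\sqrt d$; this is the classical link between the critical cone angle and the exponent $2$ in the obstacle problem. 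The maximum principle on $B_r(x_0)$, using $\mathcal O_0\supset C_\alpha+x_0$ to ensure $u(t)>0$ near the lateral boundary and $tw=0$ there for $t$ small, then gives $u(t)\ge tw$ near the origin, hence $x_0\in\Omega_t$ for all $t>0$.

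For part (i), the strategy is dual: construct a supersolution trapping $u(t)$ from above and confining $\{u(t)>0\}$ inside the slightly wider cone $C_{\arctan\sqrt d}+x_0$. Here I would use the fact, coming from the classification of nonnegative harmonic functions on Lipschitz subgraphs vanishing on the boundary (announced in the abstract and used elsewhere in the paper) together with Caffarelli-type regularity for the obstacle problem, that blow-ups at a free boundary point have homogeneity at least $2$; combined with the hypothesis $\alpha<2\arctan\sqrt d$ the confining cone $C_{\arctan\sqrt d}$ has its critical exponent strictly above the homogeneity dictated by the initial narrow cone, so a barrier of the form $Ct\,\tilde w$ with $\tilde w$ supported in $C_{\arctan\sqrt d}$, superharmonic outside and with $\Delta\tilde w\le 1$ inside, dominates $u(t)$ on a small ball for $t\le T$. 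Monotonicity of $\Omega_t$ in time (Theorem~\ref{thm:mainobstacleintro}, part (4)) together with $u(0)=0$ near $x_0$ then yields $x_0\in\partial\Omega_t$ for $t\le T$ and the claimed inclusion $\Omega_t\cap B_\rho(x_0)\subset B_\rho(x_0)\cap(C_{\arctan\sqrt d}+x_0)$.

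The final sentence, that in case (ii) the subgraph persists and becomes analytic near $x_0'$, follows by combining the eventual-subgraph mechanism with local regularity: once $x_0$ is strictly interior to $\Omega_t$ the free surface near $x_0'$ is, after the free-falling change of variables, the free boundary of the obstacle problem away from the obstacle-transition region, where $\Delta u(t)=1$; Caffarelli's regularity theory makes it $C^{1,\alpha}$, the general Rayleigh--Taylor result in $C^{1,\alpha}$ domains (also announced in the abstract) gives the sign condition, and the standard bootstrap for the Hele-Shaw/obstacle system upgrades this to analyticity of the graph function near $x_0'$; preservation of the subgraph property is exactly part (2) of Theorem~\ref{thm:mainobstacleintro} combined with part (5).

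I expect the main obstacle to be the sharp matching of the threshold angles. The delicate point is that the barriers must be built with \emph{exactly} the critical homogeneity: a homogeneous degree-$2$ function that is genuinely superharmonic (resp.\ subharmonic with the right sign of $\Delta$) in a cone of the stated aperture and vanishes to the right order on its lateral boundary exists only when the cone aperture is on the correct side of $2\arctan\sqrt d$, and one must verify that the gap between $\alpha$ and the threshold (resp.\ between $\arctan\sqrt d$ and $\alpha$) is enough to absorb the lower-order error terms coming from the curvature of $\partial\mathcal O_0$ inside $B_r(x_0)$ and from the inhomogeneity $\chi_{\Omega_t\setminus\Omega_0}$ rather than $\chi_{\{x_N<0\}}$. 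This amounts to an explicit computation of the first Dirichlet eigenvalue of spherical caps and a careful choice of the intermediate aperture; everything else is a comparison-principle argument of the type already developed in the paper.
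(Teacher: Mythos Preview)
Your overall strategy---building cone-based barriers and identifying the threshold with the crossing $\lambda(\alpha)=2$ of the homogeneity exponent---is correct and matches the paper's. The analyticity paragraph is also essentially right: the paper packages exactly that argument as Theorem~\ref{thm:analytic} (Caffarelli's $C^{1,\alpha}$ regularity followed by Kinderlehrer--Nirenberg).

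However, the specific barriers you propose for both parts do not exist as described. For part~(ii) you want a nonnegative homogeneous degree-$2$ function $w$ with $\Delta w=1$ in a cone $C_\beta$ and $w=|\nabla w|=0$ on $\partial C_\beta$ (so that $tw$ is an admissible subsolution to the obstacle problem). But such a $w$ is itself a homogeneous degree-$2$ global solution of the obstacle problem, and by Caffarelli's classification these are \emph{only} the half-space solutions $\tfrac12(x\cdot e)_+^2$ and the nonnegative quadratic polynomials---no genuine cone of aperture $\beta\neq\pi$ arises. Equivalently, writing $w=r^2\phi(\sigma)$ gives $\Delta_{\mathbb S^{d}}\phi+2N\phi=1$ on the cap with Dirichlet data; when $\beta>\alpha_2$ the first Dirichlet eigenvalue of the cap is \emph{below} $2N$, so the operator is not coercive and you cannot produce a positive $\phi$ this way. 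The same obstruction kills the dual barrier $Ct\,\tilde w$ you suggest for part~(i). So ``linear in $t$, degree-$2$ in $x$'' is exactly the scaling that is forbidden by the classification, and you cannot absorb this into the choice of intermediate aperture.

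The paper sidesteps this by working with the \emph{pressure} $p$ rather than with $u$, and by allowing the homogeneity of the barrier to be $\lambda(\alpha)\neq 2$. For the waiting time it takes the supersolution $p^+(t,x)=\kappa\,u_{\alpha+\kappa t}(x)$, i.e.\ the cone-harmonic function on a cone whose aperture \emph{opens} in time; the supersolution condition on $\partial\{p^+>0\}$ reduces to $|x|^{\lambda(\alpha+\kappa t)-1}\le |x|$, which holds on $B_1$ precisely while $\lambda\ge 2$, i.e.\ while $\alpha+\kappa t\le \alpha_2$. For immediate movement it takes the subsolution $p_-(t,x)=u_\alpha(x',x_N-\kappa t)$ on a regularised cone (a hyperboloid sheet), translated upward; here $\lambda(\alpha)<2$ makes the Hopf-type lower bound on $|\nabla u_\alpha|$ dominate the constant normal speed $\kappa$. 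These pressure barriers are then converted into obstacle-problem sub/supersolutions via the coarea-formula machinery of Proposition~\ref{lem:subsup}, and the comparison is closed by Lemma~\ref{lem:subobstacle}. The resulting barriers for $u$ are \emph{not} of the form $t\cdot w(x)$; the time-dependence is genuinely nonlinear, which is why your ansatz fails. If you want to work directly at the level of $u$, you would need a self-similar ansatz of the form $t^{\lambda/(2-\lambda)}W(x/t^{1/(2-\lambda)})$ matching the scaling in \eqref{eq:move}, not a linear-in-$t$ one.
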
 
\begin{remark}\label{R:2.6}
$i)$ In \cite{MR1363758}, King, Lacey and V\'azquez provide an interesting and more detailed study of selfsimilar solutions in $\xR^2$ which by comparison arguments implies the waiting time results of this paper for $N=2$. Concerning the immediate smoothing, let us mention that 
Choi, Jerison and Kim \cite{MR2306045} proved 
a deeper local immediate regularization
theorem under a less precise condition on the
angle. These previous results were later generalized by Choi and Kim~\cite{zbMATH05034334}, who obtain a dichotomy similar to Theorem~\ref{T:di} for the Hele-Shaw problem in a bounded domain. See also the recent paper by Agrawal, Patel and Wu \cite{MR4520423} for a different treatment of the two-dimensional problem without use of the maximum principle at the prize of much stronger regularity assumption on the initial data
(away from the singularity) and the benefit of techniques applicable in situations where a maximum principle is not available. 

In this paper, we use different tools and we will provide fairly direct and elementary proofs based on simple explicit sub- and supersolutions and comparison arguments.

$ii)$ Observe that the first part seems to contradict Theorem 1.2 of Figalli, Ros-Oton and Serra \cite{MR4179834}. However their formulation of the Hele-Shaw problem is slightly different and applies away from the initial domain. 
\end{remark}

\subsection{Comparison with other parabolic type equations}\label{S:comparison}
In recent decades, significant progress has been made in understanding the Cauchy
problem for fractional parabolic equations. While we have already discussed various results related to the 
Hele-Shaw equation, in this section, we aim to shed light on some key aspects of 
the Hele-Shaw flow by comparing it to other equations.

To begin, it is insightful to compare these results with those for the
Muskat equation, which corresponds to a two-phase version of the Hele-Shaw equation. 
C\'ordoba and Gancedo~\cite{CG-CMP} introduced a beautiful concise formulation
of the Muskat equation:
\begin{equation}\label{Muskat}
\partial_tf=\frac{1}{\pi}\int_\mathbb{R}\frac{\partial_x\Delta_\alpha f}{1+\left(\Delta_\alpha f\right)^2}\mathrm{d}\alpha \quad\text{where} \quad\Delta_\alpha f(t,x)=\frac{f(t,x)-f(t,x-\alpha)}{\alpha}\cdot
\end{equation}
Using this formulation, 
Cameron~\cite{Cameron} established the existence of a modulus of continuity for 
the derivative, leading to a global existence result with the sole requirement that the product of 
the maximal and minimal slopes remains bounded by 1. More recently, Abedin and Schwab also obtained a modulus
of continuity in \cite{Abedin-Schwab-2020} through the Krylov-Safonov estimates. 
Furthermore, C\'ordoba and Lazar, using a novel formulation of the Muskat equation involving 
oscillatory integrals \cite{Cordoba-Lazar-H3/2}, demonstrated that the Muskat equation 
is globally well-posed in time, 
with conditions that the initial data is suitably smooth and 
that the $\dot H^{3/2}(\mathbb{R})$-norm is sufficiently 
small (extended to the 3D case in \cite{Gancedo-Lazar-H2}). 
Additionally, \cite{AN3} established that the Cauchy problem is well-posed locally in time
on the endpoint Sobolev space $H^{3/2}(\mathbb{R})$, which is optimal with respect to the scaling of the equation. Indeed, 
there are blow-up results for certain large enough data 
by Castro, C\'ordoba, Fefferman, Gancedo, and L\'opez-Fern\'andez \cite{CCFG-ARMA-2013,CCFG-ARMA-2016,CCFGLF-Annals-2012} (in contrast to what we prove in this paper). Eventually, Garc\'{\i}a-Ju\'arez, G\'omez-Serrano, Haziot 
and Pausader~\cite{GJGSHP2023desingularization} proved recently a result comparable to instantaneous smoothing for the Muskat equation~\e{Muskat}, 
for initial data which contain a finite set of small corners, providing 
a precise description of the solution.

It is worth noting that recent research has focused on the existence and potential 
non-uniqueness of weak solutions, especially in the unstable regime where the heavier 
phase is situated above the lighter one. Several papers have addressed this issue \cite{Brenier2009,cordoba2011lack,szekelyhidi2012relaxation,castro2016mixing,forster2018piecewise,noisette2020mixing}.

Numerous papers have also explored critical problems in other parabolic 
equations. Consider, for example, the equation:
\begin{align*}
\partial_t\theta+u\cdot\nabla \theta+(-\Delta)^{\frac{\alpha}{2}}\theta=0\quad \text{with}
\quad u=\nabla^\perp(-\Delta)^{-\frac{1}{2}}\theta.
\end{align*}
This equation arises as a dissipative version of the surface quasi-geostrophic 
equation introduced by Constantin-Majda-Tabak \cite{CMT-1994}. In the 
critical case, where $\alpha=1$, global well-posedness in time has been 
established by Kiselev-Nazarov-Volberg \cite{KNV-2007}, 
Caffarelli-Vasseur \cite{Caffarelli-Vasseur-AoM}, and Constantin-Vicol \cite{CV-2012} (see also \cite{KN-2009,Silvestre-2012,VaVi}). 
Lastly, we would like to mention papers by Vazquez \cite{Vazquez-DCDS}, Granero-Belinch\'on, 
and Scrobogna \cite{Granero-Scrobogna}, which discuss related global existence results 
for different fractional parabolic equations, as well as Cobb~\cite{cobb2023wellposedness} 
and Dalibard, Guillod and Leblond~\cite{dalibard2023longtime} for the study of the Cauchy problem for fractional Stokes-transport systems.

\section{The Hele-Shaw equation as an obstacle problem}\label{S:2}

In this Section we will rigorously establish the derivation of the
obstacle problem formulation and recall various known-results.

\subsection{Regular solutions}

We begin by recalling a well-posedness result along with some bounds that are valid
for sufficiently smooth solutions of the Hele-Shaw equation.

\begin{theorem}[from \cite{AMS,MR4090462}]\label{T:Cauchy}
Consider an integer 
$d\ge 1$ and a real number $s>d/2+1$. 
For any initial data $h_0$ in $H^s(\xR^d)$, 
there exists 
a time $T>0$ depending only on $d$, $s$ and $\lA h_0\rA_{H^s}$ 
such that the Cauchy problem
\begin{equation}\label{Hele-Shaw100}
\partial_{t}h+G(h)h=0,
\quad h\arrowvert_{t=0}=h_0,
\end{equation}
has a unique solution 
\be\label{n34}
h\in C^0([0,T];H^s(\xR^d))\cap C^\infty((0,T]\times \xR^d).
\ee
Moreover, this solution satisfies the pointwise bound:
\begin{equation}\label{n30}
G(h)h<1,
\end{equation}
together with the two following maximum principles:
\begin{align}
&\sup_{x'\in \xR^d}\la h(t,x')\ra\le \sup_{x'\in \xR^d}\la h(0,x')\ra,\label{n31}\\
&\sup_{x'\in \xR^d}\la\nabla_{x'}h(t,x')\ra\le \sup_{x'\in \xR^d}\la \nabla_{x'}h(0,x')\ra.\label{n32}
\end{align}
\end{theorem}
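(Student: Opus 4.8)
This result is established in \cite{AMS,Nguyen-Pausader}; here is an outline of the argument.

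\emph{Step 1: local well-posedness in $H^s$.} The key structural observation is that \e{n7} is a quasilinear parabolic equation of order one. The Dirichlet-to-Neumann operator $G(h)$ is a nonnegative self-adjoint first-order elliptic pseudodifferential operator: its quadratic form is $\int_{\mathcal{O}}|\nabla\varphi|^2\dx\ge0$, with $\varphi$ the harmonic extension \e{defi:varphi}, and its principal symbol is $\lambda(x',\xi)=\sqrt{(1+|\nabla_{x'}h|^2)|\xi|^2-(\nabla_{x'}h\cdot\xi)^2}\gtrsim|\xi|$. Thus, to leading order, $-G(h)$ behaves like $-|D|$ and \e{n7} is a fractional heat equation perturbed by nonlinear lower-order terms. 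One would paralinearize, $G(h)h=T_\lambda h+R(h)$ with $R$ of lower order, symmetrize $T_\lambda$, and close an energy estimate in $H^s$: applying $\langle D\rangle^s$ and pairing with $\langle D\rangle^s h$, the principal contribution is coercive, bounded above by $-c\|h\|_{\dot H^{s+1/2}}^2$ modulo lower-order terms (G\aa rding's inequality), while the commutator and remainder terms are controlled by $C(\|h\|_{H^s})\|h\|_{H^s}^2$ through tame product and commutator estimates — this is precisely where the hypothesis $s>d/2+1$ (equivalently $H^s\hookrightarrow C^{1,\alpha}$, $\alpha=s-1-d/2>0$) is used. Combining this coercivity with a parabolic regularization (or a contraction in a norm carrying the $1/2$-derivative gain) yields a unique solution on some interval $[0,T]$ in $C^0([0,T];H^s)$. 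The smoothing $h\in C^\infty((0,T]\times\xR^d)$ then follows from the analytic-semigroup structure of $-G(h)$ by the standard parabolic bootstrap: spatial regularity is gained for $t>0$ and time regularity then follows from the equation, iterating with Sobolev embedding.

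\emph{Step 2: the pointwise bound \e{n30}.} Here one returns to the fluid picture. With $\phi=P+x_N$ the harmonic potential of \e{HS4}, one has $\phi|_{x_N=h}=h$, so by the definition of $G(h)$,
\[
G(h)h=\sqrt{1+|\nabla_{x'}h|^2}\,\partial_n\phi|_\Sigma=\sqrt{1+|\nabla_{x'}h|^2}\,\partial_nP|_\Sigma+\partial_nx_N|_\Sigma=\sqrt{1+|\nabla_{x'}h|^2}\,\partial_nP|_\Sigma+1,
\]
using $\partial_nx_N=n\cdot e_N=(1+|\nabla_{x'}h|^2)^{-1/2}$. Hence \e{n30} is equivalent to $\partial_nP<0$ on $\Sigma$ (equivalently, in free-falling coordinates $f=h+t$ the fluid region is strictly expanding, $\partial_tf>0$). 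Now $P$ is harmonic in $\mathcal{O}(t)$, vanishes on $\Sigma$, and $\nabla P\to-e_N$ as $x_N\to-\infty$ by Darcy's law \e{Darcy} together with $v\to0$; in particular $P\to+\infty$ at the bottom, so $P\ge0$ in $\mathcal{O}(t)$ by the maximum principle (a Phragm\'en--Lindel\"of argument on vertically truncated subdomains handles the unboundedness), and $P\not\equiv0$ (otherwise $v\equiv-e_N$, contradicting the decay), whence $P>0$ in the interior by the strong maximum principle. Hopf's lemma at a boundary point then gives $\partial_nP<0$ — the regularity $h(t)\in H^s\hookrightarrow C^{1,\alpha}$ supplies the needed interior-ball condition, and for $t>0$ the domain is smooth anyway — so \e{n30} follows.

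\emph{Step 3: the maximum principles \e{n31}--\e{n32}.} Both follow from the comparison principle for \e{Hele-Shaw100}: if $h_1,h_2$ are regular solutions with $h_1|_{t=0}\le h_2|_{t=0}$, then $h_1\le h_2$ at all later times; equivalently $t\mapsto\|h_1(t)-h_2(t)\|_{L^\infty}$ is nonincreasing. This can be proved directly on the Dirichlet-to-Neumann formulation and is in any case subsumed by the obstacle-problem comparison proved later (Theorem~\ref{thm:mainobstacle}). Granting it, \e{n31} is immediate since any constant $c$ is a stationary solution of \e{n7} ($G(c)c=0$ because the harmonic extension of a constant is that constant, so its normal derivative vanishes), so $\inf_{x'}h_0\le h(t,x')\le\sup_{x'}h_0$. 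For \e{n32} one uses the invariance of \e{n7} under horizontal translations: for $a\in\xR^d$ the function $x'\mapsto h(t,x'+a)$ is again a solution, so by the $L^\infty$-contraction
\[
\|h(t,\cdot+a)-h(t,\cdot)\|_{L^\infty}\le\|h_0(\cdot+a)-h_0(\cdot)\|_{L^\infty}\le|a|\,\|\nabla_{x'}h_0\|_{L^\infty}.
\]
Dividing by $|a|$ and letting $a\to0$ gives \e{n32}.

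\emph{Main obstacle.} The substantive point is the energy estimate of Step 1 — controlling at top order the shape-derivative and commutator contributions of $G(h)$, and checking that the parabolic coercivity survives the quasilinearization — which is precisely the analysis of \cite{AMS,Nguyen-Pausader}. The comparison principle underlying Step 3 is the other genuinely nontrivial ingredient; once these are in hand, Step 2 and the remaining deductions are soft.
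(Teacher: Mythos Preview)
Your outline is correct and matches the approach of the cited references; the paper itself does not give a proof of this theorem but states it as known from \cite{AMS,Nguyen-Pausader}, with Remark~\ref{R:4} pointing to Proposition~\ref{prop:taylor} for the quantitative version of \e{n30} and to \cite{Kim-ARMA2003,ChangLaraGuillenSchwab,AMS} for the maximum principles. One small slip in Step~2: in the displayed computation the intermediate expression should read $\sqrt{1+|\nabla_{x'}h|^2}\,\partial_nP|_\Sigma+\sqrt{1+|\nabla_{x'}h|^2}\,\partial_nx_N|_\Sigma$ (the factor $\sqrt{1+|\nabla_{x'}h|^2}$ distributes over both terms), after which $\sqrt{1+|\nabla_{x'}h|^2}\,\partial_nx_N=1$ gives your final line; your conclusion $G(h)h<1\Leftrightarrow\partial_nP<0$ is correct.
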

\begin{remark}\label{R:4}
$(i)$ Denote by $\phi$ the harmonic extension of $h$ which we assume 
to be of class $C^{1,\alpha}$, that is the solution to 
\e{n80}--\e{n82} with $\psi$ replaced by $h$. Then set $P(t,x)=\phi(t,x)-x_N$. 
Then we will prove in Proposition~\ref{prop:taylor} that
\be\label{n83}
P>0 \quad\text{in}\quad\mathcal{O}(t) 
\quad\text{and}\quad \lambda \ge -\partial_{x_N}P\ge \frac{1}{\lambda}\quad\text{on}\quad \partial\mathcal{O}(t),
\ee
for some constant $\lambda$ depending on $\lA h\rA_{C^{1,\alpha}}$. 
The latter inequality, related to the fact that the 
Taylor coefficient is positive, is a uniform version of the property \eqref{n30}. 
These results were already obtained for smoother interfaces 
(see~\cite{AMS,CCFG-ARMA-2013,CCFGLF-Annals-2012,CCFG-ARMA-2016,
Cheng-Belinchon-Shkoller-AdvMath,MR4090462}). 
In particular, for the solution $h$ given by the previous theorem, for all $0< t < T$, there holds 
\begin{equation}\label{boundedbelowlambda}
h(t,x') \ge h(0,x') + t -  \frac{t}\lambda.   \end{equation}

$(ii)$ We refer to~\cite{Kim-ARMA2003,ChangLaraGuillenSchwab,AMS} for a discussion 
of the maximum principles~\e{n31}--\e{n32} together with other 
inequalities. More generally, it is known, for smooth enough solutions, that whenever $\omega$ is a 
modulus of continuity for $h(0,\cdot)$, $\omega$ is also a modulus of
continuity for $h(t,\cdot)$, for any $t \in [0,T]$.
\end{remark}

\begin{definition}\label{D:1}
We say that $h$ is a regular solution to \e{Hele-Shaw100} defined on $[0,T]$ 
if $h$ satisfies the 
conclusions \e{n34}--\e{n32} 
of the above result.
\end{definition}

\subsection{Free falling coordinates}\label{S:2.3one}
In this section, as explained in the introduction (see~$\S\ref{S:I1.1}$), 
we will utilize a change of variables 
to remove gravity $g=1$ from the equation and move gravity  to the boundary 
condition at $x_N=- \infty$, see \eqref{eq:bcinfty} below. 

Consider a regular solution $h$ to \e{Hele-Shaw100}, defined over 
some time interval $[0,T]$. We start by performing a basic change 
of variables to simplify the problem, making it such that the fluid 
domain increases over time. Specifically, we 
introduce\footnote{Recall that we set $g=1$. Otherwise, we would consider $f(t,x')=h(t,x')+gt$.}:
$$
f(t,x')\defn h(t,x')+t,
$$ 
and then set
$$
\Omega(t)\defn \{ (x',x_N)\in\xR^{N-1}\times\xR \sep x_N<f(t,x')\}=\mathcal{O}(t)+t e_N,
$$
where $e_N$ is the unit vector with coordinates $(0,\ldots,0,1)$.

Then we have
\be\label{n30c}
G(f)f=G(h)h,
\ee
as can be verified by noting that the harmonic extension of 
$f(t,\cdot)$ in $\Omega(t)$ 
is simply given by $\varphi(t,x)=\phi(t,x',x_N-t)+t$ where $\phi(t,\cdot)$ 
is the harmonic extension of $h$ in 
$\mathcal{O}(t)$. We deduce that
\be\label{n36}
\partial_tf+G(f)f=1.
\ee
Now, it 
follows at once from \e{n36}, \e{n30} and \e{n30c}  
that (recall that we assume that $f \in C^{1,\alpha}$)
\be\label{n36.5}
\partial_t f > 0.
\ee
This implies that $\Omega(t_1)\subset\Omega(t_2)$ for $t_2>t_1$, which will play a 
crucial role (see Figure~\ref{Fig:shift}).

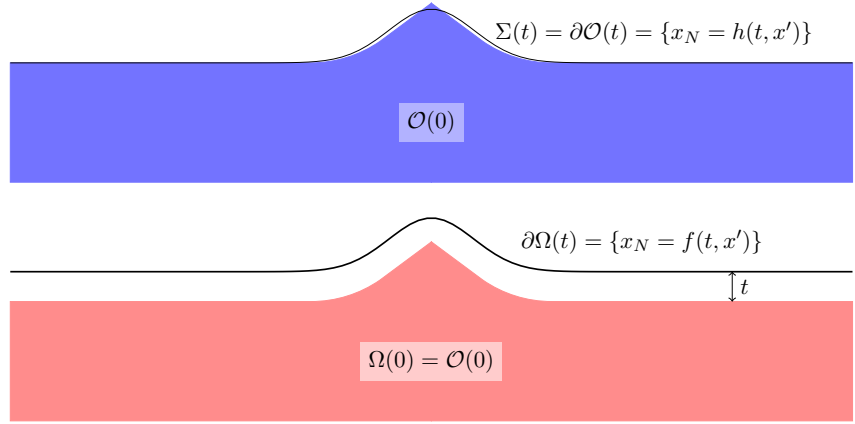
\begin{figure}[htb]
\centering
\resizebox{0.8\textwidth}{!}{%
\begin{tikzpicture}[samples=100]
\filldraw [color=blue!55!white]   
(-7,4) -- (-2,4) arc (-90:-55:2) -- (0,5) -- (0,2) -- (-7,2) -- cycle ; 
\begin{scope}[xscale=-1]
\filldraw [color=blue!55!white]   
(-7,4) -- (-2,4) arc (-90:-55:2) -- (0,5) -- (0,2) -- (-7,2) -- cycle ; 
\end{scope}
\draw [black]
plot [domain=-7:7] ({\x},{4+0.9*exp(-\x*\x)})  ;
\node at (0,3) [fill=blue!30!white] {{$\mathcal{O}(0)$}};
\node at (3.7,4.5)  {$\Sigma(t)=\partial\mathcal{O}(t)=\{x_N=h(t,x')\}$};
\filldraw [color=red!45!white]   
(-7,0) -- (-2,0) arc (-90:-55:2) -- (0,1) -- (0,-2) -- (-7,-2) -- cycle ; 
\begin{scope}[xscale=-1]
\filldraw [color=red!45!white]   
(-7,0) -- (-2,0) arc (-90:-55:2) -- (0,1) -- (0,-2) -- (-7,-2) -- cycle ; 
\end{scope}
\draw [draw=black,thick]
plot [domain=-7:7] ({\x},{0.5+0.9*exp(-\x*\x)})  ;
\node at (0,-1) [fill=red!20!white] {{$\Omega(0)=\mathcal{O}(0)$}};
\draw[<->,black] (5,0) -- (5,0.5) ;
\node [right]at (5,0.25)  {$t$};
\node at (3.5,1)  {$\partial\Omega(t)=\{x_N=f(t,x')\}$};
\end{tikzpicture}
}\caption{Illustrate the difference between $\mathcal{O}(t)$ and $\Omega(t)$. 
Here, initially, $\mathcal{O}(0)$ represents a domain characterized 
by a Lipschitz singularity. 
This singularity is immediately smoothed out, and one can visualize 
the mass spreading, as depicted in the upper illustration. In this 
scenario, 
$\mathcal{O}(t)$ does not contain $\mathcal{O}(0)$. However, by vertically shifting $\mathcal{O}(t)$lly by an amount of $t$, the resulting domain 
$\Omega(t)$ contains $\Omega(0)=\mathcal{O}(0)$ for all $t>0$.}\label{Fig:shift}
\end{figure}

Recall that $P$ denotes the pressure, that is the function 
$$
P(t,x)=\phi(t,x)-x_N
$$
where $\phi$ denotes the harmonic extension of $h$ (see~\e{HS4}). 
At a given time $t\in [0,T]$, we define 
\be\label{n36.7}
p(t,x)=P(t,x',x_N-t).
\ee
With these new unknowns, the problem becomes 
\begin{alignat}{3}
- \Delta p  &  = 0 
\qquad &&\text{ in } &&\Omega(t),\notag \\
\lim_{x_N\to -\infty} \partial_{x_N} p  & = -1 \qquad &&\text{ in } &&\xR^{N-1}, \label{eq:bcinfty}\\ 
 p& = 0 \qquad &&\text{ on } &&\partial\Omega(t),\notag\\
 \partial_t f &= -\nabla p \cdot\left( \begin{matrix} -\nabla_{x'} f \\ 1 \end{matrix} \right) 
 \qquad &&\text{ on } &&\partial \Omega(t).\label{kc}
\end{alignat}  
\begin{remark}\label{R:3.4}
The kinematic boundary condition~\e{kc} can be expressed in various equivalent forms. 
To illustrate this, we start by observing that, given that $p=0$ on $\partial\Omega(t)$, 
through direct calculations using the chain rule, we obtain:
\be\label{n84}
\partial_t p=(\partial_{x_N}p)\partial_t f \quad\text{and}\quad
\nabla_{x'}p=-(\partial_{x_N}p)\nabla_{x'}f \quad\text{on}\quad \partial \Omega(t).
\ee
Consequently, as in Kim and Jerison \cite{MR2203166}, we have
\[
p_t - |\nabla p |^2= 0 \quad \text{ on } \partial \Omega(t).
\]

Notice in addition that \e{n83} implies that $-\partial_{x_N}p>0$ on $\partial\Omega(t)$. 
By combining this with the second identity in \e{n84}, we get that
$$
\la \nabla p\ra=-\sqrt{1+|\nabla_{x'}f|^2}(\partial_{x_N}p)\quad \text{ on } \partial \Omega.
$$
Consequently, by the implicit function theorem $\frac{p_t}{|\nabla p|}$ 
is the velocity of the free boundary in the upper normal direction $V_n$, 
and, as in 
Meimarnov and Zaltzman \cite{MR1925260}, there holds
\[ |\nabla p| = V_n. \] 
\end{remark}

\subsection{Baiocchi-Duvaut transformation}\label{S:2.3} Using the free 
falling coordinates introduced in the previous section, 
we will now introduce a change of unknowns to rewrite the 
Hele-Shaw problem as an elliptic obstacle problem, 
following the works of 
Baiocchi~\cite{Baiocchi-1971,Baiocchi-1972,Baiocchi-1974}, 
Duvaut~\cite{Duvaut-1973}, Elliott and Janovsk\'y~\cite{MR611303}, and Gustafsson~\cite{Gustafsson2}.

More precisely, we
follow Elliott and Janovsk'y \cite{MR611303}
and rewrite the problem as an obstacle problem. 
To do so, we begin by extending 
the pressure $p(t)$ by $0$ outside $\Omega(t)$ and still denote by $p(t)$ 
the extended function (recall that $p$ is defined by \e{n36.7}). Then we use a
Baiocchi-Duvaut transformation which consists of introducing the
new unknown $u\colon [0,T]\times\xR^N\to\xR$, defined by
\be\label{BDT}
u(t,x)=\int_0^t p(\tau,x)\dtau.
\ee
As we will see, a remarkable aspect of this transformation is that 
in the rest of the argument, the time variable will play 
the role of a parameter. Given $t \in (0,T]$, we will write simply 
$u(t)$ or even simply $u$ the function $\xR^N\ni 
x\mapsto u(t,x)\in \xR$.

\vspace{1cm} 

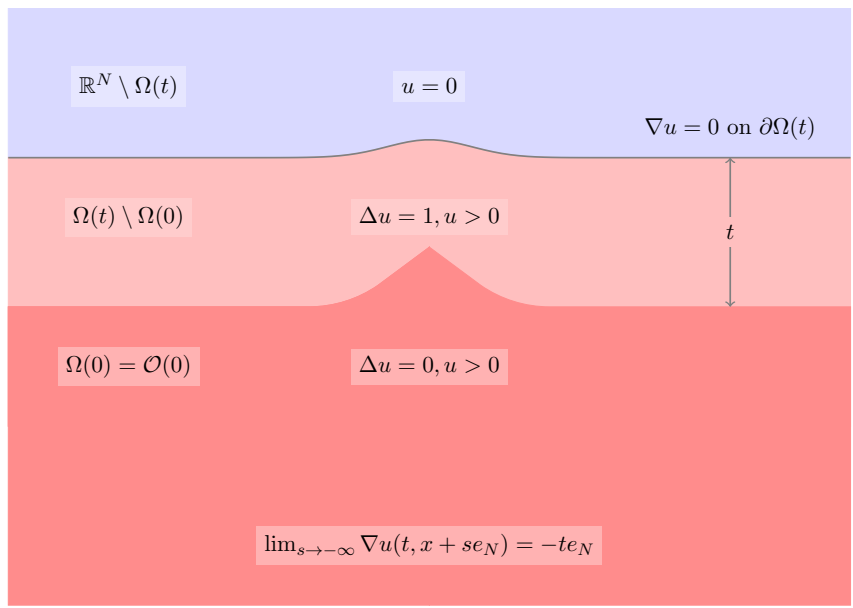
\begin{figure}[h]
\centering
\resizebox{0.8\textwidth}{!}{%
\begin{tikzpicture}[samples=100]
\filldraw [color=blue!15!white]  
(7,5) -- (-7,5) -- (-7,3) --   
plot [domain=-7:7] ({\x},{2.5+0.3*exp(-\x*\x)}) -- cycle ; 
\filldraw [color=red!25!white]   
plot [domain=-7:7] ({\x},{2.5+0.3*exp(-\x*\x)}) -- (7,-2) -- (-7,-2) -- cycle ; 
\filldraw [color=red!45!white]   
(-7,0) -- (-2,0) arc (-90:-55:2) -- (0,1) -- (0,-5) -- (-7,-5) -- cycle ; 
\filldraw [color=red!45!white]   
(-7,0) -- (-2,0) arc (-90:-55:2) -- (0,1) -- (0,-2) -- (-7,-2) -- cycle ; 
\begin{scope}[xscale=-1]
\filldraw [color=red!45!white]   
(-7,0) -- (-2,0) arc (-90:-55:2) -- (0,1) -- (0,-5) -- (-7,-5) -- cycle ; 
\end{scope}
\draw [draw=gray,thick]
plot [domain=-7:7] ({\x},{2.5+0.3*exp(-\x*\x)})  ;
\node at (-5,-1) [fill=red!30!white] {$\Omega(0)=\mathcal{O}(0)$};
\node at (-5,1.5) [fill=red!20!white] {$\Omega(t)\setminus\Omega(0)$};
\node at (-5,3.7) [fill=blue!10!white] {$\xR^N\setminus\Omega(t)$};
\node at (0,-1) [fill=red!30!white] {$\Delta u=0, u>0$};
\node at (0,-4) [fill=red!30!white] {$\lim_{s\to -\infty}\nabla u(t,x+se_N) = -t e_N$};
\node at (0,1.5) [fill=red!20!white] {$\Delta u=1, u>0$};
\node at (0,3.7) [fill=blue!10!white] {$u=0$};
\draw[<->,gray,thick] (5,0) -- (5,2.5) ;
\node at (5,1.25) [fill=red!25!white] {$t$} ;
\node at (5,3)  {$\nabla u=0 \text{ on }\partial\Omega(t)$};
\end{tikzpicture}
}\caption{The obstacle problem equations.}\label{Fig:equations}
\end{figure}

\clearpage

\begin{proposition}\label{prop:eulerobstacle} 
Consider a regular solution (in the sense of Definition~\ref{D:1}) $h$ 
to the Hele-Shaw equation defined over some time interval $[0,T]$ and define $u$ by \e{BDT}. 
Then, for all time $t\in (0,T]$, the function $u(t)$ satisfies
\be\label{n60}
u(t)\ge 0 \quad\text{and}\quad
\{x\in \xR^N\sep u(t,x)>0\} = \Omega(t).
\ee
Moreover $u(t)$ belongs to $C^1(\xR^N)$ and satisfies, in the sense of distributions,
\be\label{n85}
\left\{
\begin{aligned}
&\Delta u(t)=0 \quad &&\text{in}\quad \Omega(0)\cup \big(\xR^N\setminus \Omega(t)\big),\\
&\Delta u(t)=1 \quad &&\text{in}\quad \Omega(t)\setminus \Omega(0).
\end{aligned}
\right.
\ee
Finally, $u$ satisfies the boundary conditions
\be\label{n86}
\left\{
\begin{aligned}
&\nabla u(t)=0 \quad \text{on}\quad\partial\Omega(t),\\
&\lim_{s\to -\infty}\nabla u(t,x+se_N) = -t e_N.
\end{aligned}
\right.
\ee
\end{proposition}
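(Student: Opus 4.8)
The plan is to establish each assertion of Proposition~\ref{prop:eulerobstacle} in turn, working from the free-falling formulation \eqref{eq:bcinfty}--\eqref{kc} together with the positivity \eqref{n83}. First I would prove \eqref{n60}. Since $P>0$ in $\mathcal{O}(t)$ by \eqref{n83}, we have $p(t,\cdot)>0$ precisely on $\Omega(t)$ and $p(t,\cdot)=0$ elsewhere (after the extension by zero). Because $\Omega(\tau)$ is increasing in $\tau$ (this is \eqref{n36.5}), for fixed $x$ the function $\tau\mapsto p(\tau,x)$ is nonnegative, and it is positive for $\tau$ in some interval of the form $(\tau_0(x),t]$ exactly when $x\in\Omega(t)$; hence $u(t,x)=\int_0^t p(\tau,x)\dtau\ge 0$, with strict positivity iff $x\in\Omega(t)$. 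This gives both statements in \eqref{n60}, and in particular $\{u(t)>0\}=\Omega(t)$.

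Next I would derive the PDE \eqref{n85}. The key point is to differentiate the Baiocchi--Duvaut integral in $x$ and use that $p(\tau)$ is harmonic where it is positive: for $x\in\Omega(0)$ one has $x\in\Omega(\tau)$ for all $\tau\in[0,t]$, so $\Delta_x p(\tau,x)=0$ for every $\tau$ and thus $\Delta u(t,x)=0$ there; symmetrically, for $x\notin\Omega(t)$ one has $p(\tau,x)=0$ for all $\tau\le t$ so $\Delta u(t,x)=0$. In the intermediate region $\Omega(t)\setminus\Omega(0)$, there is a unique time $\tau_0=\tau_0(x)\in(0,t)$ with $x\in\partial\Omega(\tau_0)$, determined by $f(\tau_0,x')=x_N$; one splits $u(t,x)=\int_{\tau_0(x)}^t p(\tau,x)\dtau$, since the integrand vanishes for $\tau<\tau_0$. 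Differentiating this under the integral sign produces a bulk term $\int_{\tau_0}^t\Delta_x p(\tau,x)\dtau=0$ plus boundary contributions from the $x$-dependence of $\tau_0(x)$; using $p(\tau_0(x),x)=0$ the first-order boundary terms cancel, and the surviving contribution is governed by $\partial_\tau p$ and the kinematic relation \eqref{kc} (equivalently the identities \eqref{n84}), which after simplification yields $\Delta u(t,x)=1$. The cleanest way to organize this is actually via the co-area formula as hinted in the introduction, writing $u(t,x)=\int_0^t \chi_{\Omega(\tau)}(x)\,p(\tau,x)\dtau$ and tracking the level sets $\{f(\tau,\cdot)=x_N\}$; I would present the computation in the sense of distributions against a test function to avoid regularity issues at $\partial\Omega(0)$ and $\partial\Omega(t)$.

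Then I would prove the $C^1$ regularity and the boundary conditions \eqref{n86}. The interior $C^1$ regularity follows from elliptic regularity for the equation $\Delta u=\chi_{\Omega(t)\setminus\Omega(0)}$ (an $L^\infty$, indeed characteristic, right-hand side gives $u\in C^{1,\alpha}_{\loc}$), while across $\partial\Omega(t)$ one argues that both $u$ and $\nabla u$ vanish there: $u(t)=0$ outside $\Omega(t)$ by \eqref{n60}, and from inside, $\nabla u(t,x)=\int_0^t\nabla p(\tau,x)\dtau$ with the integrand controlled near the moving boundary using $\nabla p(\tau,\cdot)=0$ on $\partial\Omega(\tau)$ from the Neumann condition in \eqref{n86}'s bulk analogue; the Rayleigh--Taylor bound \eqref{n83} gives uniform Lipschitz control of $p(\tau)$ up to $\partial\Omega(\tau)$ so that $\nabla p(\tau,x)\to 0$ as $x\to\partial\Omega(t)$ for $\tau$ near $t$, and the contribution of $\tau$ bounded away from $t$ vanishes because then $x\notin\overline{\Omega(\tau)}$. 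For the condition at $x_N\to-\infty$, integrate the boundary condition $\partial_{x_N}p(\tau,\cdot)\to-1$ from \eqref{eq:bcinfty} in $\tau$: $\nabla u(t,x+se_N)=\int_0^t\nabla p(\tau,x+se_N)\dtau\to\int_0^t(-e_N)\dtau=-te_N$ as $s\to-\infty$, using that $v=-\nabla\phi\to0$ at the bottom so $\nabla p\to -e_N$ there.

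The main obstacle is the rigorous justification of the differentiation-under-the-integral-sign step in the derivation of $\Delta u=1$ on $\Omega(t)\setminus\Omega(0)$, i.e.\ handling the $x$-dependent lower limit $\tau_0(x)$ and the limited ($C^{1,\alpha}$) regularity of $p$ up to the free boundary. The honest route is to work entirely in the sense of distributions: test \eqref{n85} against $\varphi\in C_c^\infty$, swap the $x$- and $\tau$-integrals by Fubini, and for each fixed $\tau$ integrate by parts using that $p(\tau)$ is harmonic in $\Omega(\tau)$, vanishes on $\partial\Omega(\tau)$, and has normal derivative $|\nabla p(\tau)|$ there with $|\nabla p(\tau)| = V_n = \partial_\tau f/\sqrt{1+|\nabla_{x'}f|^2}$ (Remark~\ref{R:3.4}); the flux across $\partial\Omega(\tau)$ then integrates in $\tau$, via the co-area formula applied to the function $(\tau,x')\mapsto f(\tau,x')$, exactly to the Lebesgue measure of $\Omega(t)\setminus\Omega(0)$, which is the assertion $\Delta u(t)=\chi_{\Omega(t)\setminus\Omega(0)}$. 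Once this identity is in hand, all the regularity and boundary statements are routine consequences of elliptic theory and the monotone-in-time structure.
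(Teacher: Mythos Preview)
Your derivation of \eqref{n60} and of the behaviour at $x_N\to-\infty$ is fine and matches the paper. Your distributional derivation of \eqref{n85} via testing against $\varphi$, integrating by parts at each fixed $\tau$, and invoking the co-area formula on the moving boundaries $\partial\Omega(\tau)$ is correct; this is in fact the approach the paper develops later in Proposition~\ref{lem:subsup} for sub- and supersolutions. The paper's own proof of Proposition~\ref{prop:eulerobstacle} takes a shorter route: it first records, from the kinematic condition \eqref{kc}, the transport identity $\partial_t\chi_{\Omega(t)}-\Delta p(t)=0$ in the sense of distributions (see \eqref{n134b}--\eqref{n42}); since $u(t)=\int_0^t p$, this gives $\partial_t(\chi_{\Omega(t)}-\Delta u(t))=0$, and integrating from $0$ yields $\Delta u(t)=\chi_{\Omega(t)}-\chi_{\Omega(0)}=\chi_{\Omega(t)\setminus\Omega(0)}$ in one stroke. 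Your approach works just as well and has the advantage of extending verbatim to sub/supersolutions; the paper's is quicker for exact solutions.

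There is, however, a genuine error in your argument for $\nabla u(t)=0$ on $\partial\Omega(t)$. You write that ``$\nabla p(\tau,\cdot)=0$ on $\partial\Omega(\tau)$'' and that ``$\nabla p(\tau,x)\to 0$ as $x\to\partial\Omega(t)$ for $\tau$ near $t$''; both statements are false. By the Rayleigh--Taylor bound \eqref{n83}, $-\partial_{x_N}p$ is bounded \emph{below} on the free boundary, so $\nabla p$ certainly does not vanish there. What is true (and sufficient) is that the interval of integration $[\tau_0(x),t]$ shrinks to a point as $x\to\partial\Omega(t)$ while $\nabla p$ stays uniformly bounded, so $\nabla u(t,x)=\int_{\tau_0(x)}^{t}\nabla p(\tau,x)\dtau\to 0$. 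But even this patching is unnecessary: you already observed that $\Delta u(t)=\chi_{\Omega(t)\setminus\Omega(0)}\in L^\infty(\xR^N)$, which by elliptic regularity gives $u(t)\in C^{1,\nu}_{\loc}(\xR^N)$ \emph{globally}, not just in the interior of each phase. Since $u(t)\equiv 0$ on the open set $\xR^N\setminus\Omega(t)$, one has $\nabla u(t)=0$ there, and hence $\nabla u(t)=0$ on $\partial\Omega(t)$ by continuity. This is exactly how the paper argues; your separate ``across $\partial\Omega(t)$'' discussion should be replaced by this one-line observation.
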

\begin{proof}
Firstly, as previously mentioned (see Remark~\ref{R:4}), the classical 
maximum principle for harmonic functions implies that $p\ge 0$ and $p>0$ 
in $\Omega(t)$. Consequently, property~\e{n60} arises from the fact
that $t\mapsto\Omega(t)$ is increasing (see~\e{n36.5}).

Now, we proceed to derive \e{n85}. It is
important to note that we will prove a more general result
later (see Proposition~\ref{lem:subsup}). For this reason, we 
will only perform formal calculations at this stage and refer 
to Gustafson~\cite{Gustafsson2} for their rigorous justification 
(see also \cite{MR611303,MR2072944}). 

We begin by recalling that the kinematic boundary condition (see~\e{kc}) implies that,  
for any smooth function $\phi=\phi(t,x)$ compactly supported in 
$\xR\times \xR^N$, one has
\be\label{n134b}
\fract \int_{\Omega(t)} \phi(t,x)\dx
=\int_{\Omega(t)} (\partial_t +\nabla p\cdot \nabla)\phi \dx.
\ee
Indeed,
\begin{align*}
\int_{\xR^{N-1}} ( \phi \partial_t f)\arrowvert_{x_N=f}\dx' 
&=\int_{\xR^{N-1}} ( \phi\partial_n p)\arrowvert_{x_N=f} 
\sqrt{1+|\nabla_{x'}f|^2}\dx'\\
&=\int_{\partial\Omega(t)}n \cdot (\phi\nabla p) \dmH =\int_{\Omega(t)} \cn(\phi\nabla p)\dx\\
&=\int_{\Omega(t)}\nabla \phi\cdot \nabla p \dx.
\end{align*}

Now, let us 
introduce the function $\rho=\rho(t,x)$ defined as follows:
\begin{equation*}
\rho(t,x)=\chi_{\Omega(t)}(x).
\end{equation*}
Here, $\chi_{\Omega(t)}$ represents the indicator function of $\Omega(t)$. 
Since $p=0$ on $\mathbb{R}^N\setminus \Omega(t)$, the identity~\e{n134b} implies that the Hele-Shaw problem can be expressed as an evolution equation of the form:
\begin{equation}\label{n42}
\partial_t \rho - \Delta p = 0.
\end{equation}
As a result, directly from \e{n42} and the definition of $u$, we arrive at:
\begin{equation*}
\partial_t(\rho - \Delta u) = 0.
\end{equation*}
Since $u(0,x)=0$ by definition, integrating the previous equation in time leads to:
\begin{equation}\label{n35}
\Delta u(t) = \rho(t) - \rho(0) = \chi_{\Omega(t)\setminus\Omega(0)},
\end{equation}
which is equivalent to the desired result~\e{n85}.

Once equation~\e{n35} is derived, the rest of 
the argument is straightforward. Since the 
left-hand side is a bounded function, elliptic regularity shows that at any given time $t\in [0,T]$, $u(t)$ belongs to $C^{1,\nu}(\mathbb{R}^N)$ for any $\nu < 1$. 
In particular, $\nabla u(t)$ is continuous. Now, we know that $p(t,x)=0$ for all $x\in \mathbb{R}^N\setminus \Omega(t)$, and thus $u(t,x)=0$ for all $x\in \mathbb{R}^N\setminus \Omega(t)$. 
Consequently, $\nabla u(t)=0$ on $\mathbb{R}^N\setminus \Omega(t)$ and, by continuity, $\nabla u(t)=0$ on $\partial\Omega(t)$. Finally the limit in \eqref{n86} is a consequence of the construction. 
\end{proof}

\subsection{Definition of solutions to the Hele-Shaw problem}\label{S:3.3}


In this section, we revisit the concept of a solution to introduce one that will facilitate our use of comparison arguments later on. Additionally, for our future needs, we find it necessary to consider 
local (in space) solutions.

\begin{definition}Let $U \subset \xR^N$ 
be an open and connected set. 
Consider a nonnegative function 
$p \in C([0,T)\times U)$ such that 
$p(t) \in H^1_{\loc}(U)$ for all $t\in [0,T)$. We say that $p$ is 
the pressure of a solution to the Hele-Shaw problem if 
the two following properties hold:
\begin{enumerate}
\item For all $0<s<t<T$,
$$
p(s,x)>0  \quad\Rightarrow\quad  p(t,x)>0. 
$$
\item For all $t\in (0,T)$, the function
\[
u(t,x) = \int_0^t p(s,x) \ds,
\]
is a weak solution (i.e.\ in the sense of distributions) to the obstacle problem
\[
\Delta u = \chi_{\{u>0\} \cap A} \quad\text{where}\quad 
A=U\backslash \{ p(0,x)>0 \}.
\]
\end{enumerate}
\end{definition}

We also need a convenient notion of subsolutions 
and supersolutions.

\begin{definition}
Let $U \subset \xR^N$ 
be an open and connected set 
and consider a Lipschitz function $p\colon [0,T)\times {U}\to [0,+\infty)$ and write $\{ p>0\}=\{(t,x)\colon p(t,x)>0\}$. Suppose that, for all $0<s<t<T$,
\be\label{co:3}
p(s,x)>0  \quad\Rightarrow\quad  p(t,x)>0. 
\ee
Assume that $p\in C^2(\{ p>0\})$ and that $\nabla_{t,x}p\in C\big(\overline{\{ p>0\}}\big)$. 
We say that $p$ is a classical supersolution if
\begin{alignat*}{2}
&-\Delta p \ge 0 \quad &&\text{ in } \{ p>0\} \\
&\partial_tp \ge |\nabla p|^2 \quad &&\text{ on } \partial\{p>0\}\cap (0,T)\times {U}.
\end{alignat*}
We say that $p$ is a classical subsolution if there exists $C >1 $ so that 
\begin{alignat*}{2}
&-\Delta p \le 0 \quad &&\text{ in } \{ p>0\} \\
&C^{-1} |\nabla p |\le   \partial_tp \le |\nabla p|^2 \quad &&\text{ on } \partial\{p>0\}\cap (0,T)\times {U}.
\end{alignat*}
\end{definition}
\begin{remark}
One comment is in order about the difference in the boundary conditions. 
The extra lower bound $C^{-1} |\nabla p |\le   \partial_tp$ is used below at the 
end of the proof of Proposition~\ref{lem:subsup}: 
It ensures that the boundary of $\{(t,x): p(t,x)>0\} $ 
is a Lipschitz graph of a function $g(x)$, which in turn allows to 
apply the coarea formula. For the supersolution we 
need to avoid that  condition to be able to deal later 
with the waiting time property (see Section~\ref{S:8}). This is why we only 
rely on  the inequality  $ \partial_t p \ge |\nabla p|^2$ and allow $\partial_t p $ 
(and hence $\nabla p)$ to vanish on $\partial\{p>0\}$. Eventually, let us mention that, for 
solutions which are subgraphs of functions in $C^{1,\alpha} $, $\nabla p $ never vanishes on $\partial\{p>0\}$ (see Proposition~\ref{prop:taylor} in the appendix).
\end{remark}

\begin{proposition}\label{lem:subsup}
Let $U \subset \xR^N$ 
be an open and connected set. Let  $p^{+}$ be a supersolution and $p^{-}$ be a 
subsolution  
on $[0,T)\times U$. Assume furthermore that, for each $t$, the 
boundary $\partial\{x : p^+(t,x)>0\}$ is a Lipschitz graph: 
there exists a continuous function 
$f\in C([0,T)\times \xR^{N-1})$ which is Lipschitz in $x'$ and such that
$$
\forall t\in [0,T),\quad 
\partial\{x\in U : p^+(t,x)>0\}=\{x\in U : x_N=f(t,x')\}.
$$
Let 
\[ u^{\pm}(t,x) = \int_0^t p^\pm(s,x) \ds.\]
Then $u^{\pm}\in C([0,T)\times U)$ and for each time $t$, 
$u^{\pm}(t)$ belongs to $H^1_{\loc}(U)$. Moreover, in the distributional sense, there holds
\be\label{co:0}
- \Delta u^+(t) \ge - \chi_{\{u^+(t)>0\}\cap \{p^+(0)=0\}}
\ee
and 
\be\label{co:0b}
-\Delta u^-(t) \le -\chi_{\{u^-(t)>0\}\cap \{p^-(0)=0\}}.
\ee
\end{proposition}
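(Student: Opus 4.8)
The statement is a natural extension of the formal computation in Proposition~\ref{prop:eulerobstacle}, so the strategy is to make that computation rigorous for sub- and supersolutions using the coarea formula. I would work separately with $p^+$ and $p^-$, as the two cases are symmetric except for the direction of the inequality and the role of the extra lower bound $C^{-1}|\nabla p^-|\le \partial_t p^-$. The first step is to record the basic regularity of $u^\pm$: since $p^\pm$ is Lipschitz on $[0,T)\times U$, the time integral $u^\pm(t,x)=\int_0^t p^\pm(s,x)\ds$ is continuous in $(t,x)$ and, for each fixed $t$, lies in $H^1_{\loc}(U)$ with $\nabla_x u^\pm(t,x)=\int_0^t \nabla_x p^\pm(s,x)\ds$ (differentiation under the integral sign is legitimate because $\nabla p^\pm\in L^\infty_{\loc}$). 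Also $\{u^+(t)>0\}=\bigcup_{0<s<t}\{p^+(s)>0\}=\{p^+(t^-)>0\}$ by the monotonicity hypothesis~\e{co:3}/\e{co:0}-type condition, so the positivity set of $u^+(t)$ coincides with that of $p^+$ at time $t$ (up to the boundary), and similarly for $u^-$.

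The core step is to test the distributional Laplacian of $u^\pm(t)$ against a nonnegative $\phi\in C^\infty_c(U)$. I would write
\[
\int_U u^+(t)\,\Delta\phi\dx=\int_0^t\!\!\int_U p^+(s,x)\,\Delta\phi(x)\dx\ds,
\]
and then split the inner integral over $\{p^+(s)>0\}$ (where $p^+$ is $C^2$ with $\nabla_{t,x}p^+$ continuous up to the boundary) and its complement (where $p^+\equiv 0$). On $\{p^+(s)>0\}$, integrating by parts twice—using that $p^+$ vanishes on $\partial\{p^+(s)>0\}$ so the first boundary term drops—gives
\[
\int_{\{p^+(s)>0\}} p^+\Delta\phi\dx=\int_{\{p^+(s)>0\}}\phi\,\Delta p^+\dx-\int_{\partial\{p^+(s)>0\}}\phi\,\partial_n p^+\dmH,
\]
where $n$ is the outward normal to the fluid side. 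Since $-\Delta p^+\ge 0$ inside and $\phi\ge0$, the first term contributes something $\le 0$; for the boundary term one uses the kinematic inequality $\partial_t p^+\ge|\nabla p^+|^2$ together with the chain-rule identities $\partial_t p^+=(\partial_n p^+)(\text{normal speed factor})$ valid on $\partial\{p^+>0\}$ (exactly as in~\e{n84}), which translate the inequality into control of $\partial_n p^+$ in terms of the normal velocity $\partial_t f$ of the free boundary. Now integrating in $s$ from $0$ to $t$ and invoking the coarea formula in the $(s,x)$ variables—this is where the Lipschitz-graph hypothesis on $\partial\{p^+(s)>0\}$, i.e.\ the function $f(s,x')$, is essential—the accumulated boundary integral telescopes to $\int_{\{u^+(t)>0\}\setminus\{p^+(0)>0\}}\phi\dx$ plus a sign-favorable remainder, yielding
\[
\int_U u^+(t)\,\Delta\phi\dx\ge -\int_{\{u^+(t)>0\}\cap\{p^+(0)=0\}}\phi\dx,
\]
which is precisely~\e{co:0}. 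The subsolution case~\e{co:0b} is handled identically, reversing all inequalities; here the extra bound $\partial_t p^-\ge C^{-1}|\nabla p^-|$ is what guarantees $\partial\{p^-(s)>0\}$ is itself a Lipschitz graph (the free boundary cannot become vertical), so that the coarea argument applies to $u^-$ as well, even though this was only assumed outright for $p^+$.

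\textbf{Main obstacle.} The delicate point is the coarea/telescoping step: justifying that integrating the surface terms $\int_0^t\!\int_{\partial\{p^\pm(s)>0\}}\phi\,\partial_n p^\pm\dmH\ds$ reproduces exactly $\mathcal{H}^N$-measure of the swept region $\{u^\pm(t)>0\}\setminus\{p^\pm(0)>0\}$ against $\phi$, with the right sign. One must parametrize the space-time region $\{(s,x):0<s<t,\ p^\pm(s,x)>0\}$ by the graph $x_N=f(s,x')$, apply the coarea formula to the Lipschitz map $(s,x')\mapsto$ (the point $(x',f(s,x'))$ sweeping the boundary), and check that the Jacobian factor is exactly the one that turns $\partial_n p^\pm\dmH\ds$ into $dx$ after using the kinematic condition to replace $\partial_n p^\pm$. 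This is clean for solutions (equality throughout) but for sub/supersolutions the inequalities $\partial_t p^\pm\lessgtr|\nabla p^\pm|^2$ must be tracked carefully so the error terms land on the correct side; keeping the signs straight, and ensuring all boundary integrals are well-defined given only $\nabla_{t,x}p^\pm\in C(\overline{\{p^\pm>0\}})$ rather than more regularity, is the technical heart of the argument. Everything else is a routine integration-by-parts and differentiation-under-the-integral bookkeeping.
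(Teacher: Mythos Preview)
Your strategy---two integrations by parts in $x$, then a coarea/area-formula conversion of the time-integrated boundary flux into the Lebesgue measure of the swept region---is exactly the paper's proof. Two points deserve mention.

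First, your final displayed inequality has the sign reversed: you need $\int_U u^+(t)\,\Delta\phi\dx\le\int_{\{u^+(t)>0\}\cap\{p^+(0)=0\}}\phi\dx$ for nonnegative $\phi$ (this is what encodes $-\Delta u^+\ge-\chi_{\ldots}$), not $\ge -\int$. Your surrounding narrative is consistent with the correct sign, so this is presumably a slip.

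Second, the paper parametrizes the swept region not by $(s,x')\mapsto(x',f(s,x'))$ but by its inverse, the first-hitting-time function $g(x)=\inf\{s:p^+(s,x)>0\}$, and applies the coarea formula to $g$ in the $x$-variable. By the implicit function theorem $\nabla g=-\nabla p^+/\partial_t p^+$ wherever $\partial_t p^+\ne0$, and the coarea identity with $\varphi=\Phi\,|\nabla p^+(g(x),x)|$ turns the accumulated surface integral into $\int |\nabla p^+|^2/\partial_t p^+\,\Phi\dx$, which is $\le\int\Phi\dx$ by the supersolution inequality. The degenerate set $\{\partial_t p^+(g(x),x)=0\}$ is handled by truncating to $\{\partial_t p^+>\varepsilon\}$ and passing to the limit via monotone convergence; on the bad set $\partial_t p^+=0$ forces $\nabla p^+=0$ too, so both sides vanish there. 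Your $(s,x')$ parametrization would require $f$ to be Lipschitz in $s$, which is \emph{not} assumed (only continuity in $(s,x')$ and Lipschitz in $x'$); for supersolutions the kinematic inequality bounds the normal speed from below, not above, so one cannot conclude this directly and the same truncation would be needed. For subsolutions, as you say, the extra bound $C^{-1}|\nabla p^-|\le\partial_t p^-$ makes $g$ Lipschitz outright, which is how the paper justifies the coarea formula in that case.
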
 
\begin{remark}
Notice that $\{u^+(t)>0\}=\{p^+(t)>0\}$.
\end{remark}
\begin{proof}
We will prove the statement for supersolutions only (the one for 
subsolutions is obtained with obvious modifications).

Consider a supersolution $p^+$ that satisfies the above-mentioned regularity 
assumptions. Take a time $t\in (0,T)$ and define $u^+(t,x)=\int_0^t p^+(s,x)\ds$.

For a nonnegative test function $\Phi\in C^\infty_0(U)$, by applying the divergence theorem, we can observe:
\begin{align*}
&\int_U u^+(t)\Delta \Phi \dx \\
&\qquad\qquad = \int_0^t\int_U p^+(s) \Delta \Phi \dx\ds
\\ &\qquad\qquad = \int_0^t \left(-\int_{U}\nabla p^+(s) \cdot\nabla \Phi \dx\right) \ds
=- \int_0^t\int_{\{p^+(s)>0\}}\nabla p^+(s)\cdot \nabla \Phi \dx \ds 
\\ &\qquad\qquad = \int_0^t \int_{\{p^+(s)>0\}} \Delta p^+(s) \Phi \dx \ds 
- \int_0^t \int_{\partial \{p^+(s)>0\} } \partial_n p^+(s) \Phi \dmH  \ds\\
&\qquad\qquad\le   \int_0^t \int_{\partial \{p^+(s)>0\} }
\la \nabla p^+(s)\ra \Phi \dmH\ds,
\end{align*}
where we have used the facts that $\Delta p^+\le 0$ and $\partial_n p^+=-\la \nabla p^+\ra$ on $\partial\{p^+>0\}$.

Consequently, the desired result~\e{co:0} will follow directly from the following claim:
\begin{equation}\label{co:1}
\int_0^t \int_{\partial \{p^+(s)>0\} } \la\nabla p^+(s)\ra \Phi
\dmH \ds\le
\int_{ \{p^+(t)>0\}\cap \{p^+(0)=0\}} \Phi \dx.
\end{equation}
To prove~\e{co:1}, we introduce the function $g\colon U\to [0,t]$, which describes the first time at which the liquid reaches the point $x$ (see Figure~\ref{Figg}). More precisely, $g(x)$ is defined as:
\[
g(x) = \max\{ 0, \min\{ t, \inf\{ s\in (0,t): p^+(s,x)>0\}\}\}.
\]
The fact that this function is well-defined follows from our assumption~\e{co:3}.

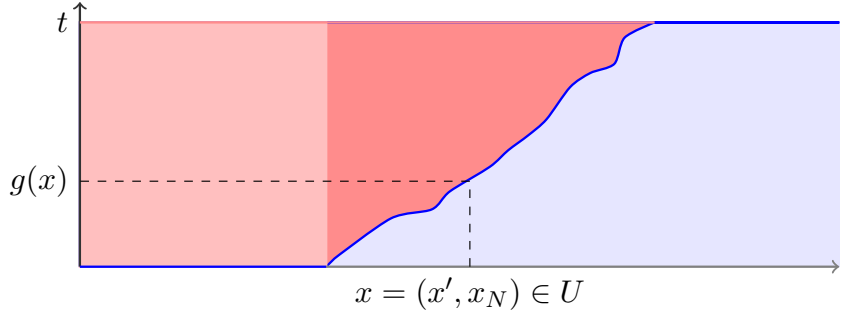
\begin{figure}[htb]
    \centering
    \resizebox{0.8\textwidth}{!}{%
\begin{tikzpicture}[x=5mm,y=5mm,decoration={mark random y steps,segment length=3mm,amplitude=1mm}]
\filldraw[fill=blue!10!white, draw=blue!10!white] (-4,1) -- (14.5,1) 
-- (14.5,-5) -- (-4,-5) -- cycle;
  \path[decorate] (2.2,-4.8) -- (2.5,-4.1) -- (3,-4) -- (4,-3.8) -- (5.5,-2.9) -- (7.5,-1) -- (10,1) ;
\filldraw [color=red!45!white,draw=blue,thick]  (-4,-5) -- (2,-5) -- plot[color=blue!60!white,variable=\x,samples at={1,...,\arabic{randymark}},smooth] 
(randymark\x) -- (14.5,1) -- (-4,1) -- (-4,-5) ;
\filldraw [color=red!25!white,thick]  (-4,-5) rectangle (2,1) ;
\draw[->,gray,thick] (2,-5) -- (14.5,-5) ;
\draw[->,color=black!80!white,thick] (-4,-5) -- (-4,1.5) ;
\draw[color=red!45!white,thick] (-4,1) node [left,color=black] {$t$} -- (10,1) ;
\draw[blue,thick] (-4,-5) -- (2,-5);
\draw[dashed,black] (5.5,-2.9) -- (-4,-2.9) node [left] {$g(x)$};
\draw[dashed,black] (5.5,-5) node [below]{$x=(x',x_N)\in U$} -- (5.5,-2.9) ;
\end{tikzpicture}
}\caption{Illustrate the graph of $g$. The darkest red area represents $\{p^+>0\} \cap \{p^+(0)=0\}$, 
the lightest red area represents $\{p^+(0)>0\}$, and the blue part is the set $\{p^+=0\}$.}\label{Figg}
\end{figure}

Notice that the implicit function theorem implies that
for almost every $x$
\[ \nabla g =  -\frac{\nabla p^+}{\partial_tp^+}\bigg\vert_{(t,x)=(g(x),x)}\]
and hence $g$ is $C^1$ away from the set $ \partial_t p_+ = 0 $ on $\partial \{p >0\} $ for some $t$. 
We then conclude the proof by
using the coarea formula, first neglecting this bad set, 
$$
\int_U \varphi(x)\la \nabla g(x)\ra \dx=\int_0^t \int_{g^{-1}(\lambda)}\varphi(x)\dmH(x) d\lambda,
$$
with $\varphi(x)=\Phi(x)\la \nabla p^+(g(x),x)\ra$. This gives
\begin{multline*}
\int_0^t \int_{\partial \{p^+(s)>0\} } \la\nabla p^+(s)\ra \Phi
\dmH \ds
\\
=\int_{ \{p^+(t,\cdot)>0\}\cap \{p^+(0,\cdot)=0\}}
\frac{|\nabla p|^2}{\partial_t p}\bigg|_{(t,x)=(g(x),x)} \Phi \dx.
\end{multline*}
Consequently, the desired result \e{co:1} follows from the 
inequality
$$
|\nabla p^+|^2\le \partial_t p^+
$$
valid for any supersolution on the boundary $\partial\{p>0\}\cap (0,T)\times {U}$ and any nonnegative integrable function $\phi$, again assuming that $\partial_t p $ is bounded from below. In general this identity holds for a truncated $\Phi$, i.e. by replacing $ \Phi$ by 
\[   \chi_{\{\partial_t p(g(x),x))>\varepsilon\}}   \Phi.   \]
By monotone convergence we can take the limit $ \varepsilon\to 0 $ and obtain the identity for $\Phi$ replaced by 
\[ \chi_{\{\partial_t p(g(x),x) \ne 0\}} \Phi. \]
However $ \partial_t p(g(x),x)) = 0 $ implies $\nabla p(g(x),x)= 0 $
and both integrands vanish. 

We argue similarly for subsolutions, for which the stronger condition 
$$
|\nabla p(g(x),x)| \le C \partial_t p(g(x),x)
$$
ensures the applicability of the coarea formula. 
\end{proof}

\subsection{Sub- and supersolutions}
\label{sec:super} 
Let $U \subset \xR^N$ be open and bounded and $ A\subset U$ be relatively closed. 
We call $ u^+ \in H^1(U) $ a supersolution of the obstacle problem if 
\be\label{Nsup1}
- \Delta u^+ \ge - \chi_{\{u^+ >0\}\cap A}
\ee
and $u^- $ a subsolution of the obstacle problem if 
\be\label{Nsub1}
- \Delta u^- \le - \chi_{\{ u^- >0\}\cap A }.
\ee
\begin{lemma} \label{lem:subobstacle}
Suppose that $ u^\pm \in C(\overline\Omega)\cap H^1(U)$ is a sub- (resp.\ super-)solution such that 
$ u^- \le u^+ $ on $ \partial U$. Then 
$ u^- \le u^+$ on $ \overline{U}$.
\end{lemma}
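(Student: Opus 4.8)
The plan is to prove Lemma~\ref{lem:subobstacle} by a standard comparison argument for the obstacle problem, testing the difference of the two (in)equations against the positive part of $u^- - u^+$. First I would set $w = u^- - u^+ \in H^1(U)$ and observe that by hypothesis $w \le 0$ on $\partial U$ in the trace sense, so $w^+ = \max(w,0)$ belongs to $H^1_0(U)$ and is an admissible test function. Subtracting \eqref{Nsup1} from \eqref{Nsub1} gives, in the distributional sense,
\[
-\Delta w \le -\chi_{\{u^->0\}\cap A} + \chi_{\{u^+>0\}\cap A}
= \big(\chi_{\{u^+>0\}} - \chi_{\{u^->0\}}\big)\chi_A .
\]
Testing against $w^+ \ge 0$ and integrating by parts (legitimate since $w^+ \in H^1_0(U)$), the left side becomes $\int_U |\nabla w^+|^2\dx$, while the right side is $\int_A \big(\chi_{\{u^+>0\}} - \chi_{\{u^->0\}}\big) w^+ \dx$.

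The key point is then to check that the right-hand integrand is $\le 0$ pointwise on the set $\{w>0\}$, i.e.\ where $w^+>0$. On that set $u^- > u^+$, so if $u^+ > 0$ then also $u^- > 0$, hence $\chi_{\{u^+>0\}} - \chi_{\{u^->0\}} = 1-1 = 0$ there; and if $u^+ \le 0$ then $\chi_{\{u^+>0\}} = 0$ while $\chi_{\{u^->0\}} \ge 0$, so the difference is $\le 0$. In all cases the integrand on $\{w^+>0\}$ is $\le 0$, and of course it vanishes where $w^+ = 0$. Therefore $\int_U |\nabla w^+|^2 \dx \le 0$, which forces $\nabla w^+ = 0$ a.e.; combined with $w^+ \in H^1_0(U)$ and $U$ connected (via Poincaré's inequality on the bounded set $U$), we conclude $w^+ \equiv 0$, that is $u^- \le u^+$ a.e., and by continuity of $u^\pm$ on $\overline U$ everywhere on $\overline U$.

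I expect the main (minor) obstacles to be bookkeeping rather than conceptual: justifying that $w^+ \in H^1_0(U)$ from the boundary hypothesis $u^- \le u^+$ on $\partial U$ (this uses that the positive part of an $H^1$ function which is $\le 0$ on the boundary lies in $H^1_0$, a standard lattice property of $H^1_0$), and making the integration by parts rigorous given that the inequalities \eqref{Nsup1}--\eqref{Nsub1} hold only distributionally — one handles this by pairing the distributions $-\Delta u^\pm$ with the nonnegative $H^1_0$ function $w^+$ and using that $\chi_{\{u^\pm>0\}\cap A}\in L^\infty(U) \subset L^2(U)$ so all terms are well-defined. One should also note the sign convention: since $w^+$ is used as test function and $-\Delta w \le (\text{something} \le 0)$ on $\{w^+ > 0\}$, the argument only needs the inequality in the ``right'' direction, which is exactly what the definitions of sub- and supersolution provide. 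No compactness or regularity theory beyond $H^1$ is needed; this is purely an energy/Poincaré argument.
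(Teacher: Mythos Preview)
Your proof is correct and follows essentially the same approach as the paper: test the difference of the two inequalities against $w_+=(u^--u^+)_+\in H^1_0(U)$, observe that $(\chi_{\{u^->0\}}-\chi_{\{u^+>0\}})w_+\ge 0$ pointwise, and conclude $\int_U|\nabla w_+|^2\dx\le 0$. Your write-up is in fact slightly more careful than the paper's in justifying why $w_+\in H^1_0(U)$ and in spelling out the case analysis for the sign; the only superfluous remark is the appeal to connectedness of $U$, which is not needed (and not assumed) since $\nabla w_+=0$ together with $w_+\in H^1_0(U)$ already forces $w_+\equiv 0$ on each component.
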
 
\begin{proof}
Set $w=u^--u^+$. Directly from \e{Nsup1} and \e{Nsub1}, we get
$$
\Delta w\ge \chi_A\big(\chi_{\{u^->0\}}-\chi_{\{u^+>0\}}\big).
$$
We can multiply the two sides of this inequality by $w_+=\max\{0,w\}\ge 0$ to get 
$$
(\Delta w)w_+\ge \big(\chi_{\{u^->0\}}-\chi_{\{u^+>0\}}\big) w_+.
$$
Now, observe that
$$
\big(\chi_{\{u^->0\}}-\chi_{\{u^+>0\}}\big) w_+\ge 0.
$$
Consequently
$$
\int_U (\Delta w)w_+\dx \ge 0
$$
which in turn implies that $\int \la \nabla w_+\ra^2\dx\le 0$, thereby proving that $w_+=0$, equivalent to $u^{-}\le u^{+}$.
\end{proof}

\section{Obstacle problem in bounded domains}
\label{sec:obstaclebounded}

In the previous chapters, we have demonstrated that the Hele-Shaw equation can be formulated as an obstacle problem on a noncompact set. As a preparation for the analysis of such problems, in this chapter, we will develop the necessary framework to consider the obstacle problem in bounded domains. More precisely, we will establish the existence and uniqueness of solutions, 
together with a stability estimate under very weak conditions. These results 
will serve as the background for constructing solutions 
in the unbounded case as limits of solutions defined on bounded domains.

We will develop a variational approach for which it is natural to work in the Sobolev spaces $H^1(U)$ or $H^1_{\loc}(U)$. Furthermore, we will consider 
non-negative functions and use the notations
$$
H^1_+(U)=
\{ u \in H^1(U) \sep u\ge 0 \},\quad 
H^1_{\loc,+}(U)=
\{ u \in H^1_{\loc}(U) \sep u\ge 0 \}.
$$
Given a function $v$, we set $v_+=\max\{v,0\}$. Recall that if $v\in H^1(U)$, then $v_+\in H^1_+(U)$. 
Similarly, if $v\in H^1_{\loc}(U)$, then $v_+\in H^1_{\loc,+}(U)$. 

Throughout  this chapter we assume that 
 $ U \subset \xR^N$ is  open and 
 bounded and consider a measurable set $ A \subset U$.

\subsection{Variational formulation}

\begin{definition}\label{def:obstacle}
We say that a non-negative function  $u \in H^1_{\loc,+}(U)$ 
is a variational solution to the obstacle problem $\Delta u=\chi_{A\cap \{u>0\}}$ if 
\begin{equation}\label{eq:obstacle}
\int_U \bigg[\Big(\frac12|\nabla v|^2+ \chi_{A} v_+\Big)
- \Big(\frac12 |\nabla u|^2 + \chi_{A}  u\Big)\bigg]\dx  \ge 0 ,
\end{equation} 
for all $v\in H^1_{\loc}(U)$ such that 
$\supp( u-v) $ is compact in $U$. 
\end{definition}
\begin{remark}Two remarks are in order concerning this formulation.

$(i)$
Notice that we assume that $u$ is non-negative, but we do not require that $v$ is non-negative. 
This explains why the first term involves $v_+$ while the second term involves $u=u_+$.

$(ii)$ Notice that it is important to write the inequality as the integral of a difference (instead of the difference of two integrals) since the two integrals in question are not well-defined in general (since we only assume that \(u\) and \(v\) are locally integrable).

\end{remark} 

We begin by proving that the equivalence of variational solutions and the Euler-Lagrange equation holds in this general setting. 

\begin{proposition}\label{lem:EulerLagrange}
Let $u \in H^1_{\loc,+}(U)$.
The following assertions are equivalent 
\begin{enumerate}
\item  $u$ is a variational solution in the sense of Definition~\ref{def:obstacle};
\item $u$  is a weak solution to the Euler-Lagrange equation 
\be\label{n27.5}
\Delta u = \chi_{A \cap \{ u>0\}},
\ee
in 
the sense that, for all 
$\phi \in C^\infty_0(U)$,
\begin{equation}\label{n27}
\int_U \Big[\nabla u \cdot\nabla \phi
+
\chi_{A\cap \{u>0\}}\phi  \Big]\dx=0.
\end{equation}
\end{enumerate} 
\end{proposition}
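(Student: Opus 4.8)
The plan is to prove the equivalence between the variational formulation \eqref{eq:obstacle} and the weak Euler--Lagrange equation \eqref{n27} by a standard convexity/perturbation argument, but carried out carefully in the local Sobolev setting where the two competing energies need not be separately finite. First I would observe that both statements only involve local perturbations (test functions or competitors with compact support), so everything can be localised on a bounded open set $V$ with $\supp(u-v)\subset V\Subset U$; on such $V$ the function $u$ lies in $H^1(V)$ and all integrals below are genuinely finite.

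For the implication ``variational $\Rightarrow$ weak'', I would test \eqref{eq:obstacle} with $v=u+s\phi$ for $\phi\in C^\infty_0(U)$ and $s>0$ small, divide by $s$, and let $s\to 0^+$. The quadratic term contributes $\int_U \nabla u\cdot\nabla\phi\dx$ in the limit. The delicate term is $s^{-1}\int_U \chi_A\big((u+s\phi)_+-u\big)\dx$: one writes $(u+s\phi)_+-u = (u+s\phi)_+-u_+$ (since $u=u_+$) and uses that, pointwise, $s^{-1}\big((u+s\phi)_+-u_+\big)\to \phi\,\chi_{\{u>0\}} + \phi_+\,\chi_{\{u=0\}}$ as $s\to0^+$, together with the uniform bound $|(u+s\phi)_+-u_+|\le s|\phi|$ to invoke dominated convergence. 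Taking $s\to 0^+$ gives $\int_U\nabla u\cdot\nabla\phi + \int_{A}\big(\phi\chi_{\{u>0\}}+\phi_+\chi_{\{u=0\}}\big)\dx\ge0$, and applying the same with $-\phi$ in place of $\phi$ yields the reverse inequality with $\phi$ replaced by $-\phi$, i.e. $-\int_U\nabla u\cdot\nabla\phi + \int_A\big(-\phi\chi_{\{u>0\}}+(-\phi)_+\chi_{\{u=0\}}\big)\dx\ge0$; adding the two and noting $\phi_+ + (-\phi)_+ = |\phi|\ge 0$ on $\{u=0\}$ forces $\int_A |\phi|\chi_{\{u=0\}}\dx$ terms to be controlled, and in fact one deduces first that the contribution of $A\cap\{u=0\}$ drops out (choosing $\phi$ of a single sign shows the $\chi_{\{u=0\}}$ term must vanish in the limiting inequality, using $u\ge0$), leaving exactly \eqref{n27}. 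This is the step I expect to require the most care: handling the non-differentiability of $t\mapsto t_+$ at $0$ and correctly showing the set $A\cap\{u=0\}$ does not contribute, which is where nonnegativity of $u$ is essential.

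For the converse ``weak $\Rightarrow$ variational'', I would use convexity of $t\mapsto \frac12 t^2$. Given a competitor $v$ with $\supp(u-v)\Subset U$, write $w=v-u$, so $w$ has compact support and $v_+ = (u+w)_+$. The quadratic part satisfies the elementary inequality $\frac12|\nabla v|^2 - \frac12|\nabla u|^2 \ge \nabla u\cdot\nabla w$ pointwise. For the zeroth-order part, on $A$ one has $v_+ - u = (u+w)_+ - u_+ \ge w\,\chi_{\{u>0\}}$ pointwise (checking the cases $u>0$ and $u=0$ separately, using $u_+=u\ge0$ and $(u+w)_+\ge u+w$). Integrating and combining,
\[
\int_U\Big[\big(\tfrac12|\nabla v|^2+\chi_A v_+\big)-\big(\tfrac12|\nabla u|^2+\chi_A u\big)\Big]\dx \ge \int_U \nabla u\cdot\nabla w\dx + \int_U \chi_{A\cap\{u>0\}} w\dx,
\]
and the right-hand side is $0$ by the weak equation \eqref{n27} applied to $\phi=w$ (after a routine density argument approximating the compactly supported $H^1$ function $w$ by $C^\infty_0$ functions, the integrand being in $L^1$). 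Hence \eqref{eq:obstacle} holds. I would close by remarking that throughout one uses $u\in H^1_{\rm loc,+}(U)$ so that $u=u_+$ and all localised integrals are finite, which is exactly why the inequality in Definition~\ref{def:obstacle} is phrased as the integral of a difference.
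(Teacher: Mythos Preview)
Your converse direction (weak $\Rightarrow$ variational) is correct, and in fact cleaner than the paper's own argument: the paper constructs an auxiliary variational solution $v$ with the same boundary data, applies the forward implication to $v$, and then shows $u=v$ by testing the difference; your direct use of convexity and the pointwise inequality $(u+w)_+-u_+\ge w\,\chi_{\{u>0\}}$ avoids that detour entirely.

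The forward direction, however, has a real gap. Your perturbation argument with $v=u+s\phi$ correctly yields, after sending $s\to 0^+$ with $\pm\phi$, the two-sided bound
\[
-\int_{A\cap\{u=0\}}\phi_+\,\dx \;\le\; \int_U\nabla u\cdot\nabla\phi\,\dx + \int_{A\cap\{u>0\}}\phi\,\dx \;\le\; \int_{A\cap\{u=0\}}\phi_-\,\dx,
\]
equivalently $\chi_{A\cap\{u>0\}}\le \Delta u\le \chi_A$ in the distributional sense. But this is \emph{not} yet \eqref{n27}: you still need $\Delta u=0$ a.e.\ on $\{u=0\}$, and your proposed mechanism (``choosing $\phi$ of a single sign shows the $\chi_{\{u=0\}}$ term must vanish'') does not deliver this. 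Testing with signed $\phi$ only reproduces the sandwich above; adding the two inequalities gives the trivial $\int_{A\cap\{u=0\}}|\phi|\ge 0$. Nonnegativity of $u$ alone, at the level of test functions, gives no further information.

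What is actually needed is a regularity step: from $\Delta u\in L^\infty$ one gets $u\in W^{2,p}_{\loc}$ for all $p<\infty$, hence $\nabla u$ is continuous and vanishes on the minimum set $\{u=0\}$, and then one invokes the Stampacchia-type fact that $D^2 u=0$ a.e.\ on $\{\nabla u=0\}$ (for $W^{1,p}$ functions with $p>N$). This is precisely the content of the lemma the paper proves inside this proposition. The paper reaches the same conclusion via a regularisation $F_\varepsilon$ of $t\mapsto t_+$, uniform $W^{2,p}$ bounds on the approximate minimisers, and the same a.e.\ vanishing lemma. Either route works, but some genuine regularity input is unavoidable; your sketch skips it.
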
 
\begin{proof} 
$i)$ Let $u \in H^1_{\loc,+}(U)$ be a variational solution in the sense of Definition~\ref{def:obstacle}. 
We want to prove that \e{n27} holds for any function $\phi \in C^\infty_0(U)$. Clearly, this is a local result 
and it suffices to prove that, for any ball $B\subset U$, \e{n27} holds for any 
$\phi \in C^\infty_0(B)$. Now, observe that 
the restriction $\tilde{u}=u\arrowvert_{B}\in H^1_{+}(B)$ is obviously a variational solution in the sense of Definition~\ref{def:obstacle}. Therefore, we may assume without loss of generality that $U=B$ and that 
$u\in H^1(B)$. Observe that $u$ is unique in the following sense: 
Since the functional $v\mapsto \int_B \big(\tfrac12|\nabla v|^2+ \chi_{A} v_+\big)\dx$ is 
uniformly convex, there exists a unique variational solution in $u+H^1_0(B)$. 

To get the Euler-Lagrange equation, we 
regularize the functional in question by considering 
\be\label{def:Feps}
\int_B  \Big(\frac12 |\nabla v|^2 + \chi_A  F_\varepsilon( v) \Big) \dx,
\ee
where 
\[  F_\varepsilon( v) = \left\{ \begin{array}{cl} 
0 & \text{ if }  v\le 0  \\ 
\frac1{2\varepsilon} v^2 & \text{ if } 0 < v < \varepsilon \\
v-\frac{\varepsilon}2 & \text{ if } \varepsilon \le v .
\end{array} 
\right. 
 \]
Then $F_\varepsilon$ is convex and continuously differentiable with derivative 
\[ f_\varepsilon( v) = \left\{ \begin{array}{cl} 
0 & \text{ if }  v\le 0 \\
\frac1{\varepsilon} v & \text{ if } 0 < v < \varepsilon \\
1 & \text{ if } \varepsilon \le v .
\end{array} 
\right. 
 \]
Since the functional~\e{def:Feps} is uniformly convex, classical methods in functional 
analysis guarantee that there exists a unique minimizer $u^\varepsilon$ in $u + H^1_0(B)$, 
which satisfies the Euler-Lagrange equation 
\[ -\Delta u^\varepsilon +\chi_A f_\varepsilon (u^\varepsilon) = 0. \]
This immediately implies that 
\[ \Delta u^\varepsilon \in [0,1] \]
almost everywhere. In particular 
$\Delta u^\varepsilon \in L^\infty(B)$ and classical result about 
elliptic regularity implies 
estimates for $ \Vert u^\eps \Vert_{W^{2,p}(\frac12 B)}  $ uniformly in  $\varepsilon$ for each $p<+\infty$ 
and hence $\Vert u^\varepsilon \Vert_{C^{1,\alpha}(\frac12 B)} $ is uniformly bounded for every $ \alpha \in (0,1)$. Now 
\[   v_+ - \frac{\varepsilon}{2} \le F_\varepsilon(v) \le v_+ \]
and it is not hard to see that $ u^\varepsilon \to u $ in $H^1$, hence\footnote{A more elaborate 
argument, not needed for our purpose, gives the stronger conclusion that $ u \in C^{1,1}(\tfrac12 B)$.} 
we have $u \in W^{2,p} \cap C^{1,\alpha}(\frac12 B)$. 

Consequently, for a  subsequence $u^{\varepsilon_n}$, we have  
\[ f_{\varepsilon_n}(u^{\varepsilon_n}) \to f \quad\text{weakly in } L^2\] 
for some function $f\in L^2$. 
In particular $ 0 \le f \le 1$, $ f(x) = 1 $ if $u(x) >0 $ and 
\[ \Delta u = \chi_A f \]
almost everywhere. We want to prove that 
$f=\chi_{\{u>0\}}$. 

We observe that 
\[ u(x)=0 \Longrightarrow \nabla u(x) =0 \]
since $u$ is nonnegative. We now claim that there exists a set $D \subset \{ u(x) = 0 \} $ 
of full measure so that~$f=0$ on $D$. More precisely we verify that for every $1\le i,j \le N$ 
there exists a set of full measure $ E\subset \{ u = 0 \} $ so that $ \partial^2_{ij} u = 0 $ on $E$. 
We recall $ \partial_i u \in W^{1,p}_{\loc}(\tfrac{1}{2}B)$ for some $p >N$. This is a consequence of the following Lemma.

\begin{lemma} 
Let $ p > N$ and consider a function $w \in W^{1,p}(B)$ for some ball $B$. Then 
there exists a set of full measure $E\subset B$ so that
$w$ is differentiable at every $x\in E$ and 
either $ w(x) \ne 0 $ or $\nabla  w(x) = 0  $.
\end{lemma} 
\begin{proof}
By Morrey's inequality there is a representative of $ w \in W^{1,p}(B)$ which is in $C^{1-\frac{N}p}$. In a slightly incorrect fashion we write $W^{1,p}(B) \subset C^{1-\frac{N}p}$ and we use pointwise values of $w$ below. Let $A\subset B  $ be the subset of Lebesgue points for $ \partial_j w $ for $ 1\le j \le N$. Choosing a suitable representative we may assume that for $ x\in A $
\[ \nabla w(x) = \lim_{r\to 0} \fint_{B_r(x)} \nabla w(y) dy.  \]
Let 
\[ D = \{ x \in A: w(x) = 0 , \nabla w(x) \ne 0 \}. \]
We claim that there exists a constant $C>0$ such that, for all 
$\varepsilon>0$, we have $|D| \le C \varepsilon$, which implies $|D|=0$.

To prove the claim we observe that 
for every $x\in D$ (since $w(x) = 0 $, $ w$ is differentiable at $x$  and $ \nabla w(x) \ne 0$) there exists a ball of radius $r(x) < 1-|x|$ so that 
\[ |D \cap B_r(x)| \le \varepsilon |B_r(x)|.\]

By the Vitali covering lemma there exists a disjoint sequence of such balls $B_{r_n}(x_n)$ so that 
\[ \Big|D \backslash \Big( \bigcup B_{r_n}(x_n) \Big)\Big| = 0.  \]
Thus 
\[ |D| = \sum_{n=1}^\infty |D\cap B_{r_n}(x_n)| \le \varepsilon 
\sum_{n=1}^\infty |B_{r_n}(x_n)| \le \varepsilon |B|. 
\]
This completes the proof of the lemma.
\end{proof} 

This concludes the proof of the first implication: Variational solutions satisfy the Euler-Lagrange equation.

$ii)$ It remains to prove that, if $u\in H^1_{\loc}(U)$ is a weak-solution of the 
Euler-Lagrange equation \e{n27.5}, then it is a variational 
solution in the sense of Definition~\ref{def:obstacle}. 
Again, the result is local in nature, and we can assume that $U$ is a ball $B$. 

Suppose that $u$ is a weak solution to the Euler-Lagrange equation and let 
$ v \in u + H^1_0(B)$  be a variational solution 
(as mentioned previously, such a variational solution always exists). By the first part 
\[
\Delta v = \chi_{A \cap \{ v>0 \} }\]
hence
\be\label{weak:v}
\Delta (u-v) = \chi_{A\cap \{ u>0\}} - \chi_{A\cap \{ v>0\} }
\ee
holds the sense that, for all 
$\phi \in C^\infty_0(B)$,
\begin{equation}\label{n27t}
\int_U \Big[\nabla (u-v) \cdot\nabla \phi
+(\chi_{A\cap \{ u>0\}} - \chi_{A\cap \{ v>0\} })\phi  \Big]\dx=0.
\end{equation}
By density, \e{n27t} holds for any $\phi\in H^1_0(B)$.  
Now observe that, since $u$ and $v$ are nonnegative, we have
\be\label{ineq:u-v}
(\chi_{A\cap \{ u>0\}} - \chi_{A\cap \{ v>0\} })(u-v) \ge 0,
\ee
since both factors have the same sign. 
Then, by applying \e{n27t} with $ u-v\in H^1_0(B)$, 
we conclude that
\[ \int_B |\nabla (u-v)|^2 \dx \le 0 \]
which proves that $v=u$. By definition of $v$, this means that $u$ is a variational solution. 
This concludes the proof of the second implication and hence that  
of Proposition~\ref{lem:EulerLagrange}.
\end{proof}

\subsection{Solutions to the  obstacle problem in bounded domains}
\label{subsec:bounded}

Let $U \subset \xR^N$ be a bounded domain with a Lipschitz boundary and consider a measurable subset $A \subset U$. 
We seek a non-negative solution to the obstacle problem written as 
\begin{equation} 
\Delta u =  \chi_{A \cap \{ u > 0\}},
\end{equation} 
with either the Dirichlet boundary data $u=1$ or the Neumann boundary condition 
$ \partial_n u = 1 $ ($n$ again the outer unit normal) on $\partial U$. 
We consider solutions 
in the sense of Definition \ref{def:obstacle}. 
To do this, we begin by considering the case with boundary data $u=1$. 

\begin{proposition}\label{P15}
Introduce the set
\[
\mathcal{A}=\{ v\in 1 + H^{1}_0(U)\sep v \ge 0 \}.
\]
There exists a unique function 
$u \in \mathcal{A}$ so that, for all 
$v\in \mathcal{A}$,
\[
\int_U \left(\frac12 |\nabla v|^2  + \chi_A v_+ \right)\dx 
\ge \int_U \left(\frac12 |\nabla u|^2 + \chi_A u_+ \right)\dx.
\]
\end{proposition}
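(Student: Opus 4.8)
The plan is to apply the direct method of the calculus of variations to the functional
\[
J(v)\defn \int_U \Big(\tfrac12 |\nabla v|^2 + \chi_A v_+\Big)\dx
\]
on the constraint set $\mathcal{A}$. First I would record the structure of $\mathcal{A}$: it is nonempty (it contains the constant function $1$), convex, and closed in $H^1(U)$, since it is the intersection of the affine subspace $1+H^1_0(U)$ with the convex cone $\{v\ge 0\}$, and the latter is closed in $L^2(U)$ hence in $H^1(U)$. Both integrands in $J$ being nonnegative, we have $J\ge 0$ on $\mathcal{A}$, so $m\defn\inf_{\mathcal{A}}J\in[0,+\infty)$; fix a minimizing sequence $(v_n)\subset\mathcal{A}$.

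Next I would extract a limit. Since $J(v_n)$ is bounded and $\chi_A(v_n)_+\ge 0$, the gradients $\nabla v_n$ are bounded in $L^2(U)$; because $v_n-1\in H^1_0(U)$ and $U$ is bounded, the Poincaré inequality bounds $\|v_n-1\|_{L^2(U)}$, hence $(v_n)$ is bounded in $H^1(U)$. Passing to a subsequence, $v_n\rightharpoonup u$ weakly in $H^1(U)$ and, by Rellich--Kondrachov, $v_n\to u$ strongly in $L^2(U)$ (and a.e.\ along a further subsequence). The limit $u$ belongs to $\mathcal{A}$ because $\mathcal{A}$ is convex and $H^1$-closed, hence weakly closed.

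Then comes the semicontinuity step. The Dirichlet part $v\mapsto\tfrac12\int_U|\nabla v|^2\dx$ is convex and strongly lower semicontinuous, hence weakly lower semicontinuous. The obstacle part is in fact continuous along $(v_n)$: from $|(v_n)_+-u_+|\le|v_n-u|$ pointwise and $\chi_A$ bounded on the bounded set $U$, one gets $\int_U\chi_A(v_n)_+\dx\to\int_U\chi_A u_+\dx$ using the $L^2$ (hence $L^1$) convergence. Combining, $J(u)\le\liminf_n J(v_n)=m$, so $u$ is a minimizer and therefore satisfies the asserted inequality for every $v\in\mathcal{A}$. For uniqueness, I would invoke strict convexity of $J$ on the affine space $1+H^1_0(U)$: the map $v\mapsto v_+$ is convex, so $v\mapsto\int_U\chi_A v_+\dx$ is convex, while $v\mapsto\tfrac12\int_U|\nabla v|^2\dx$ is \emph{strictly} convex there, since two elements of $1+H^1_0(U)$ with the same gradient a.e.\ differ by a constant lying in $H^1_0(U)$, hence are equal. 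Thus two minimizers must coincide.

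In an otherwise textbook argument, the only points requiring some care — and the nearest thing to an obstacle — are the nonsmoothness of the term $\chi_A v_+$, handled by the elementary Lipschitz bound $|a_+-b_+|\le|a-b|$, and the persistence of the pointwise constraint $v\ge 0$ under the weak limit, handled by the weak closedness of the convex set $\mathcal{A}$. (One then remarks, though it is not needed for the statement, that the minimizer $u$ is in particular a variational solution of $\Delta u=\chi_{A\cap\{u>0\}}$ in the sense of Definition~\ref{def:obstacle}, by comparing $u$ with competitors $u+t(v-u)$, $t\in[0,1]$, and letting $t\to 0$.)
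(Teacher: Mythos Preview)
Your proof is correct and follows essentially the same approach as the paper: the paper's proof is a single sentence invoking that $\mathcal{A}$ is closed and convex and that the functional is uniformly convex and coercive, citing classical arguments, and you have simply spelled out those classical arguments (direct method, Poincar\'e for coercivity, weak lower semicontinuity, strict convexity on $1+H^1_0(U)$ for uniqueness). There is no substantive difference in strategy.
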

\begin{proof}
Since $ \mathcal{A}$ is closed and convex and since 
the functional is uniformly convex 
and coercive, the existence of a unique minimizer follows from classical arguments (see~\cite{Figalli2018free}).
\end{proof}
We now turn to the case where $u$ satisfies the Neumann boundary condition 
$ \partial_n u = 1 $ on $\partial U$. 
In this case, we further assume that the set $A$ satisfies 
\begin{equation}\label{eq:measure}  \lvert A\rvert > \mathcal{H}^{N-1}(\partial U), 
\end{equation} 
where $\lvert A\rvert$ denotes the Lebesgue measure of $A$ and $\mathcal{H}^{N-1}(\partial U)$ refers to the $(N-1)$-dimensional Hausdorff measure of 
the boundary $\partial U$. Now, set 
\[
H^1_+(U)=
\{ u \in H^1(U) \sep u\ge 0 \}.
\]
We then search a minimizer $u\in H^1_+(U)$ to the functional 
\[
E(u)= \int_U \left(\frac12 |\nabla u |^2 + \chi_A u_+ \right)\dx 
- \int_{\partial U} u \dmH.
\]
\begin{proposition}\label{P16}
Suppose that \eqref{eq:measure} holds. 
Then $\inf_{v\in H^1_+(U)}E(v)>-\infty$ and 
there exists a unique $u\in H^1_+(U)$ 
such that $E(u)=\inf_{v\in H^1_+(U)}E(v)$.
\end{proposition}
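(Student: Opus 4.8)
\textbf{Proof plan for Proposition~\ref{P16}.}
The plan is to run the direct method of the calculus of variations, with the only delicate point being the lower bound on $E$, for which the hypothesis \eqref{eq:measure} is exactly what is needed. First I would establish that $E$ is bounded below on $H^1_+(U)$. The obstacle term $\int_U \chi_A u_+\dx = \int_U \chi_A u\dx$ is nonnegative, so it can be dropped for the lower bound; thus it suffices to bound $\tfrac12\int_U |\nabla u|^2\dx - \int_{\partial U} u\dmH$ from below. By the trace theorem and the Poincaré inequality on $H^1(U)$ (valid since $U$ is a bounded Lipschitz domain), one has $\int_{\partial U} u\dmH \le C\lA u\rA_{H^1(U)}$. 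The subtlety is that the gradient term only controls $\nabla u$, not $u$ itself, so the constant-function direction must be handled separately. Writing $u = \bar u + w$ with $\bar u = \fint_U u$ the mean and $w$ of zero mean, Poincaré gives $\lA w\rA_{L^2}\le C\lA \nabla w\rA_{L^2}=C\lA\nabla u\rA_{L^2}$, so
\[
\int_{\partial U} u\dmH = \bar u\,\mathcal{H}^{N-1}(\partial U) + \int_{\partial U} w\dmH \le \bar u\,\mathcal{H}^{N-1}(\partial U) + C\lA \nabla u\rA_{L^2}.
\]
For the constant part, I bring in the obstacle term: since $u\ge 0$ we have $\bar u\ge 0$ and $\int_U \chi_A u\dx \ge \bar u|A| - C\lA\nabla u\rA_{L^2}$ (again by Poincaré applied to $w$). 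Combining, $E(u)\ge \tfrac12\lA\nabla u\rA_{L^2}^2 + \bar u\big(|A|-\mathcal{H}^{N-1}(\partial U)\big) - C'\lA\nabla u\rA_{L^2}$, and since $|A|-\mathcal{H}^{N-1}(\partial U)>0$ by \eqref{eq:measure} and $\bar u\ge 0$, the middle term is nonnegative; the remaining expression $\tfrac12 t^2 - C't$ in $t=\lA\nabla u\rA_{L^2}$ is bounded below. Hence $\inf E > -\infty$, and the same computation shows that along a minimizing sequence $\lA\nabla u_n\rA_{L^2}$ stays bounded and, since the middle term is then also bounded, $\bar u_n$ stays bounded, so $(u_n)$ is bounded in $H^1(U)$.

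Next I would extract a weakly convergent subsequence $u_n \rightharpoonup u$ in $H^1(U)$; by compactness of the embedding $H^1(U)\hookrightarrow L^2(U)$ and compactness of the trace operator $H^1(U)\to L^2(\partial U)$ (both valid on bounded Lipschitz domains) we get $u_n\to u$ strongly in $L^2(U)$ and in $L^2(\partial U)$, hence (along a further subsequence) $u_n\to u$ a.e., so $u\ge 0$, i.e.\ $u\in H^1_+(U)$. Lower semicontinuity of $\lA\nabla\cdot\rA_{L^2}^2$ under weak convergence, continuity of $\int_U \chi_A u\dx$ and of $\int_{\partial U} u\dmH$ under the strong $L^2$ convergences just noted, together give $E(u)\le \liminf E(u_n)=\inf_{H^1_+(U)}E$, so $u$ is a minimizer.

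Finally, uniqueness: the map $u\mapsto \tfrac12\int_U|\nabla u|^2\dx$ is convex and strictly convex modulo constants, the map $u\mapsto \int_U \chi_A u_+\dx$ is convex (as $u\mapsto u_+$ is convex and $\chi_A\ge 0$), and $u\mapsto -\int_{\partial U} u\dmH$ is linear, so $E$ is convex on the convex set $H^1_+(U)$. If $u_1,u_2$ are both minimizers, then by convexity $\tfrac12(u_1+u_2)$ is also a minimizer and the strict convexity in the gradient forces $\nabla u_1=\nabla u_2$ a.e., so $u_1-u_2$ is a constant $c$; plugging into $E$, minimality in the direction of constants combined with $|A|>\mathcal{H}^{N-1}(\partial U)$ (the coefficient of the constant $\bar u$ in $E$ is then strictly positive, so $E$ is strictly increasing in the mean once the gradient and relative profile are fixed) forces $c=0$. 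I expect the main obstacle to be precisely the interplay in the lower-bound step between the seminorm control coming from $\tfrac12\int|\nabla u|^2$ and the two zeroth-order terms: one must split off the mean of $u$ and use \eqref{eq:measure} to absorb the boundary term into the obstacle term, rather than expecting coercivity of the full $H^1$ norm.
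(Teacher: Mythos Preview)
Your argument is correct and complete. The decomposition $u=\bar u + w$ with $\bar u=\fint_U u$ and $w$ of zero mean, together with the standard Poincar\'e inequality for zero-mean functions, does the job: the obstacle term contributes $\bar u\,|A|$ up to a gradient error, the boundary term costs $\bar u\,\mathcal{H}^{N-1}(\partial U)$ up to a gradient error, and hypothesis \eqref{eq:measure} makes the net coefficient of $\bar u$ strictly positive. The uniqueness step is also fine: once $\nabla u_1=\nabla u_2$ is established, if $u_2=u_1+c$ with $c\ge 0$ (say), then since both functions are nonnegative one has $E(u_2)-E(u_1)=c\big(|A|-\mathcal{H}^{N-1}(\partial U)\big)$, forcing $c=0$.

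The paper takes a different route for the lower bound. Rather than splitting off the mean, it first proves a reduction lemma: any $v\in H^1_+(U)$ can be replaced by $(v-\varepsilon)_+$ for a suitable $\varepsilon>0$ without increasing the energy, and the latter function vanishes on a subset of $A$ of measure at least $|A|-\mathcal{H}^{N-1}(\partial U)>0$. One then proves a Poincar\'e inequality \emph{for functions vanishing on a set of fixed positive measure} (via a contradiction/compactness argument), which directly gives full $H^1$-coercivity on this restricted class. Your approach is more elementary in that it uses only the textbook zero-mean Poincar\'e inequality and avoids the auxiliary lemma; the paper's approach has the side benefit of producing the qualitative information that minimizers (and near-minimizers) must vanish on a definite portion of $A$, which connects more transparently to the free-boundary picture even though it is not strictly needed for the bare existence statement.
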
 
\begin{proof} 
We begin by proving that it is sufficient to consider functions that vanish on a set of positive measure.
\begin{lemma}
Introduce the set $\tilde{H}^1_+(U)$ of those functions 
$u\in H^1_+(U)$ such that
\begin{equation}\label{eq:boundmeasure}
\lvert\{ x\in A \sep w(x)=0 \}\rvert \ge  \lvert A\rvert - \mathcal{H}^{N-1}(\partial U).
\end{equation}
Then, for all $v\in H^1_+(U)$, there holds
\begin{equation}\label{n11}
E(v)\ge \inf_{w\in \tilde{H}^1_+(U)}E(w).
\end{equation}
\end{lemma}
\begin{proof}
Let $v\in H^1_+(U)$. If $v$ belongs to 
$\tilde{H}^1_+(U)$, then \eqref{n11} is obvious. Otherwise, there exists 
$ \varepsilon>0$ so that 
\begin{equation}\label{n12}
\lvert \{ x \in A \sep v(x) > \varepsilon\}\rvert > 
\mathcal{H}^{N-1}(\partial U).
\end{equation}
Then the function $w=(v-\varepsilon)_+=\max\{ v-\varepsilon, 0\}$ belongs to $H^1_+(U)$. Moreover, since 
$\lvert \nabla w\rvert \le \lvert \nabla v\rvert$ and since $v-(v-\varepsilon)_+\le \varepsilon$, 
it follows from \eqref{n12} that
\begin{align*}
E( w) & \le E(v)+\int \chi_A \big( (v-\varepsilon)_+-v\big)\dx 
+\int_{\partial U} (v-(v-\varepsilon)_+) \dmH(x)\\
&\le E(v) - \varepsilon \Big( \lvert\{ x \in A \sep v> \varepsilon\} \rvert 
- \mathcal{H}^{N-1}(\partial U )\Big)< E(v).
\end{align*}
Noticing that $w$ satisfies \eqref{eq:boundmeasure} by construction, we get the wanted result.
\end{proof}

For functions belonging to the set $\tilde{H}^1_+(U)$ introduced above, we have the following Poincar\'e and trace inequalities.

\begin{lemma}\label{L3}
There exists a positive constant $C$ 
such that, for all $w\in \tilde{H}^1_+(U)$, the two following inequalities hold
\begin{align}
&\int_U w^2\dx\leq C \int_U \lvert\nabla w\rvert^2\dx,\label{n13}\\
&\int_{\partial U} w^2\dmH\leq C \int_U \lvert\nabla w\rvert^2\dx.\label{n14}
\end{align}
\end{lemma}
\begin{proof}
Notice that the trace estimate \eqref{n14} 
will be a consequence of the Poincar\'e inequality \eqref{n13} 
and the classical trace inequality 
$\lA w\rA_{L^2(\partial U)}\lesssim \lA w\rA_{H^1(U)}$. 
So it is sufficient to prove \eqref{n13}. The latter inequality is a classical result and we recall its proof for the sake of completeness. If \eqref{n13} is false we can 
construct a sequence $(w_m)$ of elements of $H^1(U)$ 
such that
\begin{equation}\label{n10}
\lvert\{ x\in A \sep w_m(x)=0 \}\rvert \ge  \lvert A\rvert - \mathcal{H}^{N-1}(\partial U),
\quad \int_U w_m^2\dx=1,\quad \int_U \lvert\nabla w_m\rvert^2\dx
\rightarrow 0.
\end{equation}
Consequently, since $U$ is bounded with regular boundary, 
we can suppose that $(w_m)$ converges to $\bar{w}\in H^1(U)$, strongly in $L^2(U)$ and 
weakly in $H^1(U)$. Then $\bar{w}$ is a nonzero constant. 
Moreover
\begin{align*}
0&=\lim_{m\to +\infty}\int_U \lvert w_m-\bar{w}\rvert^2\dx
\ge \lim_{m\to +\infty}\int_{ A\cap \{w_m=0\} }  \lvert w_m-\bar{w}\rvert^2\dx\\
&\ge \lvert \bar{w}\rvert^2\inf_m \lvert \{ x\in A\sep w_m=0\}\rvert>0,
\end{align*}
hence the wanted contradiction.
\end{proof}

Now, continuing with the proof of Proposition \ref{P16}, 
it follows from \eqref{n14} and the Cauchy-Schwarz inequality that, for all $w\in \tilde{H}^1_+(U)$,
\begin{equation}\label{n15}
\left(\int_{\partial U} w\dmH\right)^2\le K
\int_{\partial U}w^2\dmH
\le C \int_{U} \la \nabla w\ra^2\dx .
\end{equation}
Hence, by definition of the functional $E$, we have 
$$
E(w)\ge \int_U \lvert\nabla w\rvert^2\dx-\left(\frac{1}{C}\int_U \lvert\nabla w\rvert^2\dx\right)^{\frac12},
$$
which immediately implies that $\inf_{w\in \tilde{H}^1_+(U)}E(w)>-\infty$. 
We are thus in position to introduce 
a sequence $(w_n)_{n\in\mathbb{N}}$ with $v_n\in \tilde{H}^1_+(U)$ such that 
$$
\lim_{n\to +\infty}E(w_n)= \inf_{w\in \tilde{H}^1_+(U)}E(w).
$$
Notice that, for any $\delta>0$, directly from the definition of $E(\cdot)$ 
and Young's inequality, we have
$$
\int_{U}\la \nabla w_m\ra^2\dx\le 
E(w_m)+\int_{\partial U}w_m\dmH
\le E(w_m)+\delta \left( \int_{\partial U}w_m\dmH\right)^2+\frac{1}{4\delta}.
$$
Therefore, in light of \eqref{n15}, we get that the sequence 
$(w_m)_{m\in\mathbb{N}}$ is bounded in $H^1(U)$ and hence 
has a subsequence that converges to a minimizer. 

It remains to prove that the minimizer is unique. To do so, suppose that $u, v \in \mathcal{A}$ are minimizers. 
Hence, for all $t\in [0,1]$, we have
\begin{align*}
&\int _U \left(\frac12 |\nabla u |^2 + \chi_A u\right) \dx - \int_{\partial U} u \dmH\\ 
&\qquad\qquad  =  \int_U\left( \frac12 |\nabla v |^2 + \chi_A v\right)\dx 
- \int_{\partial U} v \dmH\\ 
&\qquad\qquad =\int_U\left( \frac12 |\nabla (v+ t(u-v))  |^2 + \chi_A (v+ t (u-v))\right)\dx \\
&\qquad\qquad\quad- 
\int_{\partial U} v+t(u-v) \dmH,
\end{align*}
where the last equality follows from convexity and the fact that $u$ and $v$ are minimizers. The right-hand side is a constant quadratic polynomial in $t$, hence $ \Vert \nabla (u-v) \Vert_{L^2} = 0 $ so that 
$ u-v$ is constant. 
As above, we see that the constant is $0$ which proves that $u=v$.
\end{proof}

\begin{proposition}\label{prop:regularity} Let $U \subset \xR^N$ be bounded with Lipschitz boundary. 
Consider the variational solution $u$ given by either Proposition~\ref{P15} or 
Proposition~\ref{P16}. Then $u\in C^1(U)\cap C(\overline{U})$.
\end{proposition}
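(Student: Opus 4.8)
The plan is to prove interior $C^1$ regularity via the classical Caffarelli theory for the obstacle problem, and then handle the boundary separately depending on whether we are in the Dirichlet case (Proposition~\ref{P15}) or the Neumann case (Proposition~\ref{P16}).

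\smallskip

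\textbf{Interior regularity.} First I would recall that by Proposition~\ref{lem:EulerLagrange} the variational solution $u$ is a weak solution of $\Delta u=\chi_{A\cap\{u>0\}}$, so in particular $0\le \Delta u\le 1$ in the sense of distributions, with $\Delta u\in L^\infty(U)$. Elliptic regularity (as already used inside the proof of Proposition~\ref{lem:EulerLagrange}) then gives $u\in W^{2,p}_{\loc}(U)$ for every $p<\infty$, hence $u\in C^{1,\alpha}_{\loc}(U)$ for every $\alpha\in(0,1)$ by Morrey. This already yields $u\in C^1(U)$; I would state this cleanly and note that no finer structure of the contact set is needed for the weaker conclusion claimed here (the sharper $C^{1,1}_{\loc}$ statement, mentioned in the footnote of the earlier proof, would require Caffarelli's quadratic growth estimate, but we do not need it).

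\smallskip

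\textbf{Boundary regularity, Dirichlet case.} For the solution from Proposition~\ref{P15} with $u=1$ on $\partial U$, the strategy is to compare $u$ with the harmonic function $H$ in $U$ with $H|_{\partial U}=1$ and with the solution $w$ of $\Delta w=1$ in $U$, $w|_{\partial U}=1$: by the comparison Lemma~\ref{lem:subobstacle} (or directly by the maximum principle applied to $u-H$ and $u-w$, using $0\le\Delta u\le 1$) one gets $w\le u\le H$ on $\overline U$, with $H=w=1$ on $\partial U$. Since $U$ has Lipschitz boundary, $H$ and $w$ are continuous up to the boundary (classical barriers for the Laplacian on Lipschitz domains), so $u$ is squeezed between two functions that both tend to $1$ at every boundary point; this gives $u\in C(\overline U)$ with $u=1$ on $\partial U$. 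Combined with the interior statement we obtain $u\in C^1(U)\cap C(\overline U)$.

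\smallskip

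\textbf{Boundary regularity, Neumann case.} For the solution from Proposition~\ref{P16}, I would instead use the fact that $u$ is subharmonic ($\Delta u=\chi_{A\cap\{u>0\}}\ge 0$) together with the uniform $L^\infty$ bound on $\Delta u$. The point is that $u$ is bounded: writing $u=u_0+r$ where $\Delta u_0=\chi_{A\cap\{u>0\}}$ with $u_0=0$ on $\partial U$ (so $u_0\in C(\overline U)$ by the Dirichlet solvability on Lipschitz domains and elliptic regularity) and $r$ harmonic, the Neumann condition $\partial_n u=1$ forces $\partial_n r = 1-\partial_n u_0$ on $\partial U$ (interpreted weakly), and the minimization/energy bound from the proof of Proposition~\ref{P16} (the $H^1(U)$ bound on $u$) shows $r\in H^1(U)$ solves a Neumann problem with $L^2(\partial U)$ data; on a Lipschitz domain this gives $r$ continuous up to the boundary (or at least in $C(\overline U)$ after absorbing the harmonic part, using that the trace is in $L^2$ and the domain is bounded Lipschitz). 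Hence $u\in C(\overline U)$, and with the interior part $u\in C^1(U)\cap C(\overline U)$.

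\smallskip

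\textbf{Main obstacle.} The genuinely delicate point is the boundary continuity on a merely Lipschitz domain, especially in the Neumann case, where one cannot simply invoke smooth barriers. I expect the cleanest route is to reduce everything to the Dirichlet case by subtracting an explicit solution of $\Delta \cdot = \chi_{A\cap\{u>0\}}$ and then invoke the fact that harmonic functions on bounded Lipschitz domains with continuous (resp.\ $L^2$ Neumann) data are continuous up to the boundary — a standard but non-trivial fact (Dahlberg) — combined with the interior $W^{2,p}$ estimate. If one prefers to avoid the harmonic measure machinery, an alternative for the Dirichlet case is the pure squeeze argument above, which only uses the existence of barriers at Lipschitz boundary points for the Laplacian, and this is the version I would write out in detail, noting that the Neumann case follows by the same reduction once one records that the trace of $u$ on $\partial U$ lies in $L^2$.
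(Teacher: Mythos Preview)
Your interior regularity argument and your Dirichlet squeeze argument are both correct and essentially equivalent to what the paper does (the paper simply invokes the exterior cone condition to conclude that every boundary point is Wiener-regular for the Laplacian, which is precisely what makes your barriers $H$ and $w$ continuous up to $\partial U$).

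The Neumann case, however, has a genuine gap. Your decomposition $u=u_0+r$ produces a harmonic function $r$ whose Neumann data is $1-\partial_n u_0$, and on a merely Lipschitz domain the normal derivative $\partial_n u_0$ of a solution to $\Delta u_0=f\in L^\infty$, $u_0|_{\partial U}=0$, is in general only in $L^2(\partial U)$ (this is the Rellich/Dahlberg range). But a harmonic function on a bounded Lipschitz domain with $L^2$ Neumann data is \emph{not} continuous up to the boundary in general: one only gets nontangential $L^2$ control of the trace and of the nontangential maximal function, not $C(\overline U)$. So the sentence ``on a Lipschitz domain this gives $r$ continuous up to the boundary'' is false as stated, and the hedged version (``trace in $L^2$'') does not repair it. The decomposition has effectively traded a very regular Neumann datum (the constant $1$) for an irregular one, which is the wrong direction.

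The paper avoids this by not decomposing at all: it applies a regularity theorem of Fabes--Mendez--Mitrea (Theorem~9.2 in \cite{MR1658089}) directly to the inhomogeneous Neumann problem $\Delta u=\chi_{A\cap\{u>0\}}\in L^\infty(U)$, $\partial_n u=1$ on $\partial U$, choosing parameters $q$ large and $s<1$ so that the solution space $L^q_{1-s+1/q}(U)$ embeds into $C(\overline U)$. If you want to stay closer to your strategy, one option is to decompose the other way: take $r$ to solve the Neumann problem with the \emph{constant} datum $1$ (for which one can argue continuity more directly), and put the $L^\infty$ right-hand side into a Dirichlet problem; but even then you will need a statement to the effect that the harmonic Neumann extension of a constant is continuous up to a Lipschitz boundary, which again is not entirely elementary.
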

\begin{proof}
It follows from Lemma~\ref{lem:EulerLagrange} that $ \Delta u \in L^\infty(U)$ 
and so $u\in C^1(U)$. We only need to prove that $u\in  C(\overline{U})$.

Consider the solution $u$ given by 
Proposition~\ref{P15} for the problem with 
Dirichlet boundary condition $u=1$. 
In this case the result is well-known. Indeed, since the boundary is Lipschitz by assumption, it satisfies an exterior cone condition at any point and so 
every boundary point is regular for the Dirichlet problem (alternatively, one can apply Theorem~$10.1$ in \cite{MR1658089}). This implies that 
$u \in C(\overline{U})$. 

In the case of the solution $u$ given by 
Proposition~\ref{P16} for the problem with 
Neumann 
boundary condition $\partial_n u=1$, the continuity of $u$ on $\overline{U}$ is a consequence of Theorem 9.2 in \cite{MR1658089}: 
There exists $ q_0$ and $s_0$ so that any $q>q_0$ and $s_0<s<1$ satisfies the
conditions of Theorem 9.2.
We choose $q$ large and $s<1$ so that
$L^q_{1-s+\frac1q} \subset L^\infty $ in the notation of that paper, i.e.
\[  1-s +\frac1q > \frac{N}q. \]
Since $\frac1q -s-1 < 0 $ we have 
\[ L^\infty(U) \subset L^q_{\frac1q-s-1}(U).  \]
Then 
\[   \Vert u \Vert_{C_b} \lesssim  c \Vert u \Vert_{L^q_{1-s+\frac1q} } 
\lesssim \Vert \chi_A \Vert_{L^q_{\frac1q-s-1}}+ \Vert 1 \Vert_{L^q_s(\partial U)}
\lesssim \Vert \chi_A \Vert_{L^\infty }
+ \Vert 1 \Vert_{L^q_s(\partial U)}.
\]
This implies the desired result.
\end{proof}

The previous continuity result has the following immediate 
implication. 
\begin{corollary}\label{C:4.10}
Let $U \subset \xR^N$ be a bounded open set with Lipschitz boundary. 
Let $ A \subset U$ be relatively closed and such that 
$\overline{U} \backslash A$ is connected. Consider also a real number $t>0$. For all variational solution $u$ 
to the obstacle problem $\Delta u=\chi_{A\cap \{u>0\}}$ 
satisfying either $ u=t$ on $\partial U$ or $\partial_n u = t $ on $\partial U$ and $ A$ is closed, there holds 
\begin{equation}\label{eq:zeroset} \{x\in U:  u =0\} \subset A.  \end{equation}
\end{corollary}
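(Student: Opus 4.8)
The plan is to argue by contradiction: suppose there exists a point $x_0\in U$ with $u(x_0)=0$ but $x_0\notin A$. Since $A$ is relatively closed in $U$, the set $\overline{U}\setminus A$ is open (in $\overline U$) and, by hypothesis, connected; in particular it is a connected open neighborhood of $x_0$ inside $\xR^N$ on which $\chi_A\equiv 0$. On the open set $U\setminus A$ the Euler--Lagrange equation \eqref{n27.5} reads $\Delta u=0$, so $u$ is harmonic there; moreover $u\ge 0$ by assumption. The idea is that a nonnegative harmonic function which vanishes at an interior point must vanish identically on its connected component, and then to propagate this all the way to $\partial U$ to contradict the boundary condition $u=t>0$.

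First I would record that $u\in C^1(U)\cap C(\overline U)$ by Proposition~\ref{prop:regularity}, so all pointwise statements below make sense. Next, on the connected open set $V\defn U\setminus A$ (which is nonempty and contains $x_0$) the function $u$ is harmonic and nonnegative; by the strong maximum principle a nonnegative harmonic function attaining the value $0$ at an interior point is identically $0$ on the connected component of $V$ containing that point. So $u\equiv 0$ on the component $V_0\ni x_0$. The main work is then to show $V_0$ reaches $\partial U$, i.e.\ that $\overline{V_0}\cap\partial U\ne\emptyset$, which combined with $u\in C(\overline U)$ forces $u=0$ somewhere on $\partial U$, contradicting $u\equiv t$ (Dirichlet case) or—in the Neumann case—contradicting that near such a boundary point $u$ would be a nonnegative harmonic function vanishing on a boundary portion with outward normal derivative $t>0$, which violates Hopf's lemma.

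To produce a boundary point I would use the connectedness of $\overline U\setminus A$: since $A$ is closed and $\overline U\setminus A$ is connected and contains $x_0$, and since $\partial U\subset \overline U\setminus A$ would follow if $A\subset U$ is disjoint from $\partial U$—here one must be slightly careful, as $A\subset U$ and "relatively closed" already give $A\cap\partial U=\emptyset$, so indeed $\partial U\subset \overline U\setminus A$. Thus $x_0$ and any boundary point lie in the same connected component of $\overline U\setminus A$. Because $u$ is continuous on $\overline U$, harmonic on the open set $U\setminus A$, and vanishes on $V_0$, a standard continuation argument along a path in $\overline U\setminus A$ from $x_0$ to $\partial U$ (at each step applying the strong maximum principle on the harmonic set, and using continuity to pass through, noting that any interior point of the path lies in $U\setminus A$) shows $u=0$ at the endpoint on $\partial U$.

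The main obstacle is the Neumann case, where $u=t$ on $\partial U$ is replaced by $\partial_n u=t$; there the contradiction is not immediate from continuity alone. Here I would localize near the boundary point $y_0\in\partial U$ produced above where $u(y_0)=0$: since $A$ is closed and $y_0\notin A$, there is a ball $B$ around $y_0$ with $B\cap U$ disjoint from $A$, on which $u\ge 0$ is harmonic; $u$ vanishes at the interior-reaching component, hence (again by the strong maximum principle, now applied on $B\cap U$) $u\equiv 0$ on a neighborhood of $y_0$ in $\overline U$, whence $\partial_n u(y_0)=0\ne t$. This gives the desired contradiction and completes the proof; the delicate point throughout is ensuring that the harmonicity region $U\setminus A$ stays "fat enough" along the continuation path, which is exactly what the hypothesis that $\overline U\setminus A$ is connected (with $A$ relatively closed, hence away from $\partial U$) is designed to guarantee.
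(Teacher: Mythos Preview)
Your proof is correct and uses the same core ingredients as the paper: continuity of $u$ on $\overline{U}$ (Proposition~\ref{prop:regularity}), harmonicity of $u$ on $U\setminus A$, and the strong maximum principle together with the connectedness of $\overline{U}\setminus A$. The paper's argument is simply the direct version of yours: rather than assuming a zero at $x_0\notin A$ and propagating it out to $\partial U$ to get a contradiction, the paper observes that $\{u>0\}$ contains a neighborhood of $\partial U$ (immediate from $u=t$ and continuity in the Dirichlet case), then applies the strong maximum principle on the connected set $\overline{U}\setminus A$ to conclude $u>0$ throughout. Your path-continuation language can be replaced by the cleaner observation that $\{u=0\}\cap(\overline{U}\setminus A)$ is clopen in $\overline{U}\setminus A$; this avoids the slightly delicate issue of whether interior points of the path stay in $U\setminus A$. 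One small caution: your first pass at the Neumann case invokes Hopf's lemma, which is not available on a merely Lipschitz boundary; your second argument (propagating $u\equiv 0$ to a full half-ball $B\cap U$ near $y_0$, hence $\partial_n u=0$ a.e.\ on $B\cap\partial U$) is the right one, and is what you should lead with.
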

\begin{proof}
Recall that $u\in C(\overline{U})$ by Proposition \ref{prop:regularity}, and so 
the set $\{u=0\}$ is closed while 
the set $\{u>0\}$ is open. The latter contains a neighborhood of the boundary. Then $u$ is nonnegative and harmonic in interior of the connected set $\overline{U} \backslash A$. It is positive there by the strong  maximum principle. 
\end{proof}

\subsection{Stability estimates}

We now prove some key inequalities to compare solutions of the obstacle problems.

\begin{theorem}[Stability estimate]\label{theorem:19}
Let $ A,B\subset U$ be two relatively closed sets so that $\overline{U} \backslash A$ and $\overline{U} \backslash B$ are connected. Let $u,v\in H^1(U)$ 
be solutions of the obstacle problems 
\begin{equation}\label{eq:obstacleA}
\begin{aligned}
&\Delta u = \chi_{\{ u>0\} \cap A} \quad \text{ in } U \qquad \text{ and }\qquad  u = t \quad \text{ on } \partial U,\\
&\Delta v = \chi_{\{ v>0\} \cap B} \quad \text{ in } U \qquad \text{ and }\qquad  v = t \quad \text{ on } \partial U.
\end{aligned}
\end{equation}

\begin{enumerate} 
\item \label{thm4.11(1)} 
Let $A_t$ resp. $B_t$ be the sets defined by 
\begin{equation}\label{eq:defA}
A_t := \{ x \in U : u(x)=0 \},\quad B_t := \{ x \in U : v(x)=0\}.
\end{equation}
Then
$$
A_t\subset A \quad\text{and}\quad B_t\subset B.
$$
Furthermore the following estimate holds for the symmetric difference:
\begin{equation} \label{eq:stability1}
| A_t \Delta B_t  | 
\le  | A \Delta B |.
\end{equation}
In addition, one has the stronger one-sided estimate 
\begin{equation} \label{eq:stability2}
| (U \backslash A_t) \cap B_t  | \le   |B \cap (U \setminus A)|  .
\end{equation} 

\item\label{item:2T19}  Assume now that 
$ A\subset B $ and that $u,v\in H^1(U)$ are solutions to~
\begin{equation}\label{N322}
\Delta u = \chi_{\{ u>0\} \cap A} \quad \text{ and }
\quad 
\Delta v = \chi_{\{ v>0\} \cap B} \quad \text{ in } U, 
\end{equation}
with $v\le u$ on $\partial U$. 
\e{eq:obstacleA}. Then $ u \ge v $. 
In particular the solution $u$ to \e{eq:obstacleA} satisfies $ 0 \le u \le t$.
\end{enumerate}
\end{theorem}
\begin{remark}
Using the divergence theorem 
\[  |A \backslash A_t| = \int_U  \Delta u_t  \dx 
= \int_{\partial U}  \partial_n u(t) \dmH.  \]
If we assume Neumann boundary conditions $ \partial_n u =t $
this implies 
\[ |A\backslash A_t| = t \mathcal{H}^{N-1}(\partial U).  \]
For the Dirichlet boundary condition we obtain a bound as follows. 
Let $u_0$ by the unique continuous function on $ \overline{U} $ which is harmonic function on 
$U \backslash A$ and 
satisfies $u_0=0$ on $A$ and $u_0=1 $ on $ \partial U$. 
Suppose that the following flux is finite: 
\[ F = \int_{\partial U} \partial_n u_0 \dmH  < \infty. \]  
By the  maximum principle we have $u_t\le u_0$ and so, $\mathcal{H}^{N-1}$ almost everywhere, we obtain
\[ \partial_n u_t \le t \partial_n u_0. \] 
This implies that
\[  |A \backslash A_t | \le t F. \] 
\end{remark}

\begin{proof}[Proof of Theorem \ref{theorem:19}] 
$i)$ We begin by proving the first point \eqref{thm4.11(1)}. The set inclusions 
$$
A_t\subset A \quad\text{and}\quad B_t\subset B.
$$
follow from Corollary~\ref{C:4.10}. Observe next that it is sufficient to prove the one sided estimate
\eqref{eq:stability2}. Indeed, the 
latter immediately implies \eqref{eq:stability1}.  
To to prove 
\eqref{eq:stability2}, we start with the following preliminary results.

\begin{lemma} Let $u,v$ be as in the statement of Theorem~\ref{theorem:19}. 
There holds
\begin{equation}\label{n25}
0\ge \int_U ( \chi_{A\cap \{ u >0 \}} 
- \chi_{B\cap \{ v >0 \} })   \chi_{\{u>v\}}  \dx.
\end{equation}
\end{lemma}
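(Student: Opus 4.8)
The plan is to test the weak (Euler--Lagrange) formulations of the two obstacle problems against a function that approximates $\chi_{\{u>v\}}$ from inside $H^1_0(U)$, and to exploit the favourable sign of the Dirichlet-energy term that appears.

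Since $u=v=t$ on $\partial U$, the difference $u-v$ lies in $H^1_0(U)$. For $\varepsilon>0$ set
\[
\phi_\varepsilon = \min\Big\{\,1\,,\ \tfrac1\varepsilon (u-v)_+\Big\}.
\]
Being a Lipschitz function of $u-v$ that vanishes at $0$, $\phi_\varepsilon$ belongs to $H^1_0(U)$, is bounded ($0\le\phi_\varepsilon\le1$), and satisfies $\nabla\phi_\varepsilon=\tfrac1\varepsilon\nabla(u-v)\,\chi_{\{0<u-v<\varepsilon\}}$ almost everywhere. By Proposition~\ref{lem:EulerLagrange}, whose identity~\eqref{n27} extends from $\phi\in C^\infty_0(U)$ to bounded $\phi\in H^1_0(U)$ by density (note $\nabla u,\nabla v\in L^2(U)$ and $\chi_{A\cap\{u>0\}},\chi_{B\cap\{v>0\}}\in L^\infty(U)\subset L^2(U)$ since $U$ is bounded), subtracting the formulations for $u$ and $v$ tested against $\phi_\varepsilon$ gives
\[
\int_U \nabla(u-v)\cdot\nabla\phi_\varepsilon\dx
= -\int_U \big(\chi_{A\cap\{u>0\}}-\chi_{B\cap\{v>0\}}\big)\phi_\varepsilon\dx .
\]
The left-hand side equals $\tfrac1\varepsilon\int_{\{0<u-v<\varepsilon\}}|\nabla(u-v)|^2\dx\ge 0$, hence
\[
\int_U \big(\chi_{A\cap\{u>0\}}-\chi_{B\cap\{v>0\}}\big)\phi_\varepsilon\dx\le 0 .
\]

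Finally I would let $\varepsilon\to0$. Pointwise $\phi_\varepsilon\to\chi_{\{u>v\}}$ (for $u(x)>v(x)$ one has $\phi_\varepsilon(x)=1$ as soon as $\varepsilon<u(x)-v(x)$, while $\phi_\varepsilon(x)=0$ whenever $u(x)\le v(x)$), and $|\phi_\varepsilon|\le 1$ with $|U|<\infty$, so dominated convergence yields exactly
\[
\int_U \big(\chi_{A\cap\{u>0\}}-\chi_{B\cap\{v>0\}}\big)\chi_{\{u>v\}}\dx\le 0,
\]
which is~\eqref{n25}.

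The only delicate point is the admissibility of the non-smooth bounded test function $\phi_\varepsilon$; as indicated above this follows from a routine density argument, and everything else is elementary. In particular, this step uses only the two PDEs and the common Dirichlet datum — neither the nonnegativity of $u,v$ nor the connectedness of $\overline U\setminus A$ and $\overline U\setminus B$ is needed here.
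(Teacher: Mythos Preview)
Your proof is correct and follows essentially the same approach as the paper: both test the difference of the Euler--Lagrange identities against an $H^1_0$ approximation of $\chi_{\{u>v\}}$ and exploit the nonnegativity of the resulting gradient term. The only difference is the choice of approximation---you use the truncation $\min\{1,\varepsilon^{-1}(u-v)_+\}$, while the paper uses $w_+/(\lambda^2+w^2)^{1/2}$ with $w=u-v$---which is a cosmetic variation.
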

\begin{proof}
 The proof of \eqref{n25} consists 
in testing the equation satisfied by $w=u-v$ 
by $w_+/\la w\ra$. To 
make this rigorous, we use the formulation of the obstacle problem as weak solution to the Euler-Lagrange equation (see Proposition~ \ref{lem:EulerLagrange}). 

The difference $w=u-v$ is such that
\be\label{eqbis:w}
\forall \phi\in C^\infty_0(U),\quad
\int_U \Big[\nabla w \cdot\nabla \phi + \big(\chi_{A\cap \{u>0\}}
-\chi_{B\cap \{v>0\}}\big)\phi  \Big]\dx=0.
\ee
By density, the previous identity still holds for any 
$\phi\in H^1_0(U)$. We apply it with the function 
$$
w_\lambda = \frac{ w_+}{(\lambda^2+ |w|^2)^{1/2}},\quad \lambda\in (0,1].
$$
It is elementary to check that $w_\lambda\in H^1_0(U)$. 
By so doing, we obtain that
\begin{align*} 
0
& = \int  |\nabla w_+|^2 \frac{\lambda^2}{(\lambda^2+ w^2)^{\frac32}} dx 
+ \int  (\chi_{A \cap \{ u >0\} }- \chi_{B \cap \{ v>0\} }) 
\frac{w_+}{(\lambda^2+ w^2)^{\frac12} } \dx\\ 
&\ge\int  (\chi_{A \cap \{ u >0\} }- \chi_{B \cap \{ v>0\} }) 
\frac{w_+}{(\lambda^2+ w^2)^{\frac12} } \dx.
\end{align*}
Hence, by letting $\lambda$ goes to $0$, 
we see that the wanted inequality~\eqref{n25} follows from 
Lebesgue's dominated convergence theorem.
\end{proof}

We are now in position to prove 
the stability estimate~\eqref{eq:stability2}  by 
checking some elementary set inclusions. 
Firstly, 
$$
( \chi_{A\cap \{ u >0 \}} 
- \chi_{B\cap \{ v >0 \} })   \chi_{\{u>v\}}
=( \chi_A - \chi_{B\cap \{ v >0 \} })   \chi_{\{u>v\}}. 
$$
Using \eqref{n25} and \eqref{eq:zeroset} we see that
\[
\begin{split} 
0 \, & \ge \int_U ( \chi_A - \chi_{B\cap \{ v >0 \} })   \chi_{\{u>v\}}  dx 
\\ & = |\{ u>v=0 \}|  - | (U\setminus A)  \cap \{ u >v=0\} | \\
&\quad + |A \cap \{ u > v > 0\} | - |B \cap \{ u> v >0 \} | 
\\ & \ge  |\{ u>v=0\}| - |(U\setminus A)  \cap \{ u > v =0 \} | - |(U \setminus A ) \cap B \cap \{ u > v >0\} | 
\\ & \ge |\{ u>v=0\} | - |(U\setminus A) \cap B |,
\end{split} 
\]
where we have used $\{v=0\}\subset B$ to get the last inequality. 
This immediately implies the wanted result \eqref{eq:stability2}.

It remains to prove \eqref{item:2T19}. Assume that $A\subset B$ so that $\chi_A\le \chi_B$. 
Parallel to \e{ineq:u-v}, observe that, since $u$ and $v$ are nonnegative, we have
\be
(\chi_{B\cap \{ v>0\}} - \chi_{A\cap \{ u>0\} })(v-u)_+ 
\ge (\chi_{B\cap \{ v>0\}} - \chi_{B\cap \{ u>0\} })(v-u)_+\ge 0.
\ee
Then, by testing \e{eqbis:w} by $ (v-u)_+\in H^1_0(U)$, we conclude that
\[ \int_U |\nabla (v-u)_+|^2 \dx \le 0 \]
which proves that $(v-u)_+=0$, equivalent to $u\le v$. 
This concludes the proof 
of Theorem~\ref{theorem:19}.
\end{proof}

\section{The Hele-Shaw problem with initial interface in a strip}\label{S:5}

We are now prepared to present and 
prove the central result regarding 
the existence, uniqueness and stability of the solutions to the Hele-Shaw problem, as expressed in the obstacle problem formulation. 
The solutions will be 
constructed via an approximation by obstacle problems in 
bounded domains, combined 
with comparison with subsolutions 
and supersolutions. The last step of the construction is an 
argument for uniqueness. 

\subsection{Main result}

In the previous chapter, we studied the case where the ambient open set $U$ was bounded. 
Here, we consider the case $U=\xR^N$. We introduce a notion of variational solution, parallel to the one introduced in definition~\ref{def:obstacle} for the bounded case.

\begin{definition}\label{def:obstacleunbounded}
Let $ \Omega_0 \subset  \xR^N$ be an open set. Set $A=\xR^N\setminus \Omega_0$. 
We say that a non-negative function 
$u\in H^1_{\loc,+}(\xR^N)$ is a variational solution to the obstacle problem $\Delta u=\chi_{A\cap \{u>0\}}$ if 
\begin{equation}\label{eq:obstacleunbounded}
\int_{\xR^N} \bigg[ \Big(\frac12|\nabla v|^2+ \chi_{A} v_+\Big)-\Big(\frac12 |\nabla u|^2 + \chi_{A}  u\Big)\bigg]\dx\ge0,
\end{equation} 
for all $v\in H^1_{\loc}(\xR^N)$ such that 
$\supp( u-v) $ is compact in $\xR^N$. 
\end{definition}%

One reason it is interesting to consider functions which are only locally integrable is that 
our analysis automatically applies for periodic functions as well. A second observation, more fundamental, is that with this definition it is then elementary to prove that 
weak limits in $H^1_{\loc}(\xR^N)$ of variational solutions are variational solutions.

\begin{proposition}\label{P:weakl}
Consider a sequence $(v_n)_{n\in\xN}$ where $v_n\in H^1_{\loc}(\xR^N)$ is a variational solution to the obstacle problem $\Delta v_n=\chi_{A\cap \{v_n>0\}}$ in the sense 
of Definiton~\ref{def:obstacleunbounded}. Assume that $(v_n)$ converges to $v$ in $H^1_{\loc}(\xR^N)$ when $n$ goes to $+\infty$. Then $v$ is a variational solution to the obstacle problem $\Delta v=\chi_{A\cap \{v>0\}}$.
\end{proposition}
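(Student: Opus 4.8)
The plan is to pass to the limit in the variational inequality~\e{eq:obstacleunbounded} directly, exploiting the fact that the inequality is localized by the compact-support condition on $u-v$ and that the integrand is a single function (so no issues of defining differences of divergent integrals arise). First I would fix a competitor $v\in H^1_{\loc}(\xR^N)$ with $\supp(v-v_\infty)$ compact, say contained in a ball $B_R$, where $v_\infty$ denotes the $H^1_{\loc}$-limit of the $v_n$. The obvious difficulty is that $v$ is a competitor only for the limit problem, not for the approximating ones: $\supp(v-v_n)$ need not be compact. To fix this I would introduce, for each $n$, the modified competitor
\[
v_n^\sharp = v + (v_n - v_\infty),
\]
so that $v_n^\sharp - v_n = v - v_\infty$ has support in $B_R$, hence $v_n^\sharp$ is an admissible competitor in the variational inequality for $v_n$. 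Applying Definition~\ref{def:obstacleunbounded} to $v_n$ with this competitor gives
\[
\int_{B_R}\Big[\big(\tfrac12|\nabla v_n^\sharp|^2+\chi_A (v_n^\sharp)_+\big)-\big(\tfrac12|\nabla v_n|^2+\chi_A v_n\big)\Big]\dx\ge 0,
\]
where I have used that the integrand vanishes off $B_R$ (since outside $B_R$ one has $v_n^\sharp=v_n$ pointwise, including gradients).

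Next I would pass to the limit $n\to\infty$ term by term on $B_R$. By hypothesis $v_n\to v_\infty$ in $H^1(B_R)$, hence $\nabla v_n\to\nabla v_\infty$ in $L^2(B_R)$ and (along a subsequence) $v_n\to v_\infty$ a.e.; likewise $\nabla v_n^\sharp=\nabla v+\nabla v_n-\nabla v_\infty\to\nabla v$ in $L^2(B_R)$ and $v_n^\sharp\to v$ a.e.\ and in $L^2(B_R)$. The quadratic gradient terms converge by $L^2$-norm convergence. For the linear terms, $\chi_A v_n\to\chi_A v_\infty$ in $L^1(B_R)$ since $v_n\to v_\infty$ in $L^2(B_R)\subset L^1(B_R)$, and $\chi_A(v_n^\sharp)_+\to\chi_A v_+$ in $L^1(B_R)$ because $t\mapsto t_+$ is $1$-Lipschitz so $\|(v_n^\sharp)_+-v_+\|_{L^1(B_R)}\le\|v_n^\sharp-v\|_{L^1(B_R)}\to 0$. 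Therefore the inequality survives in the limit:
\[
\int_{B_R}\Big[\big(\tfrac12|\nabla v|^2+\chi_A v_+\big)-\big(\tfrac12|\nabla v_\infty|^2+\chi_A v_\infty\big)\Big]\dx\ge 0,
\]
and since the integrand vanishes outside $B_R\supset\supp(v-v_\infty)$ this is exactly~\e{eq:obstacleunbounded} for $v_\infty$ against the competitor $v$. As $v$ was an arbitrary admissible competitor, $v_\infty$ is a variational solution. One should also note $v_\infty\in H^1_{\loc,+}(\xR^N)$, which is immediate since nonnegativity passes to a.e.\ limits.

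The main obstacle is really just the bookkeeping point above — that a competitor for the limit problem is not a competitor for the approximating problems — and the device of shifting the competitor by $v_n-v_\infty$ dissolves it cleanly; everything else is routine $L^2$/$L^1$ continuity of the functionals on a fixed ball, together with the observation that outside that ball all integrands cancel identically. I would take care to state the one-line remark that $v_n^\sharp\in H^1_{\loc}(\xR^N)$ (it is a sum of such functions) and that its difference with $v_n$ has compact support, so that Definition~\ref{def:obstacleunbounded} genuinely applies. No subsequence argument is actually needed for the final conclusion since the limiting inequality does not depend on which subsequence was used to extract a.e.\ convergence, but invoking one for the a.e.\ convergence of the linear terms is harmless.
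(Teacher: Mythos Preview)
Your proof is correct and is essentially the same as the paper's. The paper parametrizes competitors by the compactly supported perturbation $\varphi$ and uses $v_n+\varphi$ directly; your shift $v_n^\sharp=v+(v_n-v_\infty)=v_n+(v-v_\infty)$ is exactly this with $\varphi=v-v_\infty$, and your limiting argument (including the check that $v_\infty\ge 0$) simply spells out in more detail what the paper compresses into one line.
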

\begin{proof}
Consider a function $\varphi\in H^1(\xR^N)$ with support $\supp \varphi\subset K$ for some compact $K\subset \xR^N$. 
Since $v_n$ is a variational solution, we have
\begin{align*}
0&\le \int_{\xR^N} \bigg[ \Big(\frac12|\nabla (v_n+\varphi)|^2+ \chi_{A} (v_n+\varphi)_+\Big)-\Big(\frac12 |\nabla v_n|^2 + \chi_{A}  v_n\Big)\bigg]\dx\\
&\le\int_{K} \bigg[ \Big(\frac12|\nabla (v_n+\varphi)|^2+ \chi_{A} (v_n+\varphi)_+\Big)-\Big(\frac12 |\nabla v_n|^2 + \chi_{A}  v_n\Big)\bigg]\dx.
\end{align*}
Now we can pass to the limit and verify then that 
$$
0\le \int_{\xR^N} \bigg[ \Big(\frac12|\nabla (v+\varphi)|^2+ \chi_{A} (v+\varphi)_+\Big)-\Big(\frac12 |\nabla v|^2 
+ \chi_{A}  v\Big)\bigg]\dx,
$$
which means that $v$ is a variational solution.
\end{proof}
Again, we observe that the variational formulation is equivalent to Euler-Lagrange equations.

\begin{proposition}\label{lem:EulerLagrange2}
A function $u\in H^1_{\loc,+}(\xR^N)$ is 
a variational solution in the sense of Definition~\ref{def:obstacleunbounded} if and only if 
it is a weak solution to the Euler-Lagrange equation 
\be\label{n27.5b}
\Delta u = \chi_{A \cap \{ u>0\}},
\ee
in 
the sense that, for all 
$\phi \in C^\infty_0(\xR^N)$,
\begin{equation}\label{n27b}
\int_{\xR^N} \Big[\nabla u \cdot\nabla \phi
+
\chi_{A\cap \{u>0\}}\phi  \Big]\dx=0.
\end{equation}
\end{proposition}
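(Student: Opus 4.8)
The plan is to adapt the proof of Proposition~\ref{lem:EulerLagrange} to the unbounded setting, exploiting the fact that both the variational inequality \e{eq:obstacleunbounded} and the weak formulation \e{n27b} are tested against compactly supported perturbations, so the whole argument localizes. First I would prove the implication ``variational $\Rightarrow$ weak''. Given $\phi\in C^\infty_0(\xR^N)$, pick a ball $B$ with $\supp\phi\subset B$ and note that $v=u\pm\varepsilon\phi$ is an admissible competitor in \e{eq:obstacleunbounded} since $\supp(u-v)\subset B$ is compact. Expanding $\frac12|\nabla(u\pm\varepsilon\phi)|^2 = \frac12|\nabla u|^2 \pm \varepsilon\nabla u\cdot\nabla\phi + \frac{\varepsilon^2}{2}|\nabla\phi|^2$ and handling the term $\chi_A\big((u\pm\varepsilon\phi)_+ - u\big)$, one divides by $\varepsilon>0$ and lets $\varepsilon\to0$; the sign of $\phi$ being arbitrary forces the equality \e{n27b}, once we identify the limit of $\frac1\varepsilon\big((u\pm\varepsilon\phi)_+-u\big)$. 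The only subtlety is this last identification: on $\{u>0\}$ it converges to $\pm\phi$, on $\{u<0\}$ (empty since $u\ge0$) to $0$, and on $\{u=0\}$ one uses $|(\varepsilon\phi)_+ - 0|\le\varepsilon|\phi|$ together with $\nabla u=0$ a.e.\ on $\{u=0\}$ (a consequence of nonnegativity, as recalled in the proof of Proposition~\ref{lem:EulerLagrange}) to see the contribution of $\{u=0\}$ to the gradient term vanishes, and a dominated-convergence argument handles the zeroth-order term. Actually, the cleanest route is to observe that the difference quotient is bounded by $|\phi|$ pointwise and converges a.e.\ to $\chi_{\{u>0\}}(\pm\phi)$, so dominated convergence gives $\int_B \chi_A\chi_{\{u>0\}}(\pm\phi)\dx$ in the limit; combined with $\nabla u = 0$ a.e.\ on $\{u=0\}$, one gets exactly \e{n27b}.

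For the converse implication ``weak $\Rightarrow$ variational'', I would mimic the bounded-domain argument: given $u$ a weak solution and $v\in H^1_{\loc}(\xR^N)$ with $\supp(u-v)$ compact, choose a ball $B$ containing this support, so that $u-v\in H^1_0(B)$. The point is to show the functional difference in \e{eq:obstacleunbounded}, which is an integral over $B$ only (the integrand vanishes outside $B$ since $u=v$ and $\nabla u=\nabla v$ there), is nonnegative. Write the integrand using convexity of $z\mapsto\frac12|z|^2$: $\frac12|\nabla v|^2 - \frac12|\nabla u|^2 \ge \nabla u\cdot\nabla(v-u)$. For the zeroth-order part, $\chi_A(v_+ - u) = \chi_A(v_+-u_+) \ge \chi_A\chi_{\{u>0\}}(v-u)$ because on $\{u>0\}$ one has $v_+-u \ge v-u$, while on $\{u=0\}$ one has $v_+-u = v_+\ge 0 \ge \chi_{\{u>0\}}(v-u) = 0$. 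Combining, the functional difference is bounded below by $\int_B \big[\nabla u\cdot\nabla(v-u) + \chi_{A\cap\{u>0\}}(v-u)\big]\dx$, which equals $0$ by \e{n27b} applied with $\phi = v-u\in H^1_0(B)$ (valid after density, as in the proof of Proposition~\ref{lem:EulerLagrange}). Hence \e{eq:obstacleunbounded} holds and $u$ is a variational solution.

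The main obstacle, such as it is, is purely bookkeeping: one must be careful that all integrals appearing are genuinely over a compact set, since the excerpt explicitly warns (see the Remark after Definition~\ref{def:obstacle}) that the two pieces $\int(\frac12|\nabla v|^2+\chi_A v_+)$ and $\int(\frac12|\nabla u|^2+\chi_A u)$ need not individually converge when $u,v$ are merely locally integrable. This is automatic once one works with the difference of integrands and uses that this difference is supported in $B$; so in each direction the argument reduces verbatim to a computation on the ball $B$, and one can essentially invoke the bounded-domain proof. I would also record the elementary pointwise inequalities $v_+-u_+\ge\chi_{\{u>0\}}(v-u)$ and $\chi_A(v_+-u)\ge\chi_{A\cap\{u>0\}}(v-u)$ explicitly, since they are the crux of the convexity step, and note that the density of $C^\infty_0(B)$ in $H^1_0(B)$ (hence the extension of \e{n27b} to test functions in $H^1_0(B)$) is what legitimizes taking $\phi=v-u$. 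No new analytic input beyond Proposition~\ref{lem:EulerLagrange} and the Lemma used in its proof is needed.
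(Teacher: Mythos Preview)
Your ``weak $\Rightarrow$ variational'' direction is correct and is a clean direct convexity argument; the paper instead reduces both directions to the bounded case by restriction to balls and invokes Proposition~\ref{lem:EulerLagrange}, so your treatment of this half is an acceptable alternative.

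However, your ``variational $\Rightarrow$ weak'' direction has a genuine gap. The claim that the difference quotient $\tfrac{1}{\varepsilon}\big((u+\varepsilon\phi)_+-u\big)$ converges a.e.\ to $\chi_{\{u>0\}}\phi$ is false on the set $\{u=0\}$: there the quotient equals $\tfrac{1}{\varepsilon}(\varepsilon\phi)_+=\phi_+$, not $0$. Running your argument with both signs $\pm\varepsilon\phi$ therefore only yields
\[
-\int_{\{u=0\}}\chi_A\,\phi_+\,\dx \;\le\; \int_{\xR^N}\Big[\nabla u\cdot\nabla\phi+\chi_{A\cap\{u>0\}}\phi\Big]\dx \;\le\; \int_{\{u=0\}}\chi_A\,\phi_-\,\dx,
\]
and since $A\cap\{u=0\}$ may have positive measure this does not force the middle expression to vanish. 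The fact that $\nabla u=0$ a.e.\ on $\{u=0\}$ does not help here: it controls the gradient term, but the obstruction lives entirely in the zeroth-order term. This is precisely why the proof of the bounded-case Proposition~\ref{lem:EulerLagrange} does \emph{not} proceed by naive first variation, but regularizes $v_+$ by the $C^1$ convex function $F_\varepsilon(v)$, uses elliptic regularity to get uniform $W^{2,p}$ bounds on the regularized minimizers, and then invokes the lemma on $W^{1,p}$ functions (with $p>N$) to identify the limit of $f_\varepsilon(u^\varepsilon)$ as $\chi_{\{u>0\}}$.

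The paper's proof of the present proposition is accordingly much shorter than what you attempt: it observes that the restriction of a variational solution on $\xR^N$ to any ball $B$ is a variational solution on $B$ in the sense of Definition~\ref{def:obstacle} (since competitors supported in $B$ are also competitors in $\xR^N$), so Proposition~\ref{lem:EulerLagrange} applies on each ball; the converse is the same localization in reverse. Your closing remark that ``no new analytic input beyond Proposition~\ref{lem:EulerLagrange}\dots is needed'' is thus correct in spirit, but the way to use it is to invoke that proposition wholesale on balls, not to re-run a first-variation computation that its proof deliberately avoids.
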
 
\begin{proof}
Suppose $u\in H^1_{\loc,+}(\xR^N)$ is 
a variational solution in the sense of Definition~\ref{def:obstacleunbounded}. Then the restriction $\tilde u=u\arrowvert_{B}$ of $u$ to any ball $B$ is a variational solution in the sense of 
Definiton~\ref{def:obstacle}. Consequently, Proposition~\ref{lem:EulerLagrange} implies that 
equation \e{n27b} holds for any $\phi\in C^\infty_0(B)$. 
Since this is true for any ball $B\subset \xR^N$, 
this means that equation \e{n27b} holds for any $\phi\in C^\infty_0(\xR^N)$. 

Conversely, if equation \e{n27b} holds for any $\phi\in C^\infty_0(\xR^N)$, then we can apply Proposition~\ref{lem:EulerLagrange} to deduce that 
for any ball $B\subset \xR^N$, the function $\tilde u=u\arrowvert_{B}\in H^1_{\loc}(B)$ is a variational solution to 
$\Delta \tilde u=\chi_{A\cap \{\tilde u>0\}}$ in the sense of Definition~\ref{def:obstacle}. 
This implies that $u$ is a variational solution in the sense of Definition~\ref{def:obstacleunbounded}.
\end{proof}

The one dimensional problem $N=1$ has an explicit solution
 $v=v(t,x)$ where $x\in\xR$.  It is the solution to  the problem 
\[
v_{xx}(t,x) = \chi_{\{x> 0, v>0\}}(x)
\] 
with the boundary condition 
\[
\lim_{x\to -\infty} \frac{v(t,x)}{\la x\ra} = t.
\]
The solution is the function $v_0\in C^{1,1}(\xR)$ defined by

\begin{minipage}[b]{0.50\linewidth}
\begin{equation*}
v_0(t,x): = \left\{\begin{array}{cl}  t(\frac t 2-x)  & \text{ if } x <0 \\ [1ex]
 \frac12 (t-x)^2 & \text{ if } 0 \le x \le t \\ [1ex]
 0 & \text{ if } x \ge t. 
\end{array} \right.
\end{equation*} 
\vspace{1.25cm}
\end{minipage}
\begin{minipage}[b]{0.45\linewidth}
\begin{tikzpicture}
  \draw[color=black,line width=1pt,->] (-2, 0) -- (3, 0) node[below] {$x_N$};
  \draw[color=black,line width=1pt,->] (0, -0.5) -- (0, 3) node[above] {$v_0(t,x_N)$};
  \draw[thick, domain=-1.2:0, smooth, variable=\x, blue] plot ({\x}, {1.5*(0.75-\x)});
  \draw[thick, domain=0:1.5, smooth, variable=\x, blue] plot ({\x}, {0.5*(1.5-\x)*(1.5-\x)});
  \draw[thick, domain=1.5:3, smooth, variable=\x, blue]  plot ({\x}, {0});
  \draw [color=black,line width=1pt] (1.5,-0.1)node[below] {{$t$}} -- (1.5,0.1) ;
  \draw [color=black,line width=1pt] (-0.1,1.125) -- (0.1,1.125) node[right] {{$\tfrac{1}{2}t^2$}};
\end{tikzpicture}
\end{minipage}

This function corresponds to the solution of the Hele-Shaw problem at rest, after the Baiocchi-Duvaut transformation.
Indeed, for $N\ge 1$, the solution at rest of the original Hele-Shaw problem (see Section~\ref{S:1.1}) is $(h,P)(t,x)=(0,-x_N)$, defined for $x_N<0$. 
Then, after the elementary transformations introduced in Section~\ref{S:2.3}, we get 
$(f,p)(t,x)=(t,\max\{0,-x_N+t\})$, defined for all $x\in \xR^N$ (remembering that we extended the pressure by~$0$ outside the fluid domain). 
Then one verifies that $v_0$ is given by
the Baiocchi-Duvaut change of unknown, 
indeed, for all $x_N\in \xR$, we have
\be\label{eq:v0}
v_0(t,x_N)=\int_0^t\max\{0,-x_N+\tau\}\dtau.
\ee

We are now in position to state the following theorem, which is central in our analysis. 
 
\begin{theorem} \label{thm:mainobstacle} 
Let $N\ge 2$ and set $d=N-1$. 
\begin{enumerate}
\item\label{item:th1} (Existence and uniqueness) Consider an open  connected subset $\Omega_0\subset \mathbb{R}^N$ 
such that,  
for some positive constant $C$,
\begin{equation}\label{n20}
\{x\in\mathbb{R}^N\sep x_N < -C \}\subset  \Omega_0 \subset \{ x\in\mathbb{R}^N\sep x_N <C \}.
\end{equation}
Set  $ A = \xR^N \backslash \Omega_0$. 
Then, for all $t>0$, there exists a unique variational 
solution $v=v(t)\in H^1_{\loc,+}(\xR^N)$ 
to the obstacle problem  
\[
\Delta v(t) = \chi_{\{ v>0\} \cap A} \qquad \text{(in the sense of Definition~\ref{def:obstacleunbounded})}\]
satisfying
\begin{equation*} 
\forall x\in\xR^N,\quad   v_0(t,x_N +C) \le v(t,x)  \le v_0(t,x_N-C),
\end{equation*}
where $v_0$ is given by~\e{eq:v0}. 

In particular, this solution satisfies
\begin{equation}\label{eq:thm19limit} 
\lim_{\sigma\to \infty} \sup_{x'\in \xR^d}\la \frac{ v(t,x',-\sigma e_N)}{\sigma } - t\ra=0 .
\end{equation} 

\item (Monotonicity) \label{item:monotonicity} The sets 
$\Omega_t= \{x\in\xR^N: v(t,x)>0\} $ are connected and
$$
0\le s \le t \quad \Rightarrow \quad\Omega_s\subset \Omega_t.
$$
Moreover, if $ \Omega_0^j$, $ j=1,2$ 
are two sets satisfying Assumption~\e{n20} and such that $\Omega_0^1\subset \Omega_0^2$, and $v^1(t)$ resp $v^2(t)$ are the solutions to the obstacle problem then
$v^1(t) \le v^2(t)$ and  
$\Omega_t^1\subset \Omega_t^2$ for all $t>0$.
\item\label{item:stability} (Stability) Let $ \Omega_0^j$, $ j=1,2$ 
be two sets satisfying Assumption~\e{n20} with the same constant $C$ and consider 
the corresponding solutions $v^j$ to the obstacle problem. Then 
\be\label{est:stability}
|\{ x\in \xR^N \sep v^1(t,x)>v^2(t,x)=0 \} |
\le |\{ x \in \Omega_0^1 \sep x\notin \Omega_0^2 \}|,
\ee
which is equivalent to
\[ |\Omega^1_t \cap (\mathbb{R}^N\backslash \Omega^2_t )| \le |\Omega^1_0 \cap (\mathbb{R}^N\backslash \Omega^2_0) |.\]


\item\label{item:semi-flow} (Semi-flow property) Let $0\le s \le t$, $ \Omega_s$ as above and let $ v(t,s) $  be the unique solution, as given by point \eqref{item:th1}, to 
\[ \Delta v(t,s) = \chi_{(\xR^N\backslash \Omega_s) \cap \{ v(t,s) > 0 \} }\]
which satisfies 
\begin{equation*} 
\forall x\in\xR^N,\quad   v_0(t-s,x_N +C+s) \le v(t,s,x)  \le v_0(t-s,x_N-C-s),
\end{equation*}

Then $\Omega_t = \{ v(t,s) >0 \} $ and 
$v(t,s) + v(s) = v(t)$.
\item (Time regularity) \label{item:time} 
There exists $c(N)$ so that for $ 0 \le s < t$ 
for any ball $B^{\xR^d}_R(x')$ of radius $R\ge C$
\[  \Big| | (B^{\xR^d}_R(x')\times \xR)\cap (\Omega_t \backslash \Omega_s)|- (t-s) |B^{\xR^d}_R(0)| \Big|  \le  c(N) (t-s) C^{\frac25} R^{d-\frac25}.  \]
\item\label{item:eventual} (Eventual monotonicity) 
The map 
\[  (C,\infty) \ni x_N \to v(t,x',x_N) \]
is monotonically decreasing for all $t\ge 0 $ and  $x' \in \xR^d$. If $t \ge 2C$ then there exists a lower 
semicontinuous function 
$f(t)\colon \xR^d\to \xR$ so that 
\[ \Omega_t= \{ x\in \xR^N: x_N < f(t,x') \}. \]
The function $f$ is monotonically increasing in $t$.
\end{enumerate} 
\end{theorem}
\begin{remark}
We will in addition prove eventual regularity in Section~\ref{sec:eventualreg}, thereby finishing the proof of Theorem~\ref{thm:mainobstacleintro}.
\end{remark}
\begin{remark}
The assumption~\eqref{n20} implies that the 
boundary of $\Omega_0$ is included in the 
strip $\{ x\in\mathbb{R}^N\sep \la x_N\ra\le C\}$. This automatically holds if $\Omega_0$ is  the 
subgraph of a bounded semicontinuous function $h$, i.e.
$\Omega_0 = \{ x: x_N < h(x') \} $. In other 
words, the assumption~\eqref{n20} 
allows to consider 
a general case in terms of initial topology and contains no restrictions on the regularity. The graph case is studied in \S\ref{S:maingraph}.
\end{remark}

The following corollary follows directly from point~\eqref{item:stability} in Theorem~\ref{thm:mainobstacle}. It 
states a continuous dependence with respect to the initial domain.

\begin{corollary}
Denote by $\mathcal{U}$ the set
\[
\mathcal{U}= \{ \mathcal{O} \subset \xR^N : \mathcal{O} \text{ is open and connected, satisfying \eqref{n20} for some $C>0$}\}.\]
The map
\[ [0,\infty) \times \mathcal{U} \ni (t,\Omega_0) \to \Omega_t  \in \mathcal{U} \]
is a semi-flow. It depends continuously on the initial data in the following sense.
Let $ \Omega_0^j$ $j=1,2$ be two open  connected subsets satisfying \e{n20} and, given $t>0$, set $\Omega_t^j=\{ v^j(t)>0\}$ where $v^j(t)$ is the solution given by Theorem~\ref{thm:mainobstacle}. Then, with $ A \Delta B$ the symmetric difference, 
\[ |\Omega^2_t \Delta \Omega^1_t| \le |\Omega^2_0 \Delta \Omega^1_0|.       \]
\end{corollary}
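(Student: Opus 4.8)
The plan is to obtain the corollary by unwinding the statements of Theorem~\ref{thm:mainobstacle}; no new estimate is required, and the analytic substance is entirely contained in parts~\eqref{item:th1}, \eqref{item:monotonicity}, \eqref{item:stability} and \eqref{item:semi-flow} of that theorem.

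\emph{Well-definedness.} First I would check that $\Omega_t\in\mathcal{U}$ for every $\Omega_0\in\mathcal{U}$ and every $t\ge0$. Openness of $\Omega_t=\{v(t)>0\}$ is immediate from elliptic regularity: since $\Delta v(t)=\chi_{A\cap\{v(t)>0\}}\in L^\infty_{\loc}(\xR^N)$, one has $v(t)\in W^{2,p}_{\loc}\subset C^1(\xR^N)$ for every finite $p$, so $\{v(t)>0\}$ is open. Connectedness of $\Omega_t$ is part~\eqref{item:monotonicity}. For the strip condition~\e{n20}, the sandwich $v_0(t,x_N+C)\le v(t,x)\le v_0(t,x_N-C)$ from part~\eqref{item:th1}, combined with the explicit formula~\e{eq:v0} (and the fact that $v_0(t,\cdot)$ is nonincreasing and vanishes exactly on $[t,\infty)$), gives $\{x_N<t-C\}\subset\Omega_t\subset\{x_N<t+C\}$; hence $\Omega_t$ satisfies~\e{n20} with the constant $t+C$, and in particular $\Omega_t\in\mathcal{U}$.

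\emph{Semi-flow.} The identity $\Phi(0,\Omega_0)=\Omega_0$ is the defining convention at $t=0$. For $0\le s\le t$, I would invoke part~\eqref{item:semi-flow}, which asserts $\Omega_t=\{v(t,s)>0\}$, where $v(t,s)$ is characterized by the bounds $v_0(t-s,x_N+C+s)\le v(t,s,x)\le v_0(t-s,x_N-C-s)$. By the well-definedness step, $\Omega_s$ lies in $\mathcal{U}$ with constant $C+s$, and these are exactly the bounds characterizing, via part~\eqref{item:th1}, the solution associated with the initial domain $\Omega_s$ evaluated at time $t-s$. Therefore $\Omega_t=\Phi(t-s,\Omega_s)=\Phi(t-s,\Phi(s,\Omega_0))$; since $\Omega_t=\Phi(t,\Omega_0)$, relabeling $t-s$ and $s$ yields the semigroup law $\Phi(t,\Phi(s,\Omega_0))=\Phi(t+s,\Omega_0)$.

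\emph{Continuous dependence.} Given $\Omega_0^1,\Omega_0^2\in\mathcal{U}$ satisfying~\e{n20} with constants $C_1,C_2$, put $C=\max(C_1,C_2)$. Both domains then satisfy~\e{n20} with this common $C$, and since $v_0(t,\cdot)$ is nonincreasing, the solution $v^j$ selected by part~\eqref{item:th1} with its own constant $C_j$ a fortiori satisfies the wider bounds governed by $C$; by the uniqueness clause of part~\eqref{item:th1} it is therefore equal to \emph{the} solution associated with the common constant $C$. Consequently part~\eqref{item:stability}, i.e.\ estimate~\e{est:stability}, applies and gives $|\Omega_t^1\cap(\xR^N\setminus\Omega_t^2)|\le|\Omega_0^1\cap(\xR^N\setminus\Omega_0^2)|$; exchanging the roles of $1$ and $2$ gives the symmetric inequality, and adding the two (the two sets on each side being disjoint) yields $|\Omega_t^2\Delta\Omega_t^1|\le|\Omega_0^2\Delta\Omega_0^1|$.

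All the difficulty has already been absorbed into Theorem~\ref{thm:mainobstacle}; in the reduction above the only point deserving care is the constant-matching argument in the last paragraph, which I would write out explicitly using the monotonicity of $v_0$ in its spatial variable together with the uniqueness statement of part~\eqref{item:th1} — everything else is a direct transcription.
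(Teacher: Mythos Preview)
Your proof is correct and follows the same route as the paper, which simply states that the corollary ``follows directly from point~\eqref{item:stability} in Theorem~\ref{thm:mainobstacle}''; you have merely filled in the details the paper leaves implicit. The constant-matching argument you single out (passing to $C=\max(C_1,C_2)$ and invoking the uniqueness clause of part~\eqref{item:th1}) is a genuine detail the paper glosses over, and your handling of it via the monotonicity of $v_0(t,\cdot)$ is exactly right.
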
 

The proof of Theorem \ref{thm:mainobstacle} will be provided in Section~\ref{S:proofthmmain}.
Before demonstrating this theorem, we take a moment in Section~\ref{S:contraction} to establish two crucial contraction estimates, which concern the difference between two solutions to the obstacle problem.
Finally, in Section~\ref{S:maingraph}, we explore the various implications of Theorem \ref{thm:mainobstacle} when we restrict the domains to subgraphs.

\subsection{A Moser type  estimate}\label{S:contraction}
In the next result we consider differences 
of solutions to the obstacle problem. 
An important observation is that we will 
prove two local estimates for the difference of two solutions in a given ball, 
without prescribing any boundary conditions. 
Its proof relies on a Moser's iteration argument. 

\begin{lemma} \label{thm:thm3} 
Suppose that $u^j\in H^1_{\loc,+}(\xR^N)$, $j=1,2$, are 
solutions 
to the obstacle problem in the sense of Definition \ref{def:obstacleunbounded}. 
Then there exists a constant $c>0$ such that, for any radius $R>0$ and any ball $B_R\subset\xR^N$, the 
two following inequalities hold:
\begin{equation}\label{eq:Moser}
\Vert  u^2-u^1 \Vert_{L^\infty(B_{R/2})} 
\le c R^{-N} \Vert u^2- u^1 \Vert_{L^1(B_R)},
\end{equation}
and 
\begin{equation}\label{eq:Cacciopoli}  \Vert \nabla (u^2-u^1) \Vert_{L^2(B_{R/2})} \le c R^{-1} \Vert u^2-u^1 \Vert_{L^2(B_R)}.
\end{equation}
\end{lemma}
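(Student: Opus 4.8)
The plan is to show that $w:=u^2-u^1$ is, in a suitable sense, a subsolution of Laplace's equation on all of $\xR^N$; once this is in hand, both \eqref{eq:Moser} and \eqref{eq:Cacciopoli} become standard interior estimates. First I would record the equation for $w$. By Proposition~\ref{lem:EulerLagrange2} each $u^j$ is a weak solution of $\Delta u^j=\chi_{A\cap\{u^j>0\}}$ with $A=\xR^N\setminus\Omega_0$; moreover the restriction of $u^j$ to any ball is a variational solution in the sense of Definition~\ref{def:obstacle}, so the regularity argument in the proof of Proposition~\ref{lem:EulerLagrange} gives $u^j\in W^{2,p}_{\loc}(\xR^N)\cap C^1_{\loc}(\xR^N)$ for every $p<\infty$. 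Hence $w\in W^{2,p}_{\loc}\cap C^1_{\loc}$ and, almost everywhere,
\[
\Delta w=g,\qquad g:=\chi_A\bigl(\chi_{\{u^2>0\}}-\chi_{\{u^1>0\}}\bigr).
\]

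The point that makes the proof work is the sign identity $wg\ge0$ a.e. This is checked by splitting $\xR^N$ according to whether $u^1$ and $u^2$ vanish (recall $u^j\ge0$): where both are positive or both vanish, either $g=0$ or $w=0$; where $u^2>0=u^1$ one has $w=u^2>0$ and $g=\chi_A\ge0$; and where $u^1>0=u^2$ one has $w=-u^1<0$ and $g=-\chi_A\le0$. In all cases $wg\ge0$. Consequently $\Delta(w^2)=2|\nabla w|^2+2wg\ge0$, so $w^2$ is subharmonic, and, approximating $|w|$ by $\sqrt{w^2+\varepsilon^2}$ and letting $\varepsilon\to0$ (again using $wg\ge0$), $|w|$ is subharmonic as well.

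From here \eqref{eq:Moser} is the local boundedness estimate for the nonnegative subsolution $|w|$. One can run Moser's iteration, but since $w$ is continuous it is quicker to use the sub-mean value inequality for the subharmonic function $|w|$: for $x_0\in B_{R/2}$ one has $B_{R/2}(x_0)\subset B_R$, whence $|w(x_0)|\le\fint_{B_{R/2}(x_0)}|w|\le c(N)R^{-N}\|w\|_{L^1(B_R)}$, and taking the supremum over $x_0$ gives \eqref{eq:Moser}. (If instead one iterates in $L^2$, the passage from the $L^2$ to the $L^1$ norm on the right is the usual interpolation device: bound $\int_{B_\rho}w^2\le\|w\|_{L^\infty(B_\rho)}\|w\|_{L^1(B_R)}$, absorb a fraction of $\|w\|_{L^\infty(B_\rho)}$ via Young's inequality, and iterate over a geometric chain of radii between $R/2$ and $R$.) For \eqref{eq:Cacciopoli} I would run a Caccioppoli argument: picking $\zeta\in C_0^\infty(B_R)$ with $\zeta\equiv1$ on $B_{R/2}$ and $|\nabla\zeta|\le c/R$, and testing $\Delta w=g$ against $\zeta^2w$ (legitimate since $w\in H^1_{\loc}$ and $g\in L^\infty_{\loc}$),
\[
\int\zeta^2|\nabla w|^2=-2\int\zeta w\,\nabla\zeta\cdot\nabla w-\int\zeta^2wg\le-2\int\zeta w\,\nabla\zeta\cdot\nabla w,
\]
because $\int\zeta^2wg\ge0$; Cauchy--Schwarz and absorption then give $\|\nabla w\|_{L^2(B_{R/2})}\le c(N)R^{-1}\|w\|_{L^2(B_R)}$.

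The main obstacle is, in effect, already overcome by the sign identity $wg\ge0$: it is exactly what neutralizes the inhomogeneity $g$ — which is only of size $O(1)$ and would otherwise force an additive term of order $R^2$ on the right of \eqref{eq:Moser} — and what turns $w^2$ (equivalently $|w|$) into a genuine subharmonic function. The remaining points are routine: the justification that $w\in W^{2,p}_{\loc}$, so that $\Delta w=g$ holds pointwise a.e.\ and the test-function manipulations are legitimate, and the $L^2\to L^1$ upgrade in \eqref{eq:Moser}.
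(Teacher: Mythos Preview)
Your proof is correct and rests on the same key observation as the paper: the sign identity $wg\ge 0$ (equivalently $(\chi_{\{u^2>0\}}-\chi_{\{u^1>0\}})|w|^{p-2}w\ge 0$), which is exactly what kills the inhomogeneity and reduces the problem to interior estimates for a subsolution. The Caccioppoli inequality \eqref{eq:Cacciopoli} is obtained identically in both arguments (testing against $\zeta^2 w$, i.e.\ the case $p=2$).

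For \eqref{eq:Moser} the two routes differ. The paper tests against $\eta^2|w|^{p-2}w$ for general $p>1$, derives the reverse H\"older inequality and runs the full Moser iteration. You instead observe that $wg\ge 0$ makes $|w|$ itself subharmonic (via the approximation $\sqrt{w^2+\varepsilon^2}$, using $w\in W^{2,p}_{\loc}$), and then the sub-mean-value inequality gives the $L^\infty$--$L^1$ bound in one line. This is a genuinely shorter and more elementary path; it exploits the fact that the operator is the Laplacian, for which subharmonic functions satisfy an exact mean-value property, whereas Moser's iteration would survive for general divergence-form operators. Either way, the constant and the scaling in $R$ are the same.
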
 
\begin{proof}
If $u^j\in H^1_{\loc,+}(\xR^N)$, $j=1,2$, are 
solutions 
to the obstacle problem, then Proposition~\ref
{lem:EulerLagrange2} implies that they solve the Euler-Lagrange equation 
\[
-\Delta u^j + \chi_{A\cap\{ u^j>0\} } = 0
\]
in the weak sense. Hence, with $ u = u^2- u^1$
\be\label{weak:u} - \Delta u +  \chi_{A\cap\{ u^2>0\} }-  \chi_{A\cap\{ u^1>0\} }=0.
\ee

Let $\eta\in C^\infty_0(B_R)$ be a nonnegative 
compactly supported test function, with $\eta=1$ on $B_{R/2}$ and such that $\lA \nabla\eta\rA_{L^\infty}\le 2/R$. Consider a real number $ p > 1$ and introduce
\[ U = \eta^2 |u^2-u^1|^{p-2} (u^2-u^1). \]
Notice that this function does not belong in general to $H^1_0(B_R)$. However, by using classical regularization arguments  
one can proceed as if $U\in H^1_0(B_R)$.

The key property is that, parallel to~\e{ineq:u-v}, we have
$$
(\chi_{\{u^2>0\}}-\chi_{\{u^1>0\}})U\ge 0.
$$
Hence, multiplying \e{weak:u} by $U$ and integrating by parts, 
\[ \int \nabla (u^2-u^1)\cdot \nabla (\eta^2 |u^2-u_1|^{p-2} (u^2-u^1)) \dx  \le 0. \] 
We write $ u = u^2-u^1$ to shorten the notation and obtain 
\[  
\begin{split} 
0 \, & \ge  (p-1) \int   \eta^2 |u|^{p-2} \la\nabla u\ra^2 \dx 
+ 2 \int \eta |u|^{p-2} u \nabla \eta\cdot \nabla u \dx  
\\ &= \frac{4(p-1)}{p^2} \int \eta^2 |\nabla |u|^{p/2}|^2  \dx 
+ \frac4p \int (\nabla \eta)  |u|^{p/2} \eta \nabla |u|^{p/2} \dx 
\\ &= \frac{4(p-1)}{p^2} \int |\nabla ( \eta |u|^{p/2} )|^2 \dx 
-\frac{4}{p^2}  \int |\nabla \eta|^2 |u|^{p}   \dx 
\\
&\quad -  \frac{4(p-2)}{p^2} \int (\nabla \eta) |u|^{p/2} \nabla (\eta  |u|^{p/2})\dx.   
\end{split} 
\]
This  implies (using the Cauchy-Schwarz inequality)     
\begin{equation}
\label{N312}
\int  \Big|\nabla \big[\eta |u_2-u_1|^{p/2}\big] \Big|^2 \dx  \le  
  \frac{p^2-2p+2}{(p-1)^2}  \int |\nabla \eta|^2  |u_2-u_1|^{p}\dx,
\end{equation}
which is the starting point for the Moser iteration (see the original paper of Moser \cite{MR0159138}) 
 which yields 
\begin{equation}\label{eq:Moser2}
\Vert u^2-u^1 \Vert_{L^\infty(B_{R/2})}\le c R^{-N} \Vert u^2-u^1 \Vert_{L^1(B_{R})}.
\end{equation}
Notice that the dependence in $R$ can be obtained by a scaling argument: 
replacing $u^j(x)$ by $u^j(x/R)$, we see that  
if \e{eq:Moser2} holds for $R=1$, then it holds for all $R>0$.

Eventually, the Caccioppoli inequality~\e{eq:Cacciopoli} follows by choosing $p=2$ above (here, the dependence in 
$R$ follows directly from the assumption 
$\lA \nabla\eta\rA_{L^\infty}\le 2/R$). 
\end{proof} 

\subsection{Proof of the theorem}\label{S:proofthmmain}

The proof of Theorem~\ref{thm:mainobstacle} is done in six steps.

\medskip 

\noindent \textbf{First step: Existence of a variational solution.}

\medskip 

Consider an open  connected subset $\Omega_0\subset \mathbb{R}^N$ 
such that,  
for some constant~$C>0$,
\begin{equation*}
\{x\in\mathbb{R}^N\sep x_N < -C \}\subset  \Omega_0 \subset \{ x\in\mathbb{R}^N\sep x_N <C \}.
\end{equation*}
In Section~\ref{sec:obstaclebounded}, we 
studied the obstacle problem under the 
assumption that the ambient 
set $U$ is bounded, and we will 
reduce the analysis from 
the unbounded case to 
the bounded one. To do this, we need to 
truncate the domain both horizontally and vertically. Horizontal truncation requires a few precautions in order to obtain a connected domain. In fact, given a parameter $R>0$ used to truncate horizontally, let us observe that the sets
$$
\{x\in \Omega_0 :\max_{1\le j \le N-1}\la x_j\ra<R\} 
$$
are not connected in general (see Figure~\ref{Fig:trunc}). To overcome this difficulty, we add artificial vertical layers at the edges $\{ x_j=\pm R\}$. Specifically, we set  (see Figure~\ref{Fig:trunc})
\[
\Omega_{0,R} =\Big(\Omega_0
\cup \Big\{(x',x_N) :  \max_{1\le j \le N-1} \la x_j\ra > R-1, x_N < C \Big\} \Big)\cap \big\{\max_{1\le j \le N-1}\la x_j\ra<R\big\}.
\]
Notice that, by construction and connectedness of $ \Omega_0$, the domain $\Omega_{0,R}$ is connected.
We then truncate vertically by fixing $L>C+t$ and keeping only those points such that $x_N>-L$. Namely, we set
$$
\Omega_{0,L,R}=\{ x \in \Omega_{0,R}: x_N>-L\}.
$$
Eventually, we periodize the domain in $x'$ to avoid boundary conditions at the fictitious walls $\{x_j=\pm R\}$. This means that we parametrize 
$ 2R\mathbb{S}^1$ by $(-R,R)$. Then
\[
(R\mathbb{S}^1)^{N-1} \times (-L,-C) \subset \Omega_{0,L,R}  \subset (R\mathbb{S}^1)^{N-1} \times (-L, C).
\]

In the previous Section~\ref{sec:obstaclebounded}, 
we studied the obstacle problem under the assumption 
that the set $U$ is bounded, and it is clear that 
the results proved in that chapter remain true in 
the periodic setting, i.e.\  
when $U=(R\mathbb{S}^1)^{N-1} \times (-L,L) $. Then, for $t>0$, and $L > C+t $
there exists\footnote{This follows from an 
elementary modification of Proposition~\ref{P15} 
(replacing the set $\mathcal{A}$ by $\mathcal{A}'=\{v \in H^1(U) : v\ge 0,~v\arrowvert_{x_N=-L}=tL,~u\arrowvert_{x_N=L}=0\}$).} 
a unique variational solution $v_{L,R}(t)$ 
to the obstacle problem
\be\label{N340}
\Delta v_{L,R}=\chi_{A_{L,R}\cap \{v>0\}}\quad\text{with}\quad
A_{L,R}=U\setminus \Omega_{0,L,R},
\ee
(in the sense of Definition~\ref{def:obstacle}) 
satisfying the following boundary conditions:
\be\label{N321}
v_{L,R}(t,x', -L) = t\Big(\frac{t}{2}+L\Big) , 
\qquad v_{L,R} (t,x', L) =0 \quad\text{for }x' \in (\mathbb{S}^1)^{N-1}.
\ee
Now consider the functions $\zeta_1(t,x)=v_0(t,x_N+C)$ and $
\zeta_2(t,x)=v_0(t,x_N-C)$ (recall the definition of $v_0$ in \eqref{eq:v0}), which satisfy
\begin{align*}
&\Delta \zeta_j=\chi_{A_j\cap \{\zeta_1>0\}} ,\quad 
A_1=\{x_N>-C\},~A_2=\{x_N>C\},\\
&\zeta_1(t,x',-L)=t\Big(\frac{t}{2}+L-C\Big),\quad  \zeta_2(t,x',-L)=t\Big(\frac{t}{2}+L+C\Big),\\
&\zeta_1(t,x',L)=0=\zeta_2(t,x',-L).
\end{align*}
Notice that these functions are obviously periodic in $x'$ since they do not depend on $x'$. 
Then, by construction, we have $\zeta_1\le v_{L,R}\le \zeta_2$ on 
$\partial U$ and $A_2\subset A\subset A_1$. Therefore, 
the comparison argument (see the point~\ref{item:2T19} 
in Theorem~\ref{theorem:19}) shows that
\begin{equation}\label{eq:compare}
v_0(t,x_N+C) \le v_{L,R}(t,x',x_N)  \le v_0(t,x_N-C). \end{equation} 

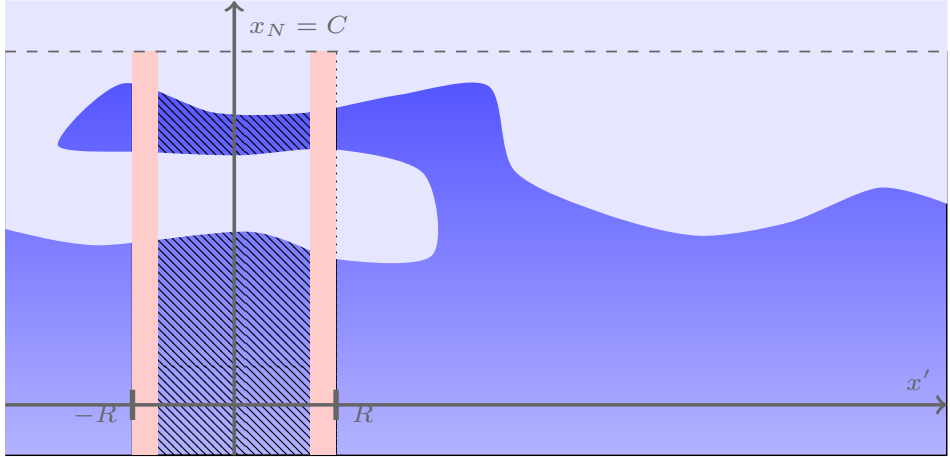
\begin{figure}[htb]
\centering
\resizebox{0.9\textwidth}{!}{%
\begin{tikzpicture}[x=5mm,y=5mm,decoration={mark random y steps,segment length=9mm,amplitude=2mm}]
\shadedraw [top color=blue!70!white,bottom color=blue!30!white]  
(-4,3) -- (14.5,3) -- (14.5,-5) -- (-4,-5) ;
\fill[pattern=north west lines,opacity=.6,draw] (-1.5,-5) rectangle (2.5,3);
\path[decorate] (-4, -0.5) -- (-1,-0.5) -- (6,-1) -- (6,0.5) -- (-4,1) -- (-3.5,2.2) -- (7,2.2) -- (6,-0.5) -- (14.5, 0);
\filldraw [color=blue!10!white]  plot[color=blue!70!white,variable=\x,samples at={1,...,\arabic{randymark}},smooth] 
   (randymark\x) -- (14.5,4) -- (-4,4) ;
\fill[pattern=north east lines,opacity=.6] (-1.5,-5) rectangle (-1,3);
\fill[pattern=north east lines,opacity=.6,white,fill=red!20!white] (-1,-5) -- (-1,3) -- (-1.5,3) -- (-1.5,-5) ; 
\fill[pattern=north east lines,opacity=.6] (2,-5) rectangle (2.5,3);
\fill[pattern=north east lines,opacity=.6,white,fill=red!20!white]  (2,-5) -- (2,3) -- (2.5,3) -- (2.5,-5); 
\draw [color=black!60!white,line width=1pt,->] (-4,-4)  -- (14.5,-4) node [above left] {\tiny{$x'$}} ;
\draw [color=black!60!white,line width=1pt,->] (0.5,-5)  -- (0.5,4) node [below right] {\tiny{$x_N=C$}} ;
\draw [dashed,color=black!60!white,line width=0.5pt] (-4,3)  -- (14.5,3)  ;
\draw [color=black!60!white,line width=1.5pt] (2.5,-4.3) -- (2.5,-3.7) node[below right] {\tiny{$R$}};
\draw [color=black!60!white,line width=1.5pt] (-1.5,-4.3) -- (-1.5,-3.7) node[below left] {\tiny{$-R$}};
\end{tikzpicture}
}
\caption{The domain $\Omega_{0,L,R}$ is connected}\label{Fig:trunc}
\end{figure}

We are now in a position to prove the existence of a variational solution by passing to the limit when $R$ and $L$ go to $+\infty$. 
To do this, consider a point $ x_0 \in \xR^N$ and observe that, 
if $R$ and $L$ are sufficiently large, then $v_{L,R}$ is well defined and uniformly bounded on the ball $B_3(x_0)$ and, in addition, $\chi_{A_{L,R}}=\chi_A$ on $B_3(x_0)$. 
Now, since $v_{L,R}$ satisfies the Euler-Lagrange equation~\e{N340}, we have $\Delta v_{L,R}\in [0,1]$, and hence $v_{L,R}$ is bounded in $W^{2,p}(B_2(x_0))$. 
Consequently, one can extract a subsequence that converges in $H^1(B_1(x_0))$. 
Using a diagonal argument, one can find a subsequence that converges in $H^1_{\loc}(\xR^N)$ 
to a function $v\in H^1_{\loc}(\xR^N)$ 
when $R,L$ goes to $+\infty$. Now, Proposition~\ref{P:weakl} implies that
$v$ is a variational solution to the obstacle problem or, equivalently, by Proposition \ref{lem:EulerLagrange2} a weak solution to
the Euler-Lagrange equation $\Delta v=\chi_{A\cap \{v>0\}}$. 
Furthermore, by passing to the limit in \e{eq:compare}, we see that $v$ inherits the bounds
\begin{equation} \label{eq:compare2}
v_0(t,x_N+C) \le v(t,x) \le v_0(t,x_N-C).
\end{equation}
This completes the proof of the existence of a variational solution satisfying~\e{eq:compare2}. 
The limit \eqref{eq:thm19limit} follows from \eqref{eq:compare2}. 

\medskip 

\noindent \textbf{Second step: uniqueness of the varational solution. }

\medskip 

We now prove that variational solutions to the obstacle problem that satisfy \eqref{eq:compare2} are unique. 

Consider 
two solutions  $ v^j$, $j=1,2$ 
to the obstacle problem satisfying \eqref{eq:compare2} and set $V=v^2-v^1$. 
By definition of $v_0$ 
(see~\eqref{eq:v0}), we have
\be\label{N301}
\sup_{x\in\xR^N}\la V(x)\ra\le 
\sup_{x\in\xR^N}\big(v_0(t,x_N-C)-v_0(t,x_N+C)\big)\le 2Ct.
\ee
Indeed, it is easily verified that
$\lA v_0(t)'\rA_{L^\infty}\le t$. 

We want to prove that $V=0$. To do so, we will prove that the $L^\infty$-norm of $V$ on balls $B_\rho(0)$ is of size at most $\rho^{-1/2}$ for any $\rho$ large enough, which immediately implies that $V=0$ (the origin plays no role here).

The key ingredient is the Moser's estimate proved in 
Lemma \ref{thm:thm3}, which allows to write
\[    \Vert V \Vert_{L^\infty(B_\rho(0))} 
\le c \rho^{-N} \Vert V \Vert_{L^1(B_{2\rho}(0))} . 
\]
We claim that
\be\label{N305}
\Vert V \Vert_{L^1(B_{2\rho}(0))} \les \rho^{N-\mez}. 
\ee
Here and below, $A\les B$ means that $A\le KB$ for some constant $K$ depending only on parameters considered fixed, as $N,C$ and $t$.

By combining this claim with the previous inequality, we immediately obtain
$$
\lA V\rA_{L^\infty(B_\rho(0))}\les \rho^{-1/2}
$$
for all $\rho$ large enough, which obviously implies
that $V=0$. 

We now turn to the proof of the claim \e{N305}. 
To shorten notation, set $B=B_{2\rho}(0)$. 
We decompose this ball 
$B$ into three pieces: 
\begin{align*}
B_1&=\{x\in B \, ;\, x_N>C+t\},\\
B_2&=\{x\in B \, ;\, -2C-t<x_N<C+t\},\\
B_3&=\{x\in B ; x_N<-2C-t\}.
\end{align*}
The integral of $V$ over $B_1$ is simply $0$ since $V(x)$ vanishes when $x_N>C+t$ (indeed, both solutions $v^1$ and $v^2$ vanishe for $x_N>C+t$ in light of \e{eq:compare2}). 
It is also elementary to estimate the
integral of $V$ over $B_2$. Indeed, the bound~\e{N301} implies that
$$
\int_{B_2}\la V(x)\ra\dx \le 2Ct \la B_2\ra \les \la B_2\ra
\les \rho^{N-1}.
$$
It remains to estimate the integral of $V$ over $B_3$. This is the most delicate part: the proof will be based on the Caccioppoli inequality proved in Lemma~\ref{thm:thm3}. 
We will apply this inequality to obtain some explicit decay rate for $V$ when $-x_N$ is large enough.

\begin{lemma}
There exists 
a constant $K$ such that, for all $x\in\xR^N$,
\be\label{N304}
x_N < -2C-t\quad\Rightarrow\quad |V(x)| \le K (x_N +2C+t)^{-\frac12}.
\ee
\end{lemma}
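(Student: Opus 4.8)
The plan is to convert the Caccioppoli inequality of Lemma~\ref{thm:thm3} into a quantitative decay of $V=v^2-v^1$ far below the initial strip, exploiting that in that region $V$ is genuinely harmonic. First I would record the structural facts. Since $\{x_N<-C\}\subset\Omega_0$, the right--hand side $\chi_{A\cap\{v^j>0\}}$ vanishes there, so each $v^j$ — and hence $V$ — is harmonic in the half--space $\{x_N<-C\}$. On the other side, the upper bound in \eqref{eq:compare2} forces $v^j\equiv0$ on $\{x_N\ge C+t\}$, so $V$ vanishes identically above $\{x_N=C+t\}$, while $\Delta V=\Delta v^2-\Delta v^1$ is supported in the bounded horizontal slab $S=\{-C\le x_N\le C+t\}$ with $|\Delta V|\le1$ there. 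Finally \eqref{N301} gives the uniform bound $|V|\le 2Ct$ on all of $\xR^N$.

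Next I would run the decay estimate on $V$ in the half--space $\{x_N<-C\}$. Fix $x$ with $x_N<-2C-t$ and set $\delta=-x_N-C>C+t$, the distance of $x$ to the hyperplane $\{x_N=-C\}$. Applying \eqref{eq:Cacciopoli} on balls $B_R(y)\subset\{x_N<-C\}$ with $R$ comparable to the distance of $y$ to $\{x_N=-C\}$ controls the scale--invariant Dirichlet energy of the harmonic function $V$ at every scale up to $\sim\delta$; chaining such estimates along a sequence of balls whose radii grow geometrically from $O(1)$ near $x$ up to $O(\delta)$ near the bottom face of $S$, and using at the top of the chain that $V$ is trapped between the \emph{common} profiles $v_0(t,\cdot\pm C)$ and vanishes above $S$ to anchor the iteration, one gains a definite factor per step. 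Summing the resulting geometric series, and invoking the Moser bound \eqref{eq:Moser} to pass from $L^1$--averages on small balls back to pointwise values, gives $|V(x)|\lesssim (Ct)\,\delta^{-1/2}$; absorbing $C,t$ and the numerical constants into $K$ and using $\delta\ge -x_N-2C-t$ up to constants yields exactly \eqref{N304}.

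The delicate point — and the one I expect to be the real obstacle — is precisely the behaviour of $V$ as $x_N\to-\infty$: a generic bounded harmonic function on a half--space need not decay at all, so the estimate cannot rest on harmonicity and the sandwich bounds alone. What has to be used is that $V$ is the difference of two solutions of the \emph{same} obstacle problem, both squeezed between the same one--dimensional reference $v_0$: this is what pins the source $\Delta V=\chi_{\Omega^2_t\setminus\Omega_0}-\chi_{\Omega^1_t\setminus\Omega_0}$ to the fixed slab $S$ and forces $V\equiv0$ above it, and it is the interplay of this rigidity with the Caccioppoli iteration that excludes a nonzero ``constant at $-\infty$'' and produces a uniform rate. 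The specific exponent $\tfrac12$ is only what the iteration conveniently delivers; it is already more than sufficient to close the $L^1$ estimate \eqref{N305} in the following step, and no optimality is claimed.
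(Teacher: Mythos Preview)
Your proposal has a genuine gap: the ``chaining'' of Caccioppoli estimates on balls inside the half-space $\{x_N<-C\}$ is not an actual mechanism for decay. On any such ball $V$ is harmonic, so \eqref{eq:Cacciopoli} reduces to the standard interior gradient estimate, which merely relates $\nabla V$ on a smaller ball to $V$ on a larger one --- it is not a contraction and iterating it does not produce a factor per step. A bounded harmonic function on a half-space can be constant, and nothing in your iteration distinguishes $V$ from a nonzero constant. You correctly flag this worry in your last paragraph, but the mechanism you then invoke (``the interplay of this rigidity with the Caccioppoli iteration'') is just a restatement of the goal, not an argument.

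What the paper actually does is quite different and uses the vanishing of $V$ above the slab in a concrete way. First, a \emph{single} application of \eqref{eq:Cacciopoli} on a ball of radius $\sim R$ centered at height $-C$ (this ball reaches into the slab --- the Caccioppoli inequality of Lemma~\ref{thm:thm3} holds for differences of obstacle-problem solutions, not only harmonic functions) gives $\lA\nabla V\rA_{L^2(B_R)}\lesssim R^{N/2-1}$ from the uniform bound $|V|\le 2Ct$. Second, since $V(\cdot,C+t)=0$, the fundamental theorem of calculus along vertical segments yields
\[
\lA V(\cdot,-C)\rA_{L^2(B_R^{\xR^d}(x'))}
=\Big\lA\int_{-C}^{C+t}\partial_{x_N}V\,dx_N\Big\rA_{L^2(B_R^{\xR^d}(x'))}
\lesssim R^{(N-2)/2},
\]
which is a half-order improvement over the trivial $R^{(N-1)/2}$ coming from the $L^\infty$ bound. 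Third, in the half-space $\{x_N<-C\}$ one writes $V$ via the Poisson kernel from its trace on $\{x_N=-C\}$; decomposing the integral into cubes of side $r$ and applying Cauchy--Schwarz against the kernel, whose $L^2$ norm on such a cube is $\sim r^{-d/2}$, gives $|V(x',-C-r)|\lesssim r^{-d/2}\cdot r^{(N-2)/2}=r^{-1/2}$. The two ingredients you are missing are precisely the FTC step (which is where $V=0$ above the slab is actually \emph{used}) and the Poisson-kernel representation that converts the improved trace bound into pointwise decay.
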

\begin{proof}
We start by applying inequality \eqref{eq:Cacciopoli} to get that, for all $x\in\xR^N$ and all $R>0$, 
\be\label{N311}
\begin{aligned}
\Vert \nabla V \Vert_{L^2(B_R(x))} 
&\le c R^{-1} \Vert V \Vert_{L^2(B_{2R}(x))} \\
&\le 
2^{\frac{N}{2}}c R^{-1+ \frac{N}2}\Vert V \Vert_{L^\infty(B_{2R}(x))}  \\
&\le 2^{\frac{N}{2}+1}cCt R^{\frac{N}2-1} \qquad \text{by }\e{N301}. 
\end{aligned}
\ee

Now assume that $ R \ge C+t $ and consider
$x'\in \xR^d$ with $d=N-1$. 
Since $V(x)=0$ for $x_N\ge C+t$, as already mentioned, 
we have
$$
\Vert V(\cdot, -C) \Vert_{L^2(B_R^{\xR^d}(x'))} 
\, = \Big\Vert \int_{-C}^{C+t}  V_{x_N} \dx_N \Big\Vert_{L^2\big(B_R^{\xR^{d}}(x')\big)}.  
$$
Therefore, the inequality~\e{N311} implies that
\be\label{N303} 
\begin{aligned}
\Vert V(\cdot, -C) \Vert_{L^2(B_R^{\xR^d}(x'))} 
& \le   (2C+t)^{\frac12} \Vert \nabla V \Vert_{L^2\big(B_{2R}^{\xR^N}(x',-C)\big)}
\\ & \le   2(2C+t)^{\frac12}Ct(2R)^{-1+ \frac{N}2}. 
\\ &\les R^{-1+ \frac{N}2}.
\end{aligned} 
\ee
Consider a point $x=(x',x_N)$ with 
$ x_N = -C-r$ for some $r>0$. 
Since $V$ is harmonic in the lower half-space $\{x_N<-C\}$, the
Poisson formula reads
\[  V(x) = \frac{\Gamma(N/2)}{\pi^{N/2}}  \int_{\xR^d}  \frac{r}{(r^2+|x'-y
'|^2)^{N/2}}     V(y',-C) \dy'. \]
Now we can decompose the integral as the sum of 
integrals over cubes whose edges are of
size $R$. By applying the Cauchy-Schwarz inequality on each such cube and then summing up, we obtain the bound
\[ \Vert V|_{x_N = -C-r} \Vert_{L^\infty(\xR^d)} \lesssim 
r^{-\frac{d}{2} }\sup_{x' \in \xR^d} \Vert V (\cdot,-C) \Vert_{L^2(B^{\xR^d}_r(x'))}.
\]
Assume now that $r>C+t$. Then
we can apply the bound \e{N303} with $R=r$ to infer that
\[ \Vert V|_{x_N = -C-r} \Vert_{L^\infty(\xR^d)}  \les r^{-\frac12}.
\]
This implies the desired result~\e{N304}. 
\end{proof}
The previous decay estimate~\e{N304} implies that the integral of $V$ over $B_3$ satisfies
$$
\int_{B_3}\la V(x)\ra\dx \les \rho^{N-1/2}.
$$
This completes the proof of the claim~\e{N305} 
and hence concludes the proof of the uniqueness of the variational solution and part \eqref{item:th1}. 

\medskip

\noindent{\textbf{Third step: Connectedness, monotonicity and stability estimate.}}

\medskip

Connectedness of the set $ \Omega_t$  is a consequence of the construction by the obstacle problem.  The monotonicity property and the stability estimate are also direct consequences of the analogous stability estimates for $v_{L,R}$ given by Theorem~\ref{theorem:19}. This proves part \eqref{item:monotonicity} and \eqref{item:stability}. 

\medskip 
\clearpage 

\noindent{\textbf {Fourth step: semi-flow property \eqref{item:semi-flow}.} 

\medskip 

Let us prove the semi-flow property, which
is statement~\eqref{item:semi-flow} 
in Theorem~\ref{thm:mainobstacle}. 
Consider two times $0<s<t$ and consider the solutions $v(t)$ and $v(s)$ as given by point~\eqref{item:th1} in Theorem~\ref{thm:mainobstacle}. Introduce the set
$\Omega_s=\{x\in \xR^N\,;\, v(s,x)>0\}$. Since $v(s)$ satisfies \begin{equation*} 
\forall x\in\xR^N,\quad   v_0(s,x_N +C) \le v(s,x)  \le v_0(s,x_N-C),
\end{equation*}
where $v_0$ is given by~\e{eq:v0}, we see that 
$\Omega_s$ satisfies 
\begin{equation}\label{N310}
\{x\in\mathbb{R}^N\sep x_N < -C+s \}\subset  \Omega_s \subset \{ x\in\mathbb{R}^N\sep x_N <C+s \},
\end{equation}
provided that $\Omega_0$ satisfies \e{n20}. Consequently, $\Omega_s$ satisfies the assumption of point~\ref{item:th1} in Theorem~\ref{thm:mainobstacle} (with $C$ replaced by $C+s$) and we can define a function $v(t,s)\in H^1_{\loc,+}(\xR^N)$ as the unique variational solution to the obstacle problem
\[ 
\Delta v(t,s) = \chi_{\xR^N \backslash \Omega_s \cap \{ v(t,s) >0 \}} \]
satisfying
\begin{equation*} 
\forall x\in\xR^N,\quad   v_0(t-s,x_N +C+s) 
\le v(t,s,x)  \le v_0(t-s,x_N-C-s),
\end{equation*}
We claim that
$$
v(t)=v(s)+v(t,s).
$$
To see this, set $\zeta=v(s)+v(t,s)$. Notice that
$$
\chi_{\xR^N\setminus \Omega_0\cap \{ v(s)>0\}}+
 \chi_{\xR^N \backslash \Omega_s \cap \{ v(t,s) >0 \}}=
\chi_{\xR^N \backslash \Omega_0 \cap \{ \zeta >0 \}}.
$$
Again, the verification consists in checking the different cases: 
\begin{alignat*}{5}
&\chi_{\xR^N\setminus \Omega_0\cap \{ v(s)>0\}}(x) =1 \, &&\Rightarrow 
\, &&\chi_{\xR^N \backslash \Omega_s \cap \{ v(t,s) >0 \}}(x) =0 \quad&&\text{and}\quad  &&\chi_{\xR^N \backslash \Omega_0 \cap \{ \zeta >0 \}}(x)=1,\\
&\chi_{\xR^N \backslash \Omega_s \cap \{ v(t,s) >0 \}}(x)=1 \, 
&&\Rightarrow 
\, &&\chi_{\xR^N\setminus \Omega_0\cap \{ v(s)>0\}} (x) =0 \quad&&\text{and}\quad  &&\chi_{\xR^N \backslash \Omega_0 \cap \{ \zeta >0 \}}(x)=1,
\end{alignat*}
and
$$
\chi_{\xR^N\setminus \Omega_0\cap \{ v(s)>0\}}(x)+
 \chi_{\xR^N \backslash \Omega_s \cap \{ v(t,s) >0 \}}(x)=0 
\quad\Rightarrow\quad  \chi_{\xR^N \backslash \Omega_0 \cap \{ \zeta >0 \}}(x)=0.
$$

Therefore, with 
$ A = \xR^N \backslash \Omega_0$, the functions $v(t)$ and $\zeta$ satisfies the same equation:
$$
\Delta v(t)=\chi_{A\cap \{v(t)>0\}}\quad,\quad \Delta\zeta=\chi_{A\cap\{\zeta>0\}}.
$$
Moreover, by construction, $v(t)$ and $\zeta$ satisfies the pointwise bounds
\begin{align*}
&v_0(t,x_N +C) \le v(t,x)\le 
v_0(t,x_N-C),\\
&
v_0(s,x_N +C)+v_0(t-s,x_N +C+s) \le \zeta(x)\\
&\zeta(x)\le 
v_0(s,x_N-C)+v_0(t-s,x_N-C-s).
\end{align*}
Now, remembering that $v_0$ is given by~\e{eq:v0}, we observe that 
$\la v(t)-\zeta\ra $ is bounded on $\xR^N$. This is enough to apply the uniqueness argument explained in Step $2$ 
and hence we obtain the wanted identity $v(t)=v(s)+v(t,s)$.

\medskip 

\noindent{\textbf {Fifth step: time regularity \eqref{item:time}.}

Let $t>0$ and consider the solution
$v(t)$ given by point~\eqref{item:th1} in Theorem~\ref{thm:mainobstacle}, and recall that $\Omega_t$ denotes the positivity set
$\Omega_t=\{x ; v(t,x)>0\}$. 

We now turn to the proof of the following estimate: There exists $c(N)$ so that for $ 0 \le s < t$ 
for any $y'\in\xR^d$, any ball $B^{\xR^d}_R(y')$ of radius $R\ge C$
\be\label{N710}
\Big| | (B^{\xR^d}_R(y')\times \xR)\cap (\Omega_t \backslash \Omega_s)|- (t-s) |B^{\xR^d}_R(y')| \Big|  \le  c(N) (t-s) C^{\frac25} R^{d-\frac25},
\ee
where $C$ is the constant in~\e{n20}. By the semi-flow property (see Theorem~\ref{thm:mainobstacle} \e{item:semi-flow}) it  suffices to consider $s=0$ and $0<  t \le C $.
We begin by proving a technical 
estimate which quantifies the fact that $\nabla v(t,x)$ is approximately equal to $te_N$ when $x_N$ goes to $-\infty$. 

We recall that
\be\label{N701}
\Delta v(t) = \chi_{\Omega_t \backslash \Omega_0},
\ee
and
\begin{equation}\label{N700} 
\forall x\in\xR^N,\quad   v_0(t,x_N +C) \le v(t,x)  \le v_0(t,x_N-C),
\end{equation}
where $v_0$ is given by~\e{eq:v0}.
We have 
$\lA v_0(t)'\rA_{L^\infty}\le t$, and hence we deduce that
\[    |v(t,x) - v_0(t,x_N)| \le Ct. \]
Recall also that, for $x_N<0$, there holds
$$
v_0(t,x_N)=t\left(\frac{t}{2}-x_N\right)
$$
and hence 
$$
|v(x)+ tx_N| \le Ct+\frac{t^2}{2}
\le 2tC\quad\text{for}\quad x_N<0 ,~t\le C.
$$
Also, as already seen, the inequalities \e{N700} implies that 
\be\label{n719}
\Omega_t \backslash \Omega_0 \subset \{ x: -C < x_N < C+t \}.
\ee
Remembering \e{N701}, we see that the previous inclusion implies that $ v$ is harmonic for 
$x_N<-C$. Now we recall a classical inequality: for all $R>0$, if $u$ is harmonic in a ball $B_R(x)$, then
\be\label{Poissoncenter}
\la \nabla u(x)\ra\le \frac{N}{R}\sup_{y\in B_R(x)}\la u(y)\ra.
\ee
(Notice that, up to replacing $N$ 
by a generic constant $K$, this estimate is obvious for $R=1$ by elliptic regularity and then the dependence in $R$ is obtained by a scaling argument. The value $N$ is obtained by using the Poisson formula.) Therefore, for any $x\in \xR^N$ such that $x_N < -C$, we can use \e{Poissoncenter} with $u(x)=v(x)+tx_N$ and $R=-x_N-C$ to obtain
\begin{equation}\label{eq:nablapoisson}
|\nabla v(x) +te_N|  \le \frac{N}{-x_N-C}     \Vert v+ t x_N  \Vert_{L^\infty(B_R(x)) }
\le \frac{2CtN}{-x_N-C}. \end{equation}

With this estimate in hand, we are now in position to obtain the wanted estimate~\e{N710}. 

Consider a point $y'\in \xR^d$ and a radius 
$R\ge C$. 
Then we introduce a cut-off function $\eta\in C^\infty_c(\xR^d)$ radial, nonnegative, supported in $B_{R+\rho}(0) $, identically $1$ on $B_R(0)$ with second derivatives bounded by $4\rho^{-2} $, with $\rho\le R$ to be chosen later. We claim that 
\be\label{N717}\Big|\int_{\Omega_t \backslash \Omega_0} \eta(y'+x')\dx  - t \int_{\xR^d} \eta(y'+x') \dx' \Big| \lesssim  t C^{\frac25} R^{-\frac25}           |B^{\xR^d}_R(0)|\les 
t C^{\frac25} R^{d-\frac25}.
\ee
and 
\be\label{n718}
\begin{split} \hspace{1cm} & \hspace{-1cm} 
\Big| | (B^{\xR^d}_R(y')\times \xR)\cap (\Omega_t \backslash \Omega_0)|- t |B^{\xR^d}_R(y')| 
\\ & - \Big(\int_{\Omega_t\backslash \Omega_0} \eta(y'+x') \dx - t \int_{\xR^d} \eta( y'+x') \dx'\Big) \Big| 
\\ & \qquad \lesssim 
t R^{d-1} \rho. 
\end{split}
\ee

Notice that \e{n718} follows directly from \e{n719} and the assumptions on~$\eta$. To prove the claim~\e{N717}, we begin by 
observing that
$$
\int_{\Omega_t \backslash \Omega_0} \eta(x')\dx 
= \int_{\xR^N} (\Delta v(x))  \eta(x') \dx.
$$
Given a parameter $r\ge C$ to be chosen later, 
set
$$
Q(r,R)=(y'+[-R-\rho,R+\rho]^d)\times [-r,C+t].
$$
Observe that 
$\eta \Delta v$ is supported in $Q(r,R)$ so   
the divergence theorem implies that
\[
\begin{split}
\int_{\Omega_t \backslash \Omega_0} \eta(x')\dx 
&=\int_{Q(r,R)} (\Delta v(x))  \eta(x') \dx\\
&=-  \int_{\xR^d}    \eta(x')\partial_N v (x',-r) \dx'   + \int_{Q(r,R)}  \Delta_{\xR^d}  \eta( x')  v \dx. 
\end{split} 
\]
Now write
\[      \int_{\xR^d}    \eta(x')\partial_N v (x',-r) \dx' 
= -t\int_{\xR^d}  \eta \dx'  +    \int_{\xR^d}    \eta(x')(t+\partial_N v (x',-r)) \dx' 
\]
and by \eqref{eq:nablapoisson}
\be\label{n721}
\int_{\xR^d}    \eta(x')|t+\partial_N v (x',-r)|  \dx' \le \frac{2tCK}{r-C}  \int_{\xR^d} \eta \dx' \le \frac{2^NtCK}{r-C}R^{N-1} |B_1^{\xR^d}(0)|.
\ee 
On the other hand, 
$$
\int_{Q(r,R)}  |\Delta_{\xR^d}  \eta( x')  v |\dx  
\le \la Q(r,R)\ra \cdot \lA \Delta_{\xR^d}  \eta\rA_{L^\infty(\xR^d)}\cdot 
\lA v\rA_{L^\infty(Q(r,R))}.
$$
Now, since $\rho\le R$ and $t\le C$, we have
$$
\la Q(r,R)\ra\les (R+\rho)^d (r+C+t)\les R^d(r+2C).
$$
Also, by assumption on $\eta$, we have
$$
\lA \Delta_{\xR^d}  \eta\rA_{L^\infty(\xR^d)}\les \rho^{-2}.
$$ 
Eventually, it follows from the bound \e{N700} and the fact that $x_N\mapsto v_0(t,x_N)$ is decreasing, that
$$
\lA v\rA_{L^\infty(Q(r,R))}\le v_0(t,-r-C)=t\left(\frac{t}{2}+r+C\right)\le t(r+2C).
$$
We conclude that 
\be\label{n722}
\int_{Q(r,R)}  |\Delta_{\xR^d}  \eta( x')  v |\dx  
\les  t R^{N-1}\rho^{-2}  (r+C)^2.
\ee
Now, collect the three inequalities \e{n719}, \e{n721} and \e{n722} to obtain 
\[
\Big| | (B^{\xR^d}_R(y')\times \xR)\cap (\Omega_t \backslash \Omega_0)|- t|B^{\xR^d}_R(y')| \Big|  \lesssim 
t R^{N-2} \Big( \rho +  \frac{CR}r  + \frac{R r^2}{\rho^2} \Big) 
\]  
We choose $ r = C^{\frac35} R^{\frac{2}{5}} \le R$ and then 
$\rho = R^{\frac13} r^{\frac23}\le R$ 
so that the bracket is bounded by a multiple of 
\[   R^{\frac13} r^{\frac23} + \frac{CR}r \lesssim C^{\frac25} R^{\frac35}.
\]
This proves the desired estimate~\e{N710}.

\medskip

\noindent{\textbf{Sixth step: eventual monotonicity \eqref{item:eventual}.}

\medskip 

Let us prove the eventual monotonicity property~\eqref{item:eventual} 
in Theorem~\ref{thm:mainobstacle}. 
Consider a time $t\ge 0$ and introduce the unique 
solution 
$v=v(t)\in H^1_{\loc,+}(\xR^N)$ 
to the obstacle problem $\Delta v=\chi_{A\cap \{v>0\}}$ satisfying
\begin{equation}\label{N313} 
\forall x\in\xR^N,\quad   v_0(t,x_N +C) \le v(x)  \le v_0(t,x_N-C).
\end{equation}
Our goal is to prove that the map
\be\label{N329}
(C,\infty) \ni x_N \mapsto v(x',x_N) \quad\text{
is monotonically decreasing for all }x' \in \xR^d.
\ee
Let us assume this result for a moment. 
As already seen, the lower bound in \e{N313} 
implies that
$$
\{x\in \xR^N\,;\, x_N \le -C+t\}\subset \Omega_t=\{x\in \xR^N\,;\, v(t,x)>0\}.
$$
Consequently, if $t\ge 2C$, then 
\be\label{N328}
\{x\in \xR^N\,;\, x_N \le C\}\subset\Omega_t.
\ee
Now, let us assume that $x=(x',x_N)\in \Omega_t$ for some $x_N>C$. 
By definition of $\Omega_t$, this means that $v(t,x)>0$. 
And since $y\mapsto v(t,x',y)$ is decreasing for $y\ge C$, we deduce that $v(t,y)>0$ for any $y\in (C,x_N)$. In particular, the segment 
$((x',C),(x',x_N))$ is contained in $\Omega_t$. By combining this with \e{N328}, 
we immediately deduce that $\Omega_t$ is a subgraph: there exists a lower 
semicontinuous function $f(t)\colon \xR^d\to \xR$ so that 
\[ \Omega_t= \{ x\in \xR^N; x_N < f(t,x') \}. \]
In addition, the function $f$ is monotonically increasing in $t$.

We now turn to the proof of \e{N329}. To prove this monotonicity result, we use the moving plane method. 
Let $x=(x',x_N)\in\xR^N$ with $x_N>C$, and consider $y_N>x_N$. We want to prove that $v(x',y_N)\le v(x',x_N)$. This is equivalent to proving that 
\be\label{N315}
v(x',2k-x_N)\le v(x',x_N)\quad\text{where}\quad k=\mez (x_N+y_N).
\ee
The advantage of using this equivalent formulation will be apparent in a moment; it has to do with the identity:
$$
\Delta (v(x',2\kappa-x_N))=
(\Delta v)(x',2\kappa-x_N).
$$ 
In particular, the claim~\e{N315} will be a direct consequence of the following lemma applied with $\kappa=k$.

\begin{proposition}
Given $\kappa > C$, introduce the function 
$u\in H^1_{\loc,+}(\xR^N)$ defined by
\be 
u(x',x_N) = v(x',2\kappa-x_N).
\ee
On the lower half-space $\Pi_\kappa=\{x\in\xR^N\,;\,x_N\le\kappa\}$ we have
$$
u(x)\le v(x)\quad\text{for all}\quad x\in \Pi_\kappa. 
$$
\end{proposition}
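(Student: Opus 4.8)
The plan is to run the classical moving-plane (reflection) comparison. Write $\sigma(x)=(x',2\kappa-x_N)$ for the reflection across $\{x_N=\kappa\}$, so that $u=v\circ\sigma$. Since $\Delta v=\chi_{A\cap\{v>0\}}$ takes values in $[0,1]$, elliptic regularity gives $v\in W^{2,p}_{\loc}(\xR^N)\cap C^{1,\alpha}_{\loc}(\xR^N)$ for all finite $p$ and all $\alpha<1$; the same holds for $u$, with $\Delta u(x)=(\Delta v)(\sigma(x))$ almost everywhere. The key use of the hypothesis $\kappa>C$ is this: if $x_N<\kappa$ then $\sigma(x)_N=2\kappa-x_N>\kappa>C$, and since $\Omega_0\subset\{x_N<C\}$ this forces $\sigma(x)\in A=\xR^N\setminus\Omega_0$. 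Hence on the open half-space $\{x_N<\kappa\}$ one has $\Delta u=\chi_{\{v>0\}}\circ\sigma=\chi_{\{u>0\}}$ a.e., whereas always $\Delta v=\chi_{A\cap\{v>0\}}\le\chi_{\{v>0\}}$. With $w=u-v$, on $\{w>0\}$ we have $u>v\ge0$, so $u>0$ and $\Delta w=1-\Delta v\ge0$ a.e.\ there. Kato's inequality then gives $\Delta w_+\ge\chi_{\{w>0\}}\Delta w\ge0$ in the sense of distributions on $\{x_N<\kappa\}$, so $w_+=(u-v)_+$ is a nonnegative, continuous, subharmonic function there.

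Next I would determine the behaviour of $w_+$ at the two ends of $\Pi_\kappa$. On $\{x_N=\kappa\}$ the reflection is the identity, so $w=u-v=0$. For $x_N\le\kappa-C$ one has $2\kappa-x_N-C\ge x_N+C$, and since $y_N\mapsto v_0(t,y_N)$ is nonincreasing, the two-sided bounds from point~\ref{item:th1} of Theorem~\ref{thm:mainobstacle} yield
\[
u(x)=v(\sigma(x))\le v_0(t,2\kappa-x_N-C)\le v_0(t,x_N+C)\le v(x),
\]
hence $w_+\equiv0$ on $\{x_N\le\kappa-C\}$. Thus $w_+$ is supported in the slab $S=\xR^d\times(\kappa-C,\kappa)$, on which $u$ and $v$ are bounded; so $w_+$ is a globally bounded, nonnegative, continuous function on $\{x_N\le\kappa\}$, subharmonic on $\{x_N<\kappa\}$, which vanishes on $\{x_N=\kappa\}\cup\{x_N\le\kappa-C\}$ and in particular on $\partial S$.

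The final step is a Phragm\'en--Lindel\"of argument on $S$. Put $M=\sup_S w_+<\infty$, fix $\mu>0$ with $\mu C/2<\pi/2$, and take
\[
\phi(x',x_N)=\cos\!\big(\mu(x_N-\kappa+\tfrac C2)\big)\prod_{j=1}^{d}\cosh\!\big(\tfrac{\mu}{\sqrt d}\,x_j\big),
\]
which is harmonic and strictly positive on $S$ and satisfies $\inf\{\phi(x):x\in S,\ |x'|=r\}\to\infty$ as $r\to\infty$. For each $\eps>0$ the function $w_+-\eps\phi$ is subharmonic on $S$, is $\le0$ on $\partial S$, and tends to $-\infty$ as $|x'|\to\infty$; applying the maximum principle on $S\cap\{|x'|<R\}$ and letting $R\to\infty$ gives $w_+-\eps\phi\le0$ on $\overline S$. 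Letting $\eps\to0$ yields $w_+\le0$, hence $w_+\equiv0$, i.e.\ $u\le v$ on $\Pi_\kappa$.

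The step I expect to be the main obstacle is the subharmonicity of $w_+$: it relies on the a.e.\ identity $\Delta u=\chi_{\{u>0\}}$ on $\{x_N<\kappa\}$ — precisely where $\kappa>C$ is needed, as it forces the reflected point out of the fluid domain so that the obstacle term in $\Delta v$ is not switched off — together with a careful application of Kato's inequality to the merely $W^{2,p}_{\loc}$ function $w$. An alternative avoiding Kato's inequality would be to test the weak equation for $w$ against $w_+\eta^2$ in the spirit of Lemma~\ref{lem:subobstacle}, using the exponential decay built into $\phi$ to kill the boundary contributions; I would keep this route in reserve.
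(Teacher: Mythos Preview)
Your proof is correct and shares its opening moves with the paper's: both identify that on $\{x_N<\kappa\}$ the reflected function satisfies $\Delta u=\chi_{\{u>0\}}$ (precisely because $\kappa>C$ forces $\sigma(x)\in A$), and both reduce to showing that $w_+=(u-v)_+$, which is bounded and supported in a finite-width horizontal strip, must vanish. Your strip $(\kappa-C,\kappa)$ comes from the two-sided comparison with $v_0$; the paper instead uses the cruder observation that $u\equiv0$ for $x_N\le 2\kappa-C-t$.

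The genuine divergence is in the Liouville step. You pass through Kato's inequality to get distributional subharmonicity of $w_+$, then run a textbook Phragm\'en--Lindel\"of with the explicit harmonic barrier $\cos(\mu(x_N-\kappa+C/2))\prod_j\cosh(\mu x_j/\sqrt d)$. The paper avoids Kato entirely: it tests the weak equation for $w$ against $\eta^2 w_+$ (exactly the ``reserve'' route you mention at the end), obtains a Caccioppoli inequality $\Vert\nabla w_+\Vert_{L^2(B_{R/2}^\kappa)}\le 2R^{-1}\Vert w_+\Vert_{L^2(B_R^\kappa)}$, couples it with the Poincar\'e inequality on the strip, and iterates in the style of the uniqueness argument (Step~2 of the same proof) to force $\Vert w_+\Vert_{L^2(B_R^\kappa)}\to0$. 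Your route is more classical potential theory and perhaps cleaner for a reader who already has Kato and Phragm\'en--Lindel\"of at hand; the paper's route stays within the energy/Moser toolbox developed earlier in Section~\ref{S:contraction} and reinforces the parallel with the uniqueness proof.
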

To prove this result, introduce the function \(w: \mathbb{R}^N \to \mathbb{R}\) defined by \(w = u - v\), as well as its nonnegative part \(w_+ = (u - v)_+\). We aim to show that
\[w_+(x) = 0 \quad \text{for all} \quad x \in \Pi_\kappa.\]

To achieve this, we follow the strategy employed to prove the uniqueness result in the second step. On the one hand, this will be simpler since $w_+$ is bounded and supported in an horizontal strip (as we will see in a moment). On the other hand, this requires careful consideration since we are now working on the half-space \(\Pi_\kappa=\{x_N\le \kappa\}\) instead of the entire space \(\mathbb{R}^N\). Let us 
start with the proof of various bounds.

\begin{lemma} \label{thm:thm3w} 
\begin{enumerate} 
\item \label{item:bound} There exists a constant $M>0$ depending only on $C$ and $t$ such that 
\be\label{N721}
\forall x\in \Pi_\kappa,\quad 0\le w_+(x)\le M.
\ee
\item \label{item:moser} Given a radius $R>0$, denote by  $B_R^\kappa\subset\xR^N$ 
the half-ball centered at $\kappa e_N=(0,\kappa)\in \xR^{N-1}\times \xR$ located under the hyperplane $\{x_N=\kappa\}$:
$$
B_R^\kappa=B_R(\kappa e_N)\cap \Pi_\kappa=\left\{ x\in B_R(\kappa e_N) \,;\, x_N\le \kappa\right\}.
$$
Then one has the following Poincar\'e inequality:
\begin{equation}\label{N729}
\Vert w_+ \Vert_{L^2(B_{R}^\kappa)} 
\le \kappa \Vert \nabla w_+ \Vert_{L^2(B_R^\kappa)},
\end{equation}
along with the following Caccioppoli inequality:
\begin{equation}\label{eq:Moser3}
\Vert \nabla w_+ \Vert_{L^2(B_{R/2}^\kappa)} 
\le 2 R^{-1} \Vert w_+ \Vert_{L^2(B_R^\kappa)}.
\end{equation}
\end{enumerate} 
\end{lemma} 
\begin{proof}
\eqref{item:bound}  Let us prove that $w_+$ is bounded. One has the obvious bound $w_+\ge 0$. On the other hand, 
since $u\ge 0$ and $v\ge 0$, we have $w_+\le u$. Now, observe that, by definition of $u$ (see~\e{N328}), $u=0$ for $x_N\le 2\kappa-C-t$ since $v(t,x',x_N)=0$ whenever $x_N\ge C+t$. It remains only to prove that $w_+$ is bounded on the strip 
$\{ 2\kappa - C-t\le x_N\le \kappa\}$. However, remembering that $\kappa>C$ by assumption, on the latter strip, 
we have $C\le 2\kappa-x_N\le C+t$ and hence 
the function $u$ is bounded there since $v$ is bounded on any horizontal strip (because it satisfies~\e{N313}).

\eqref{item:moser}  We have seen in the first part of the proof that $w_+$ vanishes on $\{x_N=\kappa\}$ since 
$u$ and $v$ coincide on $\{x_N=\kappa\}$ and is supported in the strip 
$\{ 2\kappa-C-t\le x_N \le\kappa\}$ of width $\kappa-C-t\le\kappa$. 
Hence, \e{N729} is the classical Poincar\'e inequality. 

We now turn to the proof of the Caccioppoli inequality~\e{eq:Moser3}. To do this, notice that 
$$
(\Delta u)(x)=(\Delta v)(t,x',2\kappa-x_N).
$$
Now, recall that $\Delta v=\chi_{A\cap \{v>0\}}$. 
Also, since $\kappa\ge C$ and since $A=\xR^N\setminus \Omega_0$ with $\Omega_0\subset \{x_N\le C\}$, we see that 
$\chi_{A}(t,x',2\kappa-x_N)=1$ for any $x\in \Pi_\kappa$ (notice that this is where the assumption $\kappa\ge C$ enters). This implies that, in the weak sense,
$$
\Delta u =\chi_{\{u>0\}}\quad\text{in }\Pi_\kappa.
$$
Consequently, the function $w=u-v$ satisfies, also in the weak sense, 
\be\label{weak:w}
\Delta w =  \chi_{\{ u>0\} }-  \chi_{A\cap\{ v>0\}}\quad\text{in }\Pi_\kappa.
\ee

Now, let $\eta\in C^\infty_0(B_R)$ be a nonnegative 
compactly supported test function, with $\eta=1$ on $B_{R/2}$ and such that $\lA \nabla\eta\rA_{L^\infty}\le 2/R$. 
Introduce
\[ W = \eta^2 w_+. \]
Notice that $W\in H^1_0(B_R^\kappa)$ since 
$w_+=0$ on $\{x_N=\kappa\}$.

The key property is that
$$
(\chi_{\{u>0\}}-\chi_{A\cap \{v>0\}})W\ge 0.
$$
Indeed, this is obvious if $W=0$ and otherwise $u>v$ so that 
$$
0\le \chi_{\{u>0\}}-\chi_{\{v>0\}}\le \chi_{\{u>0\}}-\chi_{A\cap \{v>0\}}.
$$ 
Hence, multiplying \eqref{weak:w} by \(W\),  
we conclude that
\[ \int_{\Pi_\kappa} \nabla w\cdot 
\nabla (\eta^2  w_+) \dx  \le 0. \] 
Now, observe that
\[ \int_{\Pi_\kappa} \nabla w\cdot 
\nabla (\eta^2  w_+) \dx =
\int_{\Pi_\kappa} \nabla w_+\cdot \nabla (\eta^2  w_+) \dx
\]
and hence we conclude that
\[ 
\int_{\Pi_\kappa}  \big|\nabla (\eta w_+) \big|^2 \dx  \le    
  \int_{\Pi_\kappa} |\nabla \eta|^2  |w_+|^{2}\dx.
\]
This implies the wanted inequality~\e{eq:Moser3}.
\end{proof}

Now, let us define the function $F\colon [0,+\infty)\to [0,+\infty)$ by
$$
F(R)=\lA w_+\rA_{L^2(B_R^\kappa)}.
$$
It follows from \e{N721}, \e{N729} and \e{eq:Moser3} that $F$ is a nondecreasing function satisfying 
$$
F(R)\le K R^{N/2},\quad F(R)\le \frac{K}{R}F(2R),
$$
for some fixed constant $K>0$.  As a consequence, for $ k > \frac{N}2$  
\[    F(R) \le  \Big(\frac{K}{N} \Big)^k F(2^kR) \le K^{k+1} R^{\frac{N}2-k} \to 0 \qquad \text{ as } R \to \infty. \]   
It easily follows that $F=0$, hence $w_+=0$, which completes the proof. 

\subsection{Consequences for the graph case}\label{S:maingraph}

The following corollaries state several consequences 
of  Theorem~\ref{thm:mainobstacle} in the graph case. 
\begin{corollary} \label{cor:graph}
\begin{enumerate} 
\item \label{item:A}  (Semiflow on lower semicontinuous functions) Assume that 
$\Omega_0 = \{x\in\xR^N \sep x_N < f_0( x')\}$ 
for a bounded lower semicontinuous function $f_0$. 
Then there exists a function $f$ on  $\xR_+\times\xR^{d}$ such that $x'\to f(t,x') $ is bounded and lower semicontinuous for every $t\ge 0$ and 
such that the positivity set $\Omega_t$ of the corresponding variational 
solution is given by 
\[ \Omega_t = \{ x: x_N < f(t,x') \}.\] 
\item \label{item:B}  (Contraction) 
The flow map
\[ f_0 \mapsto f(t) \]
is a contraction in the $L^p$ norm for $1 \le p \le \infty$, more precisely 
\begin{equation}\label{eq:lp}  \Vert (f^1(t)-f^2(t))_+\Vert_{L^p} \le \Vert (f^1_0 - f^2_0)_+ \Vert_{L^p} \end{equation}
where $f^j(t)$ correspond to solution with initial data $f^j_0$.  
\item \label{item:C} (Norm decay) 
In particular any modulus of continuity of the initial data $f_0$ is also a modulus of continuity for $f(t)$. 
If $f_0$ has a modulus of continuity then 
\[ \xR \times \xR^d \to f(t,x') \in \xR \]
is continuous. Moreover, for all $ 1 \le p \le \infty$, there holds
\begin{equation} \label{eq:ljapunov}  \Vert f(t) \Vert_{W^{1,p}(\xR^d)} \le \Vert f_0 \Vert_{W^{1,p}(\xR^d)}.
\end{equation}
More precisely: $\Vert f(t) \Vert_{L^p}$ and $ \Vert \partial_k f(t) \Vert_{L^p} $
are Lyapunov functionals. 
\end{enumerate} 
\end{corollary} 
\begin{remark}[Orlicz spaces and fractional regularity]\label{R:5.12}
The proof gives much more than stated: We obtain \eqref{eq:lp} for Orlicz spaces by the same argument. 
Similarly we obtain that Orlicz norms and Sobolev Orlicz norms are Lyapunov functionals and 
even fractional Orlicz Sobolev norms of Besov type with regularity between $0$ and $1$ are Lyapunov functionals. 
\end{remark}

\begin{proof}
\eqref{item:A}  Let $ \Omega_0 = \{ x_N \le f_0\}$ 
and let $\Omega_0^s =\{ x\in\xR^N \sep x_N < f(x') +s\}$ for $s>0$. 
Let $\Omega_t$ and $\Omega^s_t $ be the positivity sets of the corresponding solutions. Then, 
by Theorem \ref{thm:mainobstacle} \eqref{item:monotonicity}, we have
\[ \Omega_t \subset \Omega^s_t = \{ x: x-se_N \in \Omega_t \} \] 
and hence there exists a bounded lower semicontinuous function $f(t,x')$ so that 
\[ \Omega_t = \{ x: x_N < f(t,x') \}. \]
\eqref{item:B}  Now, consider two  bounded lower semicontinuous initial data $f^{1,2}_0$ 
and denote by the associated solutions by $ f^{1,2}(t,x)$. Let $h=\Vert f^2_0-f^1_0 \Vert_{L^\infty}$
Then 
\[  f^2_0-h \le f^1_0 \le f^2_0 +h \]
and this relation persists by Theorem \ref{thm:mainobstacle} \eqref{item:monotonicity}, that is:
\[ f^2(t,x) -h \le f^1(t,x) \le f^2(t,x)+ h, \]
thus
\[ \Vert f^2(t,.) - f^1(t,.) \Vert_{L^\infty} \le h. \]
Similarly 
\[ \Vert (f^2(t,.) - f^1(t,.))_+  \Vert_{L^1} \le  \Vert (f^2_0 -f^1_0)_+ \Vert_{L^1} \]
by Theorem \ref{thm:mainobstacle} \eqref{item:stability}. 

Let us now prove the corresponding contraction bounds in $L^p$ for $ 1< p < \infty$. To do this, we observe that $f^2+s $ is a solution if $f^2$ is a solution. Thus, for all $s \in \xR $, we have 
\[   \Vert  (f^2(t)-f^1(t)-s)_+ \Vert_{L^1(\xR^d)} \le \Vert (f^2_0-f^1_0-s)_+ \Vert_{L^1(\xR^d)}.  \]
To shorten notations, we 
set $ f(t) = f^2(t)-f^1(t)$ and 
suppress the time variable $t$. Then, 
for any $1<p<\infty$, we have
\[
\begin{split} 
p(p-1)  \int_0^\infty s^{p-2} \Vert (f-s)_+ \Vert_{L^1} \ds
\, & = p(p-1) \int_0^\infty s^{p-2} \int_{\{f>s\}} f - s\dx \ds 
\\ & = p(p-1) \bigg(\int f \int_0^f s^{p-2} ds- \int_0^f s^{p-1} \ds \dx \bigg)
\\  & =   \int f_+^p  \dx.  
\end{split} 
\]
This implies the wanted contraction estimate 
\eqref{eq:lp}.

\eqref{item:C} 
A small modification of the previous argument shows that any modulus of continuity for $f_0$ is also a modulus of continuity for $f(t)$. In particular Lipschitz constants are preserved. Now suppose that $f_0$ has the modulus of continuity $\omega=\omega(r)$, continuous and strictly monotone. Let $t>0$. The function $f(t,\cdot)$ has the same modulus of continuity.
Let $t>0$, $ x' \in \xR^d$ and $ r\le 1 $ maximal so that 
$3 \omega(r)\le f(t,x')-f_0(x')$ and $ R = 2\Vert f_0 \Vert_{L^\infty} $. 
Then 
\[    \frac13 r^d (f(t,x') -f_0(x'))  \le |(\Omega_t \backslash \Omega_0) \cap B_R^{\xR^d}(x')| \le C t \Vert f_0 \Vert^d_{L^\infty}  \]
where the second inequality follows from Theorem \ref{thm:mainobstacle} \eqref{item:time}, 
and the first one is 
a consequence of the geometry since the triangle inequality implies that
$$
\la y-x'\ra<r\quad\Rightarrow \quad\frac13  (f(t,x') -f_0(x'))  \le f(t,y)-f_0(y).
$$

Thus, 
\[ 0 \le (f(t,x')-f_0(x')) \big[\omega^{-1}( \tfrac13 (f(t,x')-f_0(x')))\big]^d  \le Ct \Vert f_0 \Vert_{L^\infty}^d .    \]
The function $s \mapsto  s (\omega^{-1}(s))^d$
is continuous and strictly monotone.
Hence $t \mapsto f(t,x') $ is uniformly continuous at time $t=0$, and thus at all times.

Finally, since $f=0$ is a solution, we have $\Vert f(t) \Vert_{L^p} \le \Vert f_0 \Vert_{L^p}$, 
and since the problem is invariant under translations, we deduce that, for all $ h \in \xR^d$, 
\[  \Vert f(t,\cdot+h) -f(t) \Vert_{L^p} \le \Vert f_0(\cdot.+h) - f_0 \Vert_{L^p} \]
hence 
\[ \Vert |\nabla f(t)| \Vert_{L^p} =   \sup_{h \in B_1(0) \backslash \{0\}} |h|^{-1} \Vert f(t,\cdot+h)- f \Vert_{L^p} \le \Vert |\nabla f_0| \Vert_{L^p}.     \]
This completes the proof.
\end{proof}

\section{Applications of the boundary Harnack inequality} 
\label{sec:bh} 

Let $\Omega\subset \xR^N$ be an open set 
and consider a {\em positive} harmonic function $u\in C^2(\Omega)$. 
The classical Harnack inequality 
relates the values of $u$ at two points away from the boundary. More precisely, it ensures that, for any open subset $\omega\subset \Omega$, connected and such that $\overline{\omega}\subset \Omega$, there exists a constant~$C$ such that, 
for all positive harmonic function $u\in C^2(\Omega)$, 
\[
\sup_\omega u\leqslant C \inf_\omega u.
\]
This inequality is an essential 
tool to study the interior regularity of weak solutions to an elliptic or parabolic equation. 
The boundary Harnack inequality, whose statement is recalled below, is an inequality valid up to the boundary, which plays a similar role to study boundary value problems. In this chapter, we will recall this inequality and work out two applications for the problems we consider. Namely, we will prove a uniqueness result and a regularity result.

\subsection{The boundary Harnack inequality}

Let us recall the following definition. 

\begin{definition}
Let $\Omega$ be an arbitrary domain in 
$\xR^N$ and $ \Gamma \subset \partial \Omega$. We say $\Omega$  satisfies a uniform local boundary Harnack inequality at $\Gamma$ if there exist $ L $ and $A$ such that,  for all 
$x_0 \in \Gamma$, for all 
$r>0$ 
and for all nonnegative harmonic functions $u,v\colon B_{Lr} (x_0) \cap \Omega \to (0,+\infty)$  which vanish at the boundary $\partial \Omega$, there holds
\be\label{UBHI}
\forall (x,y)\in (B_r(x_0) \cap \Omega )^2,\quad \frac{u(x)}{v(x)}  \frac{v(y)}{u(y)} \le A.
\ee
\end{definition}
\begin{remark}\label{R:5.2}
\begin{enumerate} 
\item  The study of the boundary Harnack inequality for Lipschitz domains 
goes back 
to the works of 
Kemper~\cite{Kemper-1972} and Ancona~\cite{Ancona-1978} (see also Dahlberg~\cite{Dahlberg-1977} and Wu~\cite{Wu-1978}). 
De Silva and Savin \cite{MR4093736} have provided a short proof for graph domains below a Lipschitz function (which is the only case we shall need in this paper). Aikawa \cite{MR2464701} has given fairly weak conditions on domains which imply boundary Harnack estimates, using ideas based on 
\cite{CFMS-1981,MR676988,MR676987}.

\item  The study 
of boundary Harnack inequality 
can be extended to general elliptic equations in divergence form  
see Caffarelli and Salsa \cite[Chapter 11]{CS-book}, as well as 
in  nondivergent form, see Fabes, Garofalo, Mar\'{\i}n-Malave, and Salsa~\cite{FGMMS-1988}, and De Silva and Savin for fairly general equations. One 
can also extend this notion by assuming that the boundary Harnack inequality holds only for functions which vanish 
on the boundary, 
outside of a polar set.

\item 
The boundary Harnack is frequently used to study the regularity of 
solutions to elliptic problems, in particular for the study of 'the' harmonic measure (which strictly speaking is defined for a chosen interior point, and the boundary Harnack inequality for Lipschitz domains shows that the dependence on the interior point is mild) and 
for the obstacle problem where it is used to go from Lipschitz continuity of the free boundary to smoothness, see Athanasapoulos, Caffarelli \cite{AtCa-1985} and the  books by Caffarelli and Salsa~\cite{CS-book}, 
Petrosyan, Shahgholian and Uraltseva~\cite{MR2962060},  
as well as Figalli~\cite{Figalli-ICM2018}.
\end{enumerate} 
\end{remark}

The following proposition contains two classical consequences of the boundary Harnack inequality for Lipschitz domains, for which we give a short proof for completeness. To avoid confusion, let us recall that we allow Lipschitz functions to be unbounded.

\begin{proposition}[Boundary Harnack inequality]\label{prop:boundaryharnack}
Let $d=N-1\ge 1$ 
and consider a Lipschitz function $g:\xR^d \to \xR $.  
Set $\Omega=\{(x',x_N)\,;\, x_N<g(x')\}$. There exist three constants $A>1$, $K>0$ and $L>1$ depending only on $N$ and $\lA \nabla g\rA_{L^{\infty}}$ such that the following properties hold.

\begin{enumerate}
\item  For all $R>0$, $x_0 \in \partial \Omega$ and for all positive 
harmonic functions 
$u,v\colon B_{LR}(x_0)\cap \Omega\to (0,+\infty)$, 
continuously vanishing on $B_{LR}(x_0)\cap \partial\Omega$,
\begin{equation}\label{eq:harnack} 
\sup_{B_{R}(x_0)\cap \Omega}\frac{u}{v}\le A \inf_{B_{R}(x_0)\cap\Omega}\frac{u}{v}\cdot
\end{equation} 
\item  Moreover, for all $0<r<R/L$, for all $x_0\in \partial \Omega$, and for all couples of positive 
harmonic functions 
$u,v\colon B_{R}(x_0) \cap \Omega\to (0,+\infty)$, 
continuously vanishing on $B_{R}(x_0)\cap \partial\Omega$,
\begin{equation} \label{eq:Hreg} 
\underset{B_r(x_0)\cap \Omega}{\osc}\,\,\frac{u}{v}\le 
K\left(\frac{r}{R}\right)^\alpha\underset{B_R(x_0)\cap \Omega}{\osc}\,\,\frac{u}{v},
\end{equation} 
where the oscillation is defined by 
$\osc_{F} f=\sup_{F}f-\inf_F f$. 
\end{enumerate}
\end{proposition}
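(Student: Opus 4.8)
The plan is to read \eqref{eq:harnack} as the \emph{uniform local boundary Harnack inequality} of the definition preceding the proposition, and to obtain \eqref{eq:Hreg} from it by the standard oscillation-decay iteration.

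\emph{Part (1).} I would invoke the boundary Harnack inequality for domains lying below the graph of a Lipschitz function: by the works of Kemper, Ancona, Dahlberg and Wu, and the short argument of De Silva and Savin \cite{MR4093736}, there are constants $A>1$ and $L>1$, depending only on $N$ and $\lA\nabla g\rA_{L^\infty}$, such that \eqref{UBHI} holds for all $x_0\in\partial\Omega$ and all $r>0$. The fact that the constants are uniform in $x_0$ and in the scale $r$ is precisely the scale- and translation-invariance of the class of Lipschitz graphs with a fixed Lipschitz bound (translating in $x'$ or in $x_N$, or dilating, maps $\Omega$ to another such domain, and harmonicity is preserved). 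Rewriting \eqref{UBHI} with $x,y$ ranging over $B_R(x_0)\cap\Omega$ (renaming $r\rightsquigarrow R$) and taking the supremum in $x$ and the infimum in $y$ gives \eqref{eq:harnack}. For a self-contained proof one would instead prove a Carleson-type estimate (the boundary decay bound $u(x)\lesssim u(x_r)$ for $x\in B_r(x_0)\cap\Omega$, $x_r$ a fixed corkscrew point) and combine it with the interior Harnack inequality along a Harnack chain of length bounded in terms of $\lA\nabla g\rA_{L^\infty}$.

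\emph{Part (2).} Fix $x_0\in\partial\Omega$, $R>0$ and positive harmonic $u,v$ on $B_R(x_0)\cap\Omega$ vanishing continuously on $B_R(x_0)\cap\partial\Omega$; we may assume the oscillation of $u/v$ over $B_R(x_0)\cap\Omega$ is finite, otherwise \eqref{eq:Hreg} is trivial. For $0<\rho\le R/L$ set
\[
M_\rho=\sup_{B_\rho(x_0)\cap\Omega}\tfrac uv,\qquad m_\rho=\inf_{B_\rho(x_0)\cap\Omega}\tfrac uv,\qquad \omega(\rho)=M_\rho-m_\rho,
\]
which are finite by part (1) (applicable around $x_0$ at scales $\le R/L$, since then $B_{L\rho}(x_0)\cap\Omega\subset B_R(x_0)\cap\Omega$; a harmless shift of one scale handles that the data are given only on $B_R(x_0)\cap\Omega$). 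For $\rho\le R/L$ the functions $\bar u=M_\rho v-u$ and $\underline u=u-m_\rho v$ are nonnegative, harmonic on $B_\rho(x_0)\cap\Omega$, and vanish continuously on $B_\rho(x_0)\cap\partial\Omega$. If $\bar u$ vanishes at an interior point, the strong maximum principle forces $u\equiv M_\rho v$, so $u/v$ is constant and \eqref{eq:Hreg} is trivial; likewise for $\underline u$. Otherwise both are positive, and applying \eqref{eq:harnack} at the inner scale $\rho/L$ to the pairs $(\bar u,v)$ and $(\underline u,v)$ gives $M_\rho-m_{\rho/L}\le A(M_\rho-M_{\rho/L})$ and $M_{\rho/L}-m_\rho\le A(m_{\rho/L}-m_\rho)$; adding and rearranging yields
\[
\omega(\rho/L)\le\theta\,\omega(\rho),\qquad \theta:=\frac{A-1}{A+1}\in(0,1).
\]

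Finally, iterating this inequality starting from $\rho=R/L$ gives $\omega(R/L^{k+1})\le\theta^{k}\,\omega(R/L)\le\theta^{k}\,\omega_R$, where $\omega_R$ denotes the oscillation of $u/v$ over $B_R(x_0)\cap\Omega$. For $0<r<R/L$ pick the integer $k\ge0$ with $R/L^{k+2}\le r<R/L^{k+1}$; since $\omega(r)\le\omega(R/L^{k+1})$ (monotonicity in the radius) and $\theta^{k}\le\theta^{-2}(r/R)^{\alpha}$ with $\alpha=\log(1/\theta)/\log L$, we obtain \eqref{eq:Hreg} with $K=\theta^{-2}$, and $K,\alpha$ depend only on $N$ and $\lA\nabla g\rA_{L^\infty}$ through $A$ and $L$. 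The only genuine input is the uniform boundary Harnack inequality \eqref{eq:harnack}, and that is the step to be careful about (the uniformity of the constants, via scale/translation invariance, and the correct nesting of the balls $B_{LR}$ versus $B_R$); the oscillation iteration afterwards is routine, the sole subtlety being the strong-maximum-principle dichotomy that lets one assume $\bar u,\underline u>0$ before invoking \eqref{eq:harnack}.
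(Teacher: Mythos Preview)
Your proof is correct and follows essentially the same approach as the paper: part~(1) is invoked from the literature exactly as the paper does, and part~(2) is the standard oscillation-decay iteration derived from part~(1). The only cosmetic difference is that the paper applies the boundary Harnack once per step to either $u-\beta v$ or $\alpha v-u$ (via a midpoint dichotomy at a chosen point $x_1$), yielding $\omega(\rho)\le(1-\tfrac{1}{2A})\,\omega(L\rho)$, whereas you apply it to both $M_\rho v-u$ and $u-m_\rho v$ and add, obtaining the slightly sharper factor $\theta=\tfrac{A-1}{A+1}$; the iteration and conclusion are otherwise identical.
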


\begin{proof} 
As recalled in Remark~\ref{R:5.2}, the boundary Harnack inequality holds 
for $\Omega$ for any Lipschitz continuous 
$g$. 
The wanted inequality \e{eq:harnack} then follows from \e{UBHI}.

We now move to the proof of \e{eq:Hreg}. Let 
$x_0\in \partial\Omega$ and consider two real numbers $\rho$ and $R$ such that $0<\rho<R/L$. Define 
$$
\omega(\rho)=\underset{B_\rho(x_0)\cap \Omega}{\osc}\,\,\frac{u}{v}=
\sup_{x\in B_\rho(x_0) \cap \Omega } \frac{u(x)}{v(x)} 
-\inf_{y \in B_\rho(x_0) \cap \Omega }\frac{u(y)}{v(y)}.
$$
We claim that
\be\label{N50}
\omega(\rho)\le \left(1-\frac{1}{2A}\right)
\omega(L\rho).
\ee 
To see this, introduce
\[
\alpha= \sup_{B_{L\rho}(x_0)\cap \Omega}  \frac{u(x)}{v(x)}, \quad \beta= \inf_{B_{L\rho}(x_0)\cap\Omega}\frac{u(x)}{v(x)}.
\]
Pick a point $x_1 \in B_{\rho}(x_0) \cap \Omega$. Then 
\[ \frac{u(x_1)}{v(x_1)} \ge \frac{\alpha+\beta}2 \quad \text{ or } \quad  \frac{u(x_1)}{v(x_1)} \le \frac{\alpha+\beta}2. \]
The argument is essentially the same in either case, so assume the first inequality. 
Now consider the function $ u^* = u - \beta v$. 
By definition of $\beta$, $u^*\colon B_{L\rho}\cap \Omega\to (0,+\infty)$ is a positive harmonic function, 
continuously vanishing on $B_{L\rho}\cap \partial\Omega$, 
for which we can apply the 
boundary Harnack inequality~\e{UBHI}. It follows that
$$
\forall y\in B_{\rho}(x_0)\cap \Omega, \qquad \frac{u(x_1)}{v(x_1)}-\beta  \le A \Big( \frac{u(y)}{v(y) }-\beta \Big).
$$
Hence, by combining the two previous inequalities, we deduce that
\[
\forall y\in B_{\rho}(x_0)\cap \Omega, \qquad \frac{u(y)}{v(y) } \ge \frac{\alpha-\beta}{2A} + \beta. \] 
By taking the infimum over $B_{\rho}(x_0)\cap\Omega$, we infer that 
$$
\inf_{B_{\rho}(x_0)\cap\Omega}\frac{u(y)}{v(y)}\ge \frac{\alpha-\beta}{2A} + \beta,
$$
and so
\[
\omega(\rho)\le \left(1-\frac1{2A}\right) (\alpha-\beta).
\]
This proves the claim~\e{N50}. The classical iteration argument gives \eqref{eq:Hreg}.
\end{proof} 

\subsection{A uniqueness result} 

We will make use of the following consequence of the boundary Harnack principle in Proposition \ref{prop:boundaryharnack}.

\begin{proposition} \label{lem:unique} 
Let $d=N-1\ge 1$ and consider a Lipschitz function 
$g\colon\xR^{d}\to \xR$. Set $\Omega= \{x=(x',x_N)\in\xR^d\times \xR : x_N < g(x')\} $.  
There exists a 
function $u\in C^2(\Omega)\cap C^0(\overline{\Omega})$ such that 
$$
\Delta u=0 \quad\text{in }\Omega, \quad 
u>0 \quad\text{in }\Omega,\quad 
u=0 \quad\text{on }\partial\Omega.
$$ 
It is unique up to the multiplication by a constant. 
\end{proposition}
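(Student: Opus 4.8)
The plan is to construct $u$ explicitly as the solution given by the bounded-domain approximation already developed in the paper. For $R>0$, let $\Omega_R=\Omega\cap\{|x'|<R\}$ and solve the mixed boundary value problem: $\Delta u_R=0$ in $\Omega_R$, $u_R=0$ on $\partial\Omega\cap\{|x'|<R\}$, and $u_R=\phi_R$ on the lateral part $\{|x'|=R\}\cap\Omega$, where $\phi_R$ is a fixed positive continuous profile (say, a suitable multiple of $\dist(x,\partial\Omega)$ truncated to make the data continuous). By the maximum principle $u_R>0$ in $\Omega_R$, and after normalizing $u_R$ at a fixed interior reference point $x_\ast\in\Omega$ (fixed once $R$ is large enough that $x_\ast\in\Omega_R$), the interior Harnack inequality gives uniform local upper and lower bounds for $u_R$ on compact subsets of $\Omega$; interior elliptic estimates plus boundary regularity at the Lipschitz boundary (exterior cone condition, as used in Proposition~\ref{prop:regularity}) give equicontinuity up to $\partial\Omega$ on compacts. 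A diagonal/Arzel\`a--Ascoli argument produces a subsequential limit $u\in C^2(\Omega)\cap C^0(\overline\Omega)$ with $\Delta u=0$, $u\ge 0$, $u=0$ on $\partial\Omega$, and $u(x_\ast)=1$; the strong maximum principle upgrades $u\ge 0$ to $u>0$ in $\Omega$. (Alternatively one may simply quote the half-space Martin kernel construction, but the approximation argument keeps everything self-contained within the tools of the paper.)

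\textbf{Uniqueness.} Suppose $u_1,u_2$ are two such functions. Apply the boundary Harnack inequality of Proposition~\ref{prop:boundaryharnack}, and in particular the oscillation decay estimate \eqref{eq:Hreg}: for every $x_0\in\partial\Omega$ and every $0<r<R/L$,
\[
\underset{B_r(x_0)\cap\Omega}{\osc}\ \frac{u_1}{u_2}\le K\Big(\frac{r}{R}\Big)^\alpha\underset{B_R(x_0)\cap\Omega}{\osc}\ \frac{u_1}{u_2}.
\]
Fix $x_0$ and let $R\to\infty$: since $\Omega$ is a global Lipschitz subgraph, $B_R(x_0)\cap\Omega$ exhausts $\Omega$, and the ratio $u_1/u_2$ is bounded on all of $\Omega$ — this is the one genuine point to verify, and it follows from the global boundary Harnack inequality \eqref{eq:harnack} applied with a covering of $\partial\Omega$ by balls of a fixed radius (the Lipschitz constant of $g$ is uniform, so the constants $A,L$ are uniform along $\partial\Omega$), together with the interior Harnack inequality to compare interior points. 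Hence $\osc_{B_R(x_0)\cap\Omega}(u_1/u_2)$ stays bounded while the prefactor $(r/R)^\alpha\to 0$, forcing $\osc_{B_r(x_0)\cap\Omega}(u_1/u_2)=0$ for every $r$. Thus $u_1/u_2$ is locally constant near the boundary; by connectedness of $\Omega$ and harmonicity it is globally constant, so $u_1=c\,u_2$.

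\textbf{Main obstacle.} The delicate step is establishing the \emph{global} boundedness of the quotient $u_1/u_2$ on $\Omega$ — equivalently, a Harnack-type comparison that is uniform at infinity. For a bounded Lipschitz domain this is immediate from a single application of the boundary Harnack inequality, but here $\Omega$ is unbounded in all horizontal directions, so one must chain the local boundary Harnack estimate along the boundary with uniform constants (available because $\lVert\nabla g\rVert_{L^\infty}$ is a single finite number controlling every local chart) and bridge interior regions with the classical Harnack chain. Once the quotient is known to be globally bounded, the oscillation decay \eqref{eq:Hreg} does the rest for free; and the existence half, while requiring care with the Lipschitz-boundary continuity, uses only tools already in place in Section~\ref{sec:obstaclebounded} and Section~\ref{sec:bh}.
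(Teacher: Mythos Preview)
Your overall architecture matches the paper's: existence by approximation on bounded domains, uniqueness by combining the boundary Harnack inequality \eqref{eq:harnack} with the oscillation decay \eqref{eq:Hreg}. The existence sketch is more detailed than the paper's one-line ``approximation by bounded domains'' (minor caveat: your $\Omega_R=\Omega\cap\{|x'|<R\}$ is still unbounded in the $x_N$-direction, so you would also need to truncate below before solving a BVP, but this is routine).

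The uniqueness argument, however, has a real gap at precisely the point you flag as delicate. You propose to obtain the global bound on $u_1/u_2$ by \emph{chaining} the local boundary Harnack estimate over a covering of $\partial\Omega$ by balls of fixed radius, together with interior Harnack chains. This does not work: each link in the chain contributes a factor $A$, and to compare the ratio at two points at distance $\sim R$ you need $\sim R$ links, producing a bound $A^{R}$ that blows up. Uniformity of $A$ along $\partial\Omega$ does not help, since the number of applications is unbounded.

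The paper's fix is much simpler and avoids chaining entirely. The crucial feature of \eqref{eq:harnack} is that the constant $A$ depends only on $N$ and $\lVert\nabla g\rVert_{L^\infty}$, \emph{not} on the radius $R$. Since $u_1,u_2$ are globally defined on $\Omega$ and vanish on all of $\partial\Omega$, for any boundary point $y_0$ and any $R>0$ the hypotheses of \eqref{eq:harnack} are satisfied on $B_{LR}(y_0)\cap\Omega$. A \emph{single} application with $R$ arbitrarily large, after normalising $u_1(x_\ast)=u_2(x_\ast)$ at a fixed interior point $x_\ast\in B_R(y_0)$, gives $A^{-1}\le u_1/u_2\le A$ on $B_R(y_0)\cap\Omega$; letting $R\to\infty$ yields the global bound with the fixed constant $A$. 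Once this is in hand, your application of \eqref{eq:Hreg} with $R\to\infty$ finishes the proof exactly as you wrote.
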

\begin{proof}
The existence of such an harmonic function 
can be seen through an approximation by bounded domains. Let $u$ and $v$ be non-negative harmonic functions 
which vanish at the boundary. Fix a point $x_0\in \Omega$. Up to multiplying $u$ by a positive constant, we 
may  assume without loss of generality that $v(x_0)= u(x_0)$. 
It follows from Proposition \ref{prop:boundaryharnack} 
that there exists a constant $C>0$ such that, for all $R>0$,
\[ C^{-1} \le  \frac{u(x)}{v(x)} 
\le C \] 
for all $x\in B_R(x_0)\cap \Omega$. 
Since we may choose $R$ as large as we like, we deduce that the inequality holds in $\Omega$, that is
\be\label{n731}
\forall x\in \Omega,\quad 
C^{-1} u(x) \le v(x) \le C u(x).
\ee
Let $r $ be sufficiently large and $R>Lr$. Again by Proposition \ref{prop:boundaryharnack}
\[ \osc_{B_r \cap \Omega}\frac{u}{v} \le A \big(r/R\big)^{\alpha} \osc_{B_R\cap \Omega} \frac{u}{v}
\le C \big(r/R\big)^{\alpha}, \]
where we used \e{n731} to obtain the second inequality. 
We then choose $R$ large to deduce that $u/v$ is constant, hence $u=v$.
\end{proof}
\begin{remark}\label{rem:gradient}
In particular, if $g$ is additionally bounded by $C$ 
there is such a harmonic function $u$ which in addition satisfies 
\[
 |u(x)+x_N|  \le  C,
\]
and hence this bound 
holds for all positive harmonic function vanishing on $\partial\Omega$ (up to a multiplicative constant).  
Moreover, since $ u+x_N $ is harmonic on $ B_R(x) $ if $R \le -x_N-C$, it follows from the Poisson formula (see~\e{Poissoncenter}) that
\begin{equation}\label{eq:gradientbound}
      |\nabla u + e_N| \le  \frac{CN}{-x_N-C }.
      \end{equation}
As a consequence $ \nabla u(x) \to -e_N$ uniformly as $ x_N \to - \infty$.
\end{remark}

\subsection{Global well-posedness for smooth data}

As a second application of the boundary Harnack inequality, we will 
prove a global well-posedness result for smooth solutions. 

\begin{theorem}\label{T:Cauchyglobal}
Consider an integer 
$d\ge 1$ and a real number $s>d/2+1$. For any initial data $h_0$ in $H^s(\xR^d)$, there exists a unique solution 
\be\label{n134}
h\in C^0([0,+\infty);H^s(\xR^d))\cap C^\infty((0,+\infty)\times \xR^d),
\ee
to the Cauchy problem
\begin{equation}\label{Hele-Shaw130}
\partial_{t}h+G(h)h=0,
\quad h\arrowvert_{t=0}=h_0.
\end{equation}
\end{theorem}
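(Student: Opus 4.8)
\emph{Plan.} Fix $h_0\in H^s(\xR^d)$ with $s>d/2+1$, so that by Sobolev embedding $h_0\in C^{1,\alpha}$ for some $\alpha\in(0,1)$, and write $C=\|h_0\|_{L^\infty}$, $L_0=\|\nabla h_0\|_{L^\infty}$. By Theorem~\ref{T:Cauchy} there is a unique \emph{maximal} regular solution $h$ on an interval $[0,T^*)$, $T^*\in(0,+\infty]$; on it one has $G(h)h<1$, $C^\infty$ smoothing for $t>0$, and the maximum principles $\|h(t)\|_{L^\infty}\le C$, $\|\nabla h(t)\|_{L^\infty}\le L_0$. Everything reduces to proving $T^*=+\infty$: the remaining claims follow by re-applying Theorem~\ref{T:Cauchy} on subintervals $[t_0,t_0+\tau]$, $t_0>0$, together with the uniform bounds obtained below. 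The first step is to identify $h$ on $[0,T^*)$ with the global obstacle solution. Passing to free-falling coordinates (\S\ref{S:2.3one}) and applying the Baiocchi--Duvaut transform \eqref{BDT}, Proposition~\ref{prop:eulerobstacle} shows that $u(t)=\int_0^t p(\tau)\,d\tau$ solves the obstacle problem with $A=\xR^N\setminus\Omega_0$, $\Omega_0=\{x_N<h_0(x')\}$; and since $\{x_N<-C\}\subset\mathcal O(t)\subset\{x_N<C\}$, comparison with the explicit profiles $v_0$ (exactly as in Step~1 of the proof of Theorem~\ref{thm:mainobstacle}) gives $v_0(t,x_N+C)\le u(t,x)\le v_0(t,x_N-C)$. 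Uniqueness in Theorem~\ref{thm:mainobstacle}\eqref{item:th1} then forces $u(t)=v(t)$, whence $h(t,\cdot)=f(t,\cdot)-t=\Psi(t,h_0)$, where $\Omega_t=\{v(t)>0\}=\{x_N<f(t,x')\}$; by Corollary~\ref{cor:graph} the profile $f(t)$ is globally defined and has the same Lipschitz constant $L_0$ as $h_0$.

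\emph{Uniform $C^{1,\alpha}$ control via the boundary Harnack inequality.} For $t>0$ one has $\Omega_0\subsetneq\Omega_t$ and, by \eqref{n36.5}, $f(t,x')>h_0(x')$ for every $x'$, so $\partial\Omega_t$ lies strictly inside the interior of $A$; on a neighbourhood of $\partial\Omega_t$ whose size is bounded below on $[t_0,\infty)$ — this is where the strict monotonicity $\partial_t f>0$ is used — the obstacle equation is the \emph{classical} one, $\Delta v(t)=\chi_{\{v(t)>0\}}$. Its free boundary $\partial\Omega_t=\{x_N=f(t,x')\}$ is a Lipschitz graph with constant $\le L_0$, hence, by the regularity theory for the obstacle problem built on the boundary Harnack inequality (Proposition~\ref{prop:boundaryharnack}; Athanasopoulos--Caffarelli, cf.\ Remark~\ref{R:5.2}$(iii)$), followed by the standard bootstrap to analyticity, $\partial\Omega_t$ is $C^{1,\alpha}$ with local estimates depending only on $N$, $L_0$ and a lower bound for $t$. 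Thus $M:=\sup_{t_0\le t<T^*}\|h(t)\|_{C^{1,\alpha}(\xR^d)}<\infty$ for every $t_0\in(0,T^*)$, and feeding this into \eqref{n83} (Proposition~\ref{prop:taylor}) yields a uniform Rayleigh--Taylor bound $\lambda(M)^{-1}\le -\partial_{x_N}P(t)\le\lambda(M)$ on $\partial\mathcal O(t)$, again uniformly on $[t_0,T^*)$.

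\emph{Conclusion.} Suppose $T^*<\infty$. The a priori estimates underlying the local theory (the $H^s$-propagation estimates of \cite{AMS,Nguyen-Pausader}, which show that the $H^s$ norm grows at most exponentially as long as the $C^{1,\alpha}$ norm and the Taylor coefficient stay controlled) give a differential inequality $\tfrac{d}{dt}\|h(t)\|_{H^s}^2\le \Lambda\,\|h(t)\|_{H^s}^2$ on $(0,T^*)$ with $\Lambda$ controlled by $\|h(t)\|_{C^{1,\alpha}}$ and by the Taylor coefficient; by the previous paragraph $\Lambda\le\Lambda_0$ on $[T^*/2,T^*)$ for a fixed $\Lambda_0$. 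Gr\"onwall then gives $\sup_{t<T^*}\|h(t)\|_{H^s}<\infty$, contradicting the blow-up alternative of the local theory; hence $T^*=+\infty$. This same bound, now valid on every $[0,T]$, supplies the uniform $H^s$ control on compact time intervals needed for $h\in C^0([0,\infty);H^s)$; re-applying Theorem~\ref{T:Cauchy} on $[t_0,\infty)$ for each $t_0>0$ gives $h\in C^\infty((0,\infty)\times\xR^d)$, and uniqueness on each $[0,T]$ is inherited from that of Theorem~\ref{T:Cauchy}.

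\emph{Main difficulty.} The only a priori information from the maximum principle is the Lipschitz bound $\|\nabla h(t)\|_{L^\infty}\le L_0$, which lives at the scaling-critical level of the equation and does not by itself close a continuation argument in $H^s$. The crux is therefore the second paragraph: upgrading this Lipschitz bound to a \emph{$C^{1,\alpha}$} bound that is uniform for $t$ away from $0$ and up to $T^*$. This is exactly the dividend of the obstacle reformulation, and the delicate points are $(i)$ verifying that near $\partial\Omega_t$ the problem genuinely reduces to the plain obstacle problem on a neighbourhood of controlled size, so that the boundary-Harnack-based regularity theory applies with uniform constants, and $(ii)$ that the resulting estimates feed correctly into the Rayleigh--Taylor bound \eqref{n83} and into the $H^s$ a priori estimates. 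Once this uniform $C^{1,\alpha}$-and-Taylor-sign control is in hand, globality is a routine continuation.
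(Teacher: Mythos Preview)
Your proof is correct and follows essentially the paper's route: obstacle formulation, conserved Lipschitz bound, upgrade to $C^{1,\alpha}$ via boundary Harnack near $\partial\Omega_t$, then contradict the continuation criterion. The paper differs only in that it carries out the Athanasopoulos--Caffarelli step by hand---proving $-\nu\cdot\nabla u>0$ in $\Omega(t)\setminus\Omega(0)$ for $\nu$ near $e_N$ via a finite-difference maximum-principle argument, then applying Proposition~\ref{prop:boundaryharnack} directly to the pair $(\partial_{x_N}u,\,(e_N+\eps e_j)\cdot\nabla u)$---and closes via the blow-up alternative of Lemma~\ref{P:Cauchy} rather than a Gr\"onwall inequality on $\|h\|_{H^s}$. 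One detail worth tightening: the uniform lower bound on the neighborhood size (your delicate point~(i)) does not follow from the qualitative $\partial_t f>0$ of~\eqref{n36.5} alone, since that gives no uniform-in-$x'$ positivity of $f(t_0,x')-h_0(x')$; you need the quantitative bound $\partial_t f\ge\lambda^{-1}$ from Proposition~\ref{prop:taylor} on an initial interval $[0,t_0]$ (where $\|h\|_{C^{1,\alpha}}$ is bounded by continuity of $h\in C^0([0,T^*);H^s)$), after which monotonicity of $f$ propagates the separation $\inf_{x'}(f(t,x')-h_0(x'))\ge t_0\lambda^{-1}$ to all $t\in[t_0,T^*)$.
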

\begin{proof}
We have recalled (see Theorem~\ref{T:Cauchy}) that the Cauchy problem is well-posed locally in time for smooth initial data. More precisely, there exists a time $T>0$
and a unique local solution $h \in C^0([0,T]; H^s(\xR^d))$ with $T$ and $\sup_{0\le t \le T}  \Vert h(t) \Vert_{H^s}$ bounded by constants 
depending only on $d$, $s$ and $ \Vert h_0 \Vert_{H^s}$.
To obtain Theorem~\ref{T:Cauchyglobal}, we will combine four ingredients:
\begin{enumerate}
\item A continuation argument which states that 
smooth solutions persist as long as some 
H\"older norm of the slope is controlled.\footnote{This step is not mandatory. Instead of proving a blow-up criterion for Sobolev solutions involving 
the control of some 
H\"older norm, 
one can aim to directly 
solve the Cauchy problem for initial data in H\"older spaces. However, one needs a sharp result valid for any initial data 
$h_0$ in $C^{1,\alpha}$ for some $\alpha>0$. 
Antontsev, Gon\c{c}alves, and Meirmanov have addressed this issue in their work \cite{MR1942849} for the case when $N=2$. Let us mention that we will extend this analysis to the general problem in arbitrary dimensions in a future study.}
\item A positive lower and an upper bound on the velocity of the interface in terms of $\Vert h(t,.) \Vert_{C^{1,\alpha}}$. This related to the Taylor sign condition and it  is the subject of Proposition~\ref{prop:taylor} in the appendix. 
\item A maximum principle for the slope proven in Corollary \ref{cor:graph}.
\item The boundary Harnack inequality to control some H\"older norm of the slope in terms of its $L^\infty$-norm and the distance from the initial surface. 
\end{enumerate}

We emphasize that the third point is already known (see~\eqref{n32}), while the key ingredient to prove the final point is an observation by Athanasopoulos and Caffarelli~\cite{AtCa-1985}. 
In a sense, the main novelty here 
is the second point and the observation that one can use the formulation 
of the obstacle problem introduced in Section~\ref{S:2} to solve this problem.

We start with the following continuation argument. 
\begin{lemma}\label{P:Cauchy}
Let $\alpha\in (0,1)$. Consider a regular solution $h$ to \e{Hele-Shaw130}, as given by Theorem~\ref{T:Cauchy}, 
and define by $T^*$ its lifespan. Then the following alternative holds:
either $T^*=+\infty$ or
\be\label{n110}
\limsup_{t\to T^*}\lA h(t)\rA_{C^{1,\alpha}(\xR^d)}=+\infty.
\ee
\end{lemma}
\begin{proof}Such blow-up criteria are classical for quasi-linear evolution equations. They are systematically discussed by Taylor in his books  (\cite{MR1121019,MR2744149}) who deduces them from applications of the paradifferential calculus. 
On the other hand, one can prove the existence of solutions to the Hele-Shaw 
equation by using an approach based on paradifferential calculus. 
This is the approach followed in \cite{AMS,MR4090462}. Then the blow-up criterion~\e{n110} follows directly from the analysis there together combined with the tame estimates for the paralinearization of the Dirichlet-to-Neumann operator $G(h)$ proved by de Poyferr\'e in~\cite{dePoyferre1}.
\end{proof}

Consequently, 
to prove a global well-posedness result, it is sufficient 
to control the $C^{1,\alpha}(\xR^d)$-norm of the solution. 
To this end, we begin by observating that, by Corollary \ref{cor:graph}, any modulus of continuity is preserved. In particular the Lipschitz constant of the initial data is a Lipschitz constant for the solution at later times: 
\be\label{n501}
\sup_{x'\in \xR^d}\la \nabla_{x'} h(t,x')\ra\le \sup_{x'\in \xR^d}\la \nabla_{x'} h(0,x')\ra.
\ee
As a result, it will be sufficient to prove that 
some H\"older norm of $\nabla_{x'}h$ is controlled in terms of its $L^\infty$-norm. To do so, we use a method introduced by 
Athanasopoulos and Caffarelli~\cite{AtCa-1985}. 

Denote by $\varphi(t)$ the harmonic extension of $h$ in $\mathcal{O}(t)=\{x\in \xR^N\sep x_N<h(t,x')\}$ where $N=d+1$. Then we use the change of variables and unknowns already introduced in section~\ref{S:2.3}. 
Namely, we successively define the functions  
$P$, $p$ and $u$ on $[0,T^*)\times \xR^N$ by  
\begin{align*}
P(t,x)&=
\left\{
\begin{aligned}
&\varphi(t,x)-x_N\quad &&\text{if }x\in \mathcal{O}(t)\\
&0 \quad &&\text{if }x\in \xR^N\setminus \mathcal{O}(t),
\end{aligned}
\right.\\
p(t,x)&=P(t,x',x_N-t),\\
u(t,x)&=\int_0^tp(\tau,x)\dtau.
\end{align*}
As we have already seen (see Proposition~\ref{prop:eulerobstacle}), 
for any time $t\in (0,T^*)$, $u(t)$ belongs to $C^1(\xR^N)$ and satisfies
\be\label{nu-2}
\left\{
\begin{aligned}
&\Delta u(t)=\chi_{\Omega(t)\setminus\Omega(0)}\quad
\text{where}\quad \Omega(\tau)=\mathcal{O}(\tau)+\tau e_N,\\
&u(t)=0 \quad \text{on}\quad\partial\Omega(t),\\
&\nabla u(t)=0 \quad \text{on}\quad\partial\Omega(t),\\
&\lim_{s\to -\infty}\nabla u(t,x+se_N) = -t e_N.
\end{aligned}
\right.
\ee
We also set $f=h+t$ so that 
$\partial\Omega(t)=\{x_N=f(t,x')\}$. 

We begin by showing a monotonicity inequality. To this end, observe that, since $\Omega(0)$ is assumed to be a Lipschitz graph, there exists $\varepsilon > 0$ such that, for any vector $\nu \in \mathbb{R}^N$ of the form $\nu = e_N + v$ with $|v| < \varepsilon$, the following inclusion holds:
\[
\Omega(0) \subset \Omega(0) + s \nu \quad \text{for all} \quad s \geq 0.
\]
Now, using the obstacle problem formulation and the monotonicity property (2) from Theorem~\ref{thm:mainobstacle}, we have the inequality
\begin{equation}\label{n117}
u(t, x + s \nu) \leq u(t, x)
\end{equation}
for all $x \in \mathbb{R}^N$ and all $s, t \geq 0$. 


Our second goal is to prove that 
\be\label{dnu>0}
-\partial_\nu u=- \nu \cdot \nabla u(t) > 0 \quad \text{inside} \quad \Omega(t) \setminus \Omega(0)
\ee
for any unit vector $\nu$ sufficiently close to the vertical unit vector $e_N$. 
To this end, we introduce the finite difference operator $\delta_h$ defined by
\[
(\delta_h w)(x) = \frac{w(x + h \nu) - w(x)}{h}, \quad h \in (0,1].
\]
Let $n_0$ denote the outward-pointing unit normal vector to $\partial \Omega(0)$, given by
\[
n_0 = \frac{1}{\sqrt{1 + |\nabla_{x'} h_0|^2}} 
\begin{pmatrix}
- \nabla_{x'} h_0 \\
1
\end{pmatrix}.
\]
Since $h_0$ is Lipschitz continuous, for any unit vector $\nu$ sufficiently close to the vertical vector $e_N$, we have
\[
\nu \cdot n_0 > 0 \quad \text{on} \quad \partial \Omega(0).
\]
As a consequence, recalling that $\Omega(0) \subset \Omega(t)$, we observe that for all $t \in (0, T^*)$, all $h \in (0,1]$, and all $x$ such that $x + h \nu \in \Omega(t)$, then $ \delta_h \chi_{\Omega(t)\backslash \Omega(0)} = 0 $ unless $ x \in \Omega(0) $ and $ x+h\nu \notin \Omega(0)$. In the last case  $ \delta_h \chi_{\Omega(t)\backslash \Omega(0)} = 1$. Hence  the following inequality holds:
\[
\delta_h \chi_{\Omega(t) \setminus \Omega(0)}(x) \geq 0.
\]
It then follows from the first equation in~\eqref{nu-2} that $\delta_h u$ is a subharmonic function in $\Omega(t) - h \nu$. 
Moreover $ \delta_h u \le 0$ by \eqref{n117}.  
Since $ u \in C^1(\Omega(t))$ this implies that 
$ \partial_\nu u$ is subharmonic and nonpositive.  By the strong maximum principle either $ \partial_\nu u$ is identically $0$ which implies that $u$ is identically $0$, which is absurd, or, $ \partial_\nu u < 0 $ in $ \Omega(t)$. In particular $ \partial_N u < 0 $ in $ \Omega(t)$ and by the implicit function theorem any level set $L_\lambda=  \{ x: u(t,x) = \lambda\} $ is locally (and hence globally) 
the graph of a $C^1$ function $f_\lambda$, i.e. 
\[ L_\lambda = \{  x: x_N = f_\lambda(t,x') \}. \] 

We now aim to prove that the level sets are in fact of class $C^{1,\alpha}$ for some $\alpha > 0$. To this end, we apply the Boundary Harnack Inequality. More precisely, for each index $1 \leq j \leq d = N - 1$, we consider the pair of functions
\[
v_1 = -\partial_{x_N} u, \quad v_2 = -\frac{1}{\sqrt{1 + \varepsilon^2}}(e_N + \varepsilon e_j) \cdot \nabla u,
\]
where $\varepsilon > 0$ is chosen sufficiently small so that, in view of~\eqref{dnu>0}, both $v_1$ and $v_2$ are positive inside $\Omega(t) \setminus \Omega(0)$. This requires some additional care: in order to apply the Boundary Harnack inequality to harmonic functions defined in $\Omega(t) \setminus \Omega(0)$, we need a quantitative estimate ensuring that the distance between $\partial\Omega(t)$ and $\partial\Omega(0)$ is bounded from below. To obtain this, we rely on a refined maximum principle, proved in Appendix~\ref{A:DN}, which establishes a lower bound on the Taylor coefficient. We now turn to the details.

Recall (see~\eqref{n32}) that the boundaries of the sets $\Omega(t)$ are Lipschitz graphs, with Lipschitz constants no larger than that of $h(0,\cdot)$. 
In addition, by using the property~\eqref{boundedbelowlambda} in Remark \ref{R:4} after Theorem \ref{T:Cauchy}, we have
\[  h(t, x') \ge h(0,x') -t+\frac{t}\lambda  \] 
for some $ \lambda$ depending only on $s$, $d$ and $\Vert h_0 \Vert_{H^s} $. 
Clearly, the previous property is equivalent to
\be\label{n:f82}
f(t,x')\ge f(0,x')+\frac{t}\lambda.
\ee

Now let $T \leq t < T^*$, where $T^*$ denotes the lifespan, and $T$ is a positive time such that both $T$ and 
$\sup_{0 \leq t \leq T} \| h(t) \|_{H^s}$ are bounded by constants depending only on $d$, $s$, and $\| h_0 \|_{H^s}$. In particular, by Sobolev embedding, this provides a uniform bound on the $C^0([0,T]; C^{1,\alpha})$-norm of $h$, for some $\alpha > 0$. Once $T$ is so determined, we can find a curved strip of fixed positive width included in $\Omega(t) \setminus \Omega(0)$. Indeed, it follows from~\eqref{n:f82} that both functions $v_1$ and $v_2$ are harmonic in the strip
\[
U := \left\{ x : f(t,x') - \frac{1}{\lambda} T < x_N < f(t,x') \right\} \subset \Omega(t) \setminus \Omega(0).
\]
Moreover, $v_1, v_2 > 0$, and the ratio $|v_2 / v_1|$ is bounded by a constant depending only on the Lipschitz constant of $f(t, \cdot)$, hence by the Lipschitz constant of $f_0=h_0$.

The Boundary Harnack Inequality then implies that, for all $T \leq t < T^*$, the ratio $v_2 / v_1$ belongs to the Hölder space $C^{0,\alpha}(U')$ for some $\alpha > 0$, with a uniform bound depending only on $\| h_0 \|_{C^{1,\alpha}}$, where
\[
U' := \left\{ x : f(t,x') - \frac{1}{2\lambda} T < x_N < f(t,x') \right\}\subset U.
\]
By the implicit function theorem, the level sets of $u$ in $U'$ are of class $C^{1,\alpha}$ with uniform bounds. Passing to the limit, we conclude that $f(t, \cdot) \in C^{1,\alpha}$ with bounds depending only on the $H^s$-norm of the initial data. This concludes the proof of Theorem~\ref{T:Cauchyglobal}.
\end{proof}

\section{Eventual regularity} 
\label{sec:eventualreg} 

We now turn to the proof of eventual regularity, 
that is the last point of Theorem~\ref{thm:mainobstacleintro}. This proof depends on a regularity result of 
Caffarelli, Theorem 6 in Caffarelli \cite{MR1658612}.  The latter states that the free boundary is smooth unless the contact set is contained in a thin strip. 

\begin{proposition}\label{thm:6Caffarelli}
Let $ \rho >0$. There exists $M=M(N,\rho)$ so that the following is true: Suppose  $ u\in H^1_+(B_M(x_0)) $ is  a solution to the obstacle problem 
\[ \Delta u = \chi_{\{u>0\} } \]
and consider a point $ x_0 \in \partial\{ u>0\}$. 
Suppose that 
\[   \inf_{\nu \in \partial B_1(0)} \Big( \sup_{x\in B_1(x_0) \cap \{ u = 0 \} } \langle \nu, x\rangle -\inf_{x\in B_1(x_0)\cap \{ u = 0 \}  } \langle \nu, x \rangle \Big) >\rho.    \]
Then the level sets of $u$ of positive values and the free boundary in $B_{\rho/32}(0)$ are uniformly of class $C^{1,\alpha} $ for some $ \alpha(N) >0$.\label{prop:Caffarelli}
\end{proposition}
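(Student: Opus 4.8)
The plan is to deduce this statement directly from Caffarelli's regularity theorem for the obstacle problem (Theorem 6 in \cite{MR1658612}), reformulating the "thin obstacle" alternative in the form stated here. First I would recall the dichotomy from Caffarelli's work: for a solution $u\in H^1_+$ of $\Delta u=\chi_{\{u>0\}}$ near a free boundary point, at each scale the contact set $\{u=0\}$ either is "thick" in a uniform, scale-invariant sense — in which case the free boundary is a $C^{1,\alpha}$ graph in a smaller ball, with $\alpha$ depending only on $N$ — or it is contained in a thin slab. Caffarelli's minimal-diameter / density hypothesis controls precisely the quantity
\[
\delta_r(u,x_0)=\inf_{\nu\in\partial B_1(0)}\Big(\sup_{x\in B_r(x_0)\cap\{u=0\}}\langle\nu,x\rangle-\inf_{x\in B_r(x_0)\cap\{u=0\}}\langle\nu,x\rangle\Big),
\]
the smallest width of a slab containing the contact set in $B_r(x_0)$; the assumption in the proposition is exactly $\delta_1(u,x_0)>\rho$ after rescaling so that $\Delta u=1$ on the non-contact set.

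The key steps, in order, would be: (1) normalize — the hypothesis $\Delta u=\chi_{\{u>0\}}$ is already in Caffarelli's normalized form, so no rescaling of the Laplacian is needed, but I would record that translating $x_0$ to the origin costs nothing; (2) invoke Caffarelli's theorem to get that, once the contact set at scale $1$ around $x_0$ is not contained in a slab of width $\le\rho$, the free boundary is $C^{1,\alpha}$ in a ball whose radius is a fixed fraction of $\rho$ — tracking constants gives the radius $\rho/32$ (the precise fraction is immaterial as long as it is universal, and $32$ is the constant appearing in Caffarelli's statement); (3) upgrade from "free boundary is $C^{1,\alpha}$" to "the positive level sets are $C^{1,\alpha}$": since $u\in C^{1,\alpha}$ up to the free boundary with $\Delta u=1$ there, and $\nabla u\ne 0$ on positive level sets by the nondegeneracy $u(x)\gtrsim \operatorname{dist}(x,\{u=0\})^2$ combined with interior elliptic estimates, the implicit function theorem applies on $\{0<u<\epsilon\}$, and standard elliptic (Schauder) regularity for $\Delta u=1$ handles $\{u\ge\epsilon\}$; (4) finally, fix $M=M(N,\rho)$ large enough that Caffarelli's argument, which compares blow-ups across dyadic scales between $1$ and some small universal radius, has enough room — $M$ is chosen so that $B_M(x_0)$ contains all the auxiliary balls used in the monotonicity-formula and compactness arguments of \cite{MR1658612}.

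The main obstacle I anticipate is bookkeeping rather than conceptual: one must make sure the quantitative "thick contact set $\Rightarrow$ flatness of the free boundary at a definite smaller scale" is extracted in a genuinely uniform way — i.e. that $\alpha$ and the implied constants depend only on $N$ (and the radius only on $N$ and $\rho$), with no dependence on $u$ or on finer features of the contact set. This is exactly what Caffarelli's theorem provides, but since his statement is sometimes phrased for minimizers or with a slightly different normalization, I would include a short remark reconciling the hypotheses (in particular that any weak solution of the obstacle problem with the sign $u\ge 0$ and bounded right-hand side is, after the $C^{1,1}$ interior estimate, a solution in Caffarelli's sense), and then the rest is a direct citation.
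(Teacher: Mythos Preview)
Your proposal is correct and matches the paper's approach: the paper does not prove this proposition at all but simply states it as a direct citation of Theorem~6 in Caffarelli~\cite{MR1658612}, exactly as you identify. Your outline is in fact more detailed than what the paper does, since you sketch the reconciliation of hypotheses and the upgrade to level-set regularity, whereas the paper treats the statement as a black-box quotation.
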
 

More is true: Suppose the conclusion of the Proposition holds. Then the free interface is analytic, see Kinderlehrer and Nirenberg 
\cite{MR0440187}. An inspection of the proofs shows that there are uniform bounds of analytic norms depending only on $\rho$.

There are two situations in which we apply Caffarelli's result. First we deduce the eventual regularity assertion 
\eqref{part:eventual} in Theorem \ref{thm:mainobstacleintro}.

Consider an open  connected subset $\Omega_0\subset \mathbb{R}^N$ 
such that,  
for some $C>0$,
\begin{equation}\label{n20bb}
\{x\in\mathbb{R}^N\sep x_N < -C \}\subset  \Omega_0 \subset \{ x\in\mathbb{R}^N\sep x_N <C \}.
\end{equation}
As seen in Theorem~\ref{thm:mainobstacle}, 
for all $t>0$, there exists a unique variational 
solution $v=v(t)\in H^1_{\loc,+}(\xR^N)$ 
to the obstacle problem  
\[
\Delta v (t)= \chi_{\{ v(t)>0\} \backslash \Omega_0}
\]
satisfying
\begin{equation*} 
\forall x\in\xR^N,\quad   v_0(t,x_N +C) \le v(t,x)  \le v_0(t,x_N-C),
\end{equation*}
where $v_0$ is given by~\e{eq:v0}. In particular, by comparison, the set $\Omega_t=\{x\in\xR^N; v(t,x)>0\}$ satisfies 
\be\label{n740}
\{  x_N < -C+t\} \subset \Omega_t \subset \{     x_N < C+t \}.
\ee

We fix $ \rho=\frac14$ and denote by $M$ the correspondent constant of the proposition.

Let $t \ge 3C$ and 
$x_0 \in \partial \Omega_t $. Remember that the statement \eqref{item:eventual} in Theorem~\ref{thm:mainobstacle} ensures that $\partial\Omega_t$ is the graph of some function $f$ for $t\ge 2C$. In addition, the property \e{n740} implies that $B_{t+C}(x_0)\cap \Omega_0=\emptyset$ and hence
$$
\Delta v(t) = \chi_{v(t)>0 } \quad\text{in}\quad 
B_{t+2C}(x_0).
$$
Now consider with $ r  =  (t+C)/M$ (and $t$ 
large to be specified later on) the rescaled function
\[  u(y)=  r^{-2}  v \big(t, x_0 +   y r \big). \]
It satisfies 
$$
\Delta u = \chi_{\{ u>0\}}   \quad\text{in}\quad B_M(0)
$$
and
\[ \Big\{ y_N < -  \frac{CM}{C+t}   \Big\} \subset \{ u >0 \} \subset \Big\{ y_N< \frac{CM}{C+t}   \Big\}.   \]
We choose  $t\ge 8M C $ and we arrive at the situation of Proposition \ref{prop:Caffarelli}. It follows that $ \partial \{ u >0\} $ is the graph $x_N = h(t, x') $ for some function $ h \in C^{1,\alpha} $ with 
\[ \Vert h(x'/r )  \Vert_{C^{1,\alpha}} \le C(N), \quad \alpha = \alpha(N). \]
This implies the existence of $ \delta>0$ depending only on $N$ so that for $t \ge 8MC$ and $ \gamma >0$
\[  |\partial^\gamma  h(t, x')| \le  \left( \frac{C|\gamma|}{\delta t}  \right)^{|\gamma|}.       \]

The second application that we work out of Proposition~\ref{thm:6Caffarelli} has to do with the subgraph case: assume that 
$\mathcal{U}_0 = \{ x_N < f_0(x')\}$ 
for some bounded Lipschitz function $f_0$. 
Then Corollary~\ref{cor:graph} gives a solution $f=f(t,x) $ where $f(t)$ is Lipschitz continuous with the same Lipschitz constant as $f_0$ (and then the solution to the original Hele-Shaw equation~\e{n7} is the function 
$h(t,x')=f(t,x')-t$). We can apply Proposition \ref{prop:Caffarelli} to deduce that $f(t)$ (or equivalently $h(t)$) is of class 
$C^{1,\alpha}$ for some $ \alpha = \alpha(N)$ and 
\[
\Vert f(t) \Vert_{C^{1,\alpha}}\le \mathcal{F} \Big(\Vert f_0 \Vert_{W^{1,\infty}} , \inf_{x'}  f(t,x')- f(0,x'), C \Big).   \]

\begin{theorem} \label{thm:analytic} 
Suppose that $ f_0$ is Lipschitz continuous.
Assume that for some $x_0\in \xR^d$ and $t >0$ we have 
$f(x_0,t) > f_0(x_0)$. Then $f$ is analytic in $x$ near $x_0$. 
\end{theorem}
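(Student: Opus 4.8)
The plan is to reduce the analyticity of $f$ near $x_0$ to analyticity of the free boundary of the Baiocchi--Duvaut solution $u=u(t)$, and then invoke the Caffarelli--Kinderlehrer--Nirenberg analyticity theory for the obstacle problem. First I would set up the picture as in Section~\ref{S:maingraph}: since $f_0$ is Lipschitz, Corollary~\ref{cor:graph} gives a Lipschitz solution $f(t)$ with $\Omega_t=\{x_N<f(t,x')\}$, and the variational solution $v(t)$ solves $\Delta v(t)=\chi_{\{v(t)>0\}\setminus\Omega_0}$ with $\{v(t)>0\}=\Omega_t$ and $\partial\Omega_t=\{x_N=f(t,x')\}$. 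The hypothesis $f(t,x_0)>f_0(x_0)$ means that, near the boundary point $\bar x_0=(x_0,f(t,x_0))$, the solution has detached from the obstacle: there is a ball $B_r(\bar x_0)$ with $B_r(\bar x_0)\cap\Omega_0=\emptyset$, so on this ball $v(t)$ solves the \emph{clean} obstacle problem $\Delta v(t)=\chi_{\{v(t)>0\}}$ with no dependence on $\Omega_0$.

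Next I would upgrade regularity of the free boundary near $\bar x_0$ from Lipschitz to $C^{1,\alpha}$. This is exactly the content of the discussion following Proposition~\ref{thm:6Caffarelli}: because $f(t)$ is globally Lipschitz and $f(t,x_0)-f_0(x_0)>0$, the contact set cannot be contained in a thin strip in a neighbourhood of $\bar x_0$ (the graph is genuinely $d$-dimensional in the directions $x'$), so Caffarelli's thin/thick dichotomy (Proposition~\ref{prop:Caffarelli}) forces the free boundary to be $C^{1,\alpha}$ in a fixed neighbourhood of $\bar x_0$, with a bound of the form $\Vert f(t)\Vert_{C^{1,\alpha}}\le \mathcal{F}(\Vert f_0\Vert_{W^{1,\infty}},\,f(t,x_0)-f_0(x_0),\,C)$ as already recorded just above the theorem statement. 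One has to check that the quantitative non-degeneracy needed for the thin/thick alternative is uniform on a small ball around $x_0$, which follows from the continuity of $f(t)-f_0$ and $f_0$ (lower semicontinuity suffices to keep the gap positive on a neighbourhood).

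Finally, once the free boundary is $C^{1,\alpha}$ near $\bar x_0$ and $\Delta v(t)=1$ on the (nonempty) open set $\{v(t)>0\}$ near $\bar x_0$ with $v(t)=|\nabla v(t)|=0$ on the free boundary, I would invoke the Kinderlehrer--Nirenberg bootstrap \cite{MR0440187} (as cited in the paragraph after Proposition~\ref{thm:6Caffarelli}): a $C^{1,\alpha}$ free boundary of a solution to an obstacle problem with analytic (here constant) right-hand side is automatically real analytic, and $v(t)$ is analytic up to the free boundary on the positivity side. Since $\partial\Omega_t=\{x_N=f(t,x')\}$ is precisely this free boundary near $\bar x_0$, the implicit function theorem in the analytic category (using $\partial_{x_N}v(t)\ne 0$ there, i.e.\ the nondegeneracy $-\partial_{x_N}v(t)>0$ coming from the Hopf lemma / Taylor sign condition) yields that $x'\mapsto f(t,x')$ is real analytic in a neighbourhood of $x_0$.

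The main obstacle I expect is the passage from the global Lipschitz bound to the \emph{local} applicability of Caffarelli's thin/thick dichotomy: one must verify that at the scale where one applies Proposition~\ref{prop:Caffarelli}, the detachment gap $f(t,x_0)-f_0(x_0)$ together with the Lipschitz bound genuinely rules out the ``thin strip'' alternative after the appropriate rescaling $u(y)=r^{-2}v(t,\bar x_0+ry)$ — essentially the same rescaling computation performed above for the eventual-regularity statement, but now localized and with the scale $r$ dictated by the gap rather than by $t$. Once that is in place, the analyticity is a black-box application of \cite{MR0440187}, uniform in the relevant parameters.
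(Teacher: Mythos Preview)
Your proposal is correct and follows essentially the same route as the paper: you use the detachment $f(t,x_0)>f_0(x_0)$ together with the Lipschitz bound on $f(t,\cdot)$ to produce a ball around $\bar x_0=(x_0,f(t,x_0))$ on which $v(t)$ solves the clean obstacle problem $\Delta v=\chi_{\{v>0\}}$, then use that a Lipschitz subgraph cannot have its contact set in a thin strip to trigger Proposition~\ref{prop:Caffarelli} after the rescaling $u(y)=r^{-2}v(t,\bar x_0+ry)$ with $r$ proportional to the gap, and finally invoke Kinderlehrer--Nirenberg for analyticity. The paper carries out exactly this argument, with $r=(f(t,x_0)-f_0(x_0))/(1+L)$ and the strip-thickness parameter $\rho$ chosen purely in terms of the Lipschitz constant $L$; your anticipated ``main obstacle'' is precisely the rescaling step the paper performs.
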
 
\begin{proof}Denote by $L$ the 
Lipschitz constant $L$ for $f_0$ and hence for $f(t,\cdot)$.  
Let 
\[ r = (f(t,x_0)-f_0(x_0))/(1+L)    \]
so that $ B_r(x_0, f(t,x_0)) \cap \{ x_N < f_0(x_0) \} = \emptyset$.

We fix a constant $ \rho $ depending only on the Lipschitz constant $L$ so that for no $x'$ 
and $R$, 
$ \{ x_N < f(t,x')\} \cap B_R(x',f(t,x'))$ is contained in a strip of size $\rho R$. Let $M$ be the corresponding constant of Proposition \ref{prop:Caffarelli}.

Let $w(x,y)$ be the corresponding solution at time $t$ to the obstacle problem. Then 
\[   \tilde w (x,y) =  M^2r^{-2}  w\Big(x_0 + \frac{r}{M}  (x-x_0),f(t,x') + \frac{r}{M} y\Big)  \]
is a solution to the obstacle problem 
\[ \Delta \tilde w = \chi_{\{\tilde w >0 \}} \qquad \text{ on } B_M(0).  \] 
Proposition \ref{prop:Caffarelli} now implies that the free boundary in $B_{\rho/32}(0) $ of $ \tilde w$ is a $C^{1,\alpha} $ hyper-surface, hence $f(t,\cdot)\in C^{1,\alpha} $ near $ x_0 $.
More precisely, for any multiindex $\gamma$ of length at least $1$ 
\[  |\partial^\gamma f(t,x')| \le  c(L,n) (r(L,N))^{-|\gamma|}  |f(t,x')-f_0(x')|^{1-|\gamma|}.\]
This completes the proof.
\end{proof} 

This gives an alternative proof for global existence from local existence: Indeed, by Proposition~\ref{prop:taylor}, if $ f_0 \in C^{1,\alpha} $ there exist $t_0$ and $ \kappa >0$ such that
$f(t,x)> f(0,x)+ \kappa t $ for $ 0 \le t \le t_0$. As already seen (see~\e{n501}), by Corollary  \ref{cor:graph} the Lipschitz constant is preserved and hence by Theorem \ref{thm:analytic} the solution 
$f(t,\cdot)$ is analytic.

\section{Waiting time, subsolutions and supersolutions}\label{S:8}

In this chapter we consider Lipschitz initial data. As we have seen, for initial data which are smoother (say $h_0\in H^s(\xR^d)$ with $s>d/2+1$), there is global existence and, in addition, a smoothing effect: the solution $h(t)$ belongs to $C^\infty(\xR^d)$ for all positive time $t>0$. Our goal is to investigate
the critical case where $h_0$ is merely Lipschitz.

Consider an angle
$\alpha\in (0,2\pi)$ and denote by
$C_\alpha$ the cone with aperture $\alpha$ and apex 
at the origin, 
that is 
\[
C_\alpha = \{ x=(x',x_N) \in \xR^{N-1}\times \xR : |x'|<-\tan(\alpha/2) x_N  \}.
\]
Thanks to Lemma \ref{lem:unique}, we can introduce the unique homogeneous positive harmonic function 
$u_\alpha $ vanishing at the boundary of the cone; to clarify matters recall that this harmonic function is unique up to a multiplicative constant. 
Notice that the function $u_\alpha$ can be written in polar coordinates 
as $r^{\lambda(\alpha)}v(\sigma)$ with 
\[ \lambda(\alpha)(\lambda(\alpha)+d-2) v = -\Delta_{\xS^{d-1}} v \]
and where the homogeneity $\lambda(\alpha)$ is a continuous strictly monotonically decreasing  function of the angle $ \alpha$, satisfying 
\[
\lim_{\alpha\to 0} \lambda(\alpha) = \infty, 
\quad \lambda(\pi) = 1, \quad \lim_{\alpha \to 2\pi}
\lambda(\alpha) = \left\{ \begin{array}{cl} \frac12 & \text{ if } N=2 \\ 
                                                 0 & \text{ if } N \ge 3.
                                                 \end{array} \right. 
\]
By uniqueness (up to a multiplicative constant), we see that $u_\alpha$ is radial in the $x'$ variable. 
We normalize $u_\alpha$ so that
$$
\la \nabla u_\alpha\ra=\la x\ra^{\lambda(\alpha)-1} \quad\text{on}\quad \partial C_\alpha.
$$
Define the critical angle $\alpha_2$ as the solution to 
$$
\lambda(\alpha_2)=2.
$$
The functions 
\[  (N-1)  x_N^2 -  |x'|^2 \]
are harmonic and positive 
on $C_{\alpha_2}$. They vanish
at the boundary of the cone $(N-1)x_N^2= |x|^2 $. As a result
\[
\alpha_2 =2\arctan(\sqrt{N-1}).
\]
Notice that in two  space dimensions $ \alpha_2 = \pi/2$.


Let us also mention that, in the following, we will make extensive use of the notion of solution introduced in section~\ref{S:3.3}, and of the results proved there.

\begin{proposition}
\label{prop:waiting}
Let $ \Omega \subset \xR^N$ be open and connected and 
$p\colon [0,1)\times \Omega \to [0,\infty)$ be the pressure of a solution to the Hele-Shaw problem (in particular $p$ is monotonically growing in $t$). Let 
\[ x_0 \in \partial\{ p(0,.)>0\}\cap \Omega.   \]
\begin{enumerate} 
 \item (Waiting time) If there exists $\alpha\in (0,\alpha_2) $ and $r>0$ so that  $B_r(x_0) \subset \Omega$ and  
 \[ B_r(x_0) \cap \{ p(0,.)>0\} \subset B_r(x_0) \cap C_\alpha(x_0) \]
then there exists $ T>0$ so that, for all $ 0 \le t \le T $,
 \[ \partial B_r(x_0) \cap \{ p(t,.)>0\} \subset \partial B_r(x_0) \cap C_{\alpha_2}(x_0).\]
In particular, there holds $ p(t,x_0)=0$ for $ t \in [0,T]$.
\item (Immediate movement) If there exists $ \alpha_2 < \alpha < \pi  $ and $r>0$ so that $B_r(x_0) \subset \Omega $ and 
\[ B_r(x_0) \cap \{ p(0,.)>0\} \supset B_r(x_0) \cap C_\alpha(x) \]
then there exist $\varepsilon>0$ and $ T >0$ so that 
\begin{equation}\label{eq:move}
    p(t,x_0+ \varepsilon t^{\frac1{2-\lambda(\alpha)}}e_N )>0 
    \end{equation}
for all $0<t < T$.
\end{enumerate}
\end{proposition}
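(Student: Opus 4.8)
The plan is to build explicit subsolutions and supersolutions in the sense of Section~\ref{S:3.3} and apply the comparison principle (Lemma~\ref{lem:subobstacle}, via Proposition~\ref{lem:subsup}) in the cylinder $[0,T)\times B_r(x_0)$. After translating, assume $x_0=0$. The two parts are dual: the first uses a supersolution built from the homogeneous harmonic function $u_{\alpha_2}$ on the critical cone, the second uses a subsolution built from $u_\alpha$ on a wider cone.

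\emph{Part 1 (waiting time).} Since $p(0,\cdot)$ is supported in $C_\alpha$ near $0$ with $\alpha<\alpha_2$, and $\lambda(\alpha)>\lambda(\alpha_2)=2$, I would look for a supersolution whose positivity set at time $t$ is a slightly fattened version of $C_{\alpha_2}$. A natural ansatz is $p^+(t,x)=c\,t\,(u_{\alpha_2}(x))^{\beta}$ on a neighbourhood of the cone, with an exponent $\beta\in(0,1)$ chosen so that $p^+$ is superharmonic where positive (this forces $\beta\le 1$ since powers $0<\beta<1$ of a positive harmonic function are superharmonic) and so that the boundary inequality $\partial_t p^+\ge|\nabla p^+|^2$ holds on $\partial\{p^+>0\}=\partial C_{\alpha_2}$. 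Near the lateral boundary of the cone $u_{\alpha_2}$ vanishes linearly in the distance to $\partial C_{\alpha_2}$ while $|x|\sim\rho$, so $|\nabla p^+|\sim c\,t\,\rho^{2\beta-?}$; one checks that with $\beta$ close to $1$ and $\alpha<\alpha_2$, one can enclose $\{p(0,\cdot)>0\}\cap B_r$ inside $\{p^+(0^+,\cdot)>0\}$'s "limit" set $C_{\alpha_2}$, arrange $p^+\ge p$ on the parabolic boundary $\{0\}\times B_r\cup[0,T)\times\partial B_r$ for $T$ small (here the constant $c$ and smallness of $T$ enter, using that $p$ is bounded and monotone in $t$), and conclude $p\le p^+$ on $[0,T)\times B_r$ by Lemma~\ref{lem:subobstacle} applied to $u,u^+$. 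Since $\{p^+(t,\cdot)>0\}\cap\partial B_r\subset C_{\alpha_2}$ and $0\notin\{p^+(t,\cdot)>0\}$, the conclusion follows. The exponent bookkeeping — making the superharmonicity and the boundary velocity inequality compatible while still dominating the initial pressure — is the part that needs care; one may instead take $p^+(t,x)=t\,w(x)$ with $w$ the solution of $-\Delta w=0$ on a cone slightly wider than $C_{\alpha_2}$ vanishing on its boundary, scaled so $|\nabla w|$ is small enough on $\partial B_r$, and use the homogeneity $\lambda>2$ of the initial obstacle to beat it — this is cleaner since then $\partial_t p^+=w$ and $|\nabla p^+|^2=t^2|\nabla w|^2$, and the inequality $w\ge t^2|\nabla w|^2$ holds on $\partial\{w>0\}$ for $t\le T$ small because $|\nabla w|$ is bounded on the relevant compact piece of the cone boundary (away from the apex, where $w$ itself is bounded below).

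\emph{Part 2 (immediate movement).} Here I would construct a subsolution of the form $p^-(t,x)=a\,t\,(u_\alpha(x)-\delta(t))_+$ or, more robustly, a self-similar barrier reflecting the scaling $x\sim t^{1/(2-\lambda)}$. Because $u_\alpha$ is homogeneous of degree $\lambda=\lambda(\alpha)\in(1,2)$, the combination $\phi(t,x)=t^{\frac{2}{2-\lambda}}\,\Phi\!\big(x\,t^{-\frac{1}{2-\lambda}}\big)$ respects the parabolic scaling of the obstacle problem, and one seeks $\Phi$ with $-\Delta\Phi\le 0$ on its positivity set and the subsolution boundary conditions $C^{-1}|\nabla\Phi|\le\partial_t\phi\le|\nabla\phi|^2$ on the moving free boundary; a concrete choice is $\Phi=$ (a multiple of $u_\alpha$) minus a quadratic, truncated to its positive part, so that the free boundary is $\{u_\alpha(\xi)=q(\xi)\}$ for suitable $q$. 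One then checks that at $t=0$, $\{p^-(0,\cdot)>0\}=\emptyset$ so the initial comparison $p^-\le p$ is free, that $p^-\le p$ on $[0,T)\times\partial B_r$ for $T$ small (using $\{p(0,\cdot)>0\}\supset C_\alpha\cap B_r$, hence by monotonicity $\{p(t,\cdot)>0\}\supset C_\alpha\cap B_r$, and that $p$ is bounded below on compact subsets of its interior positivity set — here the strong maximum principle / Harnack for the harmonic function $p(t,\cdot)$ on $\{p(t,\cdot)>0\}$ is used to get a quantitative lower bound on $\partial B_r\cap C_\alpha$), and that the positivity set of $p^-$ at time $t$ contains a point $\varepsilon t^{1/(2-\lambda)}e_N$. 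Lemma~\ref{lem:subobstacle} then gives $p\ge p^-$, hence $p(t,\varepsilon t^{1/(2-\lambda)}e_N)>0$ for $0<t<T$, which is \eqref{eq:move}.

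\emph{Main obstacle.} The delicate point in both parts is matching the barrier to the \emph{actual solution} on the parabolic boundary: the comparison lemmas are clean, and the homogeneity arithmetic ($\lambda(\alpha)>2$ for the supersolution, $1<\lambda(\alpha)<2$ for the subsolution) is exactly what drives the dichotomy, but one must verify (i) that the truncated-power or "$u_\alpha$ minus quadratic" barrier genuinely satisfies the one-sided boundary velocity conditions of the Definition in Section~\ref{S:3.3} — in particular the extra lower bound $C^{-1}|\nabla p^-|\le\partial_t p^-$ for the subsolution, which is why the subsolution is the more technical of the two — and (ii) the quantitative lower bound for the true pressure $p$ on $B_r\cap C_\alpha$, which requires a Harnack estimate uniform in $t\in[0,T]$ for the harmonic functions $p(t,\cdot)$ on their (monotone, hence uniformly fat near the cone) positivity sets. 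Once these are in place, choosing the constants and shrinking $T$ is routine.
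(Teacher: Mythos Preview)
Your overall strategy---build explicit barriers and compare via Proposition~\ref{lem:subsup} and Lemma~\ref{lem:subobstacle}---is exactly right, but your supersolution ans\"atze in Part~1 both fail the boundary condition, and this is a genuine gap rather than a matter of bookkeeping.

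Take your ``cleaner'' choice $p^+(t,x)=t\,w(x)$ with $w$ the positive harmonic function on a fixed cone $C_{\tilde\alpha}$ (with $\tilde\alpha$ slightly larger than $\alpha_2$) vanishing on $\partial C_{\tilde\alpha}$. The positivity set $\{p^+(t,\cdot)>0\}=C_{\tilde\alpha}$ is \emph{stationary}, so on its boundary $w=0$, hence $\partial_t p^+=w=0$, while $|\nabla p^+|=t|\nabla w|>0$ by Hopf's lemma. The supersolution inequality $\partial_t p^+\ge|\nabla p^+|^2$ therefore reads $0\ge t^2|\nabla w|^2$, which is false. Your parenthetical ``away from the apex, where $w$ itself is bounded below'' is mistaken: $w$ vanishes identically on the whole cone boundary, not just at the apex. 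The first ansatz $p^+=ct\,(u_{\alpha_2})^\beta$ with $\beta<1$ has the same defect and moreover $\nabla p^+$ fails to extend continuously to the boundary, violating the regularity hypothesis of the definition.

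The paper's resolution is to let the cone itself open in time: one sets $p^+(t,x)=u_{\alpha+t}(x)$ (extended by zero outside $C_{\alpha+t}$). Then the free boundary $\partial C_{\alpha+t}$ moves outward with normal speed $\sim|x|$, and on it $|\nabla p^+|=|x|^{\lambda(\alpha+t)-1}$, so the condition $V_n\ge|\nabla p^+|$ becomes $|x|\ge|x|^{\lambda-1}$, which holds for $|x|\le1$ precisely because $\lambda(\alpha+t)\ge2$ while $\alpha+t\le\alpha_2$. After this, a rescaling $\kappa\,u_{\alpha+\kappa t}(x)$ lets one dominate the initial pressure and match the lateral boundary for small $T$. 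The essential idea you are missing is that \emph{the barrier's free boundary must itself move}; a static positivity set cannot satisfy the supersolution velocity condition.

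For Part~2 your self-similar scaling $x\sim t^{1/(2-\lambda)}$ is the correct heuristic and the comparison steps you outline are fine, but the concrete profile ``$u_\alpha$ minus a quadratic'' would again need its free boundary analysed carefully. The paper instead regularises the cone to a hyperboloid $V$, takes the harmonic function on $V$, and \emph{translates vertically at constant speed $\kappa$}: $p_-(t,x)=u_\alpha(x',x_N-\kappa t)$. The Hopf lemma on the smooth hypersurface $\partial V$ gives a uniform lower bound $|\nabla u_\alpha|\ge c(1+|x|)^{\lambda-1}$, and since $\lambda<2$ this yields both $\partial_t p_-\le|\nabla p_-|^2$ and the extra subsolution condition $C^{-1}|\nabla p_-|\le\partial_t p_-$. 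A rescaling $\delta r^\lambda p_-(\delta t r^{\lambda-2},x/r)$ and an optimisation over $r$ then produce the claimed rate \eqref{eq:move}. Your identification of the Harnack-type lower bound on $p$ over $\partial B_r\cap C_\alpha$ as the key input for the lateral comparison is correct.
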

\begin{remark}
The first case establishes a waiting time: The front does not move immediately at $x_0$. The second cases establishes immediate movement. 

By combining this proposition with Theorem~\ref{thm:analytic}, we obtain the last conclusion of Theorem~\ref{T:di} about the immediate smoothing.
\end{remark}

\begin{proof} Let $\alpha$ be as in the proposition. 
We consider the case  $ \alpha < \alpha_2$, and define  
\[ p^+(t,x) := \left\{\begin{array}{cc} 0 \qquad &  \text{ if }  x \notin C_{\alpha+t}, \\ 
  u_{\alpha+t}(x) & \text{ if } x \in C_{\alpha+t}.  \end{array} \right. \] 

\begin{lemma} \label{lem:supersolution} The function $p^+(t,x)$ is a classical supersolution for $|x| \le 1$ and $0\le t \le \alpha_2-\alpha$.
\end{lemma}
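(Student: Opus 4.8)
The plan is to verify, one by one, the three defining requirements of a classical supersolution for $p^+$ on the region $\{\,|x|\le1,\ 0\le t\le\alpha_2-\alpha\,\}$: the monotonicity in $t$ of the positivity set, the regularity $p^+\in C^2(\{p^+>0\})$ with $\nabla_{t,x}p^+$ continuous on $\overline{\{p^+>0\}}$, the superharmonicity $-\Delta p^+\ge0$ in $\{p^+>0\}$, and the free-boundary inequality $\partial_tp^+\ge|\nabla p^+|^2$ on $\partial\{p^+>0\}$.

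The first and third are immediate. Since $\beta\mapsto C_\beta$ is increasing, $\{p^+(s,\cdot)>0\}=C_{\alpha+s}\subset C_{\alpha+t}=\{p^+(t,\cdot)>0\}$ whenever $s<t$, which is the monotonicity assumption built into the notion of a supersolution; and $p^+(t,\cdot)=u_{\alpha+t}$ is harmonic on the open cone, so $\Delta p^+=0$ there. For the regularity, I would use the separated form $u_\beta(x)=|x|^{\lambda(\beta)}\phi_\beta(\sigma)$, where $\phi_\beta>0$ solves the Dirichlet spherical-cap eigenvalue problem on the cap of half-aperture $\beta/2$ (a singular ODE in the colatitude, since $u_\beta$ is radial in $x'$), normalized so that its interior normal derivative equals $+1$ on the boundary of the cap — the same normalization as $|\nabla u_\alpha|=|x|^{\lambda(\alpha)-1}$ on $\partial C_\alpha$. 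Analytic dependence of a simple principal eigenpair on the domain parameter makes $\beta\mapsto(\lambda(\beta),\phi_\beta)$ analytic, hence $p^+$ is analytic in $(t,x)$ on $\{p^+>0\}$; continuity of $\nabla_{t,x}p^+$ up to the lateral free boundary is standard elliptic regularity in a cone away from the vertex, while at the vertex one uses $\lambda(\alpha+t)\ge\lambda(\alpha_2)=2>1$ — valid because $\lambda$ is strictly decreasing and $\alpha+t\le\alpha_2$ on the stated interval — so that $\nabla u_\beta(x)=O(|x|^{\lambda(\beta)-1})\to0$ and $\nabla_{t,x}p^+$ extends continuously by $0$ at $x=0$.

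The heart of the matter is the free-boundary inequality. On $\partial\{p^+>0\}$ the spatial gradient $\nabla p^+$ is normal to $\partial C_{\alpha+t}$, so by the normalization $|\nabla p^+|^2=|x|^{2\lambda(\alpha+t)-2}$. For $\partial_tp^+$ there, I would differentiate $t\mapsto p^+(t,x)=|x|^{\lambda(\alpha+t)}\phi_{\alpha+t}(\sigma)$ at a fixed $x$ lying on the free boundary: since $\phi_\beta$ vanishes on the boundary of the cap, the term carrying $\log|x|$ drops out on the free boundary, leaving $\partial_tp^+=|x|^{\lambda(\alpha+t)}\,\partial_\beta\phi_\beta\big|_{\partial(\text{cap})}$; and differentiating the identity $\phi_\beta\equiv0$ on $\partial(\text{cap})$ with respect to $\beta$ — using the normalization of $\partial_n\phi_\beta$ and that the cap boundary advances at colatitude-speed $\tfrac12$ — gives $\partial_\beta\phi_\beta\big|_{\partial(\text{cap})}=c$ for an explicit positive constant $c$. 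Hence $\partial_tp^+=c\,|x|^{\lambda(\alpha+t)}$ on $\partial\{p^+>0\}$, and the required inequality reads $c\,|x|^{\lambda(\alpha+t)}\ge|x|^{2\lambda(\alpha+t)-2}$. Since $\lambda(\alpha+t)\ge2$ throughout the time interval, the homogeneities obey $2\lambda(\alpha+t)-2\ge\lambda(\alpha+t)$, so $|x|^{2\lambda(\alpha+t)-2}\le|x|^{\lambda(\alpha+t)}$ for $|x|\le1$; the inequality is tightest as $|x|\to1$, and there it is a matter of the size of $c$, which one arranges on the scale $|x|\le1$ (equivalently, by the rate at which the comparison cone is opened).

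The step I expect to demand the most care is this last one, the computation of $\partial_tp^+$ along the moving free boundary: it must be carried out by differentiating in the aperture parameter at a point that becomes interior to the cone an instant later, not by inserting a boundary colatitude into a frozen $t$-derivative of $u_{\alpha+t}$, which would give $0$. It is exactly here that the homogeneity of $u_\beta$, the normalization of $\phi_\beta$, and the tautology $\tfrac{d}{d\beta}\big(\phi_\beta\big|_{\partial(\text{cap})}\big)=0$ come together, and it is also where both hypotheses of the lemma, $|x|\le1$ and $\alpha+t\le\alpha_2$ (i.e.\ $\lambda(\alpha+t)\ge2$), are used to win the exponent comparison and close the argument.
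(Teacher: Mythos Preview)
Your approach coincides with the paper's: harmonicity in the cone is immediate, and on the moving boundary both you and the paper reduce the supersolution condition $\partial_t p^+\ge|\nabla p^+|^2$ to a comparison of two powers of $|x|$, decided by $\lambda(\alpha+t)\ge 2$ for $|x|\le 1$. The paper phrases this geometrically as $V_n\ge|\nabla p^+|$, writing that the normal speed of $\partial C_{\alpha+t}$ is $|x|$ and that $|\nabla p^+|=|x|^{\lambda-1}$ by the chosen normalization; you instead compute $\partial_t p^+$ by differentiating the separated form $|x|^{\lambda(\beta)}\phi_\beta(\sigma)$ in the aperture and invoking the Hadamard relation $\partial_\beta\phi_\beta\big|_{\partial\text{cap}}=-\tfrac12\,\partial_\theta\phi_\beta$. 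These are the same calculation in two languages. Your verification of the regularity clauses (analytic dependence of $(\lambda,\phi_\beta)$ on $\beta$, and continuity of $\nabla_{t,x}p^+$ at the apex via $\lambda\ge 2>1$) is more thorough than the paper, which does not spell these out.

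Your hesitation over the constant $c$ is warranted and is the one place where the argument does not close as written. The rate of opening is fixed once $p^+(t,\cdot)=u_{\alpha+t}$ is fixed, so you cannot ``arrange $c$'' after the fact; and since the half-angle is $(\alpha+t)/2$, the normal speed is $|x|/2$, i.e.\ your $c=\tfrac12$, so at the borderline $|x|=1$, $\lambda=2$ the inequality $c|x|^\lambda\ge|x|^{2\lambda-2}$ reads $\tfrac12\ge 1$. The paper's one-line proof has the same slip, hidden in the assertion $V_n=|x|$. The repair is cosmetic --- take $p^+(t,\cdot)=u_{\alpha+2t}$, or halve the normalization of $u_\beta$, or state the lemma on a ball of radius $<1$ --- and the application in the proof of Proposition~\ref{prop:waiting} is unaffected (there one increases $\alpha$ slightly and rescales anyway). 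But for the lemma exactly as stated, your final sentence is where the proof does not literally close, and the fix is to change the definition or the normalization, not the speed post hoc.
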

\begin{proof}
With the convention above $u_{\alpha+t}$ is supported in the cone $C_{\alpha+t}$ and
\[ |\nabla u_{\alpha+t}|
=|x|^{\lambda(\alpha+t)  -1} \]
at the boundary of the cone $C_{\alpha+t}$.

Notice that the modulus of the normal velocity of the boundary of the support of $u$ is $|x|$   and 
\[   |x| \ge    |x|^{\lambda(\alpha+t)-1} = |\nabla u(x)|   \]
provided $|x|\le 1$ lies in the boundary of the support  since  $ \lambda(\alpha+t)\ge 2$ as long as $ \alpha+t \le \alpha_2$. 
This proves the lemma.
\end{proof}
Let us now prove that Lemma \ref{lem:supersolution} implies the first claim on the waiting time. 
Without loss of generality we can assume that $x_0=0$ and by scaling $r=1$ in the assumption of Proposition \ref{prop:waiting}.
Increasing $\alpha$ slightly if necessary (so that still $ \alpha < \alpha_2$, or, equivalently, $ \lambda(\alpha) > 2$)  we see that $u_\alpha$ has a positive lower bound on  
\[ \partial B_1(0) \cap \supp p(0,.).\]
For every $\kappa,\lambda>0$ the pressure 
\[ \tilde p^+(t,x)= \kappa  p^+(\kappa t,  x) =     \kappa  u_{\alpha+ \kappa t} (x)  \]
is a super solution by Lemma \ref{lem:supersolution} and the symmetries of the problem in 
$[0,  \kappa^{-1})\times B_{1}  (0) $.
Next we choose $\kappa$ large so that 
\[  \tilde p^+(0,x) \ge p(0,x) \qquad \text{ for } |x| =  1
\]
which implies by the maximum principle for harmonic functions that 
\[  \tilde p^+(0,x) \ge p(0,x) \qquad \text{ for } |x| \le  1.
\]
Next we choose 
$T \le  \kappa^{-1}$ small so that
\[  \tilde p^+(t,x) \ge p(t,x) \quad \text{ for } t \le T \text{ and } |x| = 1 .\]
 Now Proposition  \ref{lem:subsup}
and Lemma \ref{lem:subobstacle} imply the first claim on the waiting time. 

\bigskip

In contrast to that there is no waiting time if a cone of a large angle is contained in the initial support. 
Let $ \alpha> \alpha_2$ and let  
\[ V := \big\{ x:  x_N < 0 \text{ and } \tan^2(\alpha/2) x_N^2 -|x'|^2=   1    \big\} \subset C_\alpha \]
be the regularised cone, the boundary of which is a sheet of a hyperboloid. Let $u_\alpha$ be the harmonic function on the cone from the previous chapter normalized by $ |\nabla v_\alpha| = |x|^{\lambda(\alpha)-1}$ at the boundary of the cone, and let $v_\alpha$
be the unique positive harmonic function in $V$ vanishing at the boundary and normalized by 
\[ v_\alpha( x-e_N) \le u_\alpha \le v_\alpha (x). \]
This defines $u_\alpha$ uniquely since 
\[ v_{\alpha}(x) - C |x|^{\lambda(\alpha)-1}  \le    u_\alpha (x) \le v_{\alpha}(x) \]
and there is at most one multiple of $u_\alpha$ which satisfies these inequalities, which hence by 
Proposition \ref{lem:unique} determines $u_\alpha$. Existence is seen by a compact approximation.

\begin{lemma} 
Let $ \alpha > \alpha_2$.
There exists $ \kappa= \kappa(\alpha, N) $ so that 
\[  p_-(t,x) = u_\alpha(x',x_N- \kappa t)   \]
is a subsolution on $ \xR \times \xR $. 
\end{lemma}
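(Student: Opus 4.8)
The plan is to show that $p_-(t,x) = u_\alpha(x',x_N-\kappa t)$ satisfies the two defining inequalities of a classical subsolution from the definition preceding Proposition~\ref{lem:subsup}, namely $-\Delta p_- \le 0$ in $\{p_->0\}$ and $C^{-1}|\nabla p_-| \le \partial_t p_- \le |\nabla p_-|^2$ on $\partial\{p_->0\}$, for a suitable choice of the constant $\kappa=\kappa(\alpha,N)$. The subharmonicity is immediate: $u_\alpha$ is harmonic on the regularised cone $V$ (by the construction recalled just above via Proposition~\ref{lem:unique}), and a translate of a harmonic function is harmonic, so $\Delta p_-=0$ on $\{p_->0\}$; in particular $-\Delta p_-\le 0$. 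The positivity set $\{p_-(t,\cdot)>0\}$ is $V+\kappa t\, e_N$, which is increasing in $t$, so the monotonicity condition \e{co:3} also holds.

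The heart of the matter is the boundary condition on $\partial\{p_->0\}$. First I would compute $\partial_t p_- = -\kappa\,\partial_{x_N}u_\alpha(x',x_N-\kappa t)$. Since the boundary of $V$ is a downward-opening sheet of a hyperboloid, the outward normal at a boundary point has a strictly positive (upward) $x_N$-component that is bounded away from $0$ and bounded above, uniformly, because $V$ is the fixed regularised cone; consequently on $\partial\{p_->0\}$ one has $\partial_t p_- = \kappa\,(\text{positive geometric factor})\,|\nabla p_-|$, so both $\partial_t p_- \ge C^{-1}|\nabla p_-|$ and $\partial_t p_- \le \kappa_0|\nabla p_-|$ hold for constants depending only on $\alpha,N$ once $\kappa$ is fixed. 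It then remains to arrange $\partial_t p_- \le |\nabla p_-|^2$, i.e. (up to the geometric factor) $\kappa \lesssim |\nabla u_\alpha|$ on $\partial V$. Here the key input is the normalization $v_\alpha(x-e_N)\le u_\alpha \le v_\alpha(x)$ together with $|\nabla u_\alpha|=|x|^{\lambda(\alpha)-1}$ on $\partial C_\alpha$: because $\partial V$ is the sheet $\tan^2(\alpha/2)x_N^2-|x'|^2=1$, every boundary point has $|x|\ge 1$ (indeed $|x_N|\ge \cot(\alpha/2)$), and since $\alpha>\alpha_2$ means $\lambda(\alpha)<2$... I would instead note that the relevant bound is a \emph{lower} bound on $|\nabla u_\alpha|$ on $\partial V$. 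On the hyperboloid $|x|$ ranges over $[1,\infty)$, and comparing $u_\alpha$ with $v_\alpha$ (which is homogeneous-like away from the rounded tip) shows $|\nabla u_\alpha|$ is bounded below by a positive constant $c(\alpha,N)$ on all of $\partial V$: near the rounded tip this is a compactness statement (the tip is a fixed compact smooth piece of boundary where $u_\alpha$ is smooth, positive nearby, and vanishing on the tip, so $|\nabla u_\alpha|>0$ there by Hopf), and far out along the hyperboloid it follows from the asymptotic $u_\alpha \sim v_\alpha$ and the homogeneity $\lambda(\alpha)$ of $u_{\text{cone}}$ which forces $|\nabla u_\alpha|\gtrsim |x|^{\lambda(\alpha)-1}\ge$ a constant only if $\lambda(\alpha)\ge 1$; since $\alpha<\pi$ gives $\lambda(\alpha)>1$ this is exactly where the hypothesis $\alpha_2<\alpha<\pi$ is used to keep $|\nabla u_\alpha|$ bounded away from zero at infinity. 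Choosing $\kappa$ small enough relative to $c(\alpha,N)$ and the geometric factors then gives $\partial_t p_- \le |\nabla p_-|^2$.

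The regularity hypotheses in the definition of subsolution ($p_-\in C^2(\{p_->0\})$ and $\nabla_{t,x}p_-$ extending continuously to the closure) are satisfied because $u_\alpha$ is harmonic hence smooth in $V$, and $V$ has smooth boundary (a hyperboloid sheet), so elliptic boundary regularity gives $\nabla u_\alpha\in C(\overline V)$; translation in time does not affect this. I would also record that $p_-$ is Lipschitz on $[0,T)\times\overline V$ for every $T$, which is what the definition formally requires.

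The main obstacle I expect is the precise verification of the uniform lower bound $|\nabla u_\alpha|\ge c(\alpha,N)>0$ on the \emph{entire} boundary $\partial V$, unbounded sheet included. This splits into a compact (rounded-tip) part handled by Hopf's lemma and strong maximum principle, and an asymptotic part handled by the comparison $v_\alpha(x-e_N)\le u_\alpha\le v_\alpha(x)$ with the exactly homogeneous cone solution of degree $\lambda(\alpha)$; the constraint $\alpha<\pi$ (equivalently $\lambda(\alpha)>1$) is essential to prevent the gradient from decaying at infinity, while $\alpha>\alpha_2$ (equivalently $\lambda(\alpha)<2$) is what makes the subsolution move — i.e. what makes a nonzero $\kappa$ admissible with $\partial_t p_-\le|\nabla p_-|^2$ failing to force $\kappa=0$. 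Everything else is bookkeeping with the geometry of the hyperboloid.
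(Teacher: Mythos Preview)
Your argument is correct and follows the same route as the paper: harmonicity is immediate, the lower inequality $C^{-1}|\nabla p_-|\le \partial_t p_-$ comes from the uniform lower bound on the vertical component $n_N$ of the outward normal to $\partial V$, and the upper inequality $\partial_t p_-\le|\nabla p_-|^2$ reduces to a uniform lower bound on $|\nabla u_\alpha|$ along $\partial V$. The paper compresses this last step into the single Hopf-type statement $|\nabla u_\alpha|\ge \sqrt{2}\kappa(1+|x|)^{\lambda(\alpha)-1}$ on $\partial V$; your decomposition into a compact tip piece (Hopf) plus an asymptotic piece (comparison with the homogeneous cone solution $v_\alpha$) is exactly what underlies that inequality, and you correctly isolate $\lambda(\alpha)>1$ (i.e.\ $\alpha<\pi$) as the reason the gradient does not decay at infinity.

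Two small points. First, your claim that $p_-$ is Lipschitz on $[0,T)\times\overline V$ is not literally true: since $|\nabla u_\alpha|\sim |x|^{\lambda(\alpha)-1}\to\infty$ on the unbounded set $\overline V$ when $\lambda(\alpha)>1$, you only get local Lipschitz continuity. This is harmless, because in the application (Proposition~\ref{prop:waiting}) the subsolution is only compared on bounded balls. Second, your closing remark slightly mislocates the role of $\alpha>\alpha_2$: within the lemma itself, a nonzero $\kappa$ is admissible as soon as $|\nabla u_\alpha|$ has a positive lower bound on $\partial V$, which needs only $\alpha<\pi$. The hypothesis $\alpha>\alpha_2$ (equivalently $\lambda(\alpha)<2$) is what makes the \emph{rescaled} subsolution $\tilde p$ useful in the subsequent scaling argument of Proposition~\ref{prop:waiting}, not what permits $\kappa>0$ here.
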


\begin{proof} 
By the Hopf maximum principle there exists $ \kappa $ with  
\[ |\nabla u_\alpha |\ge \sqrt{2} \kappa ( 1+ |x|)^{\lambda(\alpha)-1}   \]
on $ \partial V $. Since by assumption $ \alpha > \alpha_2$ and  $\lambda(\alpha) < 2 $ the vertical component of the exterior normal at $V$ is at least $\frac1{\sqrt{2}}$.
\end{proof} 

We use this subsolution to prove the  second claim. We assume that $\alpha > \alpha_2$ and, decreasing $\alpha$ if necessary (but keeping $ \alpha > \alpha_2$), and rescaling  we may assume that $p(t,x)$ has a positive lower bound $\delta$ on $C_{\alpha+\delta} \cap \partial B_1(0)$.
We denote $\lambda = \lambda(\alpha)$ and define 
\[ \tilde p(t,x) = \delta r^\lambda p_-(\delta tr^{\lambda-2}  ,x/r)= \delta r^\lambda u_\alpha(x'/r, (x_N- \delta \kappa r^{\lambda-1} t)/r   ).  \]
Here $r$ will be a small positive number and the function  
$\tilde p(t,.)$ converges uniformly to $\delta v_\alpha(t,.)$ 
translated upward by $\delta \kappa r^{\lambda-1}$ 
on compact sets as $ r \to 0 $. 

We observe
\begin{enumerate} 
\item $ \tilde p(0,x) \le \delta v_\alpha(x) \qquad \text{ on }  \xR^N $ by construction,
\item For all $0<r<1 $ by the maximum principle $ \tilde p(0,x) \le \delta v_\alpha(x) \le  p(0,x)$ for $|x|\le 1$.
\item   $ \tilde p(t,x) = 0 $ if $ |x|=1$, $t$ small  and $ x \notin C_{\alpha+2\delta \kappa r^{\lambda-1} t} $ provided $\delta \kappa r^{\lambda-1} t < \frac{\pi-\alpha}2$. This is true since $\tilde p(t,x) \le  v_\alpha(x', x-\delta \kappa r^{\lambda-1})$.  
\item $ \tilde p(t,x) \le v_\alpha(x) + C(\alpha) \delta^2 \kappa  r^{\lambda-1} t     $ for $|x| = 1$, $ x \in C_{\alpha+2\delta \kappa r^{\lambda-1} t}$.
\item $ \tilde p(t,x) \le p(t,x)$ for $|x|\le 1$ and $ t \le \frac{1}{\kappa} r^{1-\lambda} $
since $\tilde p$ is a subsolution and the previous points verify the inequality at the parabolic boundary. The assertion follows from Proposition  \ref{lem:subsup} and Lemma \ref{lem:subobstacle}
\item $ \tilde p(t,\varepsilon t^{\frac1{2-\lambda}}) >0 $ for $ t  > (\delta\kappa)^{-1}   \tan^{-1}(\alpha/2)  r^{1-\lambda}    $.
\end{enumerate}

We have $ \tilde p(t, se_N) > 0 $ if 
\[  s<   \delta \kappa r^{\lambda-1} t    -r \tan(\alpha/2) \]
for some $ r \le c (\delta \kappa t)^{-\frac1\lambda } $. We (approximately) maximize this expression with respect to $r$ and set 
\[   r = \frac12 \left(  \frac{\delta \kappa t}{\tan (\alpha/2)}\right)^{\frac{1}{2-\lambda}}   \]
and obtain positivity for 
\[ s <\Big( 2^{1-\lambda} -\frac12\Big)\tan^{\frac{\lambda-1}{2-\lambda} } (\alpha/2)( \delta \kappa  t)^{\frac{1}{2-\lambda}} .\]
This implies \eqref{eq:move} and completes the proof.
\end{proof}

\appendix

\section{Elliptic boundary value problems} 
\label{A:DN}

We collect in this appendix classical results about the Dirichlet-to-Neumann operator and prove a result of independent interest about the Taylor coefficient.  

\subsection{Definition of the Dirichlet-to-Neumann operator}
Here we recall some results from section~$3$ in \cite{ABZ3}, where the definition of the Dirichlet-to-Neumann operator is addressed in a general context (whether the fluid domain is bounded from below or infinitely deep, as we are considering here).

Consider a bounded  Lipschitz function $h\in W^{1,\infty}(\xR^d)$ with $d=N-1$ 
and a 
function $\psi\in H^\mez (\xR^d)$. 
Introduce the set
$$
\mathcal{O}=\{x\in \xR^N\sep x_N<h(x')\}.
$$
Then there exists a unique harmonic extension 
$\varphi$ of $\psi$ in $\mathcal{O}$, 
that is a function satisfying 
$$
\varphi\in C^\infty(\mathcal{O}),\quad 
\nabla \varphi\in L^2(\mathcal{O})
\quad\text{and}\quad 
\varphi\in L^2((1+\la x_N\ra)^{-3}\dx), 
$$
and solution to
\begin{align}
\Delta\varphi&=0\quad \text{in }\mathcal{O},\label{n80}\\
\varphi\arrowvert_{x_N=h}&=\psi,\label{n81}\\
\lim_{x_N\to -\infty}\partial_{x_N}\varphi(x)&=0.\label{n82}
\end{align}
The function $\varphi$ 
is obtained by using the Lax-Milgram theorem so \e{n80} holds in the weak sense (see \cite[Definition~$3.5$]{ABZ3}). 
Notice that 
$\varphi$ belongs to $H^1(U)$ where 
$U=\{x\in \xR^N: -b<x_N<h(x')\}$ with $b>\lA h\rA_{L^\infty}$. 
Therefore, its 
trace on $\partial\mathcal{O}$ is well-defined 
as a function in $H^{\mez}(\xR^d)$, so \e{n81} has a 
clear meaning in the sense of traces of Sobolev functions. Eventually, the condition \e{n82} holds in the sense that, for any multi-index $\alpha$,
$$
\lim_{x_N\to -\infty}\lA \nabla^\alpha\partial_{x_N}\varphi(\cdot,x_N)\rA_{L^2(\xR^{N-1})}=0.
$$
In addition, since $\Delta\varphi=0$, one can express the normal 
derivative in terms of the tangential derivatives and verify that its normal derivative on $\partial\mathcal{O}$ is well-defined 
as an element of $H^{-\mez}(\xR^d)$ (see \cite[Theorem 3.8]{ABZ3}). 

Therefore, one may introduce the Dirichlet-to-Neumann operator $G(h)$. This is the operator 
$G(h)\colon H^{\mez}(\xR^d)\to H^{-\mez}(\xR^d)$ defined by
\begin{align*}
G(h)\psi (x')&=\sqrt{1+|\nabla_{x'} h|^2}\partial_n\varphi\arrowvert_{x_N=h(x')}\\
&=\partial_{x_N}\varphi(x',h(x'))-\nabla_{x'} h(x')\cdot(\nabla_{x'}\varphi)(x',h(x')).
\end{align*}

\subsection{The Hopf maximum principle}

In this section, we prove a result of independent interest which shows that, in any dimension, and for general 
$C^{1,\alpha}$-domains, the Taylor coefficient is bounded from below and from above (see Remark~\ref{R:4}).

\begin{proposition}\label{prop:taylor} 
Let $d=N-1\ge 1$, $\alpha\in (0,1]$ and $C>0$. 
There exists $ \lambda >1$ 
such that, if 
$ \Vert h \Vert_{C^{1,\alpha}(\xR^d)} \le C $ and $ u$ is a harmonic function in $\Omega=\{x\in\xR^N\,;\, x_N < h(x')\} $
which vanishes at the boundary $\partial\Omega$, with 
\[ \lim_{s\to \infty}  \frac{u(-se_N)}s = 1,
\]
then, for all $x' \in \xR^d$,  
\[  \lambda^{-1} \le \partial_N u(x', h(x'))  \le \lambda. \]
\end{proposition}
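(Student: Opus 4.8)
The plan is to prove the upper and lower bounds separately, both via the boundary Harnack machinery of Proposition~\ref{prop:boundaryharnack} combined with a compactness/normal-families argument, exploiting the uniform $C^{1,\alpha}$ control. First, note that $u$ is well-defined and unique: the function $u$ is the positive harmonic function vanishing on $\partial\Omega$ furnished by Proposition~\ref{lem:unique} (unique up to a constant), and the normalization $\lim_{s\to\infty} u(-se_N)/s = 1$ fixes the constant, so the statement is about a single function. By Remark~\ref{rem:gradient}, since $\|h\|_{L^\infty}\le C$, this $u$ satisfies $|u(x)+x_N|\le C'$ for a constant $C'=C'(C,N)$, and $\nabla u(x)\to -e_N$ uniformly as $x_N\to-\infty$; in particular $\partial_N u$ is bounded above near $-\infty$ and away from the boundary. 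Because $h\in C^{1,\alpha}$, elliptic boundary regularity (Schauder estimates in $C^{1,\alpha}$ domains) shows that $u\in C^{1,\alpha}$ up to $\partial\Omega$ locally, so $\partial_N u(x',h(x'))$ is a well-defined continuous function of $x'$; moreover by the Hopf lemma applied at each boundary point, $\partial_N u(x',h(x'))>0$ strictly (the outward normal has positive $x_N$-component, being $(-\nabla h,1)/\sqrt{1+|\nabla h|^2}$).

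The crux is to make these bounds \emph{uniform in $x'$}. I would argue by contradiction and compactness: suppose there is a sequence $x_k'$ with $\partial_N u(x_k', h(x_k'))\to 0$ (or $\to\infty$). Translate horizontally so that $x_k'=0$: set $h_k(y') = h(y'+x_k') - h(x_k')$ and $u_k(y) = u(y'+x_k', y_N + h(x_k'))$. Then $h_k$ is bounded in $C^{1,\alpha}$ uniformly, with $h_k(0)=0$, so along a subsequence $h_k\to h_\infty$ in $C^1_{\mathrm{loc}}$ with $\|h_\infty\|_{C^{1,\alpha}}\le C$ and $h_\infty(0)=0$; and $u_k$ is a positive harmonic function on $\Omega_k=\{y_N<h_k(y')\}$ vanishing on the boundary. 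Using the uniform bound $|u_k(y)+y_N|\le C''$ (which survives translation, after adjusting the additive constant by $h(x_k')$, noting $|h(x_k')|\le C$) and interior-plus-boundary Schauder estimates, $u_k\to u_\infty$ in $C^1_{\mathrm{loc}}(\overline{\Omega_\infty})$, where $u_\infty$ is a positive harmonic function on $\Omega_\infty=\{y_N<h_\infty(y')\}$ vanishing on $\partial\Omega_\infty$, with $|u_\infty(y)+y_N|\le C''$. By Proposition~\ref{lem:unique} (uniqueness up to constants) and the asymptotic normalization at $-\infty$, $u_\infty$ is \emph{nondegenerate}: it cannot be identically zero, and in fact the Hopf lemma at $0\in\partial\Omega_\infty$ gives $\partial_N u_\infty(0) = c_\infty > 0$. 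Then $\partial_N u_k(0)\to \partial_N u_\infty(0) = c_\infty\in(0,\infty)$, contradicting $\partial_N u_k(0)\to 0$ or $\to\infty$.

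The step I expect to be the main obstacle is ensuring the \emph{qualitative nondegeneracy transfers to a quantitative uniform bound}, i.e.\ that the Hopf-lemma constant $c_\infty$ for the limiting domain is bounded below by something depending only on $C$ and $N$ — which is exactly what the compactness argument delivers, provided the limit $u_\infty$ is genuinely nontrivial. The delicate point is the $C^1$ convergence \emph{up to the boundary}: interior Schauder estimates are standard, but boundary Schauder estimates in a $C^{1,\alpha}$ domain with zero Dirichlet data require flattening the boundary (using $h_k\in C^{1,\alpha}$) and then applying $C^{1,\alpha}$ global Schauder theory for the transformed equation, whose coefficients are $C^\alpha$ with norms controlled by $\|h_k\|_{C^{1,\alpha}}\le C$; this is where the hypothesis $h\in C^{1,\alpha}$ (rather than merely Lipschitz) is genuinely used, and it is precisely the regularity needed to upgrade the boundary Harnack quotient bound of Proposition~\ref{prop:boundaryharnack} to a statement about the actual normal derivative. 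An alternative route that avoids heavy Schauder machinery: apply Proposition~\ref{prop:boundaryharnack}\,(1) with $v$ the model linear function and $u$ itself on balls $B_R(x', h(x'))$ of fixed radius $R=R(C)$, to get $\sup(u/v)\le A\inf(u/v)$ with $A=A(C,N)$ on $B_R\cap\Omega$; letting the point approach the boundary along the normal and using $|u(x)+x_N|\le C'$ to pin down $u/v$ at a reference interior point, one extracts two-sided bounds on $u/v$ near $\partial\Omega$, and then the $C^{1,\alpha}$ boundary regularity converts the ratio bound into $\lambda^{-1}\le \partial_N u(x',h(x'))\le\lambda$. Either way, the combination of boundary Harnack (for the ratio), the uniform bound $|u+x_N|\le C'$ (for the normalization), and $C^{1,\alpha}$ Schauder (to pass from the ratio to the derivative) yields the claim with $\lambda=\lambda(C,N,\alpha)$.
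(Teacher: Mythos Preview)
Your compactness-and-contradiction argument is correct and complete in outline, but it takes a genuinely different route from the paper's proof. The paper proceeds constructively: it flattens the boundary at a given point (using that $h\in C^{1,\alpha}$), reducing to a divergence-form equation $\partial_i(a^{ij}\partial_j v)=0$ on a half-ball $B_2^+$ with $a^{ij}(0)=\delta^{ij}$ and $\|a^{ij}\|_{C^{0,\alpha}}$ controlled; it then proves a quantitative Hopf lemma (Lemma~\ref{lem:hopf}) for such equations directly, by building explicit barriers on annuli $B_{2r}\setminus B_r$ tangent to the flat boundary from inside (for the lower bound) and from outside (for the upper bound), perturbing the explicit radial harmonic function via Schauder estimates to accommodate the variable coefficients, and comparing via the maximum principle and the interior Harnack inequality.

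Your argument instead absorbs both the barrier construction and the uniformity into a single soft step: translate, extract a $C^1_{\mathrm{loc}}$-up-to-boundary limit using boundary Schauder (which you correctly identify as requiring exactly the $C^{1,\alpha}$ hypothesis), and apply the qualitative Hopf lemma to the nondegenerate limit $u_\infty$. This is shorter if one is willing to cite boundary Schauder and a Hopf lemma for divergence-form operators with $C^\alpha$ coefficients on a half-ball (both standard), but it is non-constructive: no explicit $\lambda$ emerges. The paper's approach is longer but self-contained and in principle yields a computable bound. One small comment: your alternative ``boundary Harnack with a model linear function'' sketch is vaguer --- there is no natural linear comparison function vanishing on a curved $C^{1,\alpha}$ boundary, so that route also forces a flattening step, at which point it essentially merges with the paper's argument.
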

\begin{proof}
In this proof, we follow the convention of summation over repeated indices, and we denote by $B_r^+(x)$ the half-ball $B_r(x) \cap \{ x_n >0 \}$. We deduce the proposition from a more local statement.

\begin{lemma}\label{lem:hopf}  There exists $\lambda$ depending on $\alpha\in (0,1]$, $C>1$ and $ \mu >1$ so that the following is true: consider a matrix with coefficients 
$a^{ij} \in C^{0,\alpha}( B^{+}_{2}(0))$ such that 
\[  |\xi|^2 \le \mu a^{ij}\xi_i \xi_j,  \qquad  |a^{ij}| \le \mu, 
\quad  \Vert a^{ij} \Vert_{C^{0,\alpha}} \le C \]
and $a^{ij} (0) = \delta^{ij} $.
Let $v$ be a nonnegative weak solution to 
\begin{equation*}
\left\{
\begin{aligned}
&Lv = \partial_i (a^{ij}(x) \partial_j v) = 0 \quad\text{in}\quad B_2^+(0),\\
&v(x',0)=0,
\end{aligned}
\right.
\end{equation*}
satisfying $\Vert v \Vert_{L^\infty(B_{2}^+(0))} \le C$
and $v(e_N) = 1$. Then 
\[ \lambda^{-1} \le \partial_N v(x',0) \le \lambda   \qquad \text{ in } B_1^{\xR^d}(0) \times \{ 0\}. \]
\end{lemma} 

Temporarily assuming this result, let us conclude the proof of Proposition~\ref{prop:taylor}. Without loss of generality we may assume $h(0)=0$ and $ \nabla h(0) = 0 $. It suffices to give lower and upper bounds for $\partial_N u(0)$. Let $R = 2\Vert h \Vert_{L^\infty}$. By comparison argument (valid for weak solutions), there are lower and upper bounds
\[
\Big(\frac{R}2+x_N\Big)_- \le u \le \Big(x_N-\frac{R}2\Big)_-.
\]
We define for $x=(x',x_N)\in B_2^+(0)$,
\[     v(x) = (u(-Re_N))^{-1} u\big(Rx', h(Rx')-Rx_N\big),   \]
which satisfies $v \ge 0 $, $ v(x',0)=0$ and
\[ \partial_i (a^{ij}(x') \partial_j v) = 0 \qquad \text{ in } B_2^+(0) \]
\[  v(e_N) = 1,  ~ \Vert v \Vert_{L^\infty(B^+_2(0))} \le 3 \]
where the coefficients are given by 
\[   a^{ij}(x') = \delta^{ij} - \delta^{iN} \partial_j h - \partial_i h \delta^{jN} + |\nabla h|^2 \delta^{iN} \delta^{jN}. \]
In particular, the ellipticity is controlled by the Lipschitz constant:  
\[ a^{ij}\xi_i \xi_j = |\xi' - \nabla h \xi_N|^2 + \xi^2_N\ge  (1+ |\nabla h|)^{-2} |\xi|^2  \]
and we have the following bounds:
\[ |a^{ij} | \le  1+ |\nabla h|^2,\quad 
\Vert a^{ij} \Vert_{C^{0,\alpha}} \le   (1+ \Vert h \Vert_{C^{0,1}} )\Vert h \Vert_{C^{1,\alpha}}.
\]
The claim of Proposition \ref{prop:taylor} follows now from Lemma \ref{lem:hopf}.
\end{proof} 

\begin{proof}[Proof of Lemma \ref{lem:hopf}]

We begin by establishing several technical results. 

$i)$ Consider the annulus $ \Omega = B_2(0) \backslash B_1(0)$. Then the problem 
\[ -\Delta u = \nabla \cdot f \qquad \text { in } \Omega, \qquad u= 0 \qquad \text{ on } \partial \Omega \]
has a unique solution which satisfies 
\[ \Vert u \Vert_{C^{1,\alpha}} \le c \Vert f \Vert_{C^{0,\alpha}(\Omega)}. \]
We denote the map $ C^{0,\alpha} \ni f \mapsto u \in C^{1,\alpha} $ by $K$.

Now, let us consider the variable coefficients equation: 
\[
-\partial_i (a^{ij} \partial_j u) = \partial_i f^i\quad \text { in } \Omega, \qquad  u= 0 \quad \text{ on } \partial \Omega,
\]
where $ a^{ij} = \delta^{ij} + b^{ij} $
with $\Vert  b^{ij} \Vert_{C^{0,\alpha}} < \varepsilon$.
The equation is equivalent to 
\[ u = K (b^{ij} \partial_j u + f^i)_i \]
and existence and uniqueness is immediate if  $\Vert b^{ij}\Vert_{C^{0,\alpha}} $ is small enough. Moreover 
\be\label{a10}
\Vert u \Vert_{C^{1,\alpha}}\le \frac{C}{1-C \Vert b \Vert_{C^{0,\alpha}}} \Vert f \Vert_{C^{0,\alpha}}.
\ee 

$ii)$ Our second observation is that the unique 
harmonic function $\psi$ in $ \Omega $ satisfying
$$
\psi(x)= 1 \text{ if } |x|= 1 \quad\text{and}\quad  \psi(x)=0 \text{ if }|x|=2
$$
is given by 
\[ \psi(x) =
\left\{
\begin{aligned} 
&-\frac{\log(|x|/2)}{\log 2} \quad && \text{ if } N=2 \\[2mm] 
&\frac{|x|^{2-N}-2^{2-N}}{1-2^{2-N}} \quad && \text{ if } N \ge 3. 
\end{aligned}
\right.
\] 
In particular, there is a constant $\gamma_N$ depending only on the dimension $N$ such that
\be\label{Npsi}
\forall x\in \Omega,\quad x\cdot \nabla \psi\le -\gamma_N.
\ee

We will prove that the same result holds for the solution $u$ to:
\be\label{Nhomogeneous}
-\partial_i (a^{ij} \partial_j u)= 0, \qquad u(x) = 1  \quad \text{if } |x|= 1, \qquad u(x) = 0 \quad \text{if } |x|= 2.
\ee 
To see this, we find a solution by the Ansatz $ u = U + \psi$  where 
\[ -\partial_i (a^{ij} \partial_j U) = \partial_i (b^{ij} \partial_j \psi) \]
and hence the estimate~\e{a10} implies that
\[ \Vert  u - \psi \Vert_{C^{1,\alpha}} \lesssim \Vert b \Vert_{C^{0,\alpha}}.
\]
Of course this implies a bound for the $L^\infty$-norm 
of $x\cdot\nabla (u-\psi)$ and hence, by 
writing $u=(u-\psi)+\psi$ and using \e{Npsi} we conclude that, 
if $ \Vert b \Vert_{C^{0,\alpha}}$ 
is sufficiently small, then 
\[ 
\forall x\in \Omega,\quad x\cdot \nabla u(x) \le c\Vert b \Vert_{C^\alpha} 
- \left\{ \begin{array}{cl} 1  & \text{ if } N=2 \\
   (N-2)2^{2-N} & \text{ if } N \ge 3 .
   \end{array} \right. 
\]
In particular, we have
\be\label{Nxnabla}
\forall x\in \Omega, \quad x\cdot \nabla u \le -c_N.
\ee

We are now in 
position to finish the proof of Lemma~\ref{lem:hopf}. Given $ 0< r < 1$ and $x\in\Omega$, define 
\[ a^{ij}_r(x) =   a^{ij}(r(2e_N + x)).
\]
Since $ a^{ij}(0) =\delta^{ij}$, we 
have 
\[  \Vert  a^{ij}_r - \delta^{ij}\Vert_{C^{0,\alpha}(\Omega)}
\le c r^\alpha \Vert a^{ij} \Vert_{C^{0,\alpha}(B_2)}.
\]
If $r$ is sufficiently small there exists a unique solution to the homogeneous problem
\be\label{Nhomogeneous2}
-\partial_i( a^{ij}_r \partial_j u)= 0 \quad\text{in }\Omega, \qquad u(x) = 1  \quad \text{if } |x|= 1, \qquad u(x) = 0 \quad \text{if } |x|= 2.
\ee 
In addition, $u$ satisfies the inequality \e{Nxnabla}.

Now consider the rescaled domain (see Figure~\ref{Fig:rescaled})
$$
\Omega_r= B_{2r}(2re_N)\setminus B_r(2re_N)
$$
and the rescaled function $u_r$ defined on $\Omega_r$ by 
by $u(x)=u_r(r(x+2e_N))$. This function $u_r$ satisfies 
\[
-\partial_i (a^{ij} \partial_j u_r) = 0 \quad\text{in}\quad 
\Omega_r,\quad 
(x+2e_N)\cdot\nabla u_r\le -c_N \quad \text{in}\quad \Omega_r.
\]
In particular, we have
\be\label{NdN0}
\partial_N u_r(0)\ge c_N/2.
\ee

We then compare $v$ and $u_r$. To do this, we begin by observing that 
$v$ is nonnegative and hence we can apply 
the Harnack inequality, which implies that there is a constant $c(r)>0$ such that
\[
v |_{x_N \ge r, |x'|\le 1 } \ge   c(r).
\]
Consequently, we have $v\ge c(r)u_r$ on the boundary 
$\partial (B_{2r}(2re_N)\setminus B_r(2r e_N))$. 
By the maximum principle, we infer that
\[  v \ge c(r)u_r\quad\text{on}\quad B_{2r}(2re_N)\setminus B_r(2r e_N). \]
Now, noticing that $v(0)=0$ and $u_r(0)=u(-2e_N)=0$, 
by using \e{NdN0}, we conclude that: 
$$
\partial_N v (0) =\lim_{\eps\to 0}\frac{v(\eps e_N)}{\eps}
\ge c(r)\lim_{\eps \to 0}\frac{u_r(\eps e_N)}{\eps}
=c(r) \partial_Nu_r(0)\ge \frac{c(r)c_N}{2}.
$$
Which completes the proof of the lower bound.

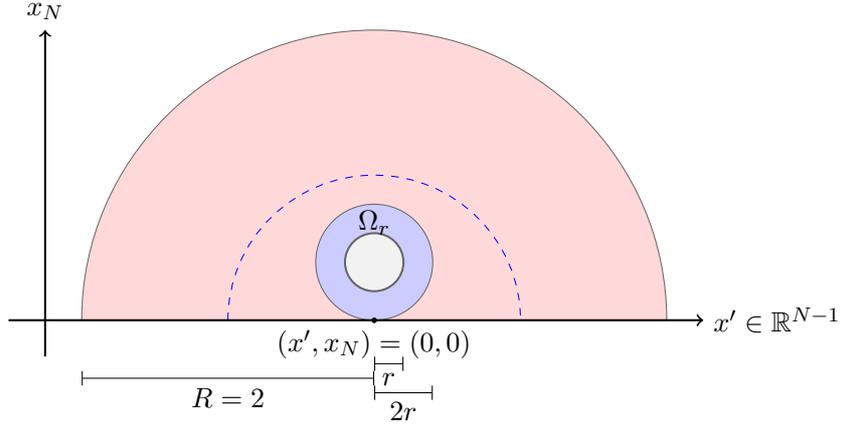
\begin{figure}[htb]
\centering
\resizebox{0.8\textwidth}{!}{%
\begin{tikzpicture}[rotate=90]
    \draw[black!60!white, fill=red!15!white] (4,0) arc(90:-90:4) -- (4,-4) 
      --  (4,-2)  -- (4,0);
    \draw[|-|]  (3.2,-4) --  (3.2,0) node [midway, below]{$R=2$};
    \draw[|-|]  (3.4,-4) --  (3.4,-4.4) node[midway,below]{$r$};
    \draw[|-|]  (3,-4) --  (3,-4.8) node[midway,below]{$2r$};
    \draw[fill=black] (4,-4) circle(.03);
    \draw[blue, dashed] (4,-2) arc(90:-90:2);
    \draw[black!60!white, fill=blue!20](4.8,-4) circle (0.8) ;
    \draw[black!60!white,thick, fill=gray!10](4.8,-4)  circle (0.4);
    \draw[thick,->] (3.5,0.5) -- (8,0.5) node[above] {$x_N$};
    \draw[thick,->] (4,1) -- (4,-8.5) node [right] {$x'\in \xR^{N-1}$};
    \draw[fill=black] (4,-4) circle(.03) node [below] {$(x',x_N)=(0,0)$};
    \node at (5.05,-4)[above] {$\Omega_r$};
 \end{tikzpicture}
}\caption{The comparison argument to get a lower bound. }\label{Fig:rescaled}
\end{figure}

We argue similarly for the upper bound: by comparison, we have 
\[ 0\le v \le 3C \quad\text{on}\quad B_2^+(0).\]
We extend the coefficient $a^{ij}$ to the full space and define (see Figure~\ref{Fig:rescaledm})
\[ \Omega_r= B_{2r}(-re_N) \backslash B_r(-e_N).\]
Then we consider the boundary value problem 
\[ \partial_i( a^{ij} \partial_j u) = 0 \qquad \text{ in } \Omega_r\]
with the boundary condition 
$$
u= 0 \quad\text{ on }\partial B_r(-re_N),\quad  u=1\quad \text{on }\partial B_{2r}(-e_N).
$$
Then $ v \in C^{1,\alpha} $ if $r$ is sufficiently small - we simply take $1-u$ in the construction above. 

Then 
\[ 0\le v \le 3C u \quad \text{ on } \partial (B_{2r}(-e_N) \cap \xR^N_+)     \]
and by the maximum principle $ v \le u $ on 
$B_{2r}(-e_N)\cap \xR^N_+$, hence 
\[ 0 \le \partial_N v(0) \le 3C \partial_N u(0). \]

This completes the proof of Lemma~\ref{lem:hopf} and hence the proof of Proposition~\ref{prop:taylor}.
\end{proof}

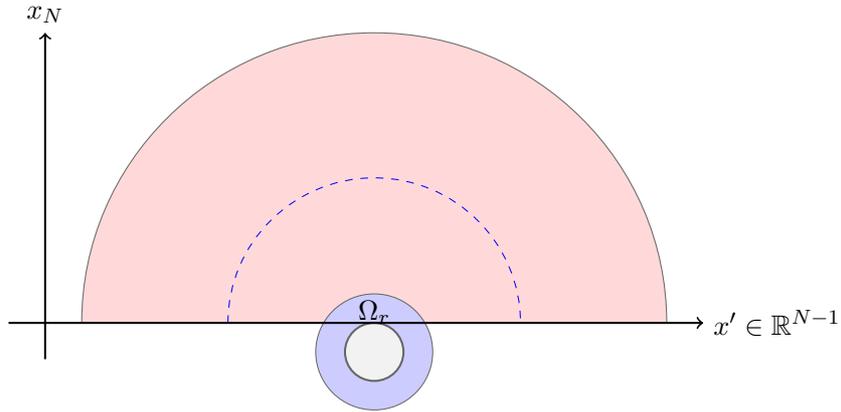
\begin{figure}[htb]
\centering
\resizebox{0.8\textwidth}{!}{%
\begin{tikzpicture}[rotate=90]
    \draw[black!60!white, fill=red!15!white] (4,0) arc(90:-90:4) -- (4,-4) 
      --  (4,-2)  -- (4,0);
    \draw[fill=black] (4,-4) circle(.03);
    \draw[blue, dashed] (4,-2) arc(90:-90:2);
    \draw[black!60!white, fill=blue!20](3.6,-4) circle (0.8) ;
    \draw[black!60!white,thick, fill=gray!10](3.6,-4)  circle (0.4);
    \draw[thick,->] (3.5,0.5) -- (8,0.5) node[above] {$x_N$};
    \draw[thick,->] (4,1) -- (4,-8.5) node [right] {$x'\in \xR^{N-1}$};
    \node at (3.85,-4)[above] {$\Omega_r$};
 \end{tikzpicture}
}\caption{The comparison argument to get an upper bound.}\label{Fig:rescaledm}
\end{figure}

\begin{flushleft}
Thomas Alazard\\ 
Universit\'e Paris-Saclay, ENS Paris-Saclay, CNRS\\
Centre Borelli UMR9010, avenue des Sciences, F-91190 Gif-sur-Yvette France\\
thomas.alazard@ens-paris-saclay.fr. 

\vspace{5mm}

Herbert Koch \\
Mathematisches Institut der Universit\"at Bonn,\\
Endicher Allee 60, 53115 Bonn, Germany,\\ koch@math.uni-bonn.de

\end{flushleft}

\end{document}